\numberwithin{equation}{section}
\theoremstyle{plain}
\newtheorem{thm}{\protect\theoremname}[section]
  \theoremstyle{plain}
  \newtheorem{assumption}[thm]{\protect\assumptionname}
  \theoremstyle{plain}
  \newtheorem{lem}[thm]{\protect\lemmaname}
  \theoremstyle{plain}
  \newtheorem{cor}[thm]{\protect\corollaryname}
  \theoremstyle{plain}
  \newtheorem{prop}[thm]{\protect\propositionname}
  \theoremstyle{remark}
  \theoremstyle{remark}
  \newtheorem{rem}[thm]{\protect\remarkname}
  \theoremstyle{definition}
  \newtheorem{defn}[thm]{\protect\definitionname}
  \providecommand{\assumptionname}{Assumption}
  \providecommand{\corollaryname}{Corollary}
  \providecommand{\definitionname}{Definition}
  \providecommand{\lemmaname}{Lemma}
  \providecommand{\notationname}{Notation}
  \providecommand{\propositionname}{Proposition}
  \providecommand{\remarkname}{Remark}
  \providecommand{\theoremname}{Theorem}
  \newcommand{\R}{\mathbb{R}}
    \newcommand{\eps}{\epsilon}
\begin{document}
\global\long\def\d{\, \mathrm{d}}
\global\long\def\gt{\Gamma_{t}\left(\delta\right)}
\global\long\def\we{\tilde{\mathbf{w}}_{1}}
\global\long\def\vt{\tilde{\mathbf{v}}_{A}^{\epsilon}}
\global\long\def\wei{\mathbf{w}_{1}^{\epsilon}}
\global\long\def\wzw{\mathbf{w}_{2}^{\epsilon}}
\global\long\def\yb{\left(\Psi_{1}^{0}\right)^{\bot}}
\global\long\def\dh{\d\mathcal{H}^{n-1}}
\global\long\def\twe{\tilde{\mathbf{w}}_{1}^{\epsilon}}
\global\long\def\tweh{\tilde{\mathbf{w}}_{1}^{\epsilon,H}}
\global\long\def\twz{\tilde{\mathbf{w}}_{2}^{\epsilon}}
\global\long\def\cte{\tilde{c}^{\epsilon}}
\global\long\def\mte{\tilde{\mu}^{\epsilon}}
\global\long\def\vte{\tilde{\mathbf{v}}^{\epsilon}}
\global\long\def\pte{\tilde{p}^{\epsilon}}
\global\long\def\di{\text{div}}
\global\long\def\vn{\mathbf{u}^{\epsilon,\mathbf{n}}}
\global\long\def\vt{\mathbf{u}^{\epsilon,\tau}}
\global\long\def\cae{\overline{c_{A}^{\epsilon}}}
\global\long\def\caeh{\overline{c_{A}^{\epsilon,H}}}
\global\long\def\wei{\mathbf{w}_{1}^{\epsilon}}
\global\long\def\rs{\mathbf{r}_{\mathrm{S}}^{\epsilon}}
\global\long\def\rdiv{r_{\mathrm{div}}^{\epsilon}}
\global\long\def\rc{r_{\mathrm{CH1}}^{\epsilon}}
\global\long\def\rh{r_{\mathrm{CH2}}^{\epsilon}}
\global\long\def\ra{\mathcal{R}_{\alpha}}
\global\long\def\rsi{\mathbf{r}_{\mathrm{S},I}^{\epsilon}}
\global\long\def\rso{\mathbf{r}_{\mathrm{S},O}^{\epsilon}}
\global\long\def\rdivi{r_{\mathrm{div},I}^{\epsilon}}
\global\long\def\rdivo{r_{\mathrm{div},O}^{\epsilon}}
\global\long\def\rci{r_{\mathrm{CH1},I}^{\epsilon}}
\global\long\def\rco{r_{\mathrm{CH1},O}^{\epsilon}}
\global\long\def\rhi{r_{\mathrm{CH2},I}^{\epsilon}}
\global\long\def\rhO{r_{\mathrm{CH2},O}^{\epsilon}}
\global\long\def\iome{\int_{\Omega_{T_{\epsilon}}}}
\global\long\def\rsb{\mathbf{r}_{\text{S},\mathbf{B}}^{\epsilon}}
\global\long\def\rdivb{r_{\text{div},\mathbf{B}}^{\epsilon}}
\global\long\def\rcb{r_{\text{CH1},\mathbf{B}}^{\epsilon}}
\global\long\def\rhb{r_{\text{CH2},\mathbf{B}}^{\epsilon}}
\global\long\def\trhb{\tilde{r}_{\text{CH2},\mathbf{B}}^{\epsilon}}

\title{Sharp Interface Limit of a Stokes/Cahn-Hilliard System, Part I:
Convergence Result}

\author{Helmut Abels\thanks{  \textit{Fakult\"at f\"ur Mathematik,   Universit\"at Regensburg,   93040 Regensburg,   Germany}   \textsf {helmut.abels@ur.de} } \  and Andreas Marquardt\thanks{\textit   {Fakult\"at f\"ur Mathematik,   Universit\"at Regensburg,   93040 Regensburg,   Germany}  } }
\maketitle
\begin{abstract}
We consider the sharp interface limit of a coupled Stokes/Cahn\textendash Hilliard
system in a two dimensional, bounded and smooth domain, i.e., we
consider the limiting behavior of solutions when a parameter $\epsilon>0$
corresponding to the thickness of the diffuse interface tends to zero.
We show that for sufficiently short times the solutions to the Stokes/Cahn\textendash Hilliard
system converge to solutions of a sharp interface model, where the
evolution of the interface is governed by a Mullins\textendash Sekerka
system with an additional convection term coupled to a two\textendash phase
stationary Stokes system with the Young-Laplace law for the jump of  an extra contribution to the stress
tensor, representing  capillary stresses.
We prove the convergence result by estimating the difference between the exact and an approximate
solutions. To this end we make use of modifications of spectral estimates shown
by X.\ Chen for the linearized Cahn-Hilliard operator. The treatment
of the coupling terms requires careful estimates, the use of the refinements of the latter spectral estimate and a suitable structure of the approximate solutions, which will be constructed in the second part of this contribution.
\end{abstract}

{\small\noindent
{\textbf {Mathematics Subject Classification (2000):}}
%Primary: 35R35; Secondary  35Q30, 76D27,  76D45, 76T99.\vspace{0.1in}\\
%76T99, %% Two-Phase flows: Others
%76D27, %% Incompressible viscous fluids: Other free-boundary flows; Hele-Shaw flows
%76D03, %% Incompressible viscous fluids: Existence, uniqueness, and regularity theory
%76D05, %% Incompressible viscous fluids: Navier-Stokes equations
%76D45, %% Incompressible viscous fluids: Capillarity (surface tension)
Primary: 76T99; Secondary:
35Q30, %% Stokes and Navier-Stokes eq.
35Q35, %% Other equations arising in fluid mechanics
35R35,
76D05, %% Incompressible viscous fluids: Navier-Stokes equations
76D45\\ %% Incompressible viscous fluids: Capillarity (surface tension)
{\textbf {Key words:}} Two-phase flow, diffuse interface model, sharp interface limit, Cahn-Hilliard equation, Free boundary problems
%  Mullins-Sekerka equation,% convergence to equilibria. \vspace{0.1in}\\
}

\section{Introduction and Overview}

Classically, the transition between two immiscible fluids was considered
to be sharp, in the sense of an appearance of a lower-dimensional
surface separating the phases. The behavior of a multiphase system
is then governed by the intricate interactions between the bulk regions
and the interface, mathematically expressed as equations of motion,
which hold in each fluid, complemented by boundary conditions at the
(free) surface. Models incorporating these ideas \textendash{} often
called \emph{sharp interface models} \textendash{} and the corresponding
free-boundary problems have been widely studied and used to great
success in describing a multitude of physical and biological phenomena.
However, fundamental problems arise in the analysis and numerical
simulation of such problems, whenever the considered interfaces develop
singularities. In fluid dynamics, topological changes such as the
pinch off of droplets or collisions are non-negligible features of
many systems, having a significant impact on the flow. Conversely,
\emph{diffuse interface models} turn out to provide a promising, alternative
approach to describe such phenomena and overcome the associated difficulties.
In these diffuse interface (or \emph{phase field)} methods, a partial
mixing of the two phases throughout a thin interfacial layer, heuristically
viewed to have a thickness proportional to a length scale parameter
$\epsilon>0$, is taken into account. Naturally, the question of the behavior for the limit  $\epsilon\rightarrow0$
arises. This so-called \emph{sharp interface limit} is in fact a question
about the connection of sharp and diffuse interface models. Concerning
the flow of two macroscopically immiscible, viscous, incompressible
Newtonian fluids with matched densities, a fundamental and broadly
accepted diffuse interface model is the so-called \emph{model H}.
This model consists of a Navier-Stokes system coupled with the Cahn-Hilliard
equation and was derived in \cite{hohenberg,Gurtin}. The sharp interface limit was studied with the method of formally matched asymptotics in \cite{agg}
and the existence of solutions for the model H was shown in \cite{abels_matched_densities,boyertheman}.
Regarding the formal sharp interface limit, short time existence of
strong solutions was shown in \cite{abelswilke} and existence of
weak solutions for long times in \cite{abelsroger}. Despite these
analytic results and the formal findings for the sharp interface limit,
there are only few attempts at rigorously discussing the sharp interface
limit for the model H. Using the notion of varifold solutions as discussed
in \cite{chenvarifold} such results for large times were shown in
\cite{abelsroger} for the model H and in \cite{abelslengeler} also
for the more general situation of fluids with different densities. But the notion of solution for the latter contributions is rather weak and no rates of convergence were obtained and convergence was only shown for a suitable subsequence. 

For the Allen-Cahn and Cahn-Hilliard equation another approach is based on the works \cite{schatz} and \cite{abc},
where the method of formally matched asymptotics is made rigorous.
However, in view of two-phase flow models in fluid mechanics and the
arising difficulties therein, the first and so far only convergence
result with convergence rates in strong norms is \cite{nsac}. More precisely, considering
a coupled Stokes/Allen-Cahn system in two dimensions, it is shown
that smooth solutions of the diffuse interface system converge for
short times to solutions of a sharp interface model, where the evolution
of the free surface is governed by a convective mean curvature flow
coupled to a two-phase Stokes system together with the Young-Laplace law for the jump of the stress tensor,
accounting for capillary forces. This contribution builds upon the
ideas introduced in \cite{nsac} and aims to establish the first rigorous
result in strong norms for a sharp interface limit of a two phase
flow model involving the Cahn-Hilliard equation with convergence rates.
In doing so, we hope to build another cornerstone on the way to rigorously
showing the sharp interface limit for model H.

More precisely we consider the Stokes/Cahn-Hilliard system
\begin{align}
-\Delta\mathbf{v}^{\epsilon}+\nabla p^{\epsilon} & =\mu^{\epsilon}\nabla c^{\epsilon} &  & \text{in }\Omega_{T},\label{eq:StokesPart}\\
\operatorname{div}\mathbf{v}^{\epsilon} & =0 &  & \text{in }\Omega_{T},\label{eq:StokesPart2}\\
\partial_{t}c^{\epsilon}+\mathbf{v}^{\epsilon}\cdot\nabla c^{\epsilon} & =\Delta\mu^{\epsilon} &  & \text{in }\Omega_{T},\label{eq:CH-Part1}\\
\mu^{\epsilon} & =-\epsilon\Delta c^{\epsilon}+\frac{1}{\epsilon}f'\left(c^{\epsilon}\right) &  & \text{in }\Omega_{T},\label{eq:CH-Part2}\\
c^{\epsilon}|_{t=0} & =c_{0}^{\epsilon} &  & \text{in }\Omega,\label{eq:CH-Anfang}\\
\left(-2D_{s}\mathbf{v}^{\epsilon}+p^{\epsilon}\mathbf{I}\right)\cdot\mathbf{n}_{\partial\Omega} & =\alpha_{0}\mathbf{v}^{\epsilon},\quad \mu^\epsilon =0,\quad c^\epsilon =-1 &  & \text{on }\partial \Omega\times (0,T).\label{eq:StokesBdry}%\\
%\mu^{\epsilon} & =0 &  & \text{on }\partial_{T}\Omega,\label{eq:Dirichlet1}\\
%c^{\epsilon} & =-1 &  & \text{on }\partial_{T}\Omega,\label{eq:Dirichlet2}
\end{align}
 Here $T>0$, $\Omega\subset\mathbb{R}^{2}$ is a bounded and smooth
domain, $\Omega_{T}:=\Omega\times\left(0,T\right)$ and $\alpha_{0}>0$
is fixed. $\mathbf{v}^{\epsilon}\colon \Omega_T \to \R^2$ and $p^{\epsilon}\colon \Omega_T\to \R$ represent the
mean velocity and pressure, $D_{s}\mathbf{v}^{\epsilon}:=\frac{1}{2}\big(\nabla\mathbf{v}^{\epsilon}+(\nabla\mathbf{v}^{\epsilon})^{T}\big)$,
$c^{\epsilon}\colon \Omega\to \R$ is an order parameter representing the concentration
difference of the fluids and $\mu^{\epsilon}\colon \Omega_T\to \R$ is the chemical potential
of the mixture. Moreover, $c_{0}^{\epsilon}\colon \Omega\to \R$ is a suitable initial
value, specified in Theorem \ref{Main} and $f\colon \mathbb{R}\rightarrow\mathbb{R}$
is a double well potential. The system corresponds to the model H if one would add the convection term $\partial_t \mathbf{v}^\epsilon+ \mathbf{v}^\epsilon\cdot \nabla \mathbf{v}^\epsilon$ on the left-hand side to \eqref{eq:StokesPart}. 

Existence of smooth solutions to (\ref{eq:StokesPart})\textendash (\ref{eq:StokesBdry})
can be shown with similar methods as in \cite{abels_matched_densities}.
A word is in order about the choice of boundary conditions
(\ref{eq:StokesBdry}). The reason
we prescribe such boundary conditions for $\mathbf{v}^{\epsilon}$
instead of periodic, no-slip or Navier boundary conditions, are major
difficulties which arise in the construction of the approximate solutions
for $\mathbf{v}^{\epsilon}$. A more detailed account is given in
\cite[Remark~3.9]{NSCH2}. Classically, the Cahn-Hilliard system is
complemented with Neumann boundary conditions for $c^{\epsilon}$
and $\mu^{\epsilon}$. While it is rather unproblematic to adapt the
present work to Neumann boundary conditions for $c^{\epsilon}$, major
issues arise when considering $\partial_{\mathbf{n}_{\partial\Omega}}\mu^\epsilon=0$
instead of $\mu^\epsilon=0$, see Remark \ref{rem:nodirnosol} below.
To circumvent these problems and as the focus of our interest and
analysis lies in the obstacles and difficulties occurring close to
the interface $\Gamma_{t}$, we decided on the present choice of boundary
conditions. We will show that the sharp interface limit of (\ref{eq:StokesPart})\textendash (\ref{eq:StokesBdry})
is given by the system
\begin{align}
-\Delta\mathbf{v}+\nabla p & =0 &  & \text{in }\Omega^{\pm}(t),t\in\left[0,T_{0}\right],\label{eq:S-SAC1}\\
\operatorname{div}\mathbf{v} & =0 &  & \text{in }\Omega^{\pm}(t),t\in\left[0,T_{0}\right],\label{eq:S-SAC2}\\
\Delta\mu & =0 &  & \text{in }\Omega^{\pm}(t),t\in\left[0,T_{0}\right],\label{eq:S-SAC3}\\
\left(-2D_{s}\mathbf{v}+p\mathbf{I}\right)\mathbf{n}_{\partial\Omega} & =\alpha_{0}\mathbf{v} &  & \text{on }\partial_{T_{0}}\Omega,\\
\mu & =0 &  & \text{on }\partial_{T_{0}}\Omega,\\
\left[2D_{s}\mathbf{v}-p\mathbf{I}\right]\mathbf{n}_{\Gamma_{t}} & =-2\sigma H_{\Gamma_{t}}\mathbf{n}_{\Gamma_{t}} &  & \text{on }\Gamma_{t},t\in\left[0,T_{0}\right],\label{eq:S-SAC4}\\
\mu & =\sigma H_{\Gamma_{t}} &  & \text{on }\Gamma_{t},t\in\left[0,T_{0}\right],\label{eq:S-SAC5}\\
-V_{\Gamma_{t}}+\mathbf{n}_{\Gamma_{t}}\cdot\mathbf{v} & =\frac{1}{2}\left[\mathbf{n}_{\Gamma_{t}}\cdot\nabla\mu\right] &  & \text{on }\Gamma_{t},t\in\left[0,T_{0}\right],\label{eq:S-SAC6}\\
\left[\mathbf{v}\right] & =0 &  & \text{on }\Gamma_{t},t\in\left[0,T_{0}\right],\\
\Gamma\left(0\right) & =\Gamma_{0}.\label{eq:S-SAC8}
\end{align}
 Here $T_{0}>0$, $\Omega$ is the disjoint union of smooth domains
$\Omega^{+}(t)$, $\Omega^{-}(t)$ and a curve
$\Gamma_{t}\subseteq \Omega$ for every $t\in\left[0,T_{0}\right]$, where $\Gamma_{t}=\partial\Omega^{+}(t)$,
$\mathbf{n}_{\Gamma_{t}}$ is the exterior normal with respect to
$\Omega^{-}(t)$, and $H_{\Gamma_{t}}$ and $V_{\Gamma_{t}}$
denote the mean curvature and normal velocity of the interface $\Gamma_{t}$.
Furthermore, $\partial_{T_{0}}\Omega:=\partial\Omega\times\left(0,T_{0}\right)$,
$\Gamma_{0}$ is a given initial surface and we use the definitions
\begin{align}
\left[g\right]\left(p,t\right) & :=\lim_{h\searrow0}\left(g(p+\mathbf{n}_{\Gamma_{t}}(p)h)-g(p-\mathbf{n}_{\Gamma_{t}}(p)h)\right)\text{ for }p\in\Gamma_{t},\nonumber \\
\sigma & :=\frac{1}{2}\int_{-\infty}^{\infty}\theta_{0}'(s)^{2}\d s,\label{eq:sigma}
\end{align}
where $\theta_{0}\colon\mathbb{R}\rightarrow\mathbb{R}$ is the so-called
optimal profile, i.e., the unique solution to the ordinary differential equation
\begin{equation}
-\theta_{0}''+f'\left(\theta_{0}\right)=0\quad\text{in }\mathbb{R},\quad \theta_{0}\left(0\right)=0,\;\lim_{\rho\rightarrow\pm\infty}\theta_{0}(\rho)=\pm1.\label{eq:optprofdef}
\end{equation}
Regarding the existence of local strong solutions of (\ref{eq:S-SAC1})\textendash (\ref{eq:S-SAC8}),
the proof in \cite{abelswilke} may be adapted, where a coupled Navier-Stokes/Mullins-Sekerka
system was treated. Regularity theory for parabolic equations and
the Stokes equation may then be used to show smoothness of the solution
for smooth initial values.

Assuming that suitable approximate solutions
$\left( c_{A}^{\epsilon},\mu_{A}^{\epsilon},\mathbf{v}_{A}^{\epsilon},p_{A}^{\epsilon}\right) _{\epsilon>0}$
to (\ref{eq:StokesPart})\textendash (\ref{eq:StokesBdry}) are constructed we show
the existence of some $T_{1}>0$ such that the difference between
$c^{\epsilon}$ and $c_{A}^{\epsilon}$ goes to zero in $L^{\infty}\left(0,T_{1};H^{-1}(\Omega)\right)$
with $H^{-1}(\Omega):=\left(H_{0}^{1}(\Omega)\right)'$,
$L^{2}\left(\Omega_{T_{1}}\right)$, $L^{2}\left(0,T_{1};H^{1}(\Omega)\right)$
and many other norms as $\epsilon\rightarrow0$ with explicit convergence
rates, for some small $T_{1}>0$. These rates will depend on the order
up to which the approximate solutions have been constructed. Moreover,
we will also present convergence rates for the error $\mathbf{v}^{\epsilon}-\mathbf{v}_{A}^{\epsilon}$
in $L^{1}\left(0,T_{1};L^{q}(\Omega)\right)$ for $q\in\left(1,2\right)$.
This result is stated in Theorem \ref{Main}. The key to this endeavors
will be a modification of the spectral estimate for the linearized
Cahn-Hilliard operator as given in \cite{chen}, see Theorem~\ref{specHillmod} below. As in \cite{nsac}, the main difficulties which arise
in the treatment of the Stokes/Cahn-Hilliard system are due to the
appearance of the capillary term $\mu^{\epsilon}\nabla c^{\epsilon}$
in (\ref{eq:StokesPart}) and the convective term $\mathbf{v^{\epsilon}}\cdot\nabla c^{\epsilon}$
in (\ref{eq:CH-Part1}). Although we may build upon the insights gained
in the cited article, several new and severe obstacles arise in the
context of system (\ref{eq:StokesPart})\textendash (\ref{eq:StokesBdry})
which have to be overcome. A novelty in this context is the introduction
of terms of fractional order in the asymptotic expansions. The necessity
of such terms is at its core a consequence of our treatment of the
convective term $\mathbf{v}^{\epsilon}\cdot\nabla c^{\epsilon}$.
Where \cite{nsac} relied on the intricate analysis of a second order,
parabolic, degenerate partial differential equation in the construction
of the highest order terms, the introduction of fractional order terms
renders such considerations unnecessary. The caveat being, that while
the produced fractional order terms are smooth, they may not be estimated
uniformly in $\epsilon$ in arbitrarily strong norms. This is the
cause for many technical subtleties in \cite{NSCH2}, where the construction
is discussed and where estimates for the remainder are shown. See also the second author's PhD-thesis \cite{ichPhD}, which contains the results of this contribution and \cite{NSCH2}.

Throughout this work we make the following assumptions: Let $\Omega\subset\mathbb{R}^{2}$
be a smooth domain, $\Gamma_{0}\subset\subset\Omega$ be a given,
smooth, non-intersecting, closed initial curve. Let moreover $\left(\mathbf{v},p,\mu,\Gamma\right)$
be a smooth solution to (\ref{eq:S-SAC1})\textendash (\ref{eq:S-SAC8})
and $\left(c^{\epsilon},\mu^{\epsilon},\mathbf{v}^{\epsilon},p^{\epsilon}\right)$
be a smooth solution to (\ref{eq:StokesPart})\textendash (\ref{eq:StokesBdry})
for some $T_{0}>0$. We assume that $\Gamma=\cup_{t\in\left[0,T_{0}\right]}\Gamma_{t}\times\left\{ t\right\} $
is a smoothly evolving hypersurface in $\mathbb{R}^{2}$, where $\left(\Gamma_{t}\right)_{t\in\left[0,T_{0}\right]}$
are compact, non-intersecting, closed curves in $\Omega$. We define
$\Omega^{+}(t)$ to be the inside of $\Gamma_{t}$ and
set $\Omega^{-}(t)$ such that $\Omega$ is the disjoint
union of $\Omega^{+}(t)$, $\Omega^{-}(t)$
and $\Gamma_{t}$. Moreover we define $\Omega_{T}^{\pm}=\cup_{t\in\left[0,T\right]}\Omega^{\pm}(t)\times\left\{ t\right\} $,
$\Omega_{T}:=\Omega\times\left(0,T\right)$ and also $\partial_{T}\Omega:=\partial\Omega\times\left(0,T\right)$
for $T\in\left[0,T_{0}\right]$. We define $\mathbf{n}_{\Gamma_{t}}(p)$
for $p\in\Gamma_{t}$ as the exterior normal with respect to $\Omega^{-}(t)$
and $V_{\Gamma_{t}}$, and $H_{\Gamma_{t}}$ as the normal velocity
and mean curvature of $\Gamma_{t}$ with respect to $\mathbf{n}_{\Gamma_{t}}$,
$t\in\left[0,T_{0}\right]$. Let 
\[
d_{\Gamma}:\Omega_{T_{0}}\rightarrow\mathbb{R},\;\left(x,t\right)\mapsto\begin{cases}
\mbox{dist}\left(\Omega^{-}(t),x\right) & \mbox{if }x\notin\Omega^{-}(t),\\
-\mbox{dist}\left(\Omega^{+}(t),x\right) & \mbox{if }x\in\Omega^{-}(t)
\end{cases}
\]
 denote the signed distance function to $\Gamma$ such that $d_{\Gamma}$
 is positive inside $\Omega_{T_{0}}^{+}$. We write
 $$
 \Gamma_{t}(\alpha):=\left\{ \left.x\in\Omega\right|\left|d_{\Gamma}\left(x,t\right)\right|<\alpha\right\}
 $$
for $\alpha>0$ and set $\Gamma(\alpha;T):=\bigcup_{t\in\left[0,T\right]}\Gamma_{t}\left(\alpha\right)\times\left\{ t\right\} $
for $T\in\left[0,T_{0}\right]$. Moreover, we assume that $\delta>0$
is a small positive constant such that $\text{dist}\left(\Gamma_{t},\partial\Omega\right)>5\delta$
for all $t\in\left[0,T_{0}\right]$ and such that the orthogonal projection $\operatorname{Pr}_{\Gamma_{t}}:\Gamma_{t}(3\delta)\rightarrow\Gamma_{t}$
is well-defined and smooth for all $t\in\left[0,T_{0}\right]$. In
the following we often use the notation $\Gamma (\alpha):=\Gamma (\alpha;T_{0})$
as a simplification. We also define a tubular neighborhood around
$\partial\Omega$: For this let $d_{\mathbf{B}}\colon \Omega\rightarrow\mathbb{R}$
be the signed distance function to $\partial\Omega$ such that $d_{\mathbf{B}}<0$
in $\Omega$. As for $\Gamma_{t}$ we define a tubular neighborhood
by $\partial\Omega\left(\alpha\right):=\left\{ x\in\Omega\left|-\alpha<d_{\mathbf{B}}(x)<0\right.\right\} $
and $\partial_{T}\Omega\left(\alpha\right):=\left\{ \left.\left(x,t\right)\in\Omega_{T}\right|d_{\mathbf{B}}(x)\in\left(-\alpha,0\right)\right\} $
for $\alpha>0$ and $T\in\left(0,T_{0}\right]$. Moreover, we denote
the outer unit normal to $\Omega$ by $\mathbf{n}_{\partial\Omega}$
and denote the normalized tangent by $\boldsymbol{\tau}_{\partial\Omega}$, which
is fixed by the relation
\[
\mathbf{n}_{\partial\Omega}(p)=\left(\begin{array}{cc}
0 & -1\\
1 & 0
\end{array}\right)\boldsymbol{\tau}_{\partial\Omega}(p)
\]
for $p\in\partial\Omega$. Finally we assume that $\delta>0$ is chosen
small enough such that the orthogonal projection $\operatorname{Pr}_{\partial\Omega}:\partial\Omega(\delta)\rightarrow\partial\Omega$
along the normal $\mathbf{n}_{\partial\Omega}$ is also well-defined
and smooth.

Concerning the potential $f$, we assume that it is a fourth order
polynomial, satisfying 
\begin{equation}
  f\left(\pm1\right)=f'\left(\pm1\right)=0,\,f''\left(\pm1\right)>0,\:f(s)=f\left(-s\right)>0\quad\text{for all }s\in\R%,\,sf^{\left(3\right)}(s)>0\quad\text{if }\left|s\right|\geq C
  \label{eq:f}
\end{equation}
%for some $C>0$
and fulfilling $f^{\left(4\right)}>0$. Then
the ordinary differential equation (\ref{eq:optprofdef}) allows for
a unique, monotonically increasing solution $\theta_{0}:\mathbb{R}\rightarrow\left(-1,1\right)$.
This solution furthermore satisfies the decay estimate
\begin{equation}
\left|\theta_{0}^{2}(\rho)-1\right|+ |\theta_{0}^{\left(n\right)}(\rho)|\leq C_{n}e^{-\alpha\left|\rho\right|}\quad\text{for all }\rho\in\mathbb{R},\;n\in\mathbb{N}\backslash\left\{ 0\right\} \label{eq:optimopti}
\end{equation}
for constants $C_{n}>0$, $n\in\mathbb{N}\backslash\{ 0\}$,
and fixed $\alpha\in\big(0,\min\{ \sqrt{f''(-1)},\sqrt{f''(1)}\} \big)$.
We denote by $\xi\in C^{\infty}\left(\mathbb{R}\right)$ a
cut-off function such that
\begin{equation}
\xi(s)=1\text{ if }\left|s\right|\leq\delta,\,\xi(s)=0\text{ if }\left|s\right|>2\delta,\text{ and }0\geq s\xi'(s)\geq-4\quad \text{if }\delta\leq\left|s\right|\leq2\delta.\label{eq:cut-off}
\end{equation}

The following theorem is the main theorem of this article (for an
explanation of the used notations see the preliminaries section):
\begin{thm}%[The Main Result]
  \label{Main}
  Let $M\in\mathbb{N}$ with $M\geq 4$, $\xi$ be a cut-off function satisfying \eqref{eq:cut-off},  $\gamma(x):=\xi\left(4d_{\mathbf{B}}(x)\right)$
for all $x\in\Omega$ and let for $\epsilon\in\left(0,1\right)$ a
smooth function $\psi_{0}^{\epsilon}\colon \Omega\rightarrow\mathbb{R}$
be given, which satisfies $\left\Vert \psi_{0}^{\epsilon}\right\Vert _{C^{1}(\Omega)}\leq C_{\psi_{0}}\epsilon^{M}$
for some $C_{\psi_{0}}>0$ independent of $\epsilon$. Then there
are smooth functions $c_{A}^{\epsilon}\colon \Omega\times\left[0,T_{0}\right]\rightarrow\mathbb{R}$, $\mathbf{v}_{A}^{\epsilon}\colon\Omega\times\left[0,T_{0}\right]\rightarrow\mathbb{R}^{2}$
for $\epsilon\in\left(0,1\right)$ such that the following holds: 

Let  $\left(\mathbf{v}^{\epsilon},p^{\epsilon},c^{\epsilon},\mu^{\epsilon}\right)$
be smooth solutions to \eqref{eq:StokesPart}\textendash \eqref{eq:StokesBdry}
with initial value
\begin{align}
c_{0}^{\epsilon}(x) & =c_{A}^{\epsilon}\left(x,0\right)+\psi_{0}^{\epsilon}(x)\label{eq:canf}
\end{align}
for all $x\in\Omega$. Then there are some $\epsilon_{0}\in\left(0,1\right]$,
$K>0$, $T\in\left(0,T_{0}\right]$ such that 
\begin{subequations}\label{eq:Main}
  \begin{align}
    \left\Vert c^{\epsilon}-c_{A}^{\epsilon}\right\Vert _{L^{2}(\Omega_T)}+\left\Vert \nabla^{\Gamma}\left(c^{\epsilon}-c_{A}^{\epsilon}\right)\right\Vert _{L^{2}\left(\Gamma (\delta,T)\right)} & \le K\epsilon^{M-\frac{1}{2}},\label{eq:Main1}\\
   \epsilon\left\Vert \nabla\left(c^{\epsilon}-c_{A}^{\epsilon}\right)\right\Vert _{L^{2}\left(\Omega_T\setminus \Gamma(\delta,T)\right)}+\left\Vert c^{\epsilon}-c_{A}^{\epsilon}\right\Vert _{L^{2}\left(\Omega_T\setminus \Gamma(\delta,T)\right)} & \leq K\epsilon^{M+\frac{1}{2}},\label{eq:Main2}\\
   \epsilon^{\frac{3}{2}}\left\Vert \partial_{\mathbf{n}}\left(c^{\epsilon}-c_{A}^{\epsilon}\right)\right\Vert _{L^{2}\left(\Gamma(\delta,T)\right)}+\left\Vert c^{\epsilon}-c_{A}^{\epsilon}\right\Vert _{L^{\infty}\left(0,T;H^{-1}(\Omega)\right)} & \leq K\epsilon^{M},\label{eq:Main3}\\
   \int_{\Omega_{T}}\epsilon\left|\nabla\left(c^{\epsilon}-c_{A}^{\epsilon}\right)\right|^{2}+\epsilon^{-1}f''\left(c_{A}^{\epsilon}\right)\left(c^{\epsilon}-c_{A}^{\epsilon}\right)^{2}\d\left(x,t\right) & \leq K^{2}\epsilon^{2M},\label{eq:Main4}\\
   \left\Vert \gamma\left(c^{\epsilon}-c_{A}^{\epsilon}\right)\right\Vert _{L^{\infty}\left(0,T;L^{2}(\Omega)\right)}+\epsilon^{\frac{1}{2}}\left\Vert \gamma\Delta\left(c^{\epsilon}-c_{A}^{\epsilon}\right)\right\Vert _{L^{2}\left(\Omega_{T}\right)} & \leq K\epsilon^{M-\frac{1}{2}},\label{eq:Main5}\\
   \left\Vert \gamma\nabla\left(c^{\epsilon}-c_{A}^{\epsilon}\right)\right\Vert_{L^{2}\left(\Omega_{T}\right)}+\left\Vert \gamma\left(c^{\epsilon}-c_{A}^{\epsilon}\right)\nabla\left(c^{\epsilon}-c_{A}^{\epsilon}\right)\right\Vert _{L^{2}\left(\Omega_{T}\right)} & \leq K\epsilon^{M},\label{eq:Main6}
\end{align}
\end{subequations}
and for $q\in\left(1,2\right)$
\begin{equation}
\left\Vert \mathbf{v}^{\epsilon}-\mathbf{v}_{A}^{\epsilon}\right\Vert _{L^{1}\left(0,T;L^{q}(\Omega)\right)}\leq C\left(K,q\right)\epsilon^{M-\frac{1}{2}},\label{eq:Mainv}
\end{equation}
 hold for all $\epsilon\in\left(0,\epsilon_{0}\right)$ and some $C\left(K,q\right)>0$.
Moreover, we have
\begin{align}
\lim_{\epsilon\rightarrow0}c_{A}^{\epsilon} & =\pm1\text{ in }L^{\infty}\left(\Omega_{T}'\right)\label{eq:Maincconverge}
\end{align}
and
\begin{equation}
\lim_{\epsilon\rightarrow0}\mathbf{v}_{A}^{\epsilon}=\mathbf{v}^{\pm}\text{ in }L^{6}((s,t);H^{2}\left(\Omega'\right)^{2})\label{eq:Mainvconverge}
\end{equation}
for every $\Omega'\times (s,t)\subset\subset\Omega_{T}^{\pm}$.
\end{thm}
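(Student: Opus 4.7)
I would define the errors $u^\epsilon := c^\epsilon - c_A^\epsilon$, $\eta^\epsilon := \mu^\epsilon - \mu_A^\epsilon$, and $\mathbf{w}^\epsilon := \mathbf{v}^\epsilon - \mathbf{v}_A^\epsilon$. The approximate solutions are constructed in the companion paper \cite{NSCH2} so that they satisfy the system \eqref{eq:StokesPart}--\eqref{eq:StokesBdry} only up to explicit remainders $\mathbf{r}_{\mathrm{S}}^\epsilon, r_{\mathrm{div}}^\epsilon, r_{\mathrm{CH1}}^\epsilon, r_{\mathrm{CH2}}^\epsilon$ whose norms are of order $\epsilon^{M+1/2}$ in the relevant spaces; subtracting from the exact system gives a linear perturbation with quadratic residual and those remainders as source. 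The proof is a standard bootstrap on a short interval: assume \eqref{eq:Main1}--\eqref{eq:Mainv} hold with constant $2K$ on a maximal $[0,T_{\ast})\subset[0,T_0]$ and show that they then self-improve to constant $K$ for $\epsilon$ sufficiently small, forcing $T_{\ast}\ge T>0$ uniformly in $\epsilon$.

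\textbf{Central energy estimate.} The core is to test the error equation for $u^\epsilon$ against $(-\Delta_{D})^{-1}u^\epsilon$, the inverse Dirichlet Laplacian (compatible with the boundary condition $\mu^\epsilon=0$, which implies $\eta^\epsilon|_{\partial\Omega}=0$ as well). Using the identity $\eta^\epsilon = -\epsilon\Delta u^\epsilon + \epsilon^{-1}(f'(c^\epsilon) - f'(c_A^\epsilon)) + r_{\mathrm{CH2}}^\epsilon$, integrating by parts and Taylor-expanding $f'$ yield, up to cubic and remainder terms,
\[
  \tfrac{1}{2}\tfrac{d}{dt}\|u^\epsilon\|_{H^{-1}}^{2} + \int_{\Omega}\!\Bigl(\epsilon|\nabla u^\epsilon|^{2} + \tfrac{1}{\epsilon}f''(c_A^\epsilon)(u^\epsilon)^{2}\Bigr)dx = -\mathcal{C}^\epsilon + \mathcal{R}^\epsilon,
\]
with $\mathcal{C}^\epsilon$ collecting the coupling terms with $\mathbf{w}^\epsilon$ and $\mathcal{R}^\epsilon$ the remainder and higher-order $f^{(4)}$ contributions. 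The refined spectral estimate, Theorem~\ref{specHillmod}, bounds the dissipation from below by a positive combination of exactly the quantities appearing on the left-hand sides of \eqref{eq:Main1}--\eqref{eq:Main6} (tangential gradients on $\Gamma(\delta,T)$, $\epsilon$-weighted gradients on $\Omega_T\setminus\Gamma(\delta,T)$, and $\gamma$-weighted boundary terms), modulo a $C\|u^\epsilon\|_{H^{-1}}^{2}$ on the right-hand side that is Gronwall-friendly. In parallel, $\mathbf{w}^\epsilon$ is recovered from the linearised stationary Stokes system with Navier-type boundary condition via its $L^{1}_{t}L^{q}_{x}$ regularity for $q\in(1,2)$ (the choice $q<2$ is forced because $\nabla c_A^\epsilon\in L^{2}$ only borderline), applied to the right-hand side $\eta^\epsilon\nabla c_A^\epsilon + \mu_A^\epsilon\nabla u^\epsilon + \eta^\epsilon\nabla u^\epsilon$.

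\textbf{Main obstacle and closing the argument.} The hardest point will be the coupling $\mathcal{C}^\epsilon$: critical pieces such as $\int\mathbf{v}^\epsilon\!\cdot\!\nabla u^\epsilon \,(-\Delta_{D})^{-1}u^\epsilon\,dx$ and $\int\eta^\epsilon\nabla u^\epsilon\!\cdot\!\mathbf{w}^\epsilon\,dx$ cannot be estimated by Cauchy--Schwarz alone without losing inverse powers of $\epsilon$, because $\nabla c_A^\epsilon\sim\epsilon^{-1}$ inside $\Gamma(\delta)$. I would control them by (i) splitting the integrals into $\Gamma(\delta)$ and its complement, where the outer part is absorbed using the free bound $\|u^\epsilon\|_{L^{2}(\Omega_T\setminus\Gamma(\delta,T))}\lesssim\epsilon^{M+1/2}$ coming from the spectral estimate, (ii) exploiting the tangential-gradient control $\|\nabla^{\Gamma}u^\epsilon\|_{L^{2}(\Gamma(\delta,T))}\lesssim\epsilon^{M-1/2}$ on the inner part, and (iii) invoking a trace-type decomposition on normal slices combined with the $L^{q}$-Stokes estimate for $\mathbf{w}^\epsilon$ to pair the singular factor $\nabla c_A^\epsilon$ against the better estimates on $u^\epsilon$ rather than against $\mathbf{w}^\epsilon$. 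The $\gamma$-weighted bounds \eqref{eq:Main5}--\eqref{eq:Main6} handle the analogous contributions near $\partial\Omega$, where boundary-layer corrections to the optimal profile $\theta_{0}$ come into play. Once all couplings are absorbed into the dissipation plus a residual $C\|u^\epsilon\|_{H^{-1}}^{2}$, Gronwall's lemma on $\tfrac{d}{dt}\|u^\epsilon\|_{H^{-1}}^{2}$ gives $E(t)\le C\epsilon^{2M}$ uniformly on $[0,T]$, whence \eqref{eq:Main1}--\eqref{eq:Main6} follow by reading off each term from the spectral estimate. The velocity bound \eqref{eq:Mainv} is then immediate from the $L^{q}$-Stokes estimate applied to the now-controlled right-hand side, while \eqref{eq:Maincconverge}--\eqref{eq:Mainvconverge} are direct consequences of the explicit matched-asymptotic structure of $(c_A^\epsilon,\mathbf{v}_A^\epsilon)$ built in \cite{NSCH2} together with the exponential decay \eqref{eq:optimopti} of $\theta_0$ away from the interface.
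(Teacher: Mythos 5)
Your skeleton (error $R=c^{\epsilon}-c_{A}^{\epsilon}$, continuation argument on a maximal interval, testing against $\varphi=(-\Delta_{D})^{-1}R$, coercivity from Theorem~\ref{specHillmod}, Gronwall, and an $L^{q}$-Stokes duality estimate for the velocity error) matches the paper's proof. But two essential mechanisms are missing. First, you assert that the spectral estimate bounds the dissipation from below by ``exactly the quantities appearing on the left-hand sides of \eqref{eq:Main1}--\eqref{eq:Main6}'', including the $\gamma$-weighted terms. It does not: Theorem~\ref{specHillmod} yields $\epsilon\Vert\nabla^{\Gamma}R\Vert^{2}$, $\epsilon^{-1}\Vert R\Vert^{2}_{L^{2}(\Omega\setminus\Gamma_{t}(\delta))}$, $\epsilon^{3}\Vert\nabla R\Vert^{2}$, etc., but nothing resembling $\Vert\gamma\Delta R\Vert_{L^{2}}$, $\Vert\gamma R\nabla R\Vert_{L^{2}}$ or $\Vert\gamma R\Vert_{L^{\infty}L^{2}}$. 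Estimates \eqref{eq:Main5}--\eqref{eq:Main6} require a second, independent energy identity obtained by testing the error equation with $\gamma^{2}R$ (exploiting $\operatorname{supp}\gamma\cap\operatorname{supp}\xi\circ d_{\Gamma}=\emptyset$, $R|_{\partial\Omega}=0$ and the expansion $f''(c_{A}^{\epsilon})=f''(-1)+\epsilon\tilde f$ near $\partial\Omega$); and these bounds are not cosmetic --- they are consumed in the estimate of the boundary terms in the weak formulation for $\mathbf{v}_{err}^{\epsilon}$ (Theorem~\ref{vehler}) and hence feed back into the first energy estimate. Your single-functional Gronwall loop cannot close without them.

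Second, your treatment of the convective coupling is too coarse to reach order $\epsilon^{2M}$. The problematic contribution is $\int\tilde{\mathbf{w}}_{1}^{\epsilon}\cdot\nabla c_{A}^{\epsilon}\,\varphi$, where $\tilde{\mathbf{w}}_{1}^{\epsilon}$ solves the Stokes system with datum $-\epsilon\operatorname{div}((\nabla c_{A}^{\epsilon}-\mathbf{h})\otimes_{s}\nabla R)$ and $\nabla c_{A}^{\epsilon}\sim\epsilon^{-1}\theta_{0}'$ on $\Gamma(\delta)$. Na\"{\i}ve Cauchy--Schwarz gives only $\Vert\tilde{\mathbf{w}}_{1}^{\epsilon}\Vert_{L^{2}H^{1}}\lesssim\epsilon\Vert\partial_{\mathbf{n}}R\Vert_{L^{2}}\sim\epsilon^{M-\frac12}$ times $\epsilon^{-1}$, i.e.\ a loss of $\epsilon^{-3/2}$ against the target. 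Closing this requires two ingredients your proposal does not supply: (i) the spectral decomposition $R=\epsilon^{-\frac12}Z(\beta\theta_{0}'+F_{1}^{\mathbf{R}})+F_{2}^{\mathbf{R}}$ of Proposition~\ref{Rbar-Zerl}, which lets one write the leading normal-derivative contribution as $\tfrac12\partial_{\mathbf{n}}(\theta_{0}'(\rho)^{2})\epsilon^{-\frac12}Z\beta$ paired with $\operatorname{div}^{\Gamma}\psi$ (using $\operatorname{div}\psi=0$) and integrate by parts tangentially --- this is how Lemma~\ref{Wichtig} obtains $\Vert\tilde{\mathbf{w}}_{1}^{\epsilon}\Vert_{L^{2}H^{1}}\leq C(K)\epsilon^{M-\frac12}$; and (ii) the fact that the approximate solution is constructed so that \eqref{eq:Cahnapp} already contains the term $\epsilon^{M-\frac12}\mathbf{w}_{1}^{\epsilon}|_{\Gamma}\xi(d_{\Gamma})\cdot\nabla c_{A}^{\epsilon}$, so that only the difference $\tilde{\mathbf{w}}_{1}^{\epsilon}-\tilde{\mathbf{w}}_{1}^{\epsilon}|_{\Gamma}\xi$, which vanishes on $\Gamma$ and gains a factor $r$ on normal slices (Lemma~\ref{CHS-Konvektion}), must be estimated. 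Treating $(c_{A}^{\epsilon},\mathbf{v}_{A}^{\epsilon})$ as a black box satisfying the PDE up to $O(\epsilon^{M+1/2})$ remainders, as you do, discards precisely the structural coupling between construction and convergence proof that makes the argument work.
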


Throughout this work we will often consider the following assumptions. 
\begin{assumption}
\label{assu:Main-est}Let $M\in\mathbb{N}$ with $M\geq 4$ and $\gamma(x):=\xi (4d_{\mathbf{B}}(x))$
for all $x\in\Omega$. We assume that $c_{A}\colon \Omega\times\left[0,T_{0}\right]\rightarrow\mathbb{R}$
is a smooth function and that there are $\epsilon_{0}\in\left(0,1\right)$,
$K\geq1$ and a family $\left(T_{\epsilon}\right)_{\epsilon\in\left(0,\epsilon_{0}\right)}\subset\left(0,T_{0}\right]$
such that the following holds: If $c^{\epsilon}$ is given as in Theorem
\ref{Main} with $c_{0}^{\epsilon}(x)=c_{A}\left(x,0\right)$,
then it holds for $R:=c^{\epsilon}-c_{A}^{\epsilon}$
\begin{subequations}\label{eq:Main-est}
\begin{align}
\Vert R\Vert _{L^{2}\left(\Omega_{T_{\epsilon}}\right)}+\Vert \nabla^{\Gamma}R\Vert _{L^{2}\left(\Gamma(T_\eps,\delta)\right)}+\left\Vert \left(\tfrac{1}{\epsilon}R,\nabla R\right)\right\Vert _{L^{2}\left(\Omega_{T_{\epsilon}}\setminus \Gamma(T_\eps,\delta)\right)} & \le K\epsilon^{M-\frac{1}{2}},\label{eq:Main-est-a}\\
\epsilon^{\frac{3}{2}}\left\Vert \partial_{\mathbf{n}}R\right\Vert _{L^{2}\left(\Gamma(T_\eps,\delta)\right)}+\left\Vert R\right\Vert _{L^{\infty}\left(0,T_{\epsilon};H^{-1}(\Omega)\right)} & \leq K\epsilon^{M},\label{eq:Main-est-b}\\
\int_{\Omega_{T_{\epsilon}}}\epsilon\left|\nabla R\right|^{2}+\tfrac1{\epsilon}f''(c_{A}^{\epsilon})R^{2}\d\left(x,t\right) & \leq K^{2}\epsilon^{2M},\label{eq:Main-est-c}\\
\epsilon^{\frac{1}{2}}\left\Vert \gamma R\right\Vert _{L^{\infty}\left(0,T_{\epsilon};L^{2}(\Omega)\right)}+\left\Vert \left(\epsilon\gamma\Delta R,\gamma\nabla R,\gamma R\left(\nabla R\right)\right)\right\Vert _{L^{2}\left(\Omega_{T_{\epsilon}}\right)} & \leq K\epsilon^{M}\label{eq:Main-est-d}
\end{align}
\end{subequations}for all $\epsilon\in\left(0,\epsilon_{0}\right)$.
Moreover, we assume that there exist $\epsilon_{0}>0$ and a constant
$C_{0}>0$ independent of $\epsilon$, such that
\begin{equation}
E^{\epsilon}\left(c_{0}^{\epsilon}\right)+ \left\Vert c_{0}^{\epsilon}\right\Vert _{L^{\infty}(\Omega)}\leq C_{0}\label{eq:Eepsbes}
\end{equation}
%and 
%\begin{equation}
%\left\Vert c_{0}^{\epsilon}\right\Vert _{L^{\infty}(\Omega)}\leq C_{0}\label{eq:coepsbes}
%\end{equation}
for all $\epsilon\in\left(0,\epsilon_{0}\right)$. 
\end{assumption}

As a first result, we give an energy estimate for (\ref{eq:StokesPart})\textendash (\ref{eq:StokesBdry}).
We consider for $\epsilon>0$ the free energy 
\begin{equation}
E^{\epsilon}\left(c^{\epsilon}\right)(t)=\frac{\epsilon}{2}\int_{\Omega}\left|\nabla c^{\epsilon}\left(x,t\right)\right|^{2}\d x+\frac{1}{\epsilon}\int_{\Omega}f\left(c^{\epsilon}\left(x,t\right)\right)\d x\text{ for }t\in\left[0,T_{0}\right].\label{eq:gb}
\end{equation}
Then one derives
\begin{equation}\label{eq:EnergyEstim}
  \sup_{0\leq t\leq T} E^\epsilon(c^\epsilon(t))+\int_0^T\int_\Omega \left(|\nabla \mathbf{v}^\epsilon|^2+|\nabla \mu^\epsilon|^2\right)\,dx \, dt +\alpha_0\int_0^t \int_{\partial\Omega}|v|^2\,d\sigma\, dt \leq C_0.
\end{equation}
in a standard manner from testing \eqref{eq:StokesPart} with $\mathbf{v}^\epsilon$, \eqref{eq:CH-Part1} with $\mu^\epsilon$ and \eqref{eq:CH-Part2} with $\partial_t c^\epsilon$ and integration by parts.
As a corollary we obtain
\begin{lem}\label{lem:energy}
\label{energy}Let $\left(c^{\epsilon},\mu^{\epsilon},\mathbf{v}^{\epsilon},p^{\epsilon}\right)$
be a classical solution to \eqref{eq:StokesPart}-\eqref{eq:StokesBdry}
and let $\epsilon_{0}>0$ and $C_{0}>0$ be given such that \eqref{eq:Main-est}
and \eqref{eq:Eepsbes} hold true. Then there is some $\epsilon_{1}\in\left(0,\epsilon_{0}\right)$
and some constant $C>0$, depending only on $T_{0},C_{0}$ and $\epsilon_{0}$,
such that 
\[
\epsilon^{7}\left\Vert \Delta c^{\epsilon}\right\Vert _{L^{2}\left(\Omega_{t}\right)}^{2}+\epsilon\sup_{\tau\in\left[0,t\right]}\left\Vert \nabla c^{\epsilon}\left(.,\tau\right)\right\Vert _{L^{2}(\Omega)}^{2}+\left\Vert \left(\nabla\mu^{\epsilon},\nabla\mathbf{v}^{\epsilon}\right)\right\Vert _{L^{2}\left(\Omega_{t}\right)}^{2}+\alpha_{0}\left\Vert \mathbf{v}^{\epsilon}\right\Vert _{L^{2}\left(\partial_{t}\Omega\right)}^{2}\leq C
\]
for all $t\in\left[0,T_\eps\right]$ and $\epsilon\in\left(0,\epsilon_{1}\right)$.
\end{lem}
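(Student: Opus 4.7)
The plan is to split the left-hand side into two parts: the last three summands are essentially direct consequences of the already-derived energy identity \eqref{eq:EnergyEstim}, while only the $\eps^{7}\|\Delta c^{\eps}\|_{L^{2}(\Omega_{t})}^{2}$ term requires extra work via the Cahn--Hilliard equation \eqref{eq:CH-Part2}.

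Indeed, assumption \eqref{eq:Eepsbes} gives $E^{\eps}(c_{0}^{\eps})\leq C_{0}$, so \eqref{eq:EnergyEstim} applied on $[0,t]\subset[0,T_{0}]$ yields at once
\[
\eps\sup_{\tau\in[0,t]}\|\nabla c^{\eps}(\cdot,\tau)\|_{L^{2}(\Omega)}^{2}+\|(\nabla\mu^{\eps},\nabla\mathbf{v}^{\eps})\|_{L^{2}(\Omega_{t})}^{2}+\alpha_{0}\|\mathbf{v}^{\eps}\|_{L^{2}(\partial_{t}\Omega)}^{2}\leq C,
\]
since the first summand is bounded by $2E^{\eps}(c^{\eps}(\tau))$. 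For the Laplacian term I solve \eqref{eq:CH-Part2} for $\Delta c^{\eps}$ to obtain
\[
\Delta c^{\eps}=-\tfrac{1}{\eps}\mu^{\eps}+\tfrac{1}{\eps^{2}}f'(c^{\eps}),\qquad\eps^{7}\|\Delta c^{\eps}\|_{L^{2}(\Omega_{t})}^{2}\leq 2\eps^{5}\|\mu^{\eps}\|_{L^{2}(\Omega_{t})}^{2}+2\eps^{3}\|f'(c^{\eps})\|_{L^{2}(\Omega_{t})}^{2}.
\]
The $\mu^{\eps}$-contribution is handled via the Dirichlet condition $\mu^{\eps}|_{\partial\Omega}=0$: Poincar\'e's inequality gives $\|\mu^{\eps}(\cdot,\tau)\|_{L^{2}(\Omega)}\leq C_{\Omega}\|\nabla\mu^{\eps}(\cdot,\tau)\|_{L^{2}(\Omega)}$, whose square integrated in time is bounded by \eqref{eq:EnergyEstim}; the prefactor $\eps^{5}$ is then more than enough.

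For the $f'(c^{\eps})$-contribution I exploit that $f$ is a fourth-order polynomial with positive leading coefficient (since $f^{(4)}>0$), so there exist constants with $f(c)\geq c_{1}c^{4}-c_{2}$ and $|f'(c)|\leq C(1+|c|^{3})$. From $\int_{\Omega}f(c^{\eps}(\cdot,\tau))\,dx\leq \eps E^{\eps}(c^{\eps}(\tau))\leq C_{0}\eps$ I infer $\|c^{\eps}(\cdot,\tau)\|_{L^{4}(\Omega)}^{4}\leq C$, and hence $\|c^{\eps}(\cdot,\tau)\|_{L^{2}(\Omega)}\leq C$, uniformly in $\tau$ and $\eps\in(0,\eps_{0})$, while $\|\nabla c^{\eps}(\cdot,\tau)\|_{L^{2}(\Omega)}^{2}\leq 2C_{0}/\eps$. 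The two-dimensional Gagliardo--Nirenberg estimate $\|u\|_{L^{6}(\Omega)}^{6}\leq C\|u\|_{H^{1}(\Omega)}^{4}\|u\|_{L^{2}(\Omega)}^{2}$ then produces $\|c^{\eps}(\cdot,\tau)\|_{L^{6}(\Omega)}^{6}\leq C/\eps^{2}$; integrating in time gives $\|f'(c^{\eps})\|_{L^{2}(\Omega_{t})}^{2}\leq C T_{0}/\eps^{2}$ and thus $\eps^{3}\|f'(c^{\eps})\|_{L^{2}(\Omega_{t})}^{2}\leq C\eps$. Summing all contributions completes the proof.

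The only point requiring care is the bookkeeping of powers of $\eps$; the exponent $\eps^{7}$ in the statement is deliberately generous, chosen precisely to absorb the $\eps^{-2}$ loss from the crude Gagliardo--Nirenberg step. No refined machinery (the spectral estimate of Theorem~\ref{specHillmod} or the stronger Assumption~\ref{assu:Main-est}) is needed here; the lemma is a genuine corollary of the basic energy identity \eqref{eq:EnergyEstim}, combined with the polynomial coercivity of $f$ and the Dirichlet condition $\mu^{\eps}=0$ on $\partial\Omega$.
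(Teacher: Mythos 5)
Your proposal is correct and follows essentially the same route as the paper: the last three terms are read off directly from the energy identity \eqref{eq:EnergyEstim}, and the Laplacian term is obtained by solving \eqref{eq:CH-Part2} for $\Delta c^{\epsilon}$, using Poincar\'e's inequality together with $\mu^{\epsilon}|_{\partial\Omega}=0$ for the $\mu^{\epsilon}$-contribution and the cubic growth of $f'$ for the remainder. The only (harmless) deviation is your bound on $\|c^{\epsilon}\|_{L^{6}}$ via the coercivity of the potential and Gagliardo--Nirenberg, which is in fact slightly sharper than the paper's direct use of $H^{1}\hookrightarrow L^{6}$ combined with $\|\nabla c^{\epsilon}\|_{L^{\infty}(0,t;L^{2})}\lesssim\epsilon^{-1/2}$; both land comfortably within the $\epsilon^{7}$ weight.
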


\begin{proof}
All estimates apart from the one for $\eps^7\left\Vert \Delta c^{\epsilon}\right\Vert _{L^{2}\left(\Omega_{t}\right)}^{2}$
follow directly from \eqref{eq:EnergyEstim}.
Because of the Dirichlet boundary condition of $\mu^{\epsilon}$
we get
\begin{align*}
\left\Vert \Delta c^{\epsilon}\right\Vert _{L^{2}\left(\Omega_{t}\right)} & \leq\frac{1}{\epsilon}\left\Vert \mu^{\epsilon}\right\Vert _{L^{2}\left(\Omega_{t}\right)}+\frac{1}{\epsilon^{2}}\left\Vert f'\left(c^{\epsilon}\right)\right\Vert _{L^{2}\left(\Omega_{t}\right)} \leq\frac{1}{\epsilon}C\left\Vert \nabla\mu^{\epsilon}\right\Vert _{L^{2}\left(\Omega_{t}\right)}+\frac{1}{\epsilon^{2}}\left\Vert f'\left(c^{\epsilon}\right)\right\Vert _{L^{2}\left(\Omega_{t}\right)}\\
 & \leq\frac{C}{\epsilon^{2}}\left(1+\left\Vert c^{\epsilon}\right\Vert _{L^{6}\left(\Omega_{t}\right)}^{3}\right) \leq\frac{C}{\epsilon^{2}}\left(1+\left\Vert \nabla c^{\epsilon}\right\Vert _{L^{\infty}\left(0,t;L^{2}(\Omega)\right)}^{3}\right) \leq\frac{C}{\epsilon^{2}}\left(1+\epsilon^{-\frac{3}{2}}\right)
\end{align*}
for $\epsilon$ small enough, where we used Poincar\'e's inequality
in the second inequality, and the fact that $f$ is a polynomial of
fourth order in the third inequality.
\end{proof}
The contribution is organized as follows: Section \ref{chap:Fundamentals}
summarizes the needed mathematical tools, in particular
existence results for stationary Stokes equations with relevant boundary
conditions and we discuss a modified spectral estimate, which is key
for the proof of Theorem \ref{Main}.
Section \ref{chap:Main Proof} is then devoted to showing Theorem
\ref{Main}. First we will state a result on existence of approximate solutions, cf. Theorem
\ref{thm:Main-Apprx-Structure} below. This result
and all subsequently discussed properties of the approximate solutions
which are needed in this work, are shown in \cite{NSCH2}, see also \cite{ichPhD}. A key result
in Subsection \ref{subsec:The-Approximate-Solutions} is Lemma \ref{Wichtig},
which provides an estimate for the leading term of the error in the
velocity $\mathbf{v}_{A}^{\epsilon}-\mathbf{v}^{\epsilon}$. In order
to show this, a spectral decomposition of $c^{\epsilon}-c_{A}^{\epsilon}$
is needed. In Subsection \ref{sec:Auxiliary-Results}, we collect
many important statements which are essential to the proof of Theorem
\ref{Main}, many of which are concerned with dealing with the aforementioned
error in the velocity. These results enable us to effectively deal
with the problems arising due to the presence of the convective term
in the Cahn-Hilliard equation.

\section{Preliminaries\label{chap:Fundamentals}}

\subsection{Stationary Stokes Equation in One Phase\label{sec:Instationary-Stokes-Equation}}

We consider the one-phase stationary Stokes equation
\begin{align}
-\Delta\mathbf{v}+\nabla p & =\mathbf{f} &  & \text{in }\Omega,\label{eq:instokes1}\\
\operatorname{div}\mathbf{v} & =g &  & \text{in }\Omega,\label{eq:instokes2}\\
\left(-2D_{s}\mathbf{v}+p\mathbf{I}\right)\mathbf{n}_{\partial\Omega} & =\alpha_{0}\mathbf{v} &  & \text{on }\partial\Omega\label{eq:instokes3}
\end{align}
for given $\mathbf{f}\in V_{g}'(\Omega)$ and $g\in L^{2}(\Omega)$.
We denote $C_{\sigma}^{\infty}(\overline{\Omega}):=\left\{ \left.\mathbf{u}\in C^{\infty}\left(\overline{\Omega}\right)^{2}\right|\operatorname{div}\mathbf{u}=0\right\} $,
$H_{\sigma}^{1}(\Omega):=\overline{C_{\sigma}^{\infty}(\overline{\Omega})}^{H_{1}(\Omega)}$
and set
\begin{equation}
V_{g}(\Omega):=\begin{cases}
H_{\sigma}^{1}(\Omega) & \text{if }g=0,\\
H^{1}(\Omega)^{2} & \text{else,}
\end{cases}\label{eq:Vg}\qquad 
H_{g}(\Omega):=\begin{cases}
L_{\sigma}^{2}(\Omega) & \text{if }g\equiv 0,\\
L^{2}(\Omega)^{2} & \text{else}
\end{cases}
\end{equation}
and let $V_{g}'(\Omega)$ denote the dual space of $V_{g}(\Omega)$. 

We call $\mathbf{v}\in V_{g}(\Omega)$ a weak solution
of (\ref{eq:instokes1})\textendash (\ref{eq:instokes3}) if 
\begin{equation}
2\int_{\Omega}D_{s}\mathbf{v}:D_{s}\psi\d x+\alpha_{0}\int_{\partial\Omega}\mathbf{v}\cdot\psi\d\mathcal{H}^{1}(s)=\left\langle \mathbf{f},\psi\right\rangle _{V_{g}',V_{g}}\label{eq:instokesweak}
\end{equation}
holds for all $\psi\in C_{\sigma}^{\infty}\left(\overline{\Omega}\right)$
and
\begin{equation}
\operatorname{div}\mathbf{v}=g\text{ in }L^{2}(\Omega).\label{eq:instokesdiv}
\end{equation}
Note that in the case $g=0$ the condition (\ref{eq:instokesdiv})
is already included in the definition of the space $V_{0}$ and can
thus be omitted. Moreover, a classical solution to (\ref{eq:instokes1})\textendash (\ref{eq:instokes3})
is a weak solution.

\begin{thm}
\label{exinstokes} For each $g\in L^{2}(\Omega)$ and
$\mathbf{f}\in V_{g}'(\Omega)$ there is a unique weak
solution $\mathbf{v}\in V_{g}(\Omega)$ of \eqref{eq:instokes1}\textendash \eqref{eq:instokes3}.
Moreover there exists a constant $C\left(\Omega,\alpha_{0}\right)>0$,
which is independent of $\mathbf{f}$, such that 
\begin{equation}
\Vert \mathbf{v}\Vert _{H^{1}(\Omega)}\leq C(\Omega,\alpha_{0})\big(\Vert \mathbf{f}\Vert _{V_{g}'(\Omega)}+\Vert g\Vert _{L^{2}(\Omega)}\big).\label{eq:instokesab}
\end{equation}
\end{thm}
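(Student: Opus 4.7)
The plan is to cast the problem as a Lax--Milgram problem on $H_\sigma^1(\Omega)$ after lifting the divergence constraint. First I would handle the inhomogeneous divergence by solving an auxiliary Poisson problem $\Delta\phi = g$ in $\Omega$ with, say, zero Dirichlet data, and setting $\mathbf{v}_0 := \nabla\phi \in H^1(\Omega)^2$. Then $\operatorname{div}\mathbf{v}_0 = g$ and $\|\mathbf{v}_0\|_{H^1(\Omega)} \le C\|g\|_{L^2(\Omega)}$ by standard elliptic regularity. Writing $\mathbf{v} = \mathbf{w} + \mathbf{v}_0$ with $\mathbf{w}\in H_\sigma^1(\Omega)$, the problem reduces to finding $\mathbf{w}\in H_\sigma^1(\Omega)$ such that
\[
a(\mathbf{w},\psi) = \langle \mathbf{f},\psi\rangle_{V_g',V_g} - a(\mathbf{v}_0,\psi)\qquad \forall \psi \in H_\sigma^1(\Omega),
\]
where $a(\mathbf{u},\psi) := 2\int_\Omega D_s\mathbf{u}:D_s\psi\,\mathrm dx + \alpha_0\int_{\partial\Omega}\mathbf{u}\cdot\psi\,\mathrm d\mathcal H^1$. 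The case $g \equiv 0$ is included directly, with no lift needed.

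Second, I would apply the Lax--Milgram theorem on $H_\sigma^1(\Omega)$. Continuity of $a$ and of the right-hand side is immediate from the trace theorem. The key step is coercivity, i.e.\ a Korn--Robin type inequality
\[
\|\mathbf{u}\|_{H^1(\Omega)}^2 \le C\bigl(\|D_s\mathbf{u}\|_{L^2(\Omega)}^2 + \|\mathbf{u}\|_{L^2(\partial\Omega)}^2\bigr)\qquad \forall \mathbf{u}\in H^1(\Omega)^2.
\]
I would argue by contradiction and compactness. If the inequality fails, there is a sequence $(\mathbf{u}_n)\subset H^1(\Omega)^2$ with $\|\mathbf{u}_n\|_{H^1}=1$ and $\|D_s\mathbf{u}_n\|_{L^2}^2 + \|\mathbf{u}_n\|_{L^2(\partial\Omega)}^2\to 0$. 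Passing to a subsequence, $\mathbf{u}_n\rightharpoonup\mathbf{u}$ in $H^1(\Omega)^2$ with strong convergence in $L^2(\Omega)^2$ and $L^2(\partial\Omega)^2$ by Rellich and the compactness of the trace. The limit satisfies $D_s\mathbf{u}\equiv 0$, so $\mathbf{u}(x)=Ax+b$ with $A\in\R^{2\times 2}$ antisymmetric and $b\in\R^2$, and $\mathbf{u}|_{\partial\Omega}=0$. Since $\partial\Omega$ contains infinitely many points not lying on a single line, the affine function $\mathbf{u}$ must vanish identically. Then Korn's second inequality $\|\nabla\mathbf{u}_n\|_{L^2}\le C(\|D_s\mathbf{u}_n\|_{L^2}+\|\mathbf{u}_n\|_{L^2})$, combined with $\mathbf{u}_n\to 0$ in $L^2$, yields $\|\mathbf{u}_n\|_{H^1}\to 0$, contradicting $\|\mathbf{u}_n\|_{H^1}=1$.

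Third, Lax--Milgram produces a unique $\mathbf{w}\in H_\sigma^1(\Omega)$ solving the reduced problem, and hence a unique $\mathbf{v}=\mathbf{w}+\mathbf{v}_0\in V_g(\Omega)$ solving \eqref{eq:instokesweak}--\eqref{eq:instokesdiv}. The bound \eqref{eq:instokesab} follows by testing the weak equation with $\mathbf{w}$, applying the coercivity proven above to estimate $\|\mathbf{w}\|_{H^1}$ by $\|\mathbf{f}\|_{V_g'}+\|\mathbf{v}_0\|_{H^1}$, and combining with $\|\mathbf{v}_0\|_{H^1}\le C\|g\|_{L^2}$.

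The only nontrivial point is the Korn--Robin coercivity; the lift, Lax--Milgram application, and the \emph{a priori} estimate are then standard. Uniqueness is an immediate consequence of coercivity applied to the difference of two solutions, for which $\mathbf{f}$ and $g$ vanish.
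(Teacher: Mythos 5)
Your proof is correct and follows essentially the same route as the paper, which reduces the case $g\neq 0$ to $g=0$ via exactly the same lift $\nabla q$ with $\Delta q=g$, $q\in H^2(\Omega)\cap H^1_0(\Omega)$, and then invokes Lax--Milgram on $H^1_\sigma(\Omega)$. The only difference is that you supply a self-contained compactness proof of the Korn--Robin coercivity, whereas the paper takes this inequality from the literature (it cites \cite[Corollary 5.8]{korn} for the analogous bound elsewhere); your argument for it is sound.
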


\begin{proof}
In the case $g=0$ the result is a direct
consequence of the Lax-Milgram Lemma. The case $g\neq 0$ can be easily reduced to the latter case by considering $\tilde{\mathbf{v}}= \mathbf{v}-\nabla q$, where $q\in H^2(\Omega)\cap H^1_0(\Omega)$ is such that $\Delta q= g$.
\end{proof}
The following corollary yields existence of a pressure term.
\begin{cor}
\label{cor:weakvp}Let $g\in L^{2}(\Omega)$ and $\mathbf{f}\in L^{2}(\Omega)^{2}$.
Then there is a unique $\left(\mathbf{v},p\right)\in V_{g}\times L^{2}(\Omega)$
of \eqref{eq:instokes1}\textendash\eqref{eq:instokes3} such
that 
\[
2\int_{\Omega}D_{s}\mathbf{v}:D_{s}\mathbf{\psi}-p\mathrm{div}\psi\d x+\alpha_{0}\int_{\partial\Omega}\mathbf{v}\cdot\psi\d\mathcal{H}^{1}(s)=\int_{\Omega}\mathbf{f}\cdot\psi\d x\quad \text{for all }\psi\in H^1(\Omega)
\]
and \eqref{eq:instokesdiv}
holds. Moreover, there is a constant $C>0$, independent of $\mathbf{v}$
and $p$, such that 
\[
\left\Vert \left(\mathbf{v},p\right)\right\Vert _{H^{1}(\Omega)\times L^{2}(\Omega)}\leq C\big(\Vert \mathbf{f}\Vert _{L^{2}(\Omega)}+\Vert g\Vert _{L^{2}(\Omega)}\big).
\]
\end{cor}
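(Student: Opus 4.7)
The plan is to upgrade the weak solution furnished by Theorem \ref{exinstokes} to a velocity-pressure pair by means of a de~Rham / Ne\v{c}as type argument for functionals on $H^{1}(\Omega)^{2}$ vanishing on $H^{1}_{\sigma}(\Omega)$. Concretely, given $g\in L^{2}(\Omega)$ and $\mathbf{f}\in L^{2}(\Omega)^{2}\subset V_{g}'(\Omega)$, Theorem \ref{exinstokes} yields a unique weak solution $\mathbf{v}\in V_{g}(\Omega)$ of \eqref{eq:instokes1}--\eqref{eq:instokes3} with
\[
\Vert \mathbf{v}\Vert_{H^{1}(\Omega)}\leq C\bigl(\Vert \mathbf{f}\Vert_{L^{2}(\Omega)}+\Vert g\Vert_{L^{2}(\Omega)}\bigr),
\]
and I would first note that by density of $C^{\infty}_{\sigma}(\overline{\Omega})$ in $H^{1}_{\sigma}(\Omega)$ the variational identity \eqref{eq:instokesweak} holds in fact for all $\psi\in H^{1}_{\sigma}(\Omega)$.

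Next, I would introduce the linear functional
\[
F\colon H^{1}(\Omega)^{2}\to\mathbb{R},\qquad F(\psi):=\int_{\Omega}\mathbf{f}\cdot\psi\d x-2\int_{\Omega}D_{s}\mathbf{v}:D_{s}\psi\d x-\alpha_{0}\int_{\partial\Omega}\mathbf{v}\cdot\psi\d\mathcal{H}^{1}.
\]
By Cauchy--Schwarz, the trace theorem and the bound on $\Vert\mathbf{v}\Vert_{H^{1}(\Omega)}$ one has $F\in (H^{1}(\Omega)^{2})'$ with $\Vert F\Vert\leq C(\Vert\mathbf{f}\Vert_{L^{2}(\Omega)}+\Vert g\Vert_{L^{2}(\Omega)})$, and by the extended weak formulation $F$ vanishes on the closed subspace $H^{1}_{\sigma}(\Omega)$.

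The heart of the argument is then the classical lemma of de Rham / Ne\v{c}as in the present geometric setting: since the divergence operator $\operatorname{div}\colon H^{1}(\Omega)^{2}\to L^{2}(\Omega)$ has closed range (in fact is surjective, via a bounded right inverse coming from solving $\Delta q=g$ with $q\in H^{2}\cap H^{1}_{0}$), any functional on $H^{1}(\Omega)^{2}$ annihilating $\ker(\operatorname{div})=H^{1}_{\sigma}(\Omega)$ factors through $\operatorname{div}$. Thus there exists a unique $p\in L^{2}(\Omega)$ such that
\[
F(\psi)=\int_{\Omega}p\,\operatorname{div}\psi\d x\qquad\text{for all }\psi\in H^{1}(\Omega)^{2},
\]
together with the estimate $\Vert p\Vert_{L^{2}(\Omega)}\leq C\Vert F\Vert$. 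Rearranging gives the identity claimed in the corollary, and combining the bound on $p$ with the estimate on $\mathbf{v}$ gives the asserted norm inequality. Uniqueness of the pair $(\mathbf{v},p)$ follows from uniqueness of $\mathbf{v}$ in Theorem \ref{exinstokes} and surjectivity of $\operatorname{div}$ onto $L^{2}(\Omega)$ (which rules out the usual additive-constant ambiguity, since we are not restricted to $L^{2}_{0}$).

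The only non-routine point is invoking the de Rham / Ne\v{c}as lemma with the mixed/Robin-type boundary setting and full $L^{2}(\Omega)$ range for the pressure; once the surjectivity of $\operatorname{div}\colon H^{1}(\Omega)^{2}\to L^{2}(\Omega)$ is in hand (which on a smooth bounded domain is standard, e.g.\ by solving a Poisson problem with Dirichlet data for $q$ and setting $\psi=\nabla q$), the rest reduces to a closed-range-theorem application and the a~priori estimate transferred from Theorem \ref{exinstokes}.
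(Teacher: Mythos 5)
Your argument is correct and is essentially the paper's own proof in abstract packaging: the paper constructs $p$ by hand by testing against gradients $\nabla\varphi$ with $\varphi\in H^{2}(\Omega)\cap H^{1}_{0}(\Omega)$ and using the bijectivity of the Dirichlet Laplacian and of its adjoint, which is precisely the bounded right inverse of $\operatorname{div}$ underlying your de~Rham/Ne\v{c}as invocation, and it then verifies the identity for general $\psi$ via the same decomposition $\psi=\psi_{0}+\nabla q$. Two minor points: with your definition of $F$ the representation comes out as $F(\psi)=-\int_{\Omega}p\,\operatorname{div}\psi\,dx$ (a harmless sign to absorb into $p$), and both your argument and the paper's tacitly use that $\ker(\operatorname{div})\cap H^{1}(\Omega)^{2}$ coincides with $H^{1}_{\sigma}(\Omega)=\overline{C^{\infty}_{\sigma}(\overline{\Omega})}^{H^{1}(\Omega)}$, a standard density fact for smooth bounded domains.
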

\begin{proof}
Let $\mathbf{v}$ be the weak solution to (\ref{eq:instokesweak})\textendash (\ref{eq:instokesdiv})
as given by Theorem \ref{exinstokes}. Elliptic theory implies that
$$
\Delta_{D}\colon \mathcal{D}(\Delta_D):= H^{2}(\Omega)\cap H_{0}^{1}(\Omega)\rightarrow L^{2}(\Omega)\colon u\mapsto \Delta u
$$
is bijective. Thus, the adjoint operator $\left(\Delta_{D}\right)'\colon L^{2}(\Omega)'\rightarrow(H^{2}(\Omega)\cap H_{0}^{1}(\Omega))'$
is also bijective. Using the continuity of the trace operator and H\"older's inequality
we find that the functional $F\colon \mathcal{D}(\Delta_D)\to \R$
\[
F(\varphi):=\int_{\Omega}\left(2D_{s}\mathbf{v}:D_{s}\left(\nabla\varphi\right)-\mathbf{f}\cdot\nabla\varphi\right)\d x+\alpha_{0}\int_{\partial\Omega}\mathbf{v}\cdot\nabla\varphi\d\mathcal{H}^{1}(s)\quad\forall\varphi\in \mathcal{D}(\Delta_D)
\]
is bounded and linear. Thus the Riesz representation theorem yields
the existence of $p\in L^{2}(\Omega)$ such that 
\begin{equation}
\left(p,\Delta\varphi\right)_{L^{2}}=\left\langle \Delta_{D}'\left(\left(p,.\right)_{L^{2}}\right),\varphi\right\rangle _{\mathcal{D}(\Delta_D)',\mathcal{D}(\Delta_D)}=F(\varphi)\label{eq:pdef}
\end{equation}
for all $\varphi\in \mathcal{D}(\Delta_D)$.
Since the operator $\left(\left(\Delta_{D}\right)'\right)^{-1}$ is bounded, we find
\begin{align*}
\left\Vert p\right\Vert _{L^{2}(\Omega)} & \leq C\left\Vert F\right\Vert _{(H^{2}(\Omega)\cap H_{0}^{1}(\Omega))'} \leq C\big(\Vert \mathbf{v}\Vert _{H^{1}(\Omega)}+\Vert \mathbf{f}\Vert _{L^{2}(\Omega)}\big)
 \leq C\big(\Vert \mathbf{f}\Vert _{L^{2}(\Omega)}+\Vert g\Vert _{L^{2}(\Omega)}\big),
\end{align*}
where we used (\ref{eq:instokesab}) in the last line.

Now let $\psi\in H^{1}(\Omega)^{2}$ be arbitrary and let $q\in \mathcal{D}(\Delta_D)$ be such that $\Delta q= \operatorname{div}\psi$.
Moreover set $\psi_{0}:=\psi-\nabla q$. Then $\operatorname{div}\psi_{0}=0$ and
\begin{align*}
  &\int_{\Omega}2D_{s}\mathbf{v}:D_{s}\mathbf{\psi}-p\mathrm{div}\psi\d x+\alpha_{0}\int_{\partial\Omega}\mathbf{v}\cdot\psi\d\mathcal{H}^{1}(s) \\
  &=\int_{\Omega}\mathbf{f}\cdot\psi_{0}\d x+\int_{\Omega}(2D_{s}\mathbf{v}:D_{s}\left(\nabla q\right)-p\Delta q)\d x +\alpha_{0}\int_{\partial\Omega}\mathbf{v}\cdot\nabla q\d\mathcal{H}^{1}(s) =\int_{\Omega}\mathbf{f}\cdot\psi\d x, {}
\end{align*}
where we used (\ref{eq:instokesweak}) and (\ref{eq:pdef}). As $\psi\in H^{1}(\Omega)^{2}$ was arbitrary,
this yields the claim.
\end{proof}
\begin{thm}[Existence of Strong Solutions]
\label{thm:strongexstokes}~\\
Let $g\equiv0$ and $\mathbf{f}\in L^{2}(\Omega)^{2}$.
Then there exists a unique solution $\left(\mathbf{v},p\right)\in H^{2}(\Omega)^{2}\times H^{1}(\Omega)$
to \eqref{eq:instokes1}\textendash \eqref{eq:instokes3}, which satisfies
the estimate
\[
\left\Vert \mathbf{v}\right\Vert _{H^{2}(\Omega)}+\left\Vert p\right\Vert _{H^{1}(\Omega)}\leq C\left\Vert \mathbf{f}\right\Vert _{L^{2}(\Omega)}.
\]
 Moreover, if $\mathbf{f}$ is smooth, then $\mathbf{v}$ and $p$
are smooth as well.
\end{thm}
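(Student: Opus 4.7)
The plan is to take the weak solution $(\mathbf{v},p)\in V_{0}(\Omega)\times L^{2}(\Omega)$ provided by Corollary \ref{cor:weakvp} with $g\equiv 0$ and upgrade its regularity to $H^{2}\times H^{1}$ via standard elliptic regularity theory for the Stokes system. Uniqueness is automatic, since a strong solution is in particular weak and Theorem \ref{exinstokes} already delivers uniqueness at the weak level. The basic energy bound $\|\mathbf{v}\|_{H^{1}}+\|p\|_{L^{2}}\le C\|\mathbf{f}\|_{L^{2}}$ is in hand from Corollary \ref{cor:weakvp}; what remains is to gain one additional derivative while respecting the Robin-type boundary condition.

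First I would fix a smooth partition of unity $(\zeta_{j})_{j}$ subordinate to a finite open cover of $\overline{\Omega}$ consisting of interior balls $B_{j}\subset\subset\Omega$ and boundary charts in which $\partial\Omega$ is flattened by a smooth diffeomorphism. On each interior patch, $\zeta_{j}\mathbf{v}$ solves a Stokes system on $\mathbb{R}^{2}$ with compactly supported $L^{2}$ right-hand side, so the classical difference-quotient method yields $\zeta_{j}\mathbf{v}\in H^{2}$ and $\zeta_{j}p\in H^{1}$ with the desired bound. On a flattened boundary patch the bulk equations are perturbed by smooth lower-order terms arising from the change of variables, and the boundary condition becomes, in tangential/normal components, $\partial_{n}v_{\tau}+\partial_{\tau}v_{n}=-\alpha_{0}v_{\tau}$ and $p=2\partial_{n}v_{n}+\alpha_{0}v_{n}$ on the flat piece. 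Both the bulk equations and these boundary relations commute, to principal order, with tangential difference quotients, so the standard argument produces local $L^{2}$ bounds on $\partial_{\tau}\nabla\mathbf{v}$ and $\partial_{\tau}p$. From this the remaining purely normal derivatives are recovered algebraically: $\operatorname{div}\mathbf{v}=0$ gives $\partial_{n}^{2}v_{n}=-\partial_{\tau}\partial_{n}v_{\tau}\in L^{2}$, the tangential component of $-\Delta\mathbf{v}+\nabla p=\mathbf{f}$ yields $\partial_{n}^{2}v_{\tau}\in L^{2}$, and its normal component then provides $\partial_{n}p\in L^{2}$. Summing over $j$ and absorbing the lower-order contributions via Corollary \ref{cor:weakvp} produces the global estimate.

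The smoothness assertion then follows by bootstrapping: if $\mathbf{f}\in H^{k}(\Omega)$, the same scheme applied one order higher yields $(\mathbf{v},p)\in H^{k+2}\times H^{k+1}$, and Sobolev embedding turns $\mathbf{f}\in C^{\infty}$ into $(\mathbf{v},p)\in C^{\infty}$. The main obstacle is the boundary condition, which is neither Dirichlet nor Neumann but a Robin-type condition coupling $D_{s}\mathbf{v}$, $p$ and $\mathbf{v}$ itself; for the scheme above to close one must verify the complementing (Lopatinskii-Shapiro) condition for the Stokes system with these boundary data, so that tangential differentiation does not lose regularity at $\partial\Omega$. For Navier-type boundary conditions of the present form this compatibility is classical, and in practice I would invoke an off-the-shelf Stokes regularity theorem in a smooth domain with Navier boundary conditions rather than carry out the partition-of-unity and difference-quotient computation by hand.
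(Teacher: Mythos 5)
Your proposal is correct in outline but follows a genuinely different route from the paper. The paper does not prove a regularity theorem at all: it cites Shibata--Shimizu's existence result for the \emph{resolvent} problem $\lambda\mathbf{u}-\Delta\mathbf{u}+\nabla q=\mathbf{g}$ with traction boundary data $(-2D_{s}\mathbf{u}+q\mathbf{I})\mathbf{n}=\mathbf{a}$, feeds in $\mathbf{g}=\mathbf{f}+\lambda\mathbf{v}$ and $\mathbf{a}=\alpha_{0}\mathbf{v}$ built from the weak solution of Corollary \ref{cor:weakvp}, and then shows by testing the homogeneous difference problem that the resulting strong solution coincides with the weak one; the addition of $\lambda\mathbf{v}$ is exactly the trick needed because the cited theorem concerns the resolvent rather than the stationary operator. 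Your route — interior and boundary difference quotients after flattening, tangential estimates first, normal derivatives of $\mathbf{v}$ and $p$ recovered algebraically from $\operatorname{div}\mathbf{v}=0$ and the two components of the momentum equation — is the classical ADN-type argument and does close, since the traction condition satisfies the complementing condition and the $\alpha_{0}\mathbf{v}$ term is lower order. What your sketch underplays is the Stokes-specific bookkeeping: localization destroys the divergence constraint ($\operatorname{div}(\zeta_{j}\mathbf{v})=\nabla\zeta_{j}\cdot\mathbf{v}$), so each local problem has an inhomogeneous divergence that must be carried through, and the pressure enters the tangential estimates only through the weak formulation. Also, the boundary condition here is the full Neumann/traction condition with a Robin term, not a Navier slip condition (there is no constraint $\mathbf{v}\cdot\mathbf{n}=0$), so the ``off-the-shelf'' theorem you would want is precisely the Neumann-problem regularity of Shibata--Shimizu or Grubb--Solonnikov that the paper invokes. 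In short: the paper buys the result by an existence-plus-uniqueness identification against a cited resolvent theorem, while you propose to re-derive the underlying regularity estimate; both work, yours is more self-contained but substantially more laborious if carried out honestly.
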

\begin{proof}
For $q\in\left(1,\infty\right)$, Theorem 3.1 in \cite{ShimizuStokes}
 implies
that there is $\lambda>0$ such that for every $\mathbf{g}\in L^{q}(\Omega)^{2}$
and $\mathbf{a}\in W_{q}^{1}(\Omega)^{2}$ the problem
\begin{align}
\lambda\mathbf{u}-\Delta\mathbf{u}+\nabla q & =\mathbf{g} &  & \text{in }\Omega,\nonumber \\
\operatorname{div}\mathbf{u} & =0 &  & \text{in }\Omega,\nonumber \\
\left(-2D_{s}\mathbf{u}+q\mathbf{I}\right)\mathbf{n}_{\partial\Omega} & =\mathbf{a}|_{\partial\Omega} &  & \text{on }\partial\Omega\label{eq:inhomogenneumann}
\end{align}
admits for a unique solution $\left(\mathbf{u},q\right)\in W_{q}^{2}(\Omega)^{2}\times W_{q}^{1}(\Omega)$.
Additionally, the estimate 
\begin{equation}
\left\Vert \mathbf{u}\right\Vert _{W_{q}^{2}(\Omega)}+\left\Vert q\right\Vert _{W_{q}^{1}(\Omega)}\leq C\big(\Vert \mathbf{g}\Vert _{L^{q}(\Omega)}+\Vert \mathbf{a}\Vert _{W_{q}^{1}(\Omega)}\big)\label{eq:absch=0000E4tzungstrongstokes}
\end{equation}
holds. Considering a weak solution $\left(\mathbf{v},p\right)\in V_{0}\times L^{2}(\Omega)$
of (\ref{eq:instokes1})\textendash (\ref{eq:instokes3}) as given
in Corollary \ref{cor:weakvp} and defining $\mathbf{g}:=\mathbf{f}+\lambda\mathbf{v}\in L^{2}(\Omega)^{2}$
and $\mathbf{a}:=\alpha_{0}\mathbf{v}\in H^{1}(\Omega)^{2}$,
we now introduce $\left(\mathbf{u},q\right)\in H^{2}(\Omega)\times H^{1}(\Omega)$
as the strong solution to (\ref{eq:inhomogenneumann}) regarding these
data. Writing $\mathbf{w}:=\mathbf{u}-\mathbf{v}$ and $r:=q-p$ we
easily find that $\left(\mathbf{w},r\right)\in H^1(\Omega)^2\times L^2(\Omega)$ is a weak solution to
\begin{align*}
\lambda\mathbf{w}-\Delta\mathbf{w}+\nabla r & =0 &  & \text{in }\Omega,\\
\operatorname{div}\mathbf{w} & =0 &  & \text{in }\Omega,\\
\left(-2D_{s}\mathbf{w}+r\mathbf{I}\right)\mathbf{n}_{\partial\Omega} & =0 &  & \text{on }\partial\Omega.
\end{align*}
% since 
% \begin{align*}
%   &\int_{\Omega}\lambda\mathbf{w}\psi+2D_{s}\mathbf{w}:D_{s}\psi-r\operatorname{div}\psi\d x\\
%   & =\int_{\Omega}\left(\lambda\mathbf{u}-\Delta\mathbf{u}+\nabla q\right)\cdot\psi-\lambda\mathbf{v}\cdot\psi-2D_{s}\mathbf{v}:D_{s}\psi+p\operatorname{div}\psi\d x-\int_{\partial\Omega}\mathbf{a}\cdot\psi\d\mathcal{H}^{1}\\
%  & =\int_{\Omega}\left(\mathbf{g}-\lambda\mathbf{v}-\mathbf{f}\right)\cdot\psi\d x =0
% \end{align*}
% for all $\psi\in H^{1}(\Omega)$.
Testing with $\psi=\mathbf{w}$
we immediately find that $\mathbf{w}\equiv0$ a.e. and thus $\mathbf{u}=\mathbf{v}$,
in particular $\mathbf{v}\in H^{2}(\Omega)^{2}$. Furthermore,
$\mathbf{w}=0$ implies $\nabla r=0$ in $\Omega$ and $r=0$ on $\partial\Omega$,
so that we can conclude $r\equiv0$ a.e. in $\Omega$ leading to $p=q$
and $p\in H^{1}(\Omega)$. The estimate follows from (\ref{eq:absch=0000E4tzungstrongstokes})
and (\ref{eq:instokesab}). 
For higher regularity one may use results on existence of solutions with higher regularity, e.g.\ due to Grubb and Solonnikov~\cite{solonnikovstokesstat} in a similar manner to obtain smoothness of the solution for smooth boundaries and smooth data.
\end{proof}
\begin{lem}
\label{LpLqstokes}Let $g\equiv0$ and $\mathbf{f}\in V_{0}'$, and
let $\mathbf{v}\in H_{\sigma}^{1}(\Omega)$ be the weak
solution to \eqref{eq:instokes1}\textendash \eqref{eq:instokes3}.
Then for all $q'\in\left(1,2\right)$
\begin{align*}
\left\Vert \mathbf{v}\right\Vert _{L^{q'}(\Omega)} & \leq C_{q}\sup_{\psi\in W_{q}^{2}(\Omega)^{2},\psi\neq0}\frac{\left|\mathbf{f}\left(\psi\right)\right|}{\left\Vert \psi\right\Vert _{W_{q}^{2}(\Omega)}},
\end{align*}
where $\frac{1}{q}+\frac{1}{q'}=1$ and $C_{q}>0$ is independent
of $\mathbf{v}$ and $f$.
\end{lem}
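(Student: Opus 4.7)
The proof is a duality argument: I would represent $\|\mathbf{v}\|_{L^{q'}(\Omega)}$ via $L^{q'}$--$L^q$ duality and, for each test density $\mathbf{g}\in L^q(\Omega)^2$, solve an auxiliary Stokes problem whose solution, being divergence-free and in $W_q^2$, is admissible as a test function in the weak formulation satisfied by $\mathbf{v}$. This transfers the $L^q$-bound on $\mathbf{g}$ into a $W_q^2$-bound on the test function, which is precisely the norm appearing in the stated supremum.

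Concretely, given $\mathbf{g}\in L^q(\Omega)^2$ with $\tfrac{1}{q}+\tfrac{1}{q'}=1$ (so $q>2$), I first produce a solution $(\boldsymbol{\psi},\pi)\in W_q^2(\Omega)^2\times W_q^1(\Omega)$ to the dual Stokes problem
\begin{align*}
-\Delta\boldsymbol{\psi}+\nabla\pi &= \mathbf{g} && \text{in }\Omega,\\
\operatorname{div}\boldsymbol{\psi} &= 0 && \text{in }\Omega,\\
(-2D_s\boldsymbol{\psi}+\pi\mathbf{I})\mathbf{n}_{\partial\Omega} &= \alpha_0\boldsymbol{\psi} && \text{on }\partial\Omega,
\end{align*}
satisfying $\|\boldsymbol{\psi}\|_{W_q^2(\Omega)}+\|\pi\|_{W_q^1(\Omega)}\le C\|\mathbf{g}\|_{L^q(\Omega)}$. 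Since $q>2$ and $\Omega$ is bounded, $L^q(\Omega)\hookrightarrow L^2(\Omega)\hookrightarrow V_0'(\Omega)$, so Theorem~\ref{exinstokes} together with Corollary~\ref{cor:weakvp} supplies a weak solution in $H_\sigma^1(\Omega)\times L^2(\Omega)$, and the bootstrap used in the proof of Theorem~\ref{thm:strongexstokes}, based on Shimizu's $L^q$-resolvent estimate for \eqref{eq:inhomogenneumann} valid for all $q\in(1,\infty)$, lifts the regularity to $W_q^2\times W_q^1$.

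With $\boldsymbol{\psi}$ in hand, I would compute, using $\operatorname{div}\mathbf{v}=0$, integration by parts and the boundary condition in the form $(2D_s\boldsymbol{\psi}-\pi\mathbf{I})\mathbf{n}_{\partial\Omega}=-\alpha_0\boldsymbol{\psi}$,
\[
\int_\Omega \mathbf{v}\cdot\mathbf{g}\d x=\int_\Omega \mathbf{v}\cdot(-\Delta\boldsymbol{\psi}+\nabla\pi)\d x=2\int_\Omega D_s\mathbf{v}:D_s\boldsymbol{\psi}\d x+\alpha_0\int_{\partial\Omega}\mathbf{v}\cdot\boldsymbol{\psi}\d\mathcal{H}^1(s).
\]
As $\boldsymbol{\psi}\in H_\sigma^1(\Omega)=V_0$, approximation by elements of $C_\sigma^\infty(\overline{\Omega})$ together with the weak formulation \eqref{eq:instokesweak} identifies the right-hand side with $\langle\mathbf{f},\boldsymbol{\psi}\rangle_{V_0',V_0}$. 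Estimating this pairing by the supremum and using the $W_q^2$-bound on $\boldsymbol{\psi}$ yields
\[
\left|\int_\Omega \mathbf{v}\cdot\mathbf{g}\d x\right|\le C_q\|\mathbf{g}\|_{L^q(\Omega)}\sup_{\psi\in W_q^2(\Omega)^2,\,\psi\neq 0}\frac{|\mathbf{f}(\psi)|}{\|\psi\|_{W_q^2(\Omega)}},
\]
and taking the supremum over $\mathbf{g}$ with $\|\mathbf{g}\|_{L^q(\Omega)}=1$, combined with $L^{q'}$--$L^q$ duality, yields the claimed bound.

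The principal difficulty is not the duality mechanism itself but securing $W_q^2$-regularity for the dual Stokes problem in the range $q\in(2,\infty)$; this is essentially already contained in the proof of Theorem~\ref{thm:strongexstokes}, since Shimizu's resolvent estimate is available for every $q\in(1,\infty)$. A minor interpretational point is that $\mathbf{f}\in V_0'$ pairs naturally only with divergence-free functions, whereas the supremum in the statement ranges over all $\psi\in W_q^2(\Omega)^2$: since only the divergence-free $\boldsymbol{\psi}$ constructed above actually enters the argument, and the supremum over the divergence-free subclass is bounded by the supremum in the statement, the inequality follows \emph{a fortiori}.
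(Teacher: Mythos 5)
Your proof is correct and takes essentially the same route as the paper's: the paper packages the identical duality argument in the language of the adjoint $A_{S}'$ of the Stokes operator on $L_{\sigma}^{q}(\Omega)$, whose bounded invertibility is exactly your solvability-with-$W_{q}^{2}$-estimate for the dual problem (obtained, as you do, from Shimizu's resolvent estimate combined with the uniqueness argument of Theorem \ref{thm:strongexstokes}). Your explicit integration by parts identifying $\int_{\Omega}\mathbf{v}\cdot\mathbf{g}\d x$ with $\langle\mathbf{f},\boldsymbol{\psi}\rangle$ is just the unwound form of the paper's identity $A_{S}'\mathbf{v}=\mathbf{f}$, and your remark on extending $\mathbf{f}$ to non-solenoidal test functions matches the convention implicit in the paper's final estimate.
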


\begin{proof}
For this we introduce $T(\mathbf{u},p):=-2D_{s}\mathbf{u}+p\mathbf{I}$
for $\mathbf{u}\in W_{q}^{1}(\Omega)$, $p\in L^{2}(\Omega)$
and set
\[
D(A_{S})=\left\{ \left.\mathbf{u}\in W_{q}^{2}(\Omega)\right|\operatorname{div}\mathbf{u}=0,\exists p\in W_{q}^{1}(\Omega):T\left(\mathbf{u},p\right)\mathbf{n}|_{\partial\Omega}=\alpha_{0}\mathbf{u}|_{\partial\Omega}\right\} .
\]
We define the operator 
\[
A_{S}\colon D(A_{S})\subset L_{\sigma}^{q}(\Omega)\rightarrow L_{\sigma}^{q}(\Omega),\,\mathbf{u}\mapsto P_{\sigma}\left(-\Delta\mathbf{u}+\nabla p\right),
\]
for $p$ as in the definition of $D(A_{S})$ and where
$P_{\sigma}$ denotes the Helmholtz projection given by 
\[
P_{\sigma}:L^{q}(\Omega)^{2}\rightarrow L_{\sigma}^{q}(\Omega),\,\psi\mapsto P_{\sigma}\left(\psi\right)=\psi-\nabla r,
\]
where $r\in W_{q,0}^{1}(\Omega)$ is the unique weak solution
to
\begin{align*}
\Delta r & =\operatorname{div}\psi &  & \text{in }\Omega,\\
r & =0 &  & \text{on }\partial\Omega.
\end{align*}
%See \cite{helmutstokes}, Lemma 2.4, p.\ 6 for existence and uniqueness
%(as $\Omega$ is a bounded domain in our case, see also the remark
%after Definition 2.2 in the cited article). Here, $\operatorname{div}\cdot$
%is understood in the sense of distributions. 
One can verify in a straight-forward manner that $A_S$ is well defined.
% First, we show that $A_{S}$ is well defined. For this, let $\mathbf{u}\in D\left(A_{S}\right)$
% and $p_{1},p_{2}\in W_{q}^{1}(\Omega)$ such that $T\left(\mathbf{u},p_{i}\right)\mathbf{n}|_{\partial\Omega}=\alpha_{0}\mathbf{u}|_{\partial\Omega}$
% for $i\in\left\{ 1,2\right\} $. Then we have $p_{1}=p_{2}$ on $\partial\Omega$.
% Now consider 
% \[
% P_{\sigma}\left(-\Delta\mathbf{u}+\nabla p_{i}\right)=-\Delta\mathbf{u}+\nabla p_{i}-\nabla r_{i},
% \]
% where $r_{i}\in W_{q,0}^{1}(\Omega)$ is the weak solution
% to 
% \begin{align*}
% \Delta r_{i} & =\Delta p_{i} &  & \text{in }\Omega,\\
% r_{i} & =0 &  & \text{on }\partial\Omega.
% \end{align*}
% Note that $\operatorname{div}\left(\Delta\mathbf{u}\right)=0$ in $\mathcal{D}'(\Omega)$
% as $\operatorname{div}\mathbf{u}=0$, resulting in $P_{\sigma}\left(\Delta\mathbf{u}\right)=\Delta\mathbf{u}$.
% The uniqueness of the weak solution and $p_{1}-p_{2}=0$ on $\partial\Omega$
% imply $r_{1}-r_{2}=p_{1}-p_{2}$. Thus, 
% \begin{align*}
% P_{\sigma}\left(-\Delta\mathbf{u}+\nabla p_{1}\right)-P_{\sigma}\left(-\Delta\mathbf{u}+\nabla p_{2}\right) & =\nabla p_{1}-\nabla p_{2}-\left(\nabla r_{1}-\nabla r_{2}\right)\\
%  & =0
% \end{align*}
% which implies that $A_{S}$ is well defined.
Moreover, %$A_{S}$ is
%positive regarding the $L^{2}$ scalar product, i.e.\ 
\begin{align}
\int_{\Omega}\left(A_{S}\mathbf{u}\right)\cdot\mathbf{u}\d x % =\int_{\Omega}\left(-\Delta\mathbf{u}+\nabla p\right)\cdot\mathbf{u}\d x\nonumber \\
 & =\int_{\Omega}2\left|D_{s}\mathbf{u}\right|^{2}\d x+\alpha_{0}\int_{\partial\Omega}\mathbf{u}^{2}\d\mathcal{H}^{1}(s) \geq C\left\Vert \mathbf{u}\right\Vert _{L^{2}(\Omega)}^{2}\label{eq:lowerspectralbound}
\end{align}
for some $C>0$ and $\mathbf{u}\in\mathcal{D}\left(A_{S}\right)$,
where we used \cite[Corollary 5.8]{korn} in the last line. This immediately
shows the injectivity of $A_{S}$. Concerning surjectivity, let $\mathbf{\tilde{f}}\in L_{\sigma}^{q}(\Omega)$.
As $q>2$, Theorem \ref{thm:strongexstokes} implies that there is
a unique strong solution $\left(\tilde{\mathbf{v}},p\right)\in H^{2}(\Omega)\times H^{1}(\Omega)$
to (\ref{eq:instokes1})\textendash (\ref{eq:instokes3}) (with $\mathbf{f}$
replaced by $\tilde{\mathbf{f}}$ and $g\equiv0$). Choosing $\lambda>0$
as in the proof of Theorem \ref{thm:strongexstokes}, we find that
\textbf{$\mathbf{g}:=\tilde{\mathbf{f}}+\lambda\mathbf{\tilde{v}}$}
and $\mathbf{a}:=\alpha_{0}\mathbf{\tilde{v}}$ satisfy $\mathbf{g}\in L^{q}(\Omega)$
and $\mathbf{a}\in W_{q}^{1}(\Omega)$ as a consequence
of the Sobolev embedding theorem. Thus, Theorem~3.1.\ in \cite{ShimizuStokes}
implies the existence of a unique solution $\left(\mathbf{u},r\right)\in W_{q}^{2}(\Omega)\times W_{q}^{1}(\Omega)$
to (\ref{eq:inhomogenneumann}) and an analogous argumentation as
in the proof of Theorem \ref{thm:strongexstokes} leads to  $\mathbf{\tilde{v}}=\mathbf{u}$ and $p=r$ along with the
estimate
\begin{equation}
\left\Vert \mathbf{\tilde{v}}\right\Vert _{W_{q}^{2}(\Omega)}+\left\Vert p\right\Vert _{W_{q}^{1}(\Omega)}\leq C\Vert \tilde{\mathbf{f}}\Vert _{L^{q}(\Omega)}.\label{eq:inversestetig}
\end{equation}
 In particular, $T\left(\tilde{\mathbf{v}},p\right)\mathbf{n}|_{\partial\Omega}=\alpha_{0}\mathbf{\tilde{v}}|_{\partial\Omega}$
is satisfied. So $\tilde{\mathbf{v}}\in\mathcal{D}\left(A_{s}\right)$
and, since $-\Delta\tilde{\mathbf{v}}+\nabla p=\tilde{\mathbf{f}}$
holds in $L^{q}(\Omega)$, we have $A_{s}\left(\tilde{\mathbf{v}}\right)=\tilde{\mathbf{f}}$.
In fact, this not only implies surjectivity, but also the existence
of a bounded inverse $A_{S}^{-1}$ as a result of (\ref{eq:inversestetig}).
Consequently, $\left(\mathcal{D}\left(A_{s}\right),\left\Vert .\right\Vert _{A_{s}}\right)$
is a Banach space, where $\left\Vert .\right\Vert _{A_{s}}$ denotes
the graph norm. All these considerations result in the fact that the
adjoint $A_{S}':\left(L_{\sigma}^{q}(\Omega)\right)'\rightarrow\left(\mathcal{D}\left(A_{S}\right)\right)'$
is an invertible and bounded operator.

Let now $\mathbf{v}\in H_{\sigma}^{1}(\Omega)$ be the
given weak solution to (\ref{eq:instokes1})\textendash (\ref{eq:instokes3})
and fix $q>2$. Then $\mathbf{v}\in L_{\sigma}^{q'}(\Omega)\cong\left(L_{\sigma}^{q}(\Omega)\right)'$
and we have for $\psi\in\mathcal{D}\left(A_{S}\right)$
\begin{align*}
\left\langle A_{S}'\mathbf{v},\psi\right\rangle _{\left(\mathcal{D}\left(A_{S}\right)\right)',\mathcal{D}\left(A_{S}\right)} %& =\int_{\Omega}\left(-\Delta\psi+\nabla p\right)\cdot\mathbf{v}\d x\\
 & =2\int_{\Omega}D_{s}\mathbf{v}:D_{s}\psi\d x+\alpha_{0}\int_{\partial\Omega}\psi\cdot\mathbf{v}\d x =\left\langle \mathbf{f},\psi\right\rangle _{\left(\mathcal{D}\left(A_{S}\right)\right)',\mathcal{D}\left(A_{S}\right)}.
\end{align*}
 As a result $A_{S}'\mathbf{v}=\mathbf{f}$ in $\left(\mathcal{D}\left(A_{S}\right)\right)'$
and thus $\mathbf{v}=\left(A_{S}'\right)^{-1}\mathbf{f}$ in $\left(L_{\sigma}^{q}(\Omega)\right)'$
which enables us to estimate
\[
\left\Vert \mathbf{v}\right\Vert _{L^{q'}(\Omega)}=\big\Vert (A_{S}')^{-1}\mathbf{f}\big\Vert _{\left(L_{\sigma}^{q}(\Omega)\right)'}\leq C\left\Vert \mathbf{f}\right\Vert _{\left(\mathcal{D}\left(A_{S}\right)\right)'}\leq C\left\Vert \mathbf{f}\right\Vert _{\left(W_{q}^{2}(\Omega)\right)'}.
\]
\end{proof}

\subsection{Differential-Geometric Background\label{sec:Diffgeo}}

We use a similar notation as in \cite{nsac}. We parameterize the curves $\left(\Gamma_{t}\right)_{t\in\left[0,T_{0}\right]}$
by choosing a family of smooth diffeomorphisms 
\begin{equation}
X_{0}\colon \mathbb{T}^{1}\times\left[0,T_{0}\right]\rightarrow\Omega\label{eq:X0def}
\end{equation}
such that $\partial_{s}X_{0}\left(s,t\right)\neq0$ for all $s\in\mathbb{T}^{1}$,
$t\in\left[0,T_{0}\right]$. In particular $\bigcup_{t\in\left[0,T_{0}\right]}X_{0}(\mathbb{T}^{1}\times\{ t\})\times\{ t\} =\Gamma$.
Moreover, we define the tangent and normal vectors on $\Gamma_{t}$
at $X_{0}(s,t)$ as 
\begin{equation}
\boldsymbol{\tau}(s,t):=\frac{\partial_{s}X_{0}(s,t)}{\left|\partial_{s}X_{0}(s,t)\right|}\text{ and }\mathbf{n}(s,t):=\left(\begin{array}{cc}
0 & -1\\
1 & 0
\end{array}\right)\boldsymbol{\tau}(s,t)\label{eq:ntau}
\end{equation}
for all $(s,t)\in\mathbb{T}^{1}\times\left[0,T_{0}\right]$.
We choose $X_{0}$ (and thereby the orientation of $\Gamma_{t}$)
such that $\mathbf{n}(.,t)$ is the exterior normal with
respect to $\Omega^{-}(t)$. Thus, for a point $p\in\Gamma_{t}$
with $p=X_{0}(s,t)$ it holds $\mathbf{n}_{\Gamma_{t}}(p)=\mathbf{n}(s,t)$

Furthermore, we define $V(s,t):=V_{\Gamma_{t}}(X_{0}(s,t))$
and $H(s,t):=H_{\Gamma_{t}}(X_{0}(s,t))$
and note that $V(s,t)=\partial_{t}X_{0}(s,t)\cdot\mathbf{n}(s,t)$
for all $\left(s,t\right)\in\mathbb{T}^{1}\times\left[0,T_{0}\right]$
by definition of the normal velocity. We also introduce the pull-back
and write for a function $\mathbf{v}\colon \Gamma\rightarrow\mathbb{R}^{d}$,
$d\in\mathbb{N}$ 
\begin{equation}
\left(X_{0}^{*}\mathbf{v}\right)(s,t):=\mathbf{v}(X_{0}(s,t),t)\quad \text{for all }(s,t)\in\mathbb{T}^{1}\times [0,T_{0}].\label{eq:X0star}
\end{equation}
On the other hand, we define for a function $h\colon\mathbb{T}^{1}\times [0,T_{0}]$
\begin{equation}
\big(X_{0}^{*,-1}h\big)(p):=h(X_{0}^{-1}(p))\label{eq:X0-1star}\quad \text{for all }p\in\Gamma_{t}, t\in [0,T_{0}].
\end{equation}

Choosing $\delta>0$ small enough, the orthogonal projection $\operatorname{Pr}_{\Gamma_{t}}\colon \Gamma_{t}(3\delta)\rightarrow\Gamma_{t}$
is well defined and smooth for all $t\in\left[0,T_{0}\right]$ and the mapping 
\[
\phi_{t}\colon \Gamma_{t}(3\delta)\rightarrow (-3\delta,3\delta)\times\Gamma_{t},\,x\mapsto\left(d_{\Gamma}(x,t),\operatorname{Pr}_{\Gamma_{t}}(x)\right)
\]
is a diffeomorphism. Its inverse is given by $\phi_{t}^{-1}(r,p)=p+r\mathbf{n}_{\Gamma_{t}}(p)$.
Although $\operatorname{Pr}_{\Gamma_{t}}$ and $\phi_{t}$ are well defined in $\Gamma_{t}(3\delta)$,
almost all computations later on are performed in $\Gamma_{t}(2\delta)$,
which is why, for the sake of readability, we work on $\Gamma_{t}(2\delta)$
in the following.

Combining $\phi_{t}^{-1}$ and $X_{0}$ we may define a diffeomorphism
\begin{align}
X\colon(-2\delta,2\delta)\times\mathbb{T}^{1}\times\left[0,T_{0}\right] & \rightarrow\Gamma(2\delta),\nonumber \\
\left(r,s,t\right) & \mapsto\left(\phi_{t}^{-1}\left(r,X_{0}(s,t)\right),t\right)=\left(X_{0}(s,t)+r\mathbf{n}(s,t),t\right)\label{eq:diffeo}
\end{align}
with inverse given by 
\begin{equation}
X^{-1}\colon\Gamma(2\delta)\rightarrow\left(-2\delta,2\delta\right)\times\mathbb{T}^{1}\times\left[0,T_{0}\right],\,(x,t)\mapsto\left(d_{\Gamma}(x,t),S(x,t),t\right),\label{eq:diffeoinv}
\end{equation}
where we define 
\begin{equation}
S(x,t):=\left(X_{0}^{-1}\left(\operatorname{Pr}_{\Gamma_{t}}(x)\right)\right)_{1}\label{eq:Sfett}
\end{equation}
for $(x,t)\in\Gamma(2\delta)$ and where $\left(.\right)_{1}$
signifies that we take the first component. In particular it holds
$S(x,t)=S(\operatorname{Pr}_{\Gamma_{t}}(x),t)$.
In the following we will write $\mathbf{n}(x,t):=\mathbf{n}\left(S(x,t),t\right)$
and $\boldsymbol{\tau}(x,t):=\boldsymbol{\tau}\left(S(x,t),t\right)$
for $(x,t)\in\Gamma(3\delta)$.
\begin{prop}
\label{Diff-Prop} For every $t\in\left[0,T_{0}\right]$, $x\in\Gamma_{t}(2\delta)$, $s\in \mathbb{T}^1$, $r\in (-2\delta,2\delta)$ it holds
%\begin{enumerate}
%\item For all  the equality 
%  \[
\begin{alignat*}{2}
\left|\nabla d_{\Gamma}(x,t)\right|&=1,&\quad   
\Delta d_{\Gamma}\left(X_{0}(s,t),t\right)&=-H(s,t),\\
-\partial_{t}d_{\Gamma}\left(X(r,s,t)\right)&=V(s,t),\quad&
\nabla d_{\Gamma}\left(X(r,s,t)\right)&=\mathbf{n}(s,t),\\
\nabla S(x,t)\cdot\nabla d_{\Gamma}(x,t)&=0.
\end{alignat*}
\end{prop}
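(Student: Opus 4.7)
The plan is to exploit the explicit parametrization $X(r,s,t)=X_{0}(s,t)+r\mathbf{n}(s,t)$ from \eqref{eq:diffeo} together with the fact that on $\Gamma_{t}(2\delta)$ the orthogonal projection $\operatorname{Pr}_{\Gamma_{t}}$ is single-valued and smooth, so that $d_{\Gamma}(X(r,s,t),t)=r$ for all admissible $(r,s,t)$. Differentiating this identity with respect to $r$ yields $\nabla d_{\Gamma}(X(r,s,t),t)\cdot\mathbf{n}(s,t)=1$. Since the standard general estimate $|\nabla d_{\Gamma}|\le 1$ holds for any signed distance function (obtained from $|d_{\Gamma}(x)-d_{\Gamma}(y)|\le|x-y|$) and $\mathbf{n}(s,t)$ is a unit vector, Cauchy--Schwarz forces equality, so that $\nabla d_{\Gamma}(X(r,s,t),t)=\mathbf{n}(s,t)$, and in particular $|\nabla d_{\Gamma}|=1$ on $\Gamma(2\delta)$. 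This simultaneously gives the first and fourth identities.

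For the temporal identity I would differentiate $d_{\Gamma}(X(r,s,t),t)=r$ in $t$. Using the chain rule,
\[
\nabla d_{\Gamma}(X(r,s,t),t)\cdot\bigl(\partial_{t}X_{0}(s,t)+r\,\partial_{t}\mathbf{n}(s,t)\bigr)+\partial_{t}d_{\Gamma}(X(r,s,t),t)=0.
\]
Inserting the already established $\nabla d_{\Gamma}=\mathbf{n}(s,t)$ and observing that $\mathbf{n}\cdot\partial_{t}\mathbf{n}=\tfrac{1}{2}\partial_{t}|\mathbf{n}|^{2}=0$, the last term becomes $-\mathbf{n}(s,t)\cdot\partial_{t}X_{0}(s,t)=-V(s,t)$, which is exactly the third claimed formula. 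For the orthogonality $\nabla S\cdot\nabla d_{\Gamma}=0$, I would use that by \eqref{eq:Sfett} the function $S(\cdot,t)$ factors through $\operatorname{Pr}_{\Gamma_{t}}$, and since $\operatorname{Pr}_{\Gamma_{t}}$ is constant along the normal rays $r\mapsto X(r,s,t)$, so is $S$. Differentiating $S(X(r,s,t),t)=(X_{0}^{-1}(X_{0}(s,t)))_{1}=s$ in $r$ yields $\nabla S(x,t)\cdot\mathbf{n}(s,t)=0$ at $x=X(r,s,t)$, which together with $\nabla d_{\Gamma}=\mathbf{n}$ gives the fifth identity.

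The main obstacle, and the only nontrivial step, is the curvature identity $\Delta d_{\Gamma}(X_{0}(s,t),t)=-H(s,t)$. My approach is to differentiate $|\nabla d_{\Gamma}|^{2}\equiv 1$ once, obtaining $(D^{2}d_{\Gamma})\nabla d_{\Gamma}=0$; consequently the Hessian $D^{2}d_{\Gamma}$ has $\mathbf{n}$ as a zero eigenvector, and on the orthogonal complement (i.e.\ the tangent space $T_{p}\Gamma_{t}$) it coincides, up to sign, with the Weingarten map of $\Gamma_{t}$ with respect to $\mathbf{n}_{\Gamma_{t}}$. Taking the trace in $\mathbb{R}^{2}$ gives $\Delta d_{\Gamma}=\operatorname{tr}(D^{2}d_{\Gamma}|_{T\Gamma_{t}})$; with our sign convention for $H_{\Gamma_{t}}$ (mean curvature with respect to the exterior normal of $\Omega^{-}(t)$, which is the same $\mathbf{n}_{\Gamma_{t}}$ pointing out of $\Omega^{-}(t)$, thus \emph{into} $\Omega^{+}(t)$ where $d_{\Gamma}>0$) the Weingarten map equals $-D^{2}d_{\Gamma}|_{T\Gamma_{t}}$ on $\Gamma_{t}$, and one concludes $\Delta d_{\Gamma}|_{\Gamma_{t}}=-H$. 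The sign is most safely checked on a concrete example, e.g.\ a circle $\Gamma_{t}=\partial B_{R}(0)=\partial\Omega^{+}(t)$, where $d_{\Gamma}(x)=R-|x|$ satisfies $\Delta d_{\Gamma}=-1/R$ while $H=1/R$, confirming the sign.
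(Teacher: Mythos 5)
Your proof is correct. The paper itself does not prove Proposition \ref{Diff-Prop} but simply refers to \cite[Chapter 2.3]{pruess} and \cite[Chapter 4.1]{chenAC}; your argument is the standard tubular-neighborhood derivation that those references contain, built on the key identity $d_{\Gamma}(X(r,s,t),t)=r$ and its $r$- and $t$-derivatives, so it supplies exactly what the paper outsources. The only step where a reader must be careful is the curvature identity, since "mean curvature with respect to $\mathbf{n}_{\Gamma_t}$" is convention-dependent; your observation that $D^{2}d_{\Gamma}$ annihilates $\mathbf{n}$ and restricts to (minus) the Weingarten map on the tangent line, together with the explicit check on the circle $d_{\Gamma}=R-|x|$ (which respects the paper's conventions that $\mathbf{n}_{\Gamma_t}$ is exterior to $\Omega^{-}(t)$ and $d_{\Gamma}>0$ in $\Omega^{+}(t)$), pins the sign down correctly.
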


\begin{proof}
  We refer to \cite[Chapter 2.3]{pruess} and \cite[Chapter 4.1]{chenAC} for the proofs.
\end{proof}
For a function $\phi\colon\Gamma(2\delta)\rightarrow\mathbb{R}$
we define $\tilde{\phi}(r,s,t):=\phi\left(X(r,s,t)\right)$
and often write $\phi(r,s,t)$ instead of $\tilde{\phi}(r,s,t)$.
In the case that $\phi$ is twice continuously differentiable,
we introduce the notations 
\begin{align}
\partial_{t}^{\Gamma}\tilde{\phi}(r,s,t) & :=\left(\partial_{t}+\partial_{t}S\left(X(r,s,t)\right)\partial_{s}\right)\tilde{\phi}(r,s,t),\nonumber \\
\nabla^{\Gamma}\tilde{\phi}(r,s,t) & :=\nabla S\left(X(r,s,t)\right)\partial_{s}\tilde{\phi}(r,s,t),\nonumber \\
\Delta^{\Gamma}\tilde{\phi}(r,s,t) & :=\left(\Delta S\left(X(r,s,t)\right)\partial_{s}+\left(\nabla S\cdot\nabla S\right)\left(X(r,s,t)\right)\partial_{ss}\right)\tilde{\phi}(r,s,t).\label{eq:surf-diff}
\end{align}
Similarly, if $\mathbf{v}\colon\Gamma(2\delta)\rightarrow\mathbb{R}^{2}$
is continuously differentiable, we will also write $\tilde{\mathbf{v}}\left(r,st\right):=\mathbf{v}\left(X(r,s,t)\right)$
and introduce
\begin{equation}
\mathrm{div}^{\Gamma}\tilde{\mathbf{v}}(r,s,t)=\nabla S\left(X(r,s,t)\right)\cdot\partial_{s}\tilde{\mathbf{v}}(r,s,t).\label{eq:divop}
\end{equation}
For later use we introduce
\begin{align*}
\nabla^{\Gamma}\phi(x,t)&:=\nabla S(x,t)\partial_{s}\tilde{\phi}\left(d_{\Gamma}(x,t),S(x,t),t\right)\quad
\text{and}\\ 
\operatorname{div}^{\Gamma}\mathbf{v}(x,t)&:=\nabla S(x,t)\partial_{s}\tilde{\mathbf{v}}\left(d_{\Gamma}(x,t),S(x,t),t\right)  
\end{align*}
for $(x,t)\in\Gamma(2\delta)$.
With these notations we have the decompositions 
\begin{align}
\nabla\phi(x,t) & =\partial_{\mathbf{n}}\phi(x,t)\mathbf{n}+\nabla^{\Gamma}\phi(x,t),\label{eq:surfgraddecomp}\\
\operatorname{div}\mathbf{v}(x,t) & =\partial_{\mathbf{n}}\mathbf{v}(x,t)\cdot\mathbf{n}+\operatorname{div}^{\Gamma}\mathbf{v}(x,t)\label{eq:surfdivdecomp}
\end{align}
for all $(x,t)\in\Gamma(2\delta)$, as 
\[
\frac{d}{dr}\left(\phi\circ X\right)\big|_{(r,s,t)=\left(d_{\Gamma}(x,t),S(x,t),t\right)}=\partial_{\mathbf{n}}\phi(x,t).
\]
\begin{rem}
\label{hnotations} If $h\colon \mathbb{T}^{1}\times\left[0,T_{0}\right]\rightarrow\mathbb{R}$
is a function that is independent of $r\in\left(-2\delta,2\delta\right)$,
the functions $\partial_{t}^{\Gamma}h,\nabla^{\Gamma}h$ and $\Delta^{\Gamma}h$
will nevertheless depend on $r$ via the derivatives of $S$. To connect
the presented concepts with the classical surface operators we introduce
the following notations:
\begin{align*}
D_{t,\Gamma}h(s,t) & :=\partial_{t}^{\Gamma}h(0,s,t),\quad
\nabla_{\Gamma}h(s,t)  :=\nabla^{\Gamma}h(0,s,t),\quad
\Delta_{\Gamma}h(s,t)  :=\Delta^{\Gamma}h(0,s,t).
\end{align*}
Later in this work, we will often consider a concatenation $h\left(S(x,t),t\right)$
and thus will write for simplicity
\begin{align}
\partial_{t}^{\Gamma}h(x,t) & :=\left(\partial_{t}+\partial_{t}S(x,t)\partial_{s}\right)h(S(x,t),t),\nonumber \\
\nabla^{\Gamma}h(x,t) & :=\left(\nabla S(x,t)\partial_{s}\right)h(S(x,t),t),\nonumber \\
\Delta^{\Gamma}h(x,t) & :=\left(\Delta S(x,t)\partial_{s}+\nabla S(x,t)\cdot\nabla S(x,t)\partial_{ss}\right)h(S(x,t),t)\label{eq:hsurf}
\end{align}
for $(x,t)\in\Gamma(2\delta)$. As a consequence we obtain the identity
\begin{equation}
\partial_{t}^{\Gamma}h(x,t)=X_{0}^{*}\left(\partial_{t}^{\Gamma}h\right)(s,t)=\partial_{t}^{\Gamma}h(0,s,t)=D_{t,\Gamma}h(s,t)\label{eq:identity}
\end{equation}
for $(s,t)\in\mathbb{T}^{1}\times\left[0,T_{0}\right]$
and $\left(X_{0}(s,t),t\right)=(x,t)\in\Gamma$.
This might seem cumbersome but turns out to be convenient throughout
this work.
\end{rem}

In later parts of this article, we will introduce stretched coordinates
of the form 
\begin{equation}
\rho^{\epsilon}(x,t)=\frac{d_{\Gamma}(x,t)-\epsilon h\left(S(x,t),t\right)}{\epsilon}\label{eq:diffpartrho}
\end{equation}
for $(x,t)\in\Gamma(2\delta)$, $\epsilon\in\left(0,1\right)$
and for some smooth function $h\colon \mathbb{T}^{1}\times\left[0,T_{0}\right]\rightarrow\mathbb{R}$
(which will later on also depend on $\epsilon$). Writing $\rho=\rho^{\epsilon}$,
the relation between the regular and the stretched variables can be
expressed as
\begin{equation}
\hat{X}(\rho,s,t):=X(\epsilon(\rho+h(s,t)),s,t)=\left(X_{0}(s,t)+\epsilon(\rho+h(s,t))\mathbf{n}(s,t),t\right).\label{eq:Xhat}
\end{equation}
\begin{lem}
\label{lem:surfright} Let $\phi\colon\mathbb{R}\times\Gamma(2\delta)\rightarrow\mathbb{R}$
be twice continuously differentiable and let $\rho$ be given
as in \eqref{eq:diffpartrho}. Then the following formulas hold for
$(x,t)\in\Gamma(2\delta)$ and $\epsilon\in\left(0,1\right)$
\begin{align*}
\partial_{t}\left(\phi\left(\rho(x,t),x,t\right)\right) & =\left(-\tfrac1{\epsilon}V(s,t)-\partial_{t}^{\Gamma}h(x,t)\right)\partial_{\rho}\phi\left(\rho,x,t\right)+\partial_{t}\phi\left(\rho,x,t\right),\\
\nabla\left(\phi\left(\rho(x,t),x,t\right)\right) & =\left(\tfrac1{\epsilon}\mathbf{n}(s,t)-\nabla^{\Gamma}h(x,t)\right)\partial_{\rho}\phi\left(\rho,x,t\right)+\nabla_{x}\phi\left(\rho,x,t\right),\\
\Delta\left(\phi\left(\rho(x,t),x,t\right)\right) & =\big(\tfrac1{\epsilon^2}+|\nabla^{\Gamma}h(x,t)|^{2}\big)\partial_{\rho\rho}\phi\left(\rho,x,t\right)\\
 & \quad+\left(\epsilon^{-1}\Delta d_{\Gamma}(x,t)-\Delta^{\Gamma}h(x,t)\right)\partial_{\rho}\phi\left(\rho,x,t\right)\\
 & \quad+2\left(\epsilon^{-1}\mathbf{n}(s,t)-\nabla^{\Gamma}h(x,t)\right)\cdot\nabla_{x}\partial_{\rho}\phi\left(\rho,x,t\right)+\Delta_{x}\phi\left(\rho(x,t),x,t\right),
\end{align*}
where $s=S(x,t)$ and $\rho=\rho(x,t)$.
Here $\nabla_{x}$ and $\Delta_{x}$ operate solely on the $x$-variable
of $\phi$.
\end{lem}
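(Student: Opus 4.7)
The plan is to prove all three formulas by a direct chain rule computation, treating $\phi$ as a function on $\mathbb{R}\times\Gamma(2\delta)$ and $\rho=\rho^{\epsilon}$ as a scalar function on $\Gamma(2\delta)$. The ingredients needed from earlier in the paper are Proposition~\ref{Diff-Prop} (for the basic identities $|\nabla d_{\Gamma}|=1$, $\nabla d_{\Gamma}=\mathbf{n}$, $-\partial_{t}d_{\Gamma}=V$, $\Delta d_{\Gamma}|_{\Gamma_{t}}=-H$, $\nabla S\cdot\nabla d_{\Gamma}=0$) together with the definitions in Remark~\ref{hnotations}, which are tailored precisely so that $\partial_{t}^{\Gamma}h$, $\nabla^{\Gamma}h$ and $\Delta^{\Gamma}h$ are the ordinary Euclidean time derivative, gradient and Laplacian of $h(S(x,t),t)$.

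First I would compute $\partial_{t}\rho$ and $\nabla\rho$. From the definition $\rho=(d_{\Gamma}-\epsilon\,h(S(x,t),t))/\epsilon$ and Remark~\ref{hnotations} we get
\[
\partial_{t}\rho = \tfrac{1}{\epsilon}\partial_{t}d_{\Gamma} - \partial_{t}^{\Gamma}h = -\tfrac{1}{\epsilon}V - \partial_{t}^{\Gamma}h,\qquad
\nabla\rho = \tfrac{1}{\epsilon}\mathbf{n} - \nabla^{\Gamma}h.
\]
The first two formulas of the lemma then follow from the one-variable and multivariable chain rules applied to $(x,t)\mapsto \phi(\rho(x,t),x,t)$.

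For the Laplacian I would use the general identity
\[
\Delta\bigl(\phi(\rho(x,t),x,t)\bigr) = |\nabla\rho|^{2}\partial_{\rho\rho}\phi + (\Delta\rho)\partial_{\rho}\phi + 2\nabla\rho\cdot\nabla_{x}\partial_{\rho}\phi + \Delta_{x}\phi,
\]
and then insert the expressions for $\nabla\rho$ and $\Delta\rho = \tfrac{1}{\epsilon}\Delta d_{\Gamma}-\Delta^{\Gamma}h$. The only point requiring care is the square $|\nabla\rho|^{2}=\tfrac{1}{\epsilon^{2}}|\mathbf{n}|^{2}-\tfrac{2}{\epsilon}\mathbf{n}\cdot\nabla^{\Gamma}h+|\nabla^{\Gamma}h|^{2}$: the cross term must drop out to reproduce the stated formula. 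This is the one genuinely non-trivial step, and it follows because $\mathbf{n}=\nabla d_{\Gamma}$ while $\nabla^{\Gamma}h=\nabla S\cdot\partial_{s}h(S,t)$, so the orthogonality $\nabla S\cdot\nabla d_{\Gamma}=0$ from Proposition~\ref{Diff-Prop} yields $\mathbf{n}\cdot\nabla^{\Gamma}h=0$. Together with $|\mathbf{n}|=1$ this gives $|\nabla\rho|^{2}=\epsilon^{-2}+|\nabla^{\Gamma}h|^{2}$, and collecting the remaining terms produces exactly the formula for $\Delta(\phi(\rho,x,t))$ claimed in the lemma.

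In summary, the argument is three applications of the chain rule whose substance lies entirely in the geometric identities of Proposition~\ref{Diff-Prop}; there is no obstacle beyond the orthogonality observation that eliminates the cross term in $|\nabla\rho|^{2}$, so no additional a priori estimates or tools are required.
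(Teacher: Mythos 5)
Your proposal is correct and is exactly the argument the paper intends: its one-line proof ("chain rule, Proposition \ref{Diff-Prop} and Remark \ref{hnotations}") is precisely your computation of $\partial_t\rho$, $\nabla\rho$, $\Delta\rho$ followed by the chain rule, with the cross term in $|\nabla\rho|^2$ killed by $\nabla S\cdot\nabla d_\Gamma=0$ and $|\nabla d_\Gamma|=1$. You have merely written out the details the authors omit.
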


\begin{proof}
This follows from the chain rule, Proposition \ref{Diff-Prop} and
the notations introduced in Remark \ref{hnotations}.
\end{proof}
By (\ref{eq:surfgraddecomp}) and (\ref{eq:surfdivdecomp}) we have
\begin{align}
\nabla^{\Gamma}u(x,t)=\left(\mathbf{I}-\mathbf{n}\left(S(x,t),t\right)\otimes\mathbf{n}\left(S(x,t),t\right)\right)\nabla u(x,t)\label{eq:surfdiffallg}
\quad
\text{and}\\ 
\operatorname{div}^{\Gamma}\mathbf{v}(x,t)=\left(\mathbf{I}-\mathbf{n}\left(S(x,t),t\right)\otimes\mathbf{n}\left(S(x,t),t\right)\right):\nabla\mathbf{v}(x,t)\label{eq:surfdivallg}
\end{align}
for suitable $u\colon \Gamma(2\delta)\rightarrow\mathbb{R}$,
$\mathbf{v}\colon \Gamma(2\delta)\rightarrow\mathbb{R}^{2}$.
A consequence is:
\begin{lem}
\label{lem:divlm}Let $t\in\left[0,T_{0}\right]$ and $\mathbf{v}\in H^{1}\left(\Gamma_{t}(\delta)\right)^{2}$,
$u\in H^{1}\left(\Gamma_{t}(\delta)\right)$. Then it holds
\[
\int_{\Gamma_{t}(\delta)}u\mathrm{div}^{\Gamma}\mathbf{v}\d x=-\int_{\Gamma_{t}(\delta)}\nabla^{\Gamma}u\cdot\mathbf{v}\d x-\int_{\Gamma_{t}(\delta)}u\mathbf{v}\cdot\mathbf{n}\kappa\d x+\int_{\partial\left(\Gamma_{t}(\delta)\right)}u\left(\left(\mathbf{I}-\mathbf{n}\otimes\mathbf{n}\right)\cdot\mathbf{v}\right)\cdot\nu\d\mathcal{H}^{1}(s),
\]
where $\kappa:=-\mathrm{div}\left(\mathbf{n}\left(S(x,t),t\right)\right)$
and $\nu(s)$ is the outer unit normal to $\Gamma_{t}(\delta)$
for $s\in\partial\left(\Gamma_{t}(\delta)\right)$.
\end{lem}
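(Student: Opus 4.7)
The plan is to reduce the statement to the ordinary divergence theorem on $\Gamma_{t}(\delta)$ by isolating an exact divergence in the integrand and carefully tracking the lower-order geometric term generated by the non-tangential extension of $\mathbf{n}$. Set $P(x,t):=\mathbf{I}-\mathbf{n}(S(x,t),t)\otimes\mathbf{n}(S(x,t),t)$. By \eqref{eq:surfgraddecomp} and \eqref{eq:surfdivallg} we have $\nabla^{\Gamma}u=P\nabla u$ and $\operatorname{div}^{\Gamma}\mathbf{v}=P:\nabla\mathbf{v}$. The key identity I will use is
\[
\operatorname{div}^{\Gamma}\mathbf{v}=\operatorname{div}(P\mathbf{v})-(\operatorname{div}P)\cdot\mathbf{v},
\]
obtained componentwise from $\partial_{i}(P_{ij}v_{j})=(\partial_{i}P_{ij})v_{j}+P_{ij}\partial_{i}v_{j}$ using the symmetry of $P$.

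The main technical step is to compute $\operatorname{div}P$. Writing $n_{i}(x,t):=n_{i}(S(x,t),t)$ and using that $\mathbf{n}(x,t)=\nabla d_{\Gamma}(x,t)$ on $\Gamma_{t}(2\delta)$ by Proposition~\ref{Diff-Prop}, we obtain $(\mathbf{n}\cdot\nabla)n_{j}=\partial_{i}d_{\Gamma}\,\partial_{i}\partial_{j}d_{\Gamma}=\tfrac{1}{2}\partial_{j}(|\nabla d_{\Gamma}|^{2})=0$. Combined with $\operatorname{div}\mathbf{n}=-\kappa$ (this is the very definition of $\kappa$ in the statement), this yields
\[
(\operatorname{div}P)_{j}=\partial_{i}(\delta_{ij}-n_{i}n_{j})=-(\partial_{i}n_{i})n_{j}-n_{i}\partial_{i}n_{j}=\kappa\,n_{j},
\]
so $(\operatorname{div}P)\cdot\mathbf{v}=\kappa\,\mathbf{n}\cdot\mathbf{v}$. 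Inserting this into the identity above gives
\[
\operatorname{div}^{\Gamma}\mathbf{v}=\operatorname{div}(P\mathbf{v})-\kappa\,\mathbf{n}\cdot\mathbf{v}.
\]

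Finally I would multiply by $u$ and integrate over $\Gamma_{t}(\delta)$. The classical divergence theorem (applicable since $P\mathbf{v}\in H^{1}(\Gamma_{t}(\delta))^{2}$ and the boundary is piecewise smooth) yields
\[
\int_{\Gamma_{t}(\delta)}u\,\operatorname{div}(P\mathbf{v})\d x=-\int_{\Gamma_{t}(\delta)}\nabla u\cdot(P\mathbf{v})\d x+\int_{\partial\Gamma_{t}(\delta)}u\,(P\mathbf{v})\cdot\nu\dh.
\]
By symmetry of $P$, the bulk integrand equals $(P\nabla u)\cdot\mathbf{v}=\nabla^{\Gamma}u\cdot\mathbf{v}$, and the boundary integrand is exactly $u\bigl((\mathbf{I}-\mathbf{n}\otimes\mathbf{n})\mathbf{v}\bigr)\cdot\nu$. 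Combining with the contribution $-\int_{\Gamma_{t}(\delta)}u\,\kappa\,\mathbf{n}\cdot\mathbf{v}\d x$ from the identity above gives precisely the claimed formula.

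The only nontrivial point is the computation $(\operatorname{div}P)\cdot\mathbf{v}=\kappa\,\mathbf{n}\cdot\mathbf{v}$, and in particular the vanishing of the term $n_{i}\partial_{i}n_{j}$; I expect this to be the step most likely to be overlooked, but it follows directly from the eikonal identity $|\nabla d_{\Gamma}|^{2}\equiv 1$ of Proposition~\ref{Diff-Prop}. All remaining manipulations are standard integration by parts.
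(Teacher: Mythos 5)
Your proof is correct and is exactly the argument the paper has in mind (the paper's proof is a one-line reference to \eqref{eq:surfdiffallg}, \eqref{eq:surfdivallg} and the divergence theorem, which your write-up simply fleshes out): you use the projector identities, the product rule $\operatorname{div}^{\Gamma}\mathbf{v}=\operatorname{div}(P\mathbf{v})-(\operatorname{div}P)\cdot\mathbf{v}$, the eikonal identity to kill $n_{i}\partial_{i}n_{j}$, and the ordinary divergence theorem. The only blemish is a citation slip: the identity $\nabla^{\Gamma}u=P\nabla u$ is \eqref{eq:surfdiffallg}, not \eqref{eq:surfgraddecomp}.
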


\begin{proof}
This is a consequence (\ref{eq:surfdiffallg}), (\ref{eq:surfdivallg}),
and the divergence theorem.
\end{proof}
For later use we define 
\begin{equation}
\left[\partial_{\mathbf{n}},\nabla^{\Gamma}\right]u:=\partial_{\mathbf{n}}\left(\left(\mathbf{I}-\mathbf{n}\otimes\mathbf{n}\right)\nabla u\right)-\left(\mathbf{I}-\mathbf{n}\otimes\mathbf{n}\right)\nabla\left(\partial_{\mathbf{n}}u\right)\label{eq:comuutatordef}
\end{equation}
and compute 
\begin{equation}
\left[\partial_{\mathbf{n}},\nabla^{\Gamma}\right]u=-\nabla S\left(\partial_{s}\mathbf{n}\cdot\nabla^{\Gamma}u\right).\label{eq:commutator}
\end{equation}

\subsection{Remainder Terms \label{sec:Remainder}}

We introduce the following function spaces.
%\begin{defn}
%\label{deflpinfdef}$\quad$
%\end{defn}
%\begin{enumerate}
%\item
For $t\in\left[0,T_{0}\right]$ and $1\leq p<\infty$ we define
\[
L^{p,\infty}\left(\Gamma_{t}(2\delta)\right):=\left\{ \left.f:\Gamma_{t}(2\delta)\rightarrow\mathbb{R}\;\text{measurable}\right|\left\Vert f\right\Vert _{L^{p,\infty}\left(\Gamma_{t}(2\delta)\right)}<\infty\right\} ,
\]
where 
\[
\left\Vert f\right\Vert _{L^{p,\infty}\left(\Gamma_{t}(2\delta)\right)}:=\left(\int_{\mathbb{T}^{1}}\text{esssup}_{\left|r\right|\leq2\delta}\left|f\left(\left(X(r,s,t)\right)_{1}\right)\right|^{p}\d s\right)^{\frac{1}{p}}.
\]
Here $X_{1}(r,s,t):=X_{0}(s,t)+r\mathbf{n}(s,t)$
denotes the first component of $X$. The following embedding was already remarked in \cite[Subsection~2.5]{nsac}.
\begin{lem}
\label{L4inf} We have
$
H^{1}\left(\Gamma_{t}(2\delta)\right)\hookrightarrow L^{4,\infty}\left(\Gamma_{t}(2\delta)\right)
$
with operator norm uniformly bounded with respect to $t\in [0,T_0]$.
\end{lem}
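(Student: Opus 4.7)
The plan is to pull everything back to the reference cylinder $(-2\delta,2\delta)\times\mathbb{T}^1$ via the diffeomorphism $X$ from \eqref{eq:diffeo}, and then prove the corresponding flat version of the embedding, namely
\[
  \int_{\mathbb{T}^1}\operatorname{esssup}_{|r|\le 2\delta}|\tilde f(r,s)|^4\,\d s\le C\,\|\tilde f\|_{H^1((-2\delta,2\delta)\times\mathbb{T}^1)}^4
\]
for $\tilde f(r,s):=f(X(r,s,t))$. Because $X(\cdot,\cdot,t)$ and its inverse are smooth in $(r,s)$ uniformly in $t\in[0,T_0]$, the Jacobian is bounded above and below independent of $t$, so $\|\tilde f\|_{H^1}$ and $\|f\|_{H^1(\Gamma_t(2\delta))}$ are equivalent with uniform constants, and $L^{p,\infty}(\Gamma_t(2\delta))$ corresponds to $L^p(\mathbb{T}^1;L^\infty(-2\delta,2\delta))$ on the flat side. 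By density it suffices to treat $\tilde f\in C^1$.

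First, for each fixed $s\in\mathbb{T}^1$ I would use the usual one-dimensional calculus inequality: starting from $\tilde f(r,s)^2=\tilde f(r_0,s)^2+2\int_{r_0}^r\tilde f\,\partial_r\tilde f\,\d\rho$ and averaging over $r_0\in(-2\delta,2\delta)$, Cauchy--Schwarz yields
\[
  \sup_{r}|\tilde f(r,s)|^2\le C\bigl(\|\tilde f(\cdot,s)\|_{L^2(-2\delta,2\delta)}^2+\|\tilde f(\cdot,s)\|_{L^2}\,\|\partial_r\tilde f(\cdot,s)\|_{L^2}\bigr).
\]
Squaring and integrating in $s$ reduces the desired inequality to controlling $\int F(s)^2\,\d s$ and $\int F(s)G(s)\,\d s$, where $F(s):=\|\tilde f(\cdot,s)\|_{L^2}^2$ and $G(s):=\|\partial_r\tilde f(\cdot,s)\|_{L^2}^2$.

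The key point is then to bound $\|F\|_{L^\infty(\mathbb{T}^1)}$. Since $F'(s)=2\int\tilde f\,\partial_s\tilde f\,\d r$ satisfies $|F'(s)|\le 2F(s)^{1/2}\bigl(\int|\partial_s\tilde f|^2\d r\bigr)^{1/2}$, one gets $\|F'\|_{L^1(\mathbb{T}^1)}\le\|\tilde f\|_{L^2}^2+\|\partial_s\tilde f\|_{L^2}^2$, while $\|F\|_{L^1}=\|\tilde f\|_{L^2}^2$. The one-dimensional Sobolev embedding $W^{1,1}(\mathbb{T}^1)\hookrightarrow L^\infty(\mathbb{T}^1)$ then gives
\[
  \|F\|_{L^\infty(\mathbb{T}^1)}\le C\,\|\tilde f\|_{H^1}^2.
\]
Consequently $\int F^2\,\d s\le \|F\|_{L^\infty}\int F\,\d s\le C\|\tilde f\|_{H^1}^4$ and $\int F G\,\d s\le\|F\|_{L^\infty}\|\partial_r\tilde f\|_{L^2}^2\le C\|\tilde f\|_{H^1}^4$, which finishes the argument after pulling back to $\Gamma_t(2\delta)$.

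The only real obstacle is the uniformity in $t$: one has to check that all constants in the change of variables and in the one-dimensional Sobolev inequalities above depend only on $\delta$ and on $C^1$-bounds of $X,X^{-1}$ over the compact interval $[0,T_0]$, which is immediate from the smooth dependence of $X$ on $t$. Everything else is a routine Gagliardo--Nirenberg type interpolation in the anisotropic setting.
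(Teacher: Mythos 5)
Your argument is correct, and it is exactly the anisotropic Gagliardo--Nirenberg/Agmon-type interpolation (1D sup-estimate in $r$, then $W^{1,1}(\mathbb{T}^1)\hookrightarrow L^\infty$ control of $s\mapsto\|\tilde f(\cdot,s)\|_{L^2_r}^2$) that the paper's one-line proof alludes to when it cites Gagliardo--Nirenberg and the one-dimensionality of $\Gamma_t$. You have simply written out the details, including the uniform-in-$t$ change of variables, so no further comment is needed.
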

\begin{proof}
This is a consequence of the Gagliardo-Nirenberg interpolation and
the fact that $\Gamma_{t}$ is one-dimensional.
\end{proof}

For $T\in\left[0,T_{0}\right]$, $1\leq p,q<\infty$ and $\alpha\in\left(0,3\delta\right)$ we set 
\begin{align*}
L^{q}\left(0,T;L^{p}\left(\Gamma_{t}(\alpha)\right)\right)&:=\left\{ \left.f\colon \Gamma\left(\alpha,T\right)\rightarrow\mathbb{R}\;\text{measurable}\right|\left\Vert f\right\Vert _{L^{q}\left(0,T;L^{p}\left(\Gamma_{t}(\alpha)\right)\right)}<\infty\right\} ,\\
\left\Vert f\right\Vert _{L^{q}\left(0,T;L^{p}\left(\Gamma_{t}(\alpha)\right)\right)}&:=\left(\int_{0}^{T}\left(\int_{\Gamma_{t}(\alpha)}\left|f(x,t)\right|^{p}\d x\right)^{\frac{q}{p}}\d t\right)^{\frac{1}{q}}.
\end{align*}
In a similar way, we define $L^{q}\left(0,T;L^{p}\left(\Omega\backslash\Gamma_{t}(\alpha)\right)\right)$ and $L^{q}\left(0,T;L^{p}\left(\Omega^\pm(t))\right)\right)$
and the corresponding norms. Moreover, for $m\in\mathbb{N}_0$ we denote for $U(t)= \Omega^\pm (t)$ or $U(t)= \Gamma_t(\alpha)$
\begin{align*}
  L^p(0,T;H^m(U(t)))&:=\{f\in L^p(0,T;L^2(\Omega^\pm(t))): \partial_x^\alpha f \in L^p(0,T;L^2(U(t)))\forall |\alpha|\leq m  \},\\
  \|f\|_{L^p(0,T;H^m(U(t)))} &:= \sum_{|\alpha|\leq m} \|\partial_x^\alpha f\|_{L^p(0,T;L^2(U(t)))}.
\end{align*}

For future use, we introduce a concept of remainder terms, similar
to \cite[Definition~2.5]{nsac}.
\begin{defn}
\label{def:Ralphadef}Let $n\in\mathbb{N}$, $\epsilon_{0}>0$. For
$\alpha>0$ let $\ra$ denote the vector space of all families $\left(\hat{r}_{\epsilon}\right)_{\epsilon\in\left(0,\epsilon_{0}\right)}$
of continuous functions $\hat{r}_{\epsilon}:\mathbb{R}\times\Gamma(2\delta)\rightarrow\mathbb{R}^{n}$
which satisfy
\[
\left|\hat{r}_{\epsilon}\left(\rho,x,t\right)\right|\leq Ce^{-\alpha\left|\rho\right|}\text{ for all }\rho\in\mathbb{R},(x,t)\in\Gamma(2\delta),\epsilon\in\left(0,1\right).
\]
Moreover, let $\mathcal{R}_{\alpha}^{0}$ be the subspace of all $\left(\hat{r}_{\epsilon}\right)_{\epsilon\in\left(0,\epsilon_{0}\right)}\in\mathcal{R}_{\alpha}$
such that 
\[
\hat{r}_{\epsilon}\left(\rho,x,t\right)=0\text{ for all }\rho\in\mathbb{R},(x,t)\in\Gamma.
\]
\end{defn}

\subsection{Spectral Theory\label{chap:Spectral-Theory}}

The results in this chapter are adapted from \cite{chen}. For detailed
proofs concerning the changed stretched variable see \cite[Chapter~3]{ichPhD}. Moreover, we define 
\begin{equation}
J(r,s,t):=\det\left(D_{\left(r,s\right)}X(r,s,t)\right)\label{eq:jacob}
\end{equation}
The statements in this section are made under the following assumptions:
\begin{assumption}
\label{assu:Spektral}Let $\epsilon\in\left(0,\epsilon_{0}\right)$,
$T\in\left(0,T_{0}\right]$ and $\xi$ be a cut-off function satisfying
\eqref{eq:cut-off}. We assume that $c_{A}^{\epsilon}:\Omega_{T}\rightarrow\mathbb{R}$
is a smooth function, which has the structure 
\begin{align}
  c_{A}^{\epsilon}(x,t) & =\xi(d_{\Gamma}(x,t))\left(\theta_{0}(\rho(x,t))+\epsilon p^{\epsilon}\left(\operatorname{Pr}_{\Gamma_{t}}(x),t\right)\theta_{1}(\rho(x,t))\right)+\xi(d_{\Gamma}(x,t))\epsilon^{2}q^{\epsilon}(x,t)\nonumber \\
 & \quad+\left(1-\xi(d_{\Gamma}(x,t))\right)\left(c_{A}^{\epsilon,+}(x,t)\chi_{\Omega_{T_{0}}^{+}}(x,t)+c_{A}^{\epsilon,-}(x,t)\chi_{\Omega_{T_{0}}^{-}}(x,t)\right)\label{eq:fie}
\end{align}
for all $(x,t)\in\Omega_{T}$, where $\rho(x,t):=\frac{d_{\Gamma}(x,t)}{\epsilon}-h^{\epsilon}(S(x,t),t)$.
The occurring functions are supposed to be smooth and satisfy for
some $C^{*}>0$ the following properties:

$\theta_{1}\colon \mathbb{R}\rightarrow\mathbb{R}$ is a bounded function
satisfying 
\begin{equation}
\int_{\mathbb{R}}\theta_{1}(\rho)\theta_{0}'(\rho)^{2}f^{\left(3\right)}(\theta_{0}(\rho))\d\rho=0.\label{eq:teta0}
\end{equation}
 Furthermore, $p^{\epsilon}\colon \Gamma\rightarrow\mathbb{R}$, $q^{\epsilon}\colon \Gamma(2\delta)\rightarrow\mathbb{R}$
satisfy 
\begin{equation}
\sup_{\epsilon\in\left(0,\epsilon_{0}\right)}\sup_{(x,t)\in\Gamma\left(2\delta;T\right)}\left(\left|p^{\epsilon}(\operatorname{Pr}_{\Gamma_{t}}(x),t)\right|+\frac{\epsilon}{\epsilon+\left|d_{\Gamma}(x,t)-\epsilon h^{\epsilon}(S(x,t),t)\right|}\left|q^{\epsilon}(x,t)\right|\right)\leq C^{*},\label{eq:Dasistes}
\end{equation}
$h^{\epsilon}\colon \mathbb{T}^{1}\times\left[0,T\right]\rightarrow\mathbb{R}$
fulfills
\begin{equation}
\sup_{\epsilon\in\left(0,\epsilon_{0}\right)}\sup_{(s,t)\in\mathbb{T}^{1}\times\left[0,T\right]}\left(\left|h^{\epsilon}(s,t)\right|+\left|\partial_{s}h^{\epsilon}(s,t)\right|\right)\leq C^{*}\label{eq:hbes}
\end{equation}
and $c_{A}^{\epsilon,\pm}\colon \Omega_{T}^{\pm}\rightarrow\mathbb{R}$
satisfy 
\begin{equation}
\pm c_{A}^{\epsilon,\pm}>0\text{ in }\Omega_{T}^{\pm}.\label{eq:caepm}
\end{equation}
 Additionally, we suppose that there is some $C^\ast$ such that 
\begin{align}
\sup_{\epsilon\in\left(0,\epsilon_{0}\right)}\left(\sup_{(x,t)\in\Omega_{T}}\left|c_{A}^{\epsilon}(x,t)\right|+\sup_{x\in\Gamma(\delta)}\left|\nabla^{\Gamma}c_{A}^{\epsilon}(x,t)\right|\right)&\leq C^\ast,\label{eq:obfi}\\
\inf_{\epsilon\in\left(0,\epsilon_{0}\right)}\inf_{(x,t)\in\Omega_{T}\backslash\Gamma\left(\delta;T\right)}f''\left(c_{A}^{\epsilon}(x,t)\right)&\geq\frac{1}{C^\ast}.\label{eq:fnachunten}
\end{align}
\end{assumption}

\begin{cor}
\label{Chen-hilf} Let Assumptions \ref{assu:Spektral} hold true and let
$t\in\left[0,T\right]$, let $\psi\in H^{1}(\Gamma_{t}(\delta))$
and $\Lambda_{\epsilon}\in\mathbb{R}$ be such that 
\[
\int_{\Gamma_{t}(\delta)}\epsilon\left|\nabla\psi(x)\right|^{2}+\epsilon^{-1}f''\left(c_{A}^{\epsilon}(x,t)\right)\psi(x)^{2}\d x\leq\Lambda_{\epsilon}
\]
and denote $I_{\epsilon}^{s,t}:=\left(-\frac{\delta}{\epsilon}-h^{\epsilon}(s,t),\frac{\delta}{\epsilon}-h^{\epsilon}(s,t)\right)$.
Then, for $\epsilon>0$ small enough, there exist functions $Z\in H^{1}(\mathbb{T}^{1})$,
$\psi^{\mathbf{R}}\in H^{1}(\Gamma_{t}(\delta))$
and smooth $\Psi\colon I_{\epsilon}^{s,t}\times\mathbb{T}^{1}\to \R$ such that
\begin{equation}
\psi\left(r,s\right)=\epsilon^{-\frac{1}{2}}Z(s)\left(\beta(s)\theta_{0}'\left(\rho\left(r,s\right)\right)+\Psi\left(\rho\left(r,s\right),s\right)\right)+\psi^{\mathbf{R}}\left(r,s\right)\label{eq:Chenneu1}
\end{equation}
for almost all $\left(r,s\right)\in\left(-\delta,\delta\right)\times\mathbb{T}^{1}$,
where $\rho\left(r,s\right)=\frac{r}{\epsilon}-h^{\epsilon}(s,t)$
and $\beta(s)=\left(\int_{I_{\epsilon}^{s,t}}\left(\theta_{0}'(\rho)\right)^{2}\d\rho\right)^{-\frac{1}{2}}$.
Moreover,
\begin{equation}
\left\Vert \psi^{\mathbf{R}}\right\Vert _{L^{2}\left(\Gamma_{t}(\delta)\right)}^{2}\leq C\left(\epsilon\Lambda_{\epsilon}+\epsilon^{2}\left\Vert \psi\right\Vert _{L^{2}\left(\Gamma_{t}(\delta)\right)}^{2}\right),\label{eq:Chenneu3}
\end{equation}
\begin{equation}
\left\Vert Z\right\Vert _{H^{1}\left(\mathbb{T}^{1}\right)}^{2}+\left\Vert \nabla^{\Gamma}\psi\right\Vert _{L^{2}\left(\Gamma_{t}(\delta)\right)}^{2}+\left\Vert \psi^{\mathbf{R}}\right\Vert _{H^{1}\left(\Gamma_{t}(\delta)\right)}^{2}\le C\left(\left\Vert \psi\right\Vert _{L^{2}\left(\Gamma_{t}(\delta)\right)}^{2}+\frac{\Lambda_{\epsilon}}{\epsilon}\right),\label{eq:Chenneu2}
\end{equation}
and
\begin{equation}
\sup_{s\in\mathbb{T}^{1}}\left(\int_{I_{\epsilon}^{s,t}}\left(\Psi\left(\rho,s\right)^{2}+\Psi_{\rho}\left(\rho,s\right)^{2}\right)J\left(\epsilon\left(\rho+h^{\epsilon}(s,t)\right),s\right)\d\rho\right)\leq C\epsilon^{2}.\label{eq:Chenneu4}
\end{equation}
\end{cor}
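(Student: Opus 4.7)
The plan is to reduce the statement to the corresponding spectral decomposition in Chen~\cite{chen}, which treats the unshifted case $\rho = r/\epsilon$, and then carry the conclusions back through the $h^\epsilon$-shift. First, for each fixed $t \in [0,T]$ and $s \in \mathbb{T}^1$, I would use the stretched normal variable $\rho = r/\epsilon - h^\epsilon(s,t)$ to rewrite $\psi$ as a function of $(\rho, s)$ on $I_\epsilon^{s,t} \times \mathbb{T}^1$. Because $h^\epsilon$ and $\partial_s h^\epsilon$ are uniformly bounded by \eqref{eq:hbes}, this change of variables is bi-Lipschitz with constants independent of $\epsilon$, and the Jacobian $J$ is bounded above and below by positive constants on $\Gamma_t(\delta)$. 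In these coordinates the hypothesis turns into an energy bound of the schematic form $\int_{\mathbb{T}^1}\int_{I_\epsilon^{s,t}}(|\partial_\rho\psi|^2 + f''(\theta_0 + O(\epsilon))\psi^2)\,d\rho\,ds + (\text{tangential contributions}) \leq C\Lambda_\epsilon$ after absorbing the $\epsilon$-prefactors, since \eqref{eq:fie} gives $c_A^\epsilon = \theta_0(\rho) + O(\epsilon)$ on $\Gamma_t(\delta)$.

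Next, the decomposition~\eqref{eq:Chenneu1} is produced slicewise. For each $s$, I would project $\psi(\cdot,s)$ in $\rho$ onto the normalized approximate ground state $\beta(s)\theta_0'(\rho) + \Psi(\rho,s)$ of the one-dimensional linearized Cahn--Hilliard operator $-\partial_{\rho\rho} + f''(\theta_0)$ on $I_\epsilon^{s,t}$, with $\Psi$ constructed to absorb the $O(\epsilon)$ perturbation of $c_A^\epsilon$ away from $\theta_0$ coming from the $\epsilon p^\epsilon \theta_1 + \epsilon^2 q^\epsilon$ terms in \eqref{eq:fie}. The coefficient $Z(s)$ is exactly this projection and $\psi^{\mathbf{R}}$ is the orthogonal remainder. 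The exponential decay~\eqref{eq:optimopti} of $\theta_0'$ gives $\beta(s) = O(1)$ uniformly in $\epsilon$, and the correction equation for $\Psi$ together with a solvability condition (analogous to~\eqref{eq:teta0}) yields~\eqref{eq:Chenneu4}, the $\epsilon^2$ gain reflecting that $\Psi$ is driven by an $O(\epsilon)$ right-hand side.

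For the quantitative bounds, the $L^2$ estimate~\eqref{eq:Chenneu3} on $\psi^{\mathbf{R}}$ is exactly Chen's spectral gap applied in $\rho$ for almost every $s$ and integrated over $s$: on the orthogonal complement of the critical near-zero eigenspace the quadratic form is bounded below by a positive multiple of the $L^2$ norm, which after the substitution $r = \epsilon(\rho + h^\epsilon)$ produces the factor $\epsilon\Lambda_\epsilon$. The $H^1$ estimate~\eqref{eq:Chenneu2} on $Z$ follows by differentiating the defining projection $Z(s) = \epsilon^{1/2}\beta(s)\int_{I_\epsilon^{s,t}}\psi(\epsilon(\rho+h^\epsilon),s)\theta_0'(\rho)J\,d\rho$ in $s$, differentiating under the integral sign using \eqref{eq:hbes}, and applying Cauchy--Schwarz together with the exponential decay of $\theta_0'$; the bound on $\nabla^\Gamma \psi$ and the $H^1$-bound on $\psi^{\mathbf{R}}$ then come by subtracting the principal term from $\psi$ and inserting the already-established estimates.

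The main obstacle is the interaction between the $h^\epsilon$-shift and the tangential derivative: differentiating the principal ansatz $\epsilon^{-1/2}Z(s)(\beta(s)\theta_0'(\rho) + \Psi(\rho,s))$ in $s$ produces formally large terms of order $\epsilon^{-1/2}\theta_0''(\rho)\partial_s h^\epsilon$ through the $s$-dependence of $\rho$. I would have to verify that these terms are reabsorbed into the energy on the left-hand side and are therefore controlled by $\Lambda_\epsilon/\epsilon + \|\psi\|_{L^2}^2$, exactly as carried out in \cite[Chapter~3]{ichPhD}. Once this bookkeeping is handled, the estimates~\eqref{eq:Chenneu3}--\eqref{eq:Chenneu4} follow from Chen's argument transported to the shifted coordinate.
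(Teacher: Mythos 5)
Your proposal is correct and follows essentially the same route as the paper: both reduce the statement to Chen's Lemmas 2.2 and 2.4 transported to the shifted stretched variable $\rho=\tfrac{r}{\epsilon}-h^{\epsilon}(s,t)$, with the genuinely technical part of that adaptation (the construction of $\Psi$, the smallness of the near-zero eigenvalue, and the bookkeeping of the $\partial_{s}h^{\epsilon}$ terms you correctly single out) deferred to \cite[Chapter~3]{ichPhD}. The only cosmetic difference is that the paper first normalizes $\tilde{\psi}=\psi/\Vert\psi\Vert_{L^{2}(\Gamma_{t}(\delta))}$ to match the normalization in Chen's lemmas and then rescales $Z$ and $\psi^{\mathbf{R}}$, whereas you work with $\psi$ directly; since all the estimates are homogeneous of the right degree, this changes nothing.
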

\begin{proof}
We define 
$
\tilde{\psi}:=\frac{\psi}{\left\Vert \psi\right\Vert _{L^{2}\left(\Gamma_{t}(\delta)\right)}}.
$
Then we have 
\[
\int_{\Gamma_{t}(\delta)}\epsilon\left|\nabla\tilde{\psi}\right|^{2}+\epsilon^{-1}f''\left(c_{A}^{\epsilon}\right)\tilde{\psi}^{2}\d x\leq\frac{\Lambda_{\epsilon}}{\left\Vert \psi\right\Vert _{L^{2}\left(\Gamma_{t}(\delta)\right)}^{2}}
\]
and may use  Lemma~2.2 and Lemma~2.4 in \cite{chen} adapted to
the case of the stretched variable $\rho=\frac{r}{\epsilon}-h^{\epsilon}(s,t)$
instead of $z=\frac{r}{\epsilon}$, where $r\in I_{1}$, $s\in\mathbb{T}^{1}$, see \cite[Chapter~3]{ichPhD} for the details.
This yields existence of some functions $\tilde{Z}\in H^{1}(\mathbb{T}^{1})$,
$\tilde{\psi}^{\mathbf{R}}\in H^{1}(\Gamma_{t}(\delta))$
and $\Psi$ such that
\begin{equation}
\tilde{\psi}\left(r,s\right)=\epsilon^{-\frac{1}{2}}\tilde{Z}(s)\left(\beta(s)\theta_{0}'\left(\rho\left(r,s\right)\right)+\Psi\left(\rho\left(r,s\right),s\right)\right)+\tilde{\psi}^{\mathbf{R}}\left(r,s\right)\label{eq:Chenspec1}
\end{equation}
with
\begin{align}
\Vert \tilde{Z}\Vert _{H^{1}(\mathbb{T}^{1})}^{2}+\Vert \nabla^{\Gamma}\tilde{\psi}\Vert _{L^{2}(\Gamma_{t}(\delta))}^{2}+\Vert \tilde{\psi}^{\mathbf{R}}\Vert _{H^{1}(\Gamma_{t}(\delta))}^{2}&\le C\left(1+\frac{\Lambda_{\epsilon}}{\epsilon\Vert \psi\Vert _{L^{2}\left(\Gamma_{t}(\delta)\right)}^{2}}\right),\label{eq:Chenspec2}\\
\Vert \tilde{\psi}^{\mathbf{R}}\Vert _{L^{2}\left(\Gamma_{t}(\delta)\right)}^{2}&\leq C\left(\epsilon\frac{\Lambda_{\epsilon}}{\left\Vert \psi\right\Vert _{L^{2}\left(\Gamma_{t}(\delta)\right)}^{2}}+\epsilon^{2}\right),\label{eq:Chenspec3}
\end{align}
and such that $\Psi$ satisfies (\ref{eq:Chenneu4}). Furthermore,
if we define
\begin{align*}
  \psi_{1}\left(r,s\right)&:=\epsilon^{-\frac{1}{2}}\left(\beta(s)\theta_{0}'(\rho(r,s))+\Psi(\rho(r,s),s)\right), \\
  Z(s)&:=\left(\psi_{1},\psi\right)_{J}\quad\text{and}\quad \psi^{\mathbf{R}}(r,s):=\psi(r,s)-Z(s)\psi_{1}(r,s),
\end{align*}
we have the identities
\begin{align*}
Z(s) & =\left(\psi_{1},\psi\right)_{J}=\left(\psi_{1},\tilde{\psi}\left\Vert \psi\right\Vert _{L^{2}\left(\Gamma_{t}(\delta)\right)}\right)=\tilde{Z}(s)\left\Vert \psi\right\Vert _{L^{2}\left(\Gamma_{t}(\delta)\right)}\quad
and\\
\psi^{\mathbf{R}}\left(r,s\right) & =\tilde{\psi}\left(r,s\right)\left\Vert \psi\right\Vert _{L^{2}\left(\Gamma_{t}(\delta)\right)}+\tilde{Z}(s)\left\Vert \psi\right\Vert _{L^{2}\left(\Gamma_{t}(\delta)\right)}, \psi_{1}(r,s)=\tilde{\psi}^{\mathbf{R}}(r,s)\left\Vert \psi\right\Vert _{L^{2}\left(\Gamma_{t}(\delta)\right)}
\end{align*}
for almost all $\left(r,s\right)\in\left(-\delta,\delta\right)\times\mathbb{T}^{1}$.
Thus, (\ref{eq:Chenneu1}), (\ref{eq:Chenneu3}), (\ref{eq:Chenneu2})
follow immediately.
\end{proof}
In the following we consider $H_{0}^{1}(\Omega)$ equipped
with the scalar product $\left(u,v\right)_{1}=\int_{\Omega}\nabla u\cdot\nabla v\d x$.
The induced norm $\left|.\right|_{1}$ is equivalent to the usual
$H^{1}$-norm by Poincar\'e's inequality. % Moreover, we write $H^{-1}:=\left(H_{0}^{1}(\Omega)\right)'.$
\begin{thm}[Spectral Estimate]
 \label{specHillmod}~\\ Let Assumption \ref{assu:Spektral} hold true and
let $t\in\left[0,T\right]$. There exist constants $C_{1}>0$, $C_{2}\geq0$
and $\epsilon_{1}>0$, independent of $t$, such that for all $\psi\in H_{0}^{1}(\Omega)$
it holds
\begin{align}
&\int_{\Omega}\epsilon\left|\nabla\psi\right|^{2}+\epsilon^{-1}f''\left(c_{A}^{\epsilon}\right)\psi^{2}\d x  \geq C_{1}\left(\epsilon\left\Vert \psi\right\Vert _{L^{2}(\Omega)}^{2}+\epsilon^{-1}\left\Vert \psi\right\Vert _{L^{2}\left(\Omega\backslash\Gamma_{t}(\delta)\right)}+\epsilon\left\Vert \nabla^{\Gamma}\psi\right\Vert _{L^{2}\left(\Gamma_{t}(\delta)\right)}^{2}\right)\nonumber \\
 & \qquad+C_{1}\left(\epsilon^{3}\left\Vert \nabla\psi\right\Vert _{L^{2}(\Omega)}^{2}+\epsilon\left\Vert \nabla\psi\right\Vert _{L^{2}\left(\Omega\backslash\Gamma_{t}(\delta)\right)}^{2}\right)-C_{2}\left\Vert \psi\right\Vert _{H^{-1}(\Omega)}^{2}.\label{eq:spect}
\end{align}
 for all $\epsilon\in\left(0,\epsilon_{1}\right)$.
\end{thm}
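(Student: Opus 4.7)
The plan is to proceed as in Chen's original argument, adapted to the modified stretched variable $\rho = r/\epsilon - h^\epsilon(s,t)$, and to exploit the decomposition already supplied by Corollary~\ref{Chen-hilf}. Write $\mathcal{Q}^\epsilon(\psi)$ for the left-hand side of \eqref{eq:spect} and split $\Omega$ into $\Gamma_t(\delta)$ and its complement. In the bulk, assumption \eqref{eq:fnachunten} immediately produces
\[
\int_{\Omega \setminus \Gamma_t(\delta)} \epsilon|\nabla\psi|^2 + \epsilon^{-1} f''(c_A^\epsilon)\psi^2\,dx \geq \epsilon \|\nabla\psi\|_{L^2(\Omega\setminus\Gamma_t(\delta))}^2 + (C^*)^{-1}\epsilon^{-1}\|\psi\|_{L^2(\Omega\setminus\Gamma_t(\delta))}^2,
\]
which already yields two of the required terms. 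On the tube I apply Corollary~\ref{Chen-hilf} with $\Lambda_\epsilon=\mathcal{Q}^\epsilon(\psi)$ to decompose $\psi = \epsilon^{-1/2}Z(s)\bigl(\beta(s)\theta_0'(\rho)+\Psi(\rho,s)\bigr) + \psi^{\mathbf{R}}$, plug this into the quadratic form, and change variables to $(\rho,s)$.

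The transverse remainder $\psi^{\mathbf{R}}$ is, up to the $O(\epsilon)$ error encoded in \eqref{eq:Chenneu3}--\eqref{eq:Chenneu4}, $L^2_\rho$-orthogonal to $\theta_0'$ on each fiber. The one-dimensional spectral gap of $L_0 := -\partial_\rho^2 + f''(\theta_0)$ therefore yields
\[
\int_{\Gamma_t(\delta)} \epsilon |\nabla \psi^{\mathbf{R}}|^2 + \epsilon^{-1} f''(\theta_0(\rho)) (\psi^{\mathbf{R}})^2 \, dx \geq c_0\bigl(\epsilon \|\nabla \psi^{\mathbf{R}}\|_{L^2(\Gamma_t(\delta))}^2 + \epsilon^{-1} \|\psi^{\mathbf{R}}\|_{L^2(\Gamma_t(\delta))}^2\bigr),
\]
and by \eqref{eq:fie}, \eqref{eq:Dasistes} the replacement $f''(c_A^\epsilon)\rightsquigarrow f''(\theta_0(\rho))$ costs only an $O(1)$ correction on the tube that is absorbable. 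For the leading part $\psi_1^Z:=\epsilon^{-1/2}Z(\beta\theta_0'+\Psi)$ the identity $L_0\theta_0'=0$ kills the principal contribution, leaving only terms with $\partial_s Z$ and geometric correction factors, bounded below by $-C\epsilon\|Z\|_{H^1(\mathbb{T}^1)}^2$, which is absorbable via \eqref{eq:Chenneu2}.

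The delicate step, and the reason the correction $-C_2\|\psi\|_{H^{-1}(\Omega)}^2$ must appear, is that $\|Z\|_{L^2(\mathbb{T}^1)}$ is the direction in which $\mathcal{Q}^\epsilon$ is nearly degenerate and is not controlled positively by the form itself. I will recover it by a duality argument: test $\psi$ against $\Phi(x):=\eta(d_\Gamma(x,t))Z(S(x,t))\beta(S(x,t))\theta_0'(\rho(x,t))$, where $\eta$ is a smooth cutoff supported in $(-\delta,\delta)$. A direct computation gives $\|\Phi\|_{H^1_0(\Omega)}^2\leq C\epsilon^{-1}\|Z\|_{L^2(\mathbb{T}^1)}^2$ (the loss comes from $\partial_{\mathbf{n}}\theta_0'$), while the expansion of $\psi$ yields $\langle\psi,\Phi\rangle_{L^2}=\|Z\|_{L^2(\mathbb{T}^1)}^2+\mathrm{err}$ with $|\mathrm{err}|\leq C\|\psi^{\mathbf{R}}\|_{L^2(\Gamma_t(\delta))}\|Z\|_{L^2(\mathbb{T}^1)}$ plus terms of higher order in $\epsilon$. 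This produces
\[
\|Z\|_{L^2(\mathbb{T}^1)}^2 \leq C\bigl(\epsilon^{-1}\|\psi\|_{H^{-1}(\Omega)}^2 + \epsilon^{-1}\|\psi^{\mathbf{R}}\|_{L^2(\Gamma_t(\delta))}^2\bigr),
\]
and the second term is re-absorbed into the spectral-gap bound for $\psi^{\mathbf{R}}$.

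Combining the bulk estimate, the coercivity on $\psi^{\mathbf{R}}$, and the $H^{-1}$-control of $Z$ with the bounds \eqref{eq:Chenneu2}--\eqref{eq:Chenneu4} recovers the remaining terms of \eqref{eq:spect}: the $\epsilon\|\psi\|_{L^2(\Omega)}^2$ and $\epsilon\|\nabla^\Gamma\psi\|_{L^2(\Gamma_t(\delta))}^2$ contributions follow from \eqref{eq:Chenneu2} after the $\|\psi\|_{L^2}^2$ on its right-hand side has been controlled, and $\epsilon^3\|\nabla\psi\|_{L^2(\Omega)}^2$ comes from writing $\nabla\psi=\nabla\psi_1^Z+\nabla\psi^{\mathbf{R}}$ on the tube and noting that the leading part $\epsilon^{-3/2}Z\beta\theta_0''\mathbf{n}$ of $\nabla\psi_1^Z$ contributes $\epsilon^2\|Z\|_{L^2(\mathbb{T}^1)}^2$ after the Jacobian change of variables, while the remainder is controlled by $\epsilon\|\nabla\psi^{\mathbf{R}}\|_{L^2}^2$ and the bulk estimate. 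The hard part will be the bookkeeping of the cross terms between $\psi_1^Z$ and $\psi^{\mathbf{R}}$ and the absorption of the geometric error terms produced by $h^\epsilon$, $\Psi$ and the $s$-dependence of $\beta$; all of these must be shown to be of sufficiently high order in $\epsilon$ to be swallowed either by the coercivity on $\psi^{\mathbf{R}}$ or by the $H^{-1}$-correction.
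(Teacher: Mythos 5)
Your overall architecture --- bulk coercivity from \eqref{eq:fnachunten}, a fiberwise spectral gap for the component orthogonal to $\theta_{0}'$, and a separate treatment of the degenerate direction encoded in $Z$ --- is the right one. The paper, however, does not redo this analysis: it handles the tube by citing \cite[Lemma 2.2]{chen} (adapted to $\rho=r/\epsilon-h^{\epsilon}$), which directly yields the tangential term together with a penalty $-C\epsilon\left\Vert \psi\right\Vert _{L^{2}(\Omega)}^{2}$, and then converts this penalty into $-C_{2}\left\Vert \psi\right\Vert _{H^{-1}(\Omega)}^{2}$ by a case distinction: either the quadratic form exceeds $c\epsilon\left\Vert \psi\right\Vert _{L^{2}(\Omega)}^{2}$ (nothing to prove, $C_{2}=0$), or it does not, in which case \cite[Theorem 3.1]{chen} gives $\tilde{C}\epsilon\left\Vert \psi\right\Vert _{L^{2}(\Omega)}^{2}\leq\left\Vert \psi\right\Vert _{H^{-1}(\Omega)}^{2}$.

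The genuine gap is in your duality step, which is your substitute for \cite[Theorem 3.1]{chen}. With $\Phi=\eta(d_{\Gamma})Z(S)\beta(S)\theta_{0}'(\rho)$ the change of variables $\d r=\epsilon\d\rho$ gives $\left\langle \psi,\Phi\right\rangle \approx\epsilon^{1/2}\left\Vert Z\right\Vert _{L^{2}(\mathbb{T}^{1})}^{2}$, not $\left\Vert Z\right\Vert _{L^{2}(\mathbb{T}^{1})}^{2}$: the leading part of $\psi$ carries the weight $\epsilon^{-1/2}$ and the fiber integral of $(\beta\theta_{0}')^{2}$ produces one factor of $\epsilon$. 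Combined with your (correct) bound $\left\Vert \Phi\right\Vert _{H_{0}^{1}(\Omega)}^{2}\leq C\epsilon^{-1}\left\Vert Z\right\Vert _{L^{2}(\mathbb{T}^{1})}^{2}$, duality only yields $\left\Vert Z\right\Vert _{L^{2}(\mathbb{T}^{1})}^{2}\leq C\epsilon^{-2}\left\Vert \psi\right\Vert _{H^{-1}(\Omega)}^{2}$, hence $\epsilon\left\Vert \psi\right\Vert _{L^{2}(\Omega)}^{2}\lesssim\epsilon\left\Vert Z\right\Vert _{L^{2}(\mathbb{T}^{1})}^{2}\lesssim\epsilon^{-1}\left\Vert \psi\right\Vert _{H^{-1}(\Omega)}^{2}$, i.e.\ a constant $C_{2}$ blowing up like $\epsilon^{-1}$, which destroys the theorem and everything downstream. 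The loss is intrinsic to testing with a function concentrated on the $\epsilon$-layer: its $H^{1}$ norm costs $\epsilon^{-1/2}$ while the pairing only gains the $L_{\rho}^{2}$ norm of $\theta_{0}'$. The remedy is a test function that is \emph{flat} across the layer, e.g.\ $\Phi=\zeta(d_{\Gamma})Z(S)$ with a fixed cutoff $\zeta$: the pairing still produces $\epsilon^{1/2}\left\Vert Z\right\Vert _{L^{2}(\mathbb{T}^{1})}^{2}$ (now via the $L_{\rho}^{1}$ mass $\int_{\mathbb{R}}\theta_{0}'\d\rho$), but $\left\Vert \Phi\right\Vert _{H^{1}(\Omega)}\approx\left\Vert Z\right\Vert _{H^{1}(\mathbb{T}^{1})}$, and in the only case that matters (quadratic form $\leq c\epsilon\left\Vert \psi\right\Vert _{L^{2}(\Omega)}^{2}$) estimate \eqref{eq:Chenneu2} gives $\left\Vert Z\right\Vert _{H^{1}(\mathbb{T}^{1})}\lesssim\left\Vert \psi\right\Vert _{L^{2}(\Omega)}\approx\left\Vert Z\right\Vert _{L^{2}(\mathbb{T}^{1})}$, whence $\epsilon\left\Vert Z\right\Vert _{L^{2}(\mathbb{T}^{1})}^{2}\lesssim\left\Vert \psi\right\Vert _{H^{-1}(\Omega)}^{2}$ as required. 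Note that the case distinction is not cosmetic: it is what makes $\left\Vert Z\right\Vert _{H^{1}(\mathbb{T}^{1})}$ controllable by $\left\Vert Z\right\Vert _{L^{2}(\mathbb{T}^{1})}$ and breaks the circularity of choosing $\Lambda_{\epsilon}$ equal to the form itself. (A secondary, lesser issue: your claim that replacing $f''(c_{A}^{\epsilon})$ by $f''(\theta_{0}(\rho))$ costs an absorbable $O(1)$ correction ignores that the $q^{\epsilon}$-term in \eqref{eq:Dasistes} is only bounded by $C(\epsilon+|d_{\Gamma}-\epsilon h^{\epsilon}|)/\epsilon$, which is of order $\epsilon^{-1}$ at the edge of the tube; this is precisely why the paper leaves the tube estimate to the adapted \cite[Lemma 2.2]{chen} rather than reducing to the constant-coefficient fiber operator.)
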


\begin{proof}
Due to (\ref{eq:fnachunten}) we may estimate 
\begin{align}
  &\int_{\Omega}\epsilon\left|\nabla\psi\right|^{2}+\epsilon^{-1}f''\left(c_{A}^{\epsilon}\right)\psi^{2}\d x \nonumber\\
  & \geq\int_{\Omega\backslash\Gamma_{t}(\delta)}\epsilon\left|\nabla\psi\right|^{2}+C\epsilon^{-1}\left|\psi\right|^{2}\d x+\int_{\mathbb{T}^{1}}\int_{-\delta}^{\delta}\left(\epsilon\left|\psi_{r}\right|^{2}+\epsilon\left|\nabla_{\boldsymbol{\tau}}\psi\right|^{2}+\epsilon^{-1}f''\left(c_{A}^{\epsilon}\right)\psi^{2}\right)J\d r\d s\nonumber \\
  & \geq\int_{\Omega\backslash\Gamma_{t}(\delta)}\epsilon\left|\nabla\psi\right|^{2}+C_{1}\epsilon^{-1}\left|\psi\right|^{2}\d x+\epsilon\int_{\Gamma_{t}(\delta)}\left|\nabla_{\boldsymbol{\tau}}\psi\right|^{2}\d x -C_{2}\epsilon\int_{\Omega}\psi^{2}\d x,\label{eq:Abspekt}
\end{align}
where the last inequality is a consequence of \cite[Lemma
2.2]{chen},  adapted to the case of the stretched variable $\rho=\frac{r}{\epsilon}-h^{\epsilon}(s,t)$
instead of $z=\frac{r}{\epsilon}$, where $r\in\left(-\delta,\delta\right)$,
$s\in\Gamma$, cf.~\cite[Proof of Theorem~3.12]{ichPhD} for more details. We observe that we may now use (\ref{eq:Abspekt})
to derive 
\begin{align}
\int_{\Omega}\epsilon\left|\nabla\psi\right|^{2}+\epsilon^{-1}f''\left(c_{A}^{\epsilon}\right)\psi^{2}\d x & \geq C_{1}\left(\epsilon\left\Vert \nabla^{\Gamma}\psi\right\Vert _{L^{2}\left(\Gamma_{t}(\delta)\right)}^{2}+\epsilon^{-1}\left\Vert \psi\right\Vert _{L^{2}\left(\Omega\backslash\Gamma_{t}(\delta)\right)}^{2}+\epsilon\left\Vert \nabla\psi\right\Vert _{L^{2}\left(\Omega\backslash\Gamma_{t}(\delta)\right)}^{2}\right)\nonumber \\
 & \quad+C_{1}\epsilon^{3}\left\Vert \nabla\psi\right\Vert _{L^{2}(\Omega)}^{2}-C_{2}\epsilon\left\Vert \psi\right\Vert _{L^{2}(\Omega)}^{2}\label{eq:N2}
\end{align}
for $C_{1},C_{2}>0$ and all $\epsilon\in\left(0,\epsilon_{1}\right)$,
after choosing $\epsilon_{1}$ so small that $\epsilon_{1}\leq\frac{1}{2}$
is fulfilled. Now, in order to prove (\ref{eq:spect}) we fix a constant
$c>C_{2}$ and $\epsilon\in\left(0,\epsilon_{0}\right)$ and consider
two different cases:
First, we assume 
\[
\int_{\Omega}\epsilon\left|\nabla\psi\right|^{2}+\epsilon^{-1}f''\left(c_{A}^{\epsilon}\right)\psi^{2}\d x>c\epsilon\left\Vert \psi\right\Vert _{L^{2}(\Omega)}^{2}
\]
which leads to the claim immediately, with $C_{2}=0.$
In the case
\[
\int_{\Omega}\epsilon\left|\nabla\psi\right|^{2}+\epsilon^{-1}f''\left(c_{A}^{\epsilon}\right)\psi^{2}\d x\leq c\epsilon\left\Vert \psi\right\Vert _{L^{2}(\Omega)}^{2}
\]
let $w\in H^{2}(\Omega)\cap H_{0}^{1}(\Omega)$
be the unique solution to $-\Delta w = \psi$.
Then \cite[Theorem~3.1]{chen} implies 
\begin{equation}
\tilde{C}\epsilon\left\Vert \psi\right\Vert _{L^{2}(\Omega)}^{2}\leq\left\Vert \nabla w\right\Vert _{L^{2}(\Omega)}^{2}.\label{eq:L2est}
\end{equation}
Moreover,$\left\Vert \psi\right\Vert _{H^{-1}(\Omega)}^{2}=\left\Vert \nabla w\right\Vert _{L^{2}(\Omega)}^{2}$
and thus we get
\begin{align*}
\int_{\Omega}\epsilon\left|\nabla\psi\right|^{2}+\epsilon^{-1}f'\left(c_{A}^{\epsilon}\right)\psi^{2}\d x & \geq C\left(\epsilon\left\Vert \psi\right\Vert _{L^{2}(\Omega)}^{2}+\epsilon^{-1}\left\Vert \psi\right\Vert _{L^{2}\left(\Omega\backslash\Gamma_{t}(\delta)\right)}+\epsilon\left\Vert \nabla^{\Gamma}\psi\right\Vert _{L^{2}\left(\Gamma_{t}(\delta)\right)}^{2}\right)\\
 & \quad+C\left(\epsilon\left\Vert \nabla\psi\right\Vert _{L^{2}\left(\Omega\backslash\Gamma_{t}(\delta)\right)}^{2}+\epsilon^{3}\left\Vert \nabla\psi\right\Vert _{L^{2}(\Omega)}^{2}\right)-\tilde{C}\left\Vert \psi\right\Vert _{H^{-1}(\Omega)}^{2}.
\end{align*}
 This proves the assertion.
\end{proof}

\section{Proof of Theorem \ref{Main} \label{chap:Main Proof}}

%The first step in the towards proving the main theorem lies in recalling
%certain important properties of the approximate solutions.\ref{eq:neumannphi}

\subsection{\label{subsec:The-Approximate-Solutions}The Approximate Solutions}

A major ingredient of this work is the construction of an approximate
solution, which satisfies (\ref{eq:StokesPart})\textendash (\ref{eq:StokesBdry})
up to a sufficiently high order. In the following we present a collection
of properties of the approximations, which are necessary to prove
Theorem \ref{Main} and are constructed in \cite{NSCH2}, alternatively see \cite{ichPhD}.
\begin{thm}
\label{thm:Main-Apprx-Structure}For every $\epsilon\in\left(0,1\right)$
there are
$\mathbf{v}_{A}^{\epsilon},\mathbf{w}_{1}^{\epsilon}\colon\Omega_{T_{0}}\rightarrow\mathbb{R}^{2}$ $c_{A}^{\epsilon},\mu_{A}^{\epsilon},p_{A}^{\epsilon}\colon\Omega_{T_{0}}\rightarrow\mathbb{R}$
and $\rs\colon\Omega_{T_{0}}\rightarrow\mathbb{R}^{2}$, $\rdiv,\rc,\rh\colon \Omega_{T_{0}}\rightarrow\mathbb{R}$
such that
\begin{align}
-\Delta\mathbf{v}_{A}^{\epsilon}+\nabla p_{A}^{\epsilon} & =\mu_{A}^{\epsilon}\nabla c_{A}^{\epsilon}+\rs,\label{eq:Stokesapp}\\
\mathrm{div}\mathbf{v}_{A}^{\epsilon} & =\rdiv,\label{eq:Divapp}\\
\partial_{t}c_{A}^{\epsilon}+\left(\mathbf{v}_{A}^{\epsilon}+\epsilon^{M-\frac{1}{2}}\left.\mathbf{w}_{1}^{\epsilon}\right|_{\Gamma}\xi\left(d_{\Gamma}\right)\right)\cdot\nabla c_{A}^{\epsilon} & =\Delta\mu_{A}^{\epsilon}+\rc,\label{eq:Cahnapp}\\
\mu_{A}^{\epsilon} & =-\epsilon\Delta c_{A}^{\epsilon}+\epsilon^{-1}f'\left(c_{A}^{\epsilon}\right)+\rh,\label{eq:Hilliardapp}
\end{align}
in $\Omega_{T_{0}}$ and
\begin{equation}
c_{A}^{\epsilon}=-1,\quad \mu_{A}^{\epsilon}=0,\quad \left(-2D_{s}\mathbf{v}_{A}^{\epsilon}+p_{A}^{\epsilon}\mathbf{I}\right)\mathbf{n}_{\partial\Omega}=\alpha_{0}\mathbf{v}_{A}^{\epsilon},\quad \rdiv=0\label{eq:boundaryconditions}
\end{equation}
are satisfied on $\partial_{T_{0}}\Omega$. If additionally
Assumption \ref{assu:Main-est} holds for $\epsilon_{0}\in\left(0,1\right)$,
$K\geq1$ and a family $\left(T_{\epsilon}\right)_{\epsilon\in\left(0,\epsilon_{0}\right)}\subset\left(0,T_{0}\right]$,
then there are some $\epsilon_{1}\in\left(0,\epsilon_{0}\right]$,
$C(K)>0$ depending on $K$ and $C_{K}:\left(0,T_{0}\right]\times\left(0,1\right]\rightarrow\left(0,\infty\right)$
(also dependent on $K$), which satisfies $C_{K}\left(T,\epsilon\right)\rightarrow0$
as $\left(T,\epsilon\right)\rightarrow0$, such that
\begin{align}
\int_{0}^{T_{\epsilon}}\left|\int_{\Omega}\rc(x,t)\varphi(x,t)\d x\right|\d t & \leq C_{K}\left(T_{\epsilon},\epsilon\right)\epsilon^{M}\left\Vert \varphi\right\Vert _{L^{\infty}\left(0,T_{\epsilon};H^{1}(\Omega)\right)},\label{eq:remcahn}\\
\int_{0}^{T_{\epsilon}}\left|\int_{\Omega}\rh(x,t)\left(c^{\epsilon}(x,t)-c_{A}^{\epsilon}(x,t)\right)\d x\right|\d t & \leq C_{K}\left(T_{\epsilon},\epsilon\right)\epsilon^{2M},\label{eq:remhill}\\
\left\Vert \rs\right\Vert _{L^{2}\left(0,T_{\epsilon};\left(H^{1}(\Omega)\right)'\right)}+\left\Vert \rdiv\right\Vert _{L^{2}\left(\Omega_{T_{\epsilon}}\right)} & \leq C(K)\epsilon^{M},\label{eq:remstokes}\\
\left\Vert \rh\nabla c_{A}^{\epsilon}\right\Vert _{L^{2}\left(0,T_{\epsilon};\left(H^{1}(\Omega)^{2}\right)'\right)} & \leq C(K)C\left(T_{\epsilon},\epsilon\right)\epsilon^{M}\label{eq:rch2-nablacae}\\
\left\Vert \rc\right\Vert _{L^{2}\left(\partial_{T_{\epsilon}}\Omega\left(\frac{\delta}{2}\right)\right)} & \leq C(K)\epsilon^{M}\label{eq:rch1-rch2-Linfbdry}
\end{align}
for all $\epsilon\in\left(0,\epsilon_{1}\right)$ and $\varphi\in L^{\infty}\left(0,T_{\epsilon};H^{1}(\Omega)\right)$.
\end{thm}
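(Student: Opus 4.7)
The proof is announced as being carried out in detail in the companion paper \cite{NSCH2}, so here I sketch only the strategy I would follow. The plan is to construct $(c_A^\epsilon,\mu_A^\epsilon,\mathbf{v}_A^\epsilon,p_A^\epsilon)$ by formally matched asymptotic expansions in powers of $\epsilon$ (augmented by fractional orders, see below), together with a leading-order velocity correction $\mathbf{w}_1^\epsilon$ that absorbs the convection term. Concretely, I would work with three types of expansions: an outer expansion in $\Omega_{T_0}^\pm\setminus\Gamma(2\delta;T_0)$ of the form $c_A^{\epsilon,\pm}=\pm1+\sum_{k\geq 1}\epsilon^k c_k^\pm$ (and analogously for $\mu_A^\epsilon,\mathbf{v}_A^\epsilon,p_A^\epsilon$), an inner expansion in $\Gamma(2\delta;T_0)$ in the stretched variable $\rho=(d_\Gamma-\epsilon h^\epsilon)/\epsilon$ of the form $c_A^\epsilon=\theta_0(\rho)+\sum_{k\geq 1}\epsilon^k c_k^I(\rho,s,t)$, with $h^\epsilon=h_0+\epsilon h_1+\ldots$ parametrising the perturbation of the interface, and finally a boundary-layer expansion near $\partial\Omega$ to enforce $c_A^\epsilon=-1$, $\mu_A^\epsilon=0$ and the Robin-type condition for $\mathbf{v}_A^\epsilon$. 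The three expansions are glued by the cut-off $\xi(d_\Gamma)$, just as in the structural ansatz \eqref{eq:fie} used in Assumption \ref{assu:Spektral}.

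Inserting these expansions into \eqref{eq:StokesPart}--\eqref{eq:StokesBdry} and collecting equal powers of $\epsilon$ yields a hierarchy of equations. At each order one gets an ODE in $\rho$ of the form $L_0 c_k^I:=(-\partial_{\rho\rho}+f''(\theta_0))c_k^I=F_k$, whose Fredholm solvability condition $\int_{\mathbb{R}}F_k\theta_0'\,d\rho=0$ determines the dynamics of $h_k$ and, at the leading order, recovers exactly the sharp interface system \eqref{eq:S-SAC1}--\eqref{eq:S-SAC8}. Matching the outer limits $\rho\to\pm\infty$ of the inner terms with the boundary values of the outer terms on $\Gamma_t$ yields the transmission conditions, in particular the Young--Laplace law and the Gibbs--Thomson relation. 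The genuinely new ingredient compared to \cite{nsac} is that at the stage where the convection $\mathbf{v}^\epsilon\cdot\nabla c^\epsilon$ would force us to solve a degenerate parabolic PDE on $\Gamma$ for $h_k$, we instead introduce \emph{fractional-order} terms $\epsilon^{k+1/2}$ in the ansatz and an additional velocity correction $\mathbf{w}_1^\epsilon$ concentrated near $\Gamma$; the compatibility conditions then reduce to linear elliptic problems on $\Gamma$ that are much easier to solve. Once all terms up to order $M$ are determined, $c_A^\epsilon,\mu_A^\epsilon,\mathbf{v}_A^\epsilon,p_A^\epsilon$ are defined by truncation, and the remainders $\rs,\rdiv,\rc,\rh$ are, by construction, the formal $O(\epsilon^M)$ terms, together with commutator errors coming from the cut-off.

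The remainder estimates then follow by unfolding the definitions. For the pointwise bounds \eqref{eq:remstokes}, \eqref{eq:rch1-rch2-Linfbdry} one uses that each remainder is a sum of products of smooth outer/boundary-layer data with inner profiles in $\mathcal{R}_\alpha$, Definition \ref{def:Ralphadef}, whose exponential decay in $\rho$ together with the change of variables $r=\epsilon(\rho+h^\epsilon)$ produces the prefactor $\epsilon^{1/2}\cdot\epsilon^{M-1/2}=\epsilon^M$ after taking $L^2$ in $\Gamma_t(2\delta)$; and that the boundary-layer construction makes $\rc$ vanish up to order $\epsilon^M$ on $\partial_{T_\epsilon}\Omega(\delta/2)$. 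For the dual-norm estimates \eqref{eq:remcahn} and \eqref{eq:rch2-nablacae} the key observation is that the leading contribution in each integrand is, up to smooth factors, of the form $\theta_0^{(k)}(\rho)\,\Phi(\operatorname{Pr}_{\Gamma_t}(x),t)$ with $\Phi$ smooth and $\int_{\mathbb{R}}\theta_0^{(k)}\,d\rho=0$ for the appropriate $k$; integrating against a test function $\varphi$ and expanding $\varphi$ around its value on $\Gamma_t$ via $\varphi(x)=\varphi(\operatorname{Pr}_{\Gamma_t}(x),t)+d_\Gamma(x,t)\int_0^1\partial_{\mathbf{n}}\varphi\,d\tau$ yields cancellation of the leading term and an extra factor of $\epsilon$ in the next one, producing the bound $C_K(T_\epsilon,\epsilon)\epsilon^M\|\varphi\|_{L^\infty(0,T_\epsilon;H^1(\Omega))}$. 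Estimate \eqref{eq:remhill} is obtained similarly after substituting the spectral decomposition of $R=c^\epsilon-c_A^\epsilon$ from Corollary \ref{Chen-hilf}, which is where Assumption \ref{assu:Main-est} enters; the factor $C_K(T_\epsilon,\epsilon)\to 0$ arises because the coefficients of the expansion depend continuously on $t$ and vanish at $t=0$ by the choice of initial data.

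The main obstacle is of course step by step handling of the hierarchy: one has to ensure that the $h_k$, $c_k^I$, $\mathbf{v}_k$, and in particular the fractional-order corrections can indeed be constructed smoothly and globally in $t\in[0,T_0]$, and that the resulting $c_A^\epsilon$ satisfies the structural Assumption \ref{assu:Spektral} (so that Theorem \ref{specHillmod} is applicable). The subtlety highlighted in the introduction and in \cite{NSCH2} is that the fractional-order terms, while smooth, are not uniformly bounded in $\epsilon$ in arbitrarily strong norms; one therefore has to choose $M$ sufficiently large (here $M\geq 4$) so that the resulting loss in powers of $\epsilon$ in \eqref{eq:remcahn}--\eqref{eq:rch1-rch2-Linfbdry} is still compensated by the $\epsilon^M$ gain from truncation, and to carry out all duality arguments on the level of $H^1$-test functions (rather than $L^2$) in order to exploit the oddness/cancellation structure of the inner profiles.
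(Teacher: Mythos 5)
The paper you are working from does not actually prove Theorem \ref{thm:Main-Apprx-Structure}: it is imported wholesale from the companion paper \cite{NSCH2} (see also \cite{ichPhD}), and only the \emph{structure} of the approximate solutions is recalled in Subsection \ref{subsec:The-Approximate-Solutions}. So there is no in-paper proof to compare your proposal against; what can be checked is whether your sketch is consistent with the structural information the paper does supply, and on the whole it is. Your three-zone matched-asymptotics scheme (outer, inner in the stretched variable $\rho=(d_\Gamma-\epsilon h^\epsilon)/\epsilon$, boundary layer near $\partial\Omega$, glued by $\xi(d_\Gamma)$), the Fredholm solvability conditions for $L_0=-\partial_{\rho\rho}+f''(\theta_0)$ determining the $h_k$ and recovering \eqref{eq:S-SAC1}--\eqref{eq:S-SAC8}, the role of $\mathbf{w}_1^\epsilon$ in absorbing the convection term, and the use of $\mathcal{R}_\alpha$-decay plus the change of variables $r=\epsilon(\rho+h_A^\epsilon)$ to produce the $\epsilon^{M}$ gain in the $L^2$-remainder bounds all match what is described there.

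Two points in your sketch do not match the actual construction and would matter if you carried it out. First, the ansatz does not contain a full series of half-integer orders $\epsilon^{k+1/2}$: there is exactly \emph{one} fractional-order correction at level $M-\tfrac12$ (namely $h_{M-\frac12}^\epsilon$, $\mu_{M-\frac12}^{\pm,\epsilon}$, $\mathbf{v}_{M-\frac12}^{\pm,\epsilon}$), entering $h_A^\epsilon$ as $\epsilon^{M-\frac32}h_{M-\frac12}^\epsilon$, and its whole point is that it is allowed to depend on $\epsilon$ and on the exact solution through $\mathbf{w}_1^\epsilon$; it is controlled only in $X_{T_\epsilon}$ (Lemma \ref{lem:hM-1}), not uniformly in strong norms. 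Second, the compatibility condition for this term does not reduce to a linear \emph{elliptic} problem on $\Gamma$: the space $X_{T}=L^2(0,T;H^{7/2}(\mathbb{T}^1))\cap H^1(0,T;H^{1/2}(\mathbb{T}^1))$ indicates a (nondegenerate) \emph{parabolic} problem on $\Gamma$; what the fractional order avoids is the \emph{degenerate} parabolic equation of \cite{nsac}, not time evolution altogether. Finally, be aware that \eqref{eq:remhill} and \eqref{eq:rch2-nablacae} are the genuinely delicate estimates: they require not just the oddness/cancellation of inner profiles but the spectral decomposition of $R$ from Proposition \ref{Rbar-Zerl} together with the bounds of Assumption \ref{assu:Main-est} and Lemma \ref{Wichtig}, so your one-sentence treatment of them hides most of the work done in \cite{NSCH2}.
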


In the following we will need a more intricate knowledge of the approximate
solutions. Let $\xi$ be a cut-off function satisfying (\ref{eq:cut-off}),
and we denote $\mathbf{v}^{\pm}:=\mathbf{v}|_{\Omega_{T_{0}}^{\pm}}$,
$\mu^{\pm}:=\mu|_{\Omega_{T_{0}}^{\pm}}$ for solutions $\mu,\mathbf{v}$
of (\ref{eq:S-SAC1})\textendash (\ref{eq:S-SAC8}). We assume that
$\mathbf{v}^{\pm}$, $\mu^{\pm}$ are smoothly extended to $\Omega_{T_{0}}^{\pm}\cup\Gamma\left(2\delta;T_{0}\right)$,
where \textbf{$\mathbf{v}^{\pm}$ }is moreover divergence free in
that region. We refer to \cite[Remark~3.1]{NSCH2},  for more details on this
extension and \cite[Remark~4.3]{NSCH2} for more information on the
structural details discussed below. We have
\begin{align*}
c_{A}^{\epsilon}(x,t) & =\xi\left(d_{\Gamma}(x,t)\right)c_{I}(x,t)+\left(1-\xi\left(d_{\Gamma}(x,t)\right)\right)c_{O,\mathbf{B}}(x,t),\\
\mu_{A}^{\epsilon}(x,t) & =\xi\left(d_{\Gamma}(x,t)\right)\mu_{I}(x,t)+\left(1-\xi\left(d_{\Gamma}(x,t)\right)\right)\mu_{O,\mathbf{B}}(x,t)+\epsilon^{M-\frac{1}{2}}\mu_{A,M-\frac{1}{2}}^{\epsilon}(x,t),\\
\mathbf{v}_{A}^{\epsilon}(x,t) & =\xi\left(d_{\Gamma}(x,t)\right)\mathbf{v}_{I}(x,t)+\left(1-\xi\left(d_{\Gamma}(x,t)\right)\right)\mathbf{v}_{O,\mathbf{B}}(x,t)+\epsilon^{M-\frac{1}{2}}\mathbf{v}_{A,M-\frac{1}{2}}^{\epsilon}(x,t)
\end{align*}
for $(x,t)\in\Omega_{T_{0}}$. Here $c_{O,\mathbf{B}}=\pm1+\mathcal{O}\left(\epsilon\right)$
in $C^{1}\left(\Omega_{T_{0}}^{\pm}\right)$ as $\epsilon\rightarrow0$,
with $\left\Vert c_{O,\mathbf{B}}\right\Vert _{C^{2}\left(\Omega_{T_{0}}^{\pm}\right)}\leq C$
and $\mu_{O,\mathbf{B}}=\mu^{\pm}+\mathcal{O}\left(\epsilon\right)$
and $\mathbf{v}_{O,\mathbf{B}}=\mathbf{v}^{\pm}+\mathcal{O}\left(\epsilon\right)$
in $L^{\infty}\left(\Omega_{T_{0}}^{\pm}\right)$ as $\epsilon\rightarrow0$.
Moreover, 
\[
c_{I}(x,t)=\sum_{k=0}^{M+1}\epsilon^{k}c_{k}\left(\rho(x,t),x,t\right)\;\forall(x,t)\in\Gamma\left(2\delta;T_{0}\right),
\]
where $c_{k}:\mathbb{R}\times\Gamma\left(2\delta;T_{0}\right)\rightarrow\mathbb{R}$
, $k\in\left\{ 0,\ldots,M+1\right\} $, are smooth and bounded functions,
which do not depend on $\epsilon$ and have bounded derivatives. Here
\[
\rho(x,t)=\frac{d_{\Gamma}(x,t)}{\epsilon}-h_{A}^{\epsilon}\left(S(x,t),t\right)\quad\forall(x,t)\in\Gamma\left(2\delta;T_{0}\right),
\]
where $h_{A}^{\epsilon}(s,t)=\sum_{k=0}^{M}\epsilon^{k}h_{k+1}(s,t)+\epsilon^{M-\frac{3}{2}}h_{M-\frac{1}{2}}^{\epsilon}(s,t)$
for $(s,t)\in\mathbb{T}^{1}\times\left[0,T_{0}\right]$
and $h_{k}$ are smooth and bounded functions independent of $\epsilon$
with bounded derivatives, for $k\in\left\{ 1,\ldots,M+1\right\} $.
Moreover, $\mu_{I}$ and $\mathbf{v}_{I}$ have the same kind of expansion with coefficients $\mu_{k}$ and $\mathbf{v}_{k}$, $k\in\left\{ 0,\ldots,M+1\right\} $.
In particular, we have 
\begin{align}
c_{0}(\rho,x,t) & =\theta_{0}(\rho),\;\mathbf{v}_{0}(\rho,x,t)=\mathbf{v}^{+}(x,t)\eta(\rho)-\mathbf{v}^{-}(x,t)\left(1-\eta(\rho)\right)\nonumber \\
\mu_{0}(\rho,x,t) & =\mu^{+}(x,t)\eta(\rho)-\mu^{-}(x,t)\left(1-\eta(\rho)\right)\label{eq:0teordnung}
\end{align}
for $(\rho,x,t)\in\mathbb{R}\times\Gamma\left(2\delta;T_{0}\right)$,
where $\eta:\mathbb{R}\rightarrow\left[0,1\right]$ is smooth and
satisfies $\eta=0$ in $\left(-\infty,-1\right]$, $\eta=1$ in $\left[1,\infty\right)$
and $\eta'\geq0$ in $\mathbb{R}$. The so-called inner terms satisfy
moreover $\nabla_{x}^{l}\partial_{t}^{m}\partial_{\rho}^{i}u\in\mathcal{R}_{\alpha}$
for some $\alpha>0$, where $i\geq1,m,l\geq0$ and $u=c_{k},\mu_{k},\mathbf{v}_{k}$
for $k\in\left\{ 0,\ldots,M+1\right\} $. Additionally, we note for
later use $h_{A}^{\epsilon}\left(s,0\right)=0$ for all $s\in\mathbb{T}^{1}$.

Regarding the structure of the fractional order terms, we have
\begin{align*}
\mu_{A,M-\frac{1}{2}}^{\epsilon} & =\xi\left(d_{\Gamma}\right)\mu_{M-\frac{1}{2}}^{\epsilon}+\left(1-\xi\left(d_{\Gamma}\right)\right)\left(\mu_{M-\frac{1}{2}}^{+,\epsilon}\overline{\chi_{\Omega_{T_{0}}^{+}}}+\mu_{M-\frac{1}{2}}^{-,\epsilon}\chi_{\Omega_{T_{0}}}^{-}\right),\\
\mathbf{v}_{A,M-\frac{1}{2}}^{\epsilon}(x,t) & =\xi\left(d_{\Gamma}\right)\mathbf{v}_{M-\frac{1}{2}}^{\epsilon}+\left(1-\xi\left(d_{\Gamma}\right)\right)\left(\mathbf{v}_{M-\frac{1}{2}}^{+,\epsilon}\overline{\chi_{\Omega_{T_{0}}^{+}}}+\mathbf{v}_{M-\frac{1}{2}}^{-,\epsilon}\chi_{\Omega_{T_{0}}}^{-}\right)
\end{align*}
in $\Omega_{T_{0}}$, where $\mu_{M-\frac{1}{2}}^{\pm,\epsilon},\mathbf{v}_{M-\frac{1}{2}}^{\pm,\epsilon}$
are functions defined on $\Omega_{T_{0}}^{\pm}\cup\Gamma\left(2\delta;T_{0}\right)$
and $\mu_{M-\frac{1}{2}}^{\epsilon}:=\mu_{M-\frac{1}{2}}^{+,\epsilon}\eta-\mu_{M-\frac{1}{2}}^{-,\epsilon}\left(1-\eta\right)$
and \textbf{$\mathbf{v}_{M-\frac{1}{2}}^{\epsilon}:=\mathbf{v}_{M-\frac{1}{2}}^{+,\epsilon}\eta-\mathbf{v}_{M-\frac{1}{2}}^{-,\epsilon}\left(1-\eta\right)$
}in $\Gamma\left(2\delta;T_{0}\right)$. As technical details, we
remark that 
\begin{equation}
\rh=\epsilon^{M-\frac{1}{2}}\mu_{M-\frac{1}{2}}^{-}+\mathcal{O}\left(\epsilon^{M+1}\right)\text{ in }L^{\infty}\left(\Omega_{T_{0}}\backslash\Gamma(2\delta)\right)\text{ as }\epsilon\rightarrow0,\label{eq:bdry-remainder}
\end{equation}
which is a direct consequence of \cite[Remark~4.4]{NSCH2} and that
$\mu_{M-\frac{1}{2}}^{-}=0$ on $\partial_{T_{0}}\Omega$, which is
discussed in \cite[Remark~4.3]{NSCH2}.

A key element in the proof of Theorem \ref{Main} is an  understanding
of the term $\mathbf{w}_{1}^{\epsilon}$ mentioned in Theorem \ref{thm:Main-Apprx-Structure}
and also of the appearing fractional order terms, which are in the
end a consequence of the appearance of $\mathbf{w}_{1}^{\epsilon}$.
This motivates the following analysis: For $T\in\left(0,T_{0}\right]$
we consider weak solutions $\tilde{\mathbf{w}}_{1}^{\epsilon}:\Omega_{T}\rightarrow\mathbb{R}^{2}$
and $q_{1}^{\epsilon}:\Omega_{T}\rightarrow\mathbb{R}$ of 
\begin{align}
-\Delta\tilde{\mathbf{w}}_{1}^{\epsilon}+\nabla q_{1}^{\epsilon} & =-\epsilon\operatorname{div}\left(\left(\nabla c_{A}^{\epsilon}-\mathbf{h}\right)\otimes_{s}\nabla R\right) &  & \text{in }\Omega_{T},\label{eq:w1}\\
\operatorname{div}\tilde{\mathbf{w}}_{1}^{\epsilon} & =0 &  & \text{in }\Omega_{T},\label{eq:w12}\\
\left(-2D_{s}\tilde{\mathbf{w}}_{1}^{\epsilon}+q_{1}^{\epsilon}\mathbf{I}\right)\cdot\mathbf{n}_{\partial\Omega} & =\alpha_{0}\tilde{\mathbf{w}}_{1}^{\epsilon} &  & \text{on }\partial_{T}\Omega\label{eq:w13}
\end{align}
 in the sense of (\ref{eq:instokesweak}). Here we denote $R:=c^{\epsilon}-c_{A}^{\epsilon}$
and we define $\mathbf{h}$ by 
\begin{equation}
\mathbf{h}(x,t):=-\xi\left(d_{\Gamma}(x,t)\right)\sum_{k=0}^{M+1}\epsilon^{k}\partial_{\rho}c_{k}\left(\rho(x,t),x,t\right)\epsilon^{M-\frac{3}{2}}\nabla^{\Gamma}h_{M-\frac{1}{2}}^{\epsilon}(x,t)\label{eq:hh}
\end{equation}
and $\otimes_{s}$ as $\mathbf{a}\otimes_{s}\mathbf{b}:=\mathbf{a}\otimes\mathbf{b}+\mathbf{b}\otimes\mathbf{a}$
for $\mathbf{a},\mathbf{b}\in\mathbb{R}^{n}$. We calculate 
\begin{align}
 & \left(\nabla c_{A}^{\epsilon}-\mathbf{h}\right)(x,t)\nonumber \\
 & =\xi'\left(d_{\Gamma}(x,t)\right)\nabla d_{\Gamma}(x,t)c_{I}(x,t)+\xi\left(d_{\Gamma}(x,t)\right)\left(\sum_{k=0}^{M+1}\epsilon^{k}\nabla_{x}c_{k}\left(\rho(x,t),x,t\right)\right)\nonumber \\
 & \quad+\xi\left(d_{\Gamma}(x,t)\right)\left(\sum_{k=0}^{M+1}\epsilon^{k}\partial_{\rho}c_{k}\left(\rho(x,t),x,t\right)\left(\rho(x,t),x,t\right)\left(\frac{1}{\epsilon}\nabla d_{\Gamma}(x,t)-\sum_{i=0}^{M}\epsilon^{i}\nabla^{\Gamma}h_{i+1}(x,t)\right)\right)\nonumber \\
 & \quad+\nabla\left(\left(1-\xi\left(d_{\Gamma}(x,t)\right)\right)c_{O,\mathbf{B}}(x,t)\right)\label{eq:ca-h}
\end{align}
for $(x,t)\in\Omega_{T_{0}}$. We understand the right-hand side of (\ref{eq:w1}) as a functional in $\left(V_{0}\right)'$
given by
\begin{equation}
\mathbf{f}^{\epsilon}\left(\psi\right):=\int_{\Omega}\epsilon\left(\left(\nabla c_{A}^{\epsilon}-\mathbf{h}\right)\otimes\nabla R+\nabla R\otimes\left(\nabla c_{A}^{\epsilon}-\mathbf{h}\right)\right):\nabla\psi\d x\label{eq:fepsh}
\end{equation}
for $\psi\in V_{0}$ and fixed $t\in\left[0,T\right]$. $\mathbf{w}_{1}^{\epsilon}$
as introduced in Theorem \ref{thm:Main-Apprx-Structure} is just a
rescaling of $\tilde{\mathbf{w}}_{1}^{\epsilon}$, i.e.,
\begin{equation}
\mathbf{w}_{1}^{\epsilon}=\frac{\tilde{\mathbf{w}}_{1}^{\epsilon}}{\epsilon^{M-\frac{1}{2}}}\label{eq:w1e}
\end{equation}
holds. Furthermore, we introduce
\begin{eqnarray}
X_{T} & = & L^{2}\left(0,T;H^{\frac{7}{2}}\left(\mathbb{T}^{1}\right)\right)\cap H^{1}\left(0,T;H^{\frac{1}{2}}\left(\mathbb{T}^{1}\right)\right)\label{eq:XT-1}
\end{eqnarray}
for $T\in\mathbb{R}_{+}\cup\left\{ \infty\right\} $, where we equip
$X_{T}$ with the norm 
\[
\left\Vert h\right\Vert _{X_{T}}=\left\Vert h\right\Vert _{L^{2}\left(0,T;H^{\frac{7}{2}}\left(\mathbb{T}^{1}\right)\right)}+\left\Vert h\right\Vert _{H^{1}\left(0,T;H^{\frac{1}{2}}\left(\mathbb{T}^{1}\right)\right)}+\left\Vert h|_{t=0}\right\Vert _{H^{2}\left(\mathbb{T}^{1}\right)}.
\]
Note that $X_{T}\hookrightarrow C^{0}\left(\left[0,T\right];H^{2}\left(\mathbb{T}^{1}\right)\right)$,
where the operator norm of the embedding can be bounded independently
of $T$.

The following lemma is shown in \cite[Lemma~3.13]{NSCH2} and enables
us to access the results obtained in Subsection \ref{chap:Spectral-Theory}.
\begin{lem}
\label{lem:spekholds}Let $\epsilon_{0}>0$, $T\in\left(0,T_{0}\right]$
and $\left(T_{\epsilon}\right)_{\epsilon\in\left(0,\epsilon_{0}\right)}\subset\left(0,T\right]$
be given. We assume that there is some $\bar{C}>0$ such that
\[
\sup_{\epsilon\in\left(0,\epsilon_{0}\right)}\left\Vert h_{M-\frac{1}{2}}^{\epsilon}\right\Vert _{X_{T_{\epsilon}}}\leq\bar{C}
\]
holds. Then there is $\epsilon_{1}\in\left(0,\epsilon_{0}\right]$
such that $c_{A}^{\epsilon}\left(.,t\right)$ satisfies Assumption
\ref{assu:Spektral} for all $t\in\left[0,T_{\epsilon}\right]$ and
$\epsilon\in\left(0,\epsilon_{1}\right)$, where the appearing constant
$C^{*}$ does not depend on $\epsilon$, $T_{\epsilon}$, $h_{M-\frac{1}{2}}^{\epsilon}$
or $\bar{C}$.
\end{lem}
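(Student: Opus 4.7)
The plan is to verify each of the structural requirements \eqref{eq:fie}--\eqref{eq:fnachunten} of Assumption \ref{assu:Spektral} directly from the explicit expansions of $c_A^\epsilon$ and $h_A^\epsilon$ summarized before the lemma, using the bound on $h_{M-\frac{1}{2}}^\epsilon$ only to control the fractional-order contribution.

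First I identify the ingredients of \eqref{eq:fie}. Writing $c_I=\theta_0(\rho)+\epsilon c_1(\rho,x,t)+\sum_{k=2}^{M+1}\epsilon^{k}c_k(\rho,x,t)$, the first-order inner term produced by matched asymptotics has the product form $c_1(\rho,x,t)=p^\epsilon(\operatorname{Pr}_{\Gamma_t}(x),t)\theta_1(\rho)$ with $\theta_1$ satisfying the solvability condition \eqref{eq:teta0} (this is the standard secular/Fredholm condition for the linearisation of \eqref{eq:optprofdef}, and is part of the construction in \cite{NSCH2}). Setting $\epsilon^{2}q^\epsilon:=\sum_{k=2}^{M+1}\epsilon^{k}c_k(\rho,x,t)$ and $c_A^{\epsilon,\pm}:=c_{O,\mathbf{B}}|_{\Omega^\pm_{T_0}}$ puts $c_A^\epsilon$ in exactly the form \eqref{eq:fie}, and the sign condition \eqref{eq:caepm} is immediate from $c_{O,\mathbf{B}}=\pm1+\mathcal{O}(\epsilon)$ in $C^1(\Omega_{T_0}^\pm)$ once $\epsilon_1$ is taken small enough.

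Next I control the functions appearing in \eqref{eq:Dasistes}--\eqref{eq:hbes} uniformly in $\epsilon$. The $\epsilon$-independent coefficients $c_k$, $h_k$ and $p^\epsilon$ (in fact $\epsilon$-independent at leading order, with lower-order corrections themselves uniformly smooth) have bounded derivatives by construction; hence the classical part of $h_A^\epsilon=\sum_{k=0}^M\epsilon^k h_{k+1}+\epsilon^{M-\tfrac{3}{2}}h_{M-\tfrac{1}{2}}^\epsilon$ is uniformly bounded in $C^1(\mathbb{T}^1\times[0,T_0])$. For the fractional piece, the continuous embedding $X_{T_\epsilon}\hookrightarrow C^0([0,T_\epsilon];H^2(\mathbb{T}^1))$ (with operator norm independent of $T_\epsilon$, as noted after \eqref{eq:XT-1}) combined with $H^2(\mathbb{T}^1)\hookrightarrow C^1(\mathbb{T}^1)$ yields
\[
\sup_{(s,t)\in\mathbb{T}^1\times[0,T_\epsilon]}\bigl(|h_{M-\tfrac12}^\epsilon|+|\partial_s h_{M-\tfrac12}^\epsilon|\bigr)\leq C\,\bar C.
\]
Since $M\geq 4$, the prefactor $\epsilon^{M-3/2}$ absorbs $C\bar C$ once $\epsilon_1$ is small enough, giving \eqref{eq:hbes} with a constant $C^{*}$ that depends neither on $\epsilon$, $T_\epsilon$, $h_{M-\tfrac12}^\epsilon$ nor $\bar C$. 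The bound on $p^\epsilon$ in \eqref{eq:Dasistes} is clear from its explicit form; for $q^\epsilon$ one simply uses $\tfrac{\epsilon}{\epsilon+|d_\Gamma-\epsilon h^\epsilon|}|q^\epsilon|\leq |q^\epsilon|\leq\sum_{k\geq 2}\epsilon^{k-2}\|c_k\|_\infty\leq C^{*}$.

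Finally, to obtain \eqref{eq:obfi} and \eqref{eq:fnachunten}, the $L^\infty$ bound on $c_A^\epsilon$ follows termwise from the previous estimates; the bound on $\nabla^\Gamma c_A^\epsilon$ in $\Gamma(\delta)$ uses Lemma \ref{lem:surfright} to collect the derivatives of the inner expansion, noting that each factor $\partial_\rho c_k$ lies in $\mathcal{R}_\alpha$ and is therefore uniformly bounded, while the factors $\nabla^\Gamma h_k$ and $\nabla^\Gamma h_{M-\tfrac12}^\epsilon$ have already been controlled. Outside $\Gamma(\delta)$ we have $c_A^\epsilon=c_{O,\mathbf{B}}+\mathcal{O}(\epsilon^{M-\tfrac12})$ in $L^\infty$, which is $\pm1+\mathcal{O}(\epsilon)$ in each $\Omega^\pm_{T_0}$; by continuity of $f''$ and $f''(\pm1)>0$, shrinking $\epsilon_1$ once more yields \eqref{eq:fnachunten}. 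The only genuinely $\epsilon$-sensitive step is the fractional-order estimate for $h_{M-\tfrac12}^\epsilon$, and this is precisely where the hypothesis $\|h_{M-\tfrac12}^\epsilon\|_{X_{T_\epsilon}}\leq\bar C$ and the $T_\epsilon$-uniform embedding into $C^0H^2$ are essential; everything else is a bookkeeping exercise about the expansion constructed in \cite{NSCH2}.
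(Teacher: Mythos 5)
Your verification is essentially correct, but note that the paper itself gives no proof of this lemma: it is deferred entirely to \cite[Lemma~3.13]{NSCH2}, so your write-up is a genuine reconstruction rather than a parallel of an argument in this paper. Most of what you do is exactly the bookkeeping one would expect, and the one place where you use the hypothesis --- controlling $\epsilon^{M-\frac32}h_{M-\frac12}^\epsilon$ in $C^0([0,T_\epsilon];C^1(\mathbb{T}^1))$ via the $T_\epsilon$-uniform embedding $X_{T_\epsilon}\hookrightarrow C^0([0,T_\epsilon];H^2(\mathbb{T}^1))$ and then shrinking $\epsilon_1$ depending on $\bar C$ so that $C^{*}$ itself stays independent of $\bar C$ --- is precisely the mechanism the paper uses elsewhere (cf.\ \eqref{eq:epsab1}--\eqref{eq:epsab2} in the proof of Lemma \ref{Wichtig}). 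Two caveats. First, the identification $c_1(\rho,x,t)=p^\epsilon(\operatorname{Pr}_{\Gamma_t}(x),t)\theta_1(\rho)$ with $\theta_1$ bounded and satisfying \eqref{eq:teta0} is \emph{not} derivable from the structural summary in Subsection \ref{subsec:The-Approximate-Solutions} (which only records that the $c_k$ are smooth, bounded, with $\partial_\rho c_k\in\mathcal{R}_\alpha$); you correctly flag that it must be imported from the construction in \cite{NSCH2}, but this is the one genuinely non-trivial input, so your proof is no more self-contained on this point than the paper's citation. Second, your justification of \eqref{eq:fnachunten} in the transition region $\Gamma(2\delta)\setminus\Gamma(\delta)$ via ``$c_A^\epsilon=c_{O,\mathbf{B}}+\mathcal{O}(\epsilon^{M-\frac12})$'' implicitly appeals to the matching estimate \eqref{eq:matching}, which in this paper is stated only under Assumption \ref{assu:Main-est} --- not a hypothesis of the present lemma. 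The clean route is the one the paper uses for \eqref{eq:epsab1}: for $\epsilon\leq\epsilon_1(\bar C)$ one has $|\rho(x,t)|\geq\frac{\delta}{2\epsilon}$ there, so $\theta_0(\rho)$ is exponentially close to $\pm1$ and the remaining terms of $c_I$ are $\mathcal{O}(\epsilon)$; hence the convex combination $\xi c_I+(1-\xi)c_{O,\mathbf{B}}$ is uniformly close to $\pm1$ and $f''(c_A^\epsilon)\geq\frac12 f''(\pm1)$ follows without invoking \eqref{eq:matching}.
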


The following technical proposition is an essential ingredient for many estimates.
Essentially it states that an error $R$ can be split into a multiple of $\theta_{0}'$ plus perturbation terms that is of higher order
in $\epsilon$. 
\begin{prop}
\label{Rbar-Zerl} Let $\epsilon_{0}>0$, $T\in\left(0,T_{0}\right]$
and a family $\left(T_{\epsilon}\right)_{\epsilon\in\left(0,\epsilon_{0}\right)}\subset\left(0,T\right]$
be given. Let Assumption \ref{assu:Main-est} hold true for $c_{A}=c_{A}^{\epsilon}$
and we assume that there is some $\bar{C}\geq1$ such that
\[
\sup_{\epsilon\in\left(0,\epsilon_{0}\right)}\left\Vert h_{M-\frac{1}{2}}^{\epsilon}\right\Vert _{X_{T_{\epsilon}}}\leq\bar{C}.
\]
We denote
\[
I_{\epsilon}^{s,t}:=\left(-\frac{\delta}{\epsilon}-h_{A}^{\epsilon}(s,t),\frac{\delta}{\epsilon}-h_{A}^{\epsilon}(s,t)\right)\quad \text{and} \quad \beta(s,t):=\left\Vert \theta_{0}'\right\Vert _{L^{2}\left(I_{\epsilon}^{s,t}\right)}^{-1}
\]
for $\epsilon\in\left(0,\epsilon_{0}\right)$, $s\in\mathbb{T}^{1}$
and $t\in\left[0,T_{\epsilon}\right]$. Then there is some $\epsilon_{1}\in\left(0,\epsilon_{0}\right]$
and there exist $Z\in L^{2}\left(0,T_{\epsilon};H^{1}\left(\mathbb{T}^{1}\right)\right)$,
$F_{2}^{\mathbf{R}}\in L^{2}\left(0,T_{\epsilon};H^{1}\left(\Gamma_{t}(\delta)\right)\right)$
and smooth $F_{1}^{\mathbf{R}}\colon \Gamma\left(\delta;T_{\epsilon}\right)\rightarrow\mathbb{R}$
such that 
\begin{equation}
R(x,t)=\epsilon^{-\frac{1}{2}}Z\left(S(x,t),t\right)\left(\beta\left(S(x,t),t\right)\theta_{0}'\left(\rho(x,t)\right)+F_{1}^{\mathbf{R}}(x,t)\right)+F_{2}^{\mathbf{R}}(x,t)\label{eq:Zerl}
\end{equation}
for almost all $(x,t)\in\Gamma\left(\delta;T_{\epsilon}\right)$
and all $\epsilon\in\left(0,\epsilon_{1}\right)$. 
Furthermore, there exist $C(K)$, $C>0$ independent of
$\epsilon$, $T_{\epsilon}$, $h_{M-\frac{1}{2}}^{\epsilon}$ and
$\bar{C}$ such that $\Vert \beta\Vert _{L^{\infty}(\mathbb{T}^{1}\times (0,T_{\epsilon}))}\leq C$ and
\begin{align}
\left\Vert F_{2}^{\mathbf{R}}\right\Vert _{L^{2}\left(\Gamma\left(\delta;T_{\epsilon}\right)\right)}^{2}&\leq C(K)\epsilon^{2M+1},\label{eq:Zerl-F2R}\\
\left\Vert Z\right\Vert _{L^{2}\left(0,T_{\epsilon};H^{1}\left(\mathbb{T}^{1}\right)\right)}^{2}+\left\Vert F_{2}^{\mathbf{R}}\right\Vert _{L^{2}\left(0,T_{\epsilon};H^{1}\left(\Gamma_{t}(\delta)\right)\right)}^{2}&\leq C(K)\epsilon^{2M-1}\label{eq:Zerl-main}
\end{align}
for all $\epsilon\in\left(0,\epsilon_{1}\right)$ as well as
\begin{equation}
\sup_{t\in\left[0,T_{\epsilon}\right]}\sup_{s\in\mathbb{T}^{1}}\int_{I_{\epsilon}^{s,t}}\left(\left|F_{1}^{\mathbf{R}}\left(\rho,s,t\right)\right|^{2}+\left|\partial_{\rho}F_{1}^{\mathbf{R}}\left(\rho,s,t\right)\right|^{2}\right)J^{\epsilon}\left(\rho,s,t\right)\d\rho\leq C(K)\epsilon^{2}\label{eq:Zerl-F1R}
\end{equation}
for all $\epsilon\in\left(0,\epsilon_{1}\right)$, where
\[
F_{1}^{\mathbf{R}}\left(\rho,s,t\right):=F_{1}^{\mathbf{R}}\left(X\left(\epsilon\left(\rho+h_{A}^{\epsilon}(s,t)\right),s,t\right)\right)
\]
 for $X$ as in (\ref{eq:diffeo}) and 
$
J^{\epsilon}(\rho,s,t):=J(\epsilon(\rho+h_{A}^{\epsilon}(s,t)),s,t)
$
 with $J(r,s,t):=\det\left(D_{\left(r,s\right)}X\right)(r,s,t)$. 
\end{prop}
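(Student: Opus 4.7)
\medskip

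\noindent\textbf{Proof plan.}
The strategy is to apply Corollary~\ref{Chen-hilf} pointwise in $t\in[0,T_\eps]$ to the function $\psi:=R(\cdot,t)$ and then integrate the resulting estimates in time, using the energy-type bound from Assumption~\ref{assu:Main-est}. The definitions of $Z$, $F_1^{\mathbf{R}}$ and $F_2^{\mathbf{R}}$ will be read off from the output of the corollary.

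\smallskip
\noindent\emph{Step 1 (Spectral setup).} First, I would invoke Lemma~\ref{lem:spekholds} with the uniform bound $\sup_\eps \|h_{M-\frac12}^\eps\|_{X_{T_\eps}}\le \bar C$ to produce some $\eps_1\in(0,\eps_0]$ for which $c_A^\eps(\cdot,t)$ satisfies Assumption~\ref{assu:Spektral} for every $t\in[0,T_\eps]$ and every $\eps\in(0,\eps_1)$, with a constant $C^\ast$ independent of $t$, $\eps$ and $\bar C$. This makes Corollary~\ref{Chen-hilf} available along the whole time interval with constants uniform in $t$.

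\smallskip
\noindent\emph{Step 2 (Pointwise decomposition).} For each $t\in[0,T_\eps]$ set
\[
\Lambda_\eps(t):=\int_\Omega \eps|\nabla R(x,t)|^2+\tfrac1{\eps}f''(c_A^\eps(x,t))R(x,t)^2\d x,
\]
so that by \eqref{eq:Main-est-c} one has $\int_0^{T_\eps}\Lambda_\eps(t)\d t\le K^2\eps^{2M}$. Apply Corollary~\ref{Chen-hilf} to $\psi:=R(\cdot,t)$ with this $\Lambda_\eps=\Lambda_\eps(t)$ to obtain the decomposition
\[
R(r,s,t)=\eps^{-\frac12}Z(s,t)\bigl(\beta(s,t)\theta_0'(\rho)+\Psi(\rho,s,t)\bigr)+\psi^{\mathbf{R}}(r,s,t),
\]
and set $F_1^{\mathbf{R}}:=\Psi$ and $F_2^{\mathbf{R}}:=\psi^{\mathbf{R}}$, which yields \eqref{eq:Zerl}. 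Measurability of $Z$, $F_1^{\mathbf{R}}$ and $F_2^{\mathbf{R}}$ in $t$ follows from the explicit construction in the proof of the corollary ($Z=(\psi_1,R)_J$, and $\psi_1$ depends on $t$ only through $h_A^\eps$ and $J$, both smooth in $t$).

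\smallskip
\noindent\emph{Step 3 (Time integration of the estimates).} Squaring and integrating \eqref{eq:Chenneu3} in time, and using $\|R\|_{L^2(\Omega_{T_\eps})}^2\le K^2\eps^{2M-1}$ from \eqref{eq:Main-est-a}, gives
\[
\|F_2^{\mathbf{R}}\|_{L^2(\Gamma(\delta;T_\eps))}^2\le C\bigl(\eps\,K^2\eps^{2M}+\eps^2\,K^2\eps^{2M-1}\bigr)\le C(K)\eps^{2M+1},
\]
which is \eqref{eq:Zerl-F2R}. Integrating \eqref{eq:Chenneu2} in time and using the same bounds yields
\[
\|Z\|_{L^2(0,T_\eps;H^1(\mathbb{T}^1))}^2+\|F_2^{\mathbf{R}}\|_{L^2(0,T_\eps;H^1(\Gamma_t(\delta)))}^2\le C\bigl(K^2\eps^{2M-1}+K^2\eps^{2M-1}\bigr),
\]
giving \eqref{eq:Zerl-main}. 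The pointwise-in-$(s,t)$ bound \eqref{eq:Zerl-F1R} on $F_1^{\mathbf{R}}=\Psi$ is exactly the content of \eqref{eq:Chenneu4}, uniformly in $t$ because the constant there depends only on $C^\ast$.

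\smallskip
\noindent\emph{Step 4 (Uniform bound on $\beta$).} Since $|h_A^\eps|$ is uniformly bounded (this follows from the structural expansion of $h_A^\eps$ together with the hypothesis on $h_{M-\frac12}^\eps$), for all sufficiently small $\eps$ the interval $I_\eps^{s,t}$ contains, say, $(-\frac{\delta}{2\eps},\frac{\delta}{2\eps})$. Combined with the exponential decay of $\theta_0'$ from \eqref{eq:optimopti} this gives
\[
\|\theta_0'\|_{L^2(I_\eps^{s,t})}^2\ge \|\theta_0'\|_{L^2(\mathbb{R})}^2-Ce^{-\alpha\delta/\eps},
\]
uniformly in $(s,t)$, so $\beta$ is bounded by a constant independent of $\eps$, $T_\eps$, $h_{M-\frac12}^\eps$ and $\bar C$.

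\smallskip
The main conceptual obstacle is only the bookkeeping between the pointwise-in-$t$ constants furnished by Corollary~\ref{Chen-hilf} and the time-integrated bounds required here, together with checking that the $t$-dependence of all quantities produced by the corollary is measurable with integrable norms; everything else is a direct transcription of Corollary~\ref{Chen-hilf} once Lemma~\ref{lem:spekholds} is in place.
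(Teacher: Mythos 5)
Your proposal is correct and follows essentially the same route as the paper's proof: verify Assumption \ref{assu:Spektral} via Lemma \ref{lem:spekholds}, apply Corollary \ref{Chen-hilf} slicewise in $t$ with $\Lambda_\epsilon(t)$, and integrate the resulting estimates in time using \eqref{eq:Main-est-a} and \eqref{eq:Main-est-c}. The only cosmetic difference is that you define $\Lambda_\epsilon(t)$ as an integral over $\Omega$ rather than over $\Gamma_t(\delta)$, which is harmless since \eqref{eq:fnachunten} makes the contribution from $\Omega\setminus\Gamma_t(\delta)$ nonnegative.
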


\begin{proof}
Let $\epsilon_{1}$ be chosen as in Lemma \ref{lem:spekholds}. Then
$c_{A}^{\epsilon}$ satisfies Assumption \ref{assu:Spektral} for
all $\epsilon\in\left(0,\epsilon_{1}\right)$. Let
\[
\Lambda_{\epsilon}(t):=\int_{\Gamma_{t}(\delta)}\epsilon\left|\nabla R\right|^{2}+\epsilon^{-1}f''\left(c_{A}^{\epsilon}\right)\left(R\right){}^{2}\d x.
\]
Then (\ref{eq:Main-est-c}) and (\ref{eq:fnachunten}) imply
\begin{equation}
\int_{0}^{T_{\epsilon}}\Lambda_{\epsilon}(t)\leq CK^{2}\epsilon^{2M}\qquad \text{for all }\eps\in (0,\eps_1).\label{eq:lambdae}
\end{equation}
Hence for each $t\in\left[0,T_{\epsilon}\right]$, Lemma \ref{Chen-hilf}
implies the existence of functions $Z\left(.,t\right)\in H^{1}\left(\mathbb{T}^{1}\right)$,
$F_{1}^{\mathbf{R}}\left(.,t\right):\Gamma_{t}(\delta)\rightarrow\mathbb{R}$
and $F_{2}^{\mathbf{R}}\left(.,t\right)\in H^{1}\left(\Gamma_{t}(\delta)\right)$
such that (\ref{eq:Zerl}) holds for almost all $x\in\Gamma_{t}(\delta)$
and all $\epsilon\in\left(0,\epsilon_{1}\right)$. Moreover,
\[
\left\Vert F_{2}^{\mathbf{R}}\left(.,t\right)\right\Vert _{L^{2}\left(\Gamma_{t}(\delta)\right)}^{2}\leq C\left(\epsilon\Lambda_{\epsilon}(t)+\epsilon^{2}\left\Vert R\left(.,t\right)\right\Vert _{L^{2}\left(\Gamma_{t}(\delta)\right)}^{2}\right)
\]
\[
\left\Vert Z\left(.,t\right)\right\Vert _{H^{1}\left(\mathbb{T}^{1}\right)}^{2}+\left\Vert F_{2}^{\mathbf{R}}\left(.,t\right)\right\Vert _{H^{1}\left(\Gamma_{t}(\delta)\right)}^{2}\le C\left(\left\Vert R\left(.,t\right)\right\Vert _{L^{2}\left(\Gamma_{t}(\delta)\right)}^{2}+\frac{\Lambda_{\epsilon}(t)}{\epsilon}\right)
\]
for all $\epsilon\in\left(0,\epsilon_{2}\right)$. Note that $C>0$ is independent of $\epsilon$, $T_{\epsilon}$ and $\bar{C}$
since $C^\ast$ in Lemma~\ref{lem:spekholds} is independent of these quantities as well. Since
$\left\Vert R\right\Vert _{L^{2}\left(\Omega_{T_{\epsilon}}\right)}^{2}\leq CK^{2}\epsilon^{2M-1}$
and (\ref{eq:lambdae}) hold true due to (\ref{eq:Main-est}), integration
over $\left(0,T_{\epsilon}\right)$ yields (\ref{eq:Zerl-F2R}) and
(\ref{eq:Zerl-main}).
Finally, (\ref{eq:Zerl-F1R}) is a direct consequence of (\ref{eq:Chenneu4}).
\end{proof}
Now we show the main estimate for $\tilde{\mathbf{w}}_{1}^{\epsilon}$
:
\begin{lem}
\label{Wichtig} Let $\epsilon_{0}>0$, $T'\in\left(0,T_{0}\right]$
and a family $\left(T_{\epsilon}\right)_{\epsilon\in\left(0,\epsilon_{0}\right)}\subset\left(0,T'\right]$
be given. Let Assumption \ref{assu:Main-est} hold true for $c_{A}=c_{A}^{\epsilon}$
and we assume that there is $\bar{C}\geq1$ such that
\begin{equation}
\sup_{\epsilon\in\left(0,\epsilon_{0}\right)}\left\Vert h_{M-\frac{1}{2}}^{\epsilon}\right\Vert _{X_{T_{\epsilon}}}\leq\bar{C}.\label{eq:bes}
\end{equation}
Then there exists a constant $C(K)>0$, which is
independent of $\epsilon$, $T_{\epsilon}$, $h_{M-\frac{1}{2}}^{\epsilon}$
and $\bar{C}$, and some $\epsilon_{1}\in\left(0,\epsilon_{0}\right)$
such that 
\begin{align}
  \left\Vert \tilde{\mathbf{w}}_{1}^{\epsilon}\right\Vert _{L^{2}\left(0,T;H^{1}(\Omega)\right)} & \leq C(K)\epsilon^{M-\frac{1}{2}}\quad \text{for all }\epsilon\in\left(0,\epsilon_{1}\right), T\in\left(0,T_{\epsilon}\right]. \label{eq:w1epsab}
\end{align}
\end{lem}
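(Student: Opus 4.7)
The natural first step is to apply Theorem~\ref{exinstokes} to the weak formulation of \eqref{eq:w1}--\eqref{eq:w13}. Since $\operatorname{div}\tilde{\mathbf{w}}_1^\epsilon = 0$, we have $\tilde{\mathbf{w}}_1^\epsilon \in V_0(\Omega)$, and the estimate \eqref{eq:instokesab} reduces the problem to controlling, for each $t\in[0,T_\epsilon]$,
\[
\|\tilde{\mathbf{w}}_1^\epsilon(\cdot,t)\|_{H^1(\Omega)} \;\leq\; C(\Omega,\alpha_0)\,\|\mathbf{f}^\epsilon(\cdot,t)\|_{V_0'(\Omega)},
\]
with $\mathbf{f}^\epsilon$ as in \eqref{eq:fepsh}. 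Thus everything amounts to bounding $|\mathbf{f}^\epsilon(\psi)|$ for divergence-free $\psi\in V_0(\Omega)$ with $\|\psi\|_{H^1}\leq 1$.

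I would split the integrand in \eqref{eq:fepsh} according to the two natural expansions available: the structural decomposition \eqref{eq:ca-h} of $\nabla c_A^\epsilon - \mathbf{h}$ and the spectral decomposition \eqref{eq:Zerl} of $R$. The dominant terms live in $\Gamma_t(\delta)$, where the expansion \eqref{eq:ca-h} gives $\nabla c_A^\epsilon - \mathbf{h} = \epsilon^{-1}\theta_0'(\rho)\mathbf{n} + O_{L^\infty}(1)$, and Proposition~\ref{Rbar-Zerl} combined with Lemma~\ref{lem:surfright} yields
\[
\nabla R \;=\; \epsilon^{-3/2} Z(S,t)\,\beta(S,t)\,\theta_0''(\rho)\,\mathbf{n} \;+\; (\text{lower order in }\epsilon),
\]
where the remainder is controlled by $F_1^{\mathbf{R}}$, $F_2^{\mathbf{R}}$, tangential derivatives of $Z$ and $\beta$, as well as smooth bounded geometry. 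Outside $\Gamma_t(\delta)$, $\nabla c_A^\epsilon$ is uniformly bounded and $\nabla R$ satisfies \eqref{eq:Main-est-a}, which already gives $\epsilon\|(\nabla c_A^\epsilon - \mathbf{h})\otimes\nabla R\|_{L^2(\Omega_{T_\epsilon}\setminus \Gamma(\delta,T_\epsilon))} \leq C(K)\epsilon^{M+1/2}$ directly.

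The main obstacle, and the heart of the proof, is handling the leading-order contribution
\[
I_0^\epsilon(\psi) \;:=\; \epsilon^{-3/2}\int_{\Gamma_t(\delta)} Z(S,t)\,\beta(S,t)\,\theta_0'(\rho)\,\theta_0''(\rho)\bigl[(\mathbf{n}\otimes\mathbf{n}+\mathbf{n}\otimes\mathbf{n}):\nabla\psi\bigr]\,dx,
\]
since naive $L^2$ bounds would only yield $O(\epsilon^{-1/2})$, far from the desired $\epsilon^{M-1/2}$. Two facts must be exploited in tandem. First, $\operatorname{div}\psi = 0$ gives $\mathbf{n}\cdot(\nabla\psi)\mathbf{n} = -\boldsymbol{\tau}\cdot(\nabla\psi)\boldsymbol{\tau} = -\operatorname{div}^\Gamma\psi$, so the inner product against $\mathbf{n}\otimes\mathbf{n}$ carries only tangential derivatives of $\psi$, which are $O(1)$ rather than $O(\epsilon^{-1})$ after passing to stretched coordinates $(\rho,s)$. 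Second, changing variables via \eqref{eq:Xhat}, $dx = \epsilon J^\epsilon\,d\rho\,ds$, converts the prefactor $\epsilon^{-3/2}$ into $\epsilon^{-1/2}$; then writing $\theta_0'\theta_0'' = \tfrac12\tfrac{d}{d\rho}(\theta_0')^2$ and integrating by parts in $\rho$ (boundary terms vanish exponentially by \eqref{eq:optimopti}) produces an extra factor of $\epsilon$ from differentiating $J^\epsilon$, $\operatorname{div}^\Gamma\psi$, or $Z\beta$ in the tangential variable. Taking absolute values, applying Cauchy--Schwarz in $s$ and using $\|Z(\cdot,t)\|_{H^1(\mathbb{T}^1)}$ together with $\|\psi\|_{H^1(\Omega)}\leq 1$ yields
\[
|I_0^\epsilon(\psi)| \;\leq\; C\,\epsilon^{1/2}\,\|Z(\cdot,t)\|_{H^1(\mathbb{T}^1)}.
\]

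All other contributions to $\mathbf{f}^\epsilon(\psi)$ are estimated more directly. Terms involving $F_2^{\mathbf{R}}$ and its gradient are paired with the $L^\infty$-bounded factors in $\nabla c_A^\epsilon - \mathbf{h}$ after one integration by parts that redistributes derivatives to $\psi$, and bounded using \eqref{eq:Zerl-F2R}--\eqref{eq:Zerl-main}. Terms carrying $F_1^{\mathbf{R}}$ use the pointwise-in-$s$ estimate \eqref{eq:Zerl-F1R} together with the exponential decay in $\rho$ of $\partial_\rho c_k$ (Definition~\ref{def:Ralphadef}); the extra factor of $\epsilon$ saved by \eqref{eq:Zerl-F1R} compensates the singular $\epsilon^{-1}$ in $\nabla c_A^\epsilon$. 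Collecting everything gives the pointwise-in-$t$ bound
\[
\|\mathbf{f}^\epsilon(\cdot,t)\|_{V_0'(\Omega)} \;\leq\; C\,\epsilon^{1/2}\bigl(\|Z(\cdot,t)\|_{H^1(\mathbb{T}^1)} + \|F_2^{\mathbf{R}}(\cdot,t)\|_{H^1(\Gamma_t(\delta))}\bigr) + C(K)\epsilon^{M-1/2}.
\]
Squaring, integrating in $t$, and using \eqref{eq:Zerl-main} then yields \eqref{eq:w1epsab}. Lemma~\ref{lem:spekholds} is invoked at the outset to guarantee that Proposition~\ref{Rbar-Zerl} applies uniformly in $\epsilon\in(0,\epsilon_1)$ under hypothesis \eqref{eq:bes}, and ensures that the constants are independent of $\bar C$, $T_\epsilon$, and $h_{M-1/2}^\epsilon$.
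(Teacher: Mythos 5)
Your overall architecture coincides with the paper's: reduce to bounding $\Vert\mathbf{f}^\epsilon\Vert_{V_0'(\Omega)}$ via Theorem~\ref{exinstokes}, split into $\Gamma_t(\delta)$ and its complement, isolate the leading term $\theta_0'(\rho)\,\mathbf{n}\otimes\mathbf{n}\,\partial_{\mathbf{n}}R:\nabla\psi$, use $\operatorname{div}\psi=0$ to replace $\mathbf{n}\otimes\mathbf{n}:\nabla\psi$ by $-\operatorname{div}^{\Gamma}\psi$, and insert the decomposition of Proposition~\ref{Rbar-Zerl}; the $F_1^{\mathbf{R}}$, $F_2^{\mathbf{R}}$ and outer-region contributions are treated essentially as in the paper.

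There is, however, a genuine gap in your treatment of the leading contribution $I_0^\epsilon$. After the change of variables you write $\theta_0'\theta_0''=\tfrac12\tfrac{d}{d\rho}\left(\theta_0'\right)^2$ and integrate by parts in $\rho$ \emph{while $\operatorname{div}^{\Gamma}\psi$ is still in the integrand}. The $\rho$-derivative then falls on $Z\beta$ (zero), on $J^{\epsilon}$ (harmless, a factor $\epsilon$), or on $\operatorname{div}^{\Gamma}\psi\left(X(\epsilon(\rho+h_A^{\epsilon}),s,t)\right)$; the last term equals $\epsilon\,\partial_{\mathbf{n}}\!\left(\operatorname{div}^{\Gamma}\psi\right)$, a \emph{second} derivative of the test function, which is not controlled for $\psi\in V_0\subset H^1(\Omega)^2$. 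The advertised extra factor of $\epsilon$ is therefore useless, since it multiplies a quantity that need not be finite (and "differentiating in the tangential variable" is not what an integration by parts in $\rho$ does). The paper performs the two integrations by parts in the opposite order: first the tangential one of Lemma~\ref{lem:divlm}, which removes the derivative from $\psi$ altogether at the cost of a curvature term, exponentially small boundary terms on $\partial(\Gamma_t(\delta))$ (via \eqref{eq:optimopti}), and $\nabla^{\Gamma}$ acting on $\partial_{\mathbf{n}}\left((\theta_0')^2\right)Z\beta$; only then is $\partial_{\mathbf{n}}$ moved off $(\theta_0')^2$, so that it lands on $\psi$ itself (producing $\partial_{\mathbf{n}}\psi\in L^2$) and on the commutator \eqref{eq:commutator}. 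With that ordering every term is bounded by $\Vert Z\Vert_{L^{2}(0,T;H^{1}(\mathbb{T}^{1}))}\Vert\psi\Vert_{L^{2}(0,T;H^{1}(\Omega))}$ and \eqref{eq:Zerl-main} closes the estimate. Your plan must be amended to include this tangential integration by parts before any normal one is attempted.
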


\begin{proof}
First of all, we note that there exists $\epsilon_{1}\in\left(0,\epsilon_{0}\right]$,
which depends on $\bar{C}$, such that
\begin{equation}
\left|\frac{d_{\Gamma}(x,t)}{\epsilon}-h_{A}^{\epsilon}\left(S(x,t),t\right)\right|\geq\frac{\delta}{2\epsilon}\label{eq:epsab1}
\end{equation}
for all $(x,t)\in\Gamma\left(2\delta;T_{\epsilon}\right)\backslash\Gamma\left(\delta;T_{\epsilon}\right)$
and $\epsilon\in\left(0,\epsilon_{1}\right)$ because of $X_{T}\hookrightarrow C^{0}\left(\left[0,T\right];C^{1}\left(\mathbb{T}^{1}\right)\right)$
and (\ref{eq:bes}). After possibly choosing $\epsilon_{1}>0$ smaller,
we may ensure that
\begin{equation}
\left|\theta_{0}\left(\rho(x,t)\right)-\chi_{\Omega^{+}}(x,t)+\chi_{\Omega^{-}}(x,t)\right|+\left|\theta_{0}'\left(\rho(x,t)\right)\right|\leq C_{1}e^{-C_{2}\frac{\delta}{2\epsilon}}\label{eq:expab}
\end{equation}
holds true for all $(x,t)\in\Gamma\left(2\delta;T_{\epsilon}\right)\backslash\Gamma\left(\delta;T_{\epsilon}\right)$
and $\epsilon\in\left(0,\epsilon_{1}\right)$, as a consequence of
(\ref{eq:optimopti}), where $C_{1},C_{2}>0$ can be chosen independently
of $\epsilon_{1}$. As a last condition on $\epsilon_{1}$ we impose
that $\epsilon_{1}^{M-\frac{3}{2}}\leq\frac{1}{\bar{C}}$, which yields
\begin{equation}
\epsilon^{M-\frac{3}{2}}\left\Vert h_{M-\frac{1}{2}}^{\epsilon}\right\Vert _{X_{T_{\epsilon}}}\leq1\label{eq:epsab2}
\end{equation}
 for all $\epsilon\in\left(0,\epsilon_{1}\right)$. 

Since $\twe$ is a weak solution to (\ref{eq:w1})\textendash (\ref{eq:w13})
in $\Omega_{T_{\epsilon}}$, we have due to Theorem \ref{exinstokes}
\[
\left\Vert \twe\right\Vert _{L^{2}\left(0,T;H^{1}(\Omega)\right)}\leq C\left\Vert \mathbf{f}^{\epsilon}\right\Vert _{L^{2}\left(0,T;V_{0}'(\Omega)\right)}
\]
for all $T\in\left(0,T_{\epsilon}\right)$, where $\mathbf{f}^{\epsilon}$
is given as in (\ref{eq:fepsh}). Let in the following $T\in\left(0,T_{\epsilon}\right]$
and $\psi\in L^{2}\left(0,T_{\epsilon};V_{0}(\Omega)\right)$,
$\psi\neq0$.
As a starting point, we decompose
\begin{align}\nonumber
  &\int_{\Omega_{T}}\epsilon\left(\nabla c_{A}^{\epsilon}-\mathbf{h}\right)\otimes\nabla R:\nabla\psi\d(x,t)=\\
  & \int_{\Gamma\left(\delta,T\right)}\epsilon\left(\nabla c_{A}^{\epsilon}-\mathbf{h}\right)\otimes\nabla R:\nabla\psi\d(x,t) +\int_{\Omega_{T}\backslash\Gamma\left(\delta;T\right)}\epsilon\left(\nabla c_{A}^{\epsilon}-\mathbf{h}\right)\otimes\nabla R:\nabla\psi\d(x,t)\label{eq:splitup}
\end{align}
and estimate the two integrals on the right hand side separately.
The second summand in $\mathbf{f}^{\epsilon}$ may be treated analogously.

To estimate the second integral in (\ref{eq:splitup}), note that
$c_{I},\nabla_{x}c_{k},\partial_{\rho}c_{k},\nabla^{\Gamma}h_{i}\in L^{\infty}\left(\Gamma(2\delta)\right),$
$i\in\left\{ 1,\ldots,M+1\right\} $, $k\in\left\{ 0,\ldots,M+1\right\} $,
$c_{O,\mathbf{B}},\nabla c_{O,\mathbf{B}}\in L^{\infty}\left(\Omega_{T_{0}}^{\pm}\right)$
and that we may employ (\ref{eq:expab}). Thus, $\left|\nabla c_{A}^{\epsilon}(x,t)-\mathbf{h}(x,t)\right|\leq C_{1}\left(1+\frac{1}{\epsilon}e^{-C_{2}\frac{\delta}{2\epsilon}}\right)$
for all $(x,t)\in\Omega_{T_{\epsilon}}\backslash\Gamma\left(\delta;T_{\epsilon}\right)$
and $\epsilon\in\left(0,\epsilon_{1}\right)$ and we may estimate
\begin{align*}
\int_{0}^{T}\int_{\Omega\backslash\Gamma_{t}(\delta)}\left|\epsilon\left(\nabla c_{A}^{\epsilon}-\mathbf{h}\right)\otimes\nabla R:\nabla\psi\right|\d x\d t & \leq C\epsilon\left\Vert \nabla R\right\Vert _{L^{2}\left(0,T;L^{2}\left(\Omega\backslash\Gamma_{t}(\delta)\right)\right)}\left\Vert \psi\right\Vert _{L^{2}\left(0,T;H^{1}(\Omega)\right)}\\
 & \leq C(K)\epsilon^{M+\frac{1}{2}}\left\Vert \psi\right\Vert _{L^{2}\left(0,T;H^{1}(\Omega)\right)}
\end{align*}
for $T\in\left(0,T_{\epsilon}\right)$, where we used (\ref{eq:Main-est-a})
in the last inequality. Dealing with the first integral on the right
hand side of (\ref{eq:splitup}) is more complicated. We compute
\begin{align}
&\int_{\Gamma(\delta;T)}  \epsilon\left(\nabla c_{A}^{\epsilon}-\mathbf{h}\right)\otimes\nabla R:\nabla\psi\d(x,t)\nonumber \\
 & =\int_{\Gamma(\delta;T)}\theta_{0}'(\rho)\left(\mathbf{n}-\epsilon\left(\sum_{i=0}^{M}\epsilon^{i}\nabla^{\Gamma}h_{i+1}\right)\right)\otimes\nabla R:\nabla\psi\d(x,t)\nonumber \\
 & \quad+\int_{\Gamma(\delta;T)}\epsilon\left(\nabla\left(c_{A}^{\epsilon}-\theta_{0}(\rho)\right)-\left(\mathbf{h}+\theta_{0}^{'}(\rho)\epsilon^{M-\frac{3}{2}}\nabla^{\Gamma}h_{M-\frac{1}{2}}^{\epsilon}\right)\right)\otimes\nabla R:\nabla\psi\d(x,t),\label{eq:1term}
\end{align}
where we employ the shortened notations $\rho=\rho(x,t)$
and $\mathbf{n}=\mathbf{n}\left(S(x,t),t\right)$. 
As
$$
\left(c_{A}^{\epsilon}-\theta_{0}\circ\rho\right)(x,t)=\sum_{i=1}^{M+1}\epsilon^{i}c_{i}(\rho(x,t),x,t)
$$
for all $(x,t)\in\Gamma\left(\delta;T_{\epsilon}\right)$
we find that there exists some $C>0$ independent of $K$ and $\epsilon$
such that 
\[
\left|\nabla\left(c_{A}^{\epsilon}-\theta_{0}(\rho)\right)-\left(\mathbf{h}+\theta_{0}^{'}(\rho)\epsilon^{M-\frac{3}{2}}\nabla^{\Gamma}h_{M-\frac{1}{2}}^{\epsilon}\right)\right|\leq C
\]
for all $(x,t)\in\Gamma\left(\delta;T_{\epsilon}\right)$.
Thus 
\begin{align*}
\int_{0}^{T}\int_{\gt} & \left|\epsilon\left(\nabla\left(c_{A}^{\epsilon}-\theta_{0}(\rho)\right)-\left(\mathbf{h}+\theta_{0}^{'}(\rho)\epsilon^{M-\frac{3}{2}}\nabla^{\Gamma}h_{M-\frac{1}{2}}^{\epsilon}\right)\right)\otimes\nabla R:\nabla\psi\right|\d x\d t\\
 & \leq C\epsilon\left\Vert \nabla R\right\Vert _{L^{2}\left(\Gamma_(\delta,T)\right)}\left\Vert \psi\right\Vert _{L^{2}\left(0,T;H^{1}(\Omega)\right)} \leq C(K)\epsilon^{M-\frac{1}{2}}\left\Vert \psi\right\Vert _{L^{2}\left(0,T;H^{1}(\Omega)\right)}
\end{align*}
for $T\in\left(0,T_{\epsilon}\right]$ and $\epsilon\in\left(0,\epsilon_{1}\right)$,
by (\ref{eq:Main-est}). 

Using the boundedness of $\theta_{0}'$ in $L^{\infty}\left(\mathbb{R}\right)$
and that of $\nabla^{\Gamma}h_{i}$ in $L^{\infty}\left(\Gamma(2\delta)\right)$,
$i\in\left\{ 1,\ldots,M+1\right\} $, we also find 
\begin{align*}
  &\int_{0}^{T}\int_{\gt}\left|\theta_{0}'(\rho)\mathbf{n}\otimes\nabla^{\Gamma}R:\nabla\psi\right|\d x\d t\\
  & \qquad \leq C\left\Vert \nabla^{\Gamma}R\right\Vert _{L^{2}\left(\Gamma(\delta,T)\right)}\left\Vert \psi\right\Vert _{L^{2}\left(0,T;H^{1}(\Omega)\right)} \leq C(K)\epsilon^{M-\frac{1}{2}}\left\Vert \psi\right\Vert _{L^{2}\left(0,T;H^{1}(\Omega)\right)},
\\
&  \int_{0}^{T}\!\int_{\gt}\!\left|\epsilon\theta_{0}'(\rho)\left(\sum_{i=0}^{M}\epsilon^{i}\nabla^{\Gamma}h_{i+1}\right)\!\otimes\nabla R:\nabla\psi\right|\!\d x\d t\\
  & \qquad \leq C\epsilon\left\Vert \nabla R\right\Vert _{L^{2}\left(\Gamma(\delta,T)\right)}\left\Vert \psi\right\Vert _{L^{2}\left(0,T;H^{1}(\Omega)\right)} \leq C(K)\epsilon^{M-\frac{1}{2}}\left\Vert \psi\right\Vert _{L^{2}\left(0,T;H^{1}(\Omega)\right)}
\end{align*}
by (\ref{eq:Main-est}). Hence, plugging these results into (\ref{eq:1term}),
we obtain
\begin{align*}
\left|\,\int_{\Gamma\left(\delta;T\right)}\epsilon\left(\nabla c_{A}^{\epsilon}-\mathbf{h}\right)\otimes\nabla R:\nabla\psi\d(x,t)\right| &\leq  \mathcal{I}+C(K)\epsilon^{M-\frac{1}{2}}\left\Vert \psi\right\Vert _{L^{2}\left(0,T;H^{1}(\Omega)\right)}
\end{align*}
for $T\in\left(0,T_{\epsilon}\right)$ and $\epsilon\in\left(0,\epsilon_{1}\right)$, where
\[
\text{\ensuremath{\mathcal{I}}}:=\left|\,\int_{\Gamma\left(\delta;T\right)}\theta_{0}'(\rho)\mathbf{n}\otimes\mathbf{n}\partial_{\mathbf{n}}R:\nabla\psi\d(x,t)\right|.
\]
 Since $\psi\in V_{0}$,
we have $\operatorname{div}\psi=0$, which implies by (\ref{eq:surfdivdecomp})
that $\operatorname{div}^{\Gamma}\psi=-\mathbf{n}\otimes\mathbf{n}:\nabla\psi$
holds. As the assumptions of Proposition \ref{Rbar-Zerl} are satisfied,
we may estimate $\mathcal{I}$ using (\ref{eq:Zerl}) and obtain
\begin{align*}
\mathcal{I} & =\left|\,\int_{\Gamma\left(\delta;T\right)}\theta_{0}'(\rho)\partial_{\mathbf{n}}\left(\epsilon^{-\frac{1}{2}}Z(S(x,t),t)\left(\beta(S(x,t),t)\theta_{0}'(\rho)+F_{1}^{\mathbf{R}}\right)+F_{2}^{\mathbf{R}}\right)\operatorname{div}^{\Gamma}\psi\d(x,t)\right|\\
 & \leq\left|\,\int_{0}^{T}\int_{\Gamma_{t}(\delta)}\frac{1}{2}\partial_{\mathbf{n}}\left(\theta_{0}'(\rho)^{2}\right)\epsilon^{-\frac{1}{2}}Z(S(x,t),t)\beta(S(x,t),t)\mbox{div}^{\Gamma}\psi\d x\d t\right|\\
 & \quad+C_{1}\left|\,\int_{0}^{T}\int_{\mathbb{T}^{1}}\int_{-\frac{\delta}{\epsilon}-h_{A}^{\epsilon}(s,t)}^{\frac{\delta}{\epsilon}-h_{A}^{\epsilon}(s,t)}\theta_{0}'(\rho)\epsilon^{-\frac{1}{2}}Z(s,t)\partial_{\rho}F_{1}^{\mathbf{R}}(\rho,s,t)\mbox{div}^{\Gamma}\psi J^{\epsilon}(\rho,s,t)\d\rho\d s\d t\right|\\
 & \quad+C_{2}\left\Vert F_{2}^{\mathbf{R}}\right\Vert _{L^{2}\left(0,T;H^{1}\left(\Gamma_{t}(\delta)\right)\right)}\left\Vert \psi\right\Vert _{L^{2}\left(0,T;H^{1}\left(\Gamma_{t}(\delta)\right)\right)}\\
 & =:\mathcal{J}_{1}+\mathcal{J}_{2}+\mathcal{J}_{3}.
\end{align*}
Here we used the same notations as in Proposition~\ref{Rbar-Zerl}
and in the first lines the short notation $\rho=\rho(x,t)$.
Now (\ref{eq:Zerl-main}) implies 
\[
\mathcal{J}_{3}\leq C(K)\epsilon^{M-\frac{1}{2}}\left\Vert \psi\right\Vert _{L^{2}\left(0,T;H^{1}\left(\Gamma_{t}(\delta)\right)\right)}
\]
 and we may estimate $\mathcal{J}_{2}$ by
\begin{align*}
\mathcal{J}_{2} & \leq C\epsilon^{-1}\left\Vert \psi\right\Vert _{L^{2}\left(0,T;H^{1}\left(\Gamma_{t}(\delta)\right)\right)}\left(\int_{0}^{T}\int_{\mathbb{T}^{1}}Z(s,t)^{2}\int_{-\frac{\delta}{\epsilon}-h_{A}^{\epsilon}(s,t)}^{\frac{\delta}{\epsilon}-h_{A}^{\epsilon}(s,t)}\left(\partial_{\rho}F_{1}^{\mathbf{R}}\left(\rho,s,t\right)\right)^{2}J^{\epsilon}\d\rho\d s\d t\right)^{\frac{1}{2}}\\
 & \leq C(K)\epsilon^{M-\frac{1}{2}}\left\Vert \psi\right\Vert _{L^{2}\left(0,T;H^{1}\left(\Gamma_{t}(\delta)\right)\right)},
\end{align*}
where we used (\ref{eq:Zerl-F1R}) in the last line. To treat the remaining integral, we may use Lemma~\ref{lem:divlm}
to get 
\begin{align*}
\mathcal{J}_{1} & \leq\left|\int_{0}^{T}\int_{\Gamma_{t}(\delta)}\frac{1}{2}\nabla^{\Gamma}\left(\partial_{\mathbf{n}}\left(\theta_{0}'(\rho)^{2}\right)\epsilon^{-\frac{1}{2}}Z\left(S(x,t),t\right)\beta\left(S(x,t),t\right)\right)\cdot\psi\d x\d t\right|\\
 & \quad+\left|\int_{0}^{T}\int_{\Gamma_{t}(\delta)}\partial_{\mathbf{n}}\left(\theta_{0}'(\rho)^{2}\right)\epsilon^{-\frac{1}{2}}Z\left(S(x,t),t\right)\beta\left(S(x,t),t\right)\psi\cdot\mathbf{n}\kappa(x,t)\d x\d t\right|\\
 & \quad+C\sum_{\pm}\int_{0}^{T}\int_{\mathbb{T}^{1}}\left|\partial_{\rho}\left(\theta_{0}'\left(\frac{\pm\delta}{\epsilon}-h_{A}^{\epsilon}(s,t)\right)^{2}\right)\epsilon^{-\frac{3}{2}}Z(s,t)\beta(s,t)\psi\left(\pm \delta,s,t\right)\right|\d s\d t\\
% & \quad+C\int_{0}^{T}\int_{\mathbb{T}^{1}}\left|\partial_{\rho}\left(\theta_{0}'\left(\frac{-\delta}{\epsilon}-h_{A}^{\epsilon}(s,t)\right)^{2}\right)\epsilon^{-\frac{3}{2}}Z(s,t)\beta(s,t)\psi\left(-\delta,s,t\right)\right|\d s\d t\\
 & :=\mathcal{J}_{1}^{1}+\mathcal{J}_{1}^{2}+\mathcal{J}_{1}^{3,+}+\mathcal{J}_{1}^{3,-}.
\end{align*}
Now 
\begin{align*}
\mathcal{J}_{1}^{3,\pm} & \leq C_{1}\epsilon^{-\frac{3}{2}}e^{-C_{2}\frac{\delta}{2\epsilon}}\int_{0}^{T}\int_{\mathbb{T}^{1}}\left|Z(s,t)\right|\sup_{r\in\left[-\delta,\delta\right]}\left|\psi(r,s,t)\right|\d s\d t\\
 & \leq C(K)\epsilon^{M-\frac{1}{2}}\left\Vert \psi\right\Vert _{L^{2}\left(0,T;H^{1}\left(\Gamma_{t}(\delta)\right)\right)},
\end{align*}
where we used (\ref{eq:expab}) and the uniform bound on $\beta$
in the first step and $H^{1}\left(\Gamma_{t}(\delta)\right)\hookrightarrow L^{2,\infty}\left(\Gamma_{t}(\delta)\right)$
(cf.\ Lemma \ref{L4inf}) in the second step.  For $\mathcal{J}_{1}^{2}$, we use integration
by parts and get 
\begin{align*}
\mathcal{J}_{1}^{2} & \leq\left|\int_{0}^{T}\int_{\Gamma_{t}(\delta)}\left(\theta_{0}'(\rho)\right)^{2}\epsilon^{-\frac{1}{2}}Z\left(S(x,t),t\right)\beta\left(S(x,t),t\right)\partial_{\mathbf{n}}\psi\cdot\mathbf{n}\left(S(x,t),t\right)\kappa(x,t)\d x\d t\right|\\
 & \quad+C\int_{0}^{T}\int_{\Gamma_{t}(\delta)}\left|\left(\theta_{0}'(\rho)\right)^{2}\epsilon^{-\frac{1}{2}}Z\left(S(x,t),t\right)\beta\left(S(x,t),t\right)\psi\right|\d x\d t\\
 & \quad+C(K)e^{-C_{2}\frac{\delta}{2\epsilon}}\left\Vert \psi\right\Vert _{L^{2}\left(0,T;H^{1}\left(\Gamma_{t}(\delta)\right)\right)}\\
 & \leq C\epsilon^{-\frac{1}{2}}\left\Vert Z\right\Vert _{L^{2}\left(0,T;H^{1}\left(\mathbb{T}^{1}\right)\right)}\left\Vert \psi\right\Vert _{L^{2}\left(0,T;H^{1}\left(\Gamma_{t}(\delta)\right)\right)}\epsilon^{\frac{1}{2}}\left\Vert \left(\theta_{0}'\right)^{2}\right\Vert _{L^{2}\left(\mathbb{R}\right)}\\
 & \quad+C(K)e^{-C_{2}\frac{\delta}{2\epsilon}}\left\Vert \psi\right\Vert _{L^{2}\left(0,T;H^{1}\left(\Gamma_{t}(\delta)\right)\right)}\\
 & \leq C(K)\epsilon^{M-\frac{1}{2}}\left\Vert \psi\right\Vert _{L^{2}\left(0,T;H^{1}\left(\Gamma_{t}(\delta)\right)\right)},
\end{align*}
where the exponential decaying term in the first inequality is a consequence
of the appearing boundary integral, which may be estimated as in the
case of $\mathcal{J}_{1}^{3,\pm}$. Moreover, we used a change of
variables $r\mapsto\frac{r}{\epsilon}-h_{A}^{\epsilon}$ in the second
step and (\ref{eq:Zerl-main}) in the last step.

Now we discuss  $\mathcal{J}_{1}^{1}$ \textendash{} the last term
we need to estimate. Note that by the definition of $\beta$ in Proposition
\ref{Rbar-Zerl}, we have
\begin{align*}
\nabla^{\Gamma}\beta(s,t) & =-\frac{1}{\left\Vert \theta_{0}'\right\Vert _{L^{2}(I_{\epsilon}^{s,t})}^{2}}\int_{-\frac{\delta}{\epsilon}-h_{A}^{\epsilon}(s,t)}^{\frac{\delta}{\epsilon}-h_{A}^{\epsilon}(s,t)}\frac{1}{2}\frac{d}{d\rho}\left(\theta_{0}'(\rho)^{2}\right)\d\rho\left(-\nabla^{\Gamma}h_{A}^{\epsilon}\right) \leq C_{1}e^{-C_{2}\frac{\delta}{2\epsilon}}
\end{align*}
for all $\epsilon\in\left(0,\epsilon_{1}\right)$, due to (\ref{eq:expab})
and $\epsilon^{M-\frac{3}{2}}\left\Vert h_{M-\frac{1}{2}}^{\epsilon}\right\Vert _{X_{T_{\epsilon}}}\leq1$,
cf.\ (\ref{eq:epsab2}). Thus, we compute
\begin{align*}
\mathcal{J}_{1}^{1} & \leq\left|\int_{0}^{T}\int_{\Gamma_{t}(\delta)}\frac{1}{2}\partial_{\mathbf{n}}\nabla^{\Gamma}\left(\theta_{0}'\left(\rho(x,t)\right)^{2}\right)\epsilon^{-\frac{1}{2}}Z\left(S(x,t),t\right)\beta\left(S(x,t),t\right)\cdot\psi\d x\d t\right|\\
 & \quad+\left|\int_{0}^{T}\int_{\Gamma_{t}(\delta)}\frac{1}{2}\left[\partial_{\mathbf{n}},\nabla^{\Gamma}\right]\left(\theta_{0}'\left(\rho(x,t)\right)^{2}\right)\epsilon^{-\frac{1}{2}}Z\left(S(x,t),t\right)\beta\left(S(x,t),t\right)\cdot\psi\d x\d t\right|\\
 & \quad+\left|\int_{0}^{T}\int_{\Gamma_{t}(\delta)}\frac{1}{2}\partial_{\mathbf{n}}\left(\theta_{0}'\left(\rho(x,t)\right)^{2}\right)\epsilon^{-\frac{1}{2}}\nabla^{\Gamma}\left(Z\left(S(x,t),t\right)\beta\left(S(x,t),t\right)\right)\cdot\psi\d x\d t\right|\\
 & \leq C_{1}\int_{0}^{T}\int_{\Gamma_{t}(\delta)}\left|\partial_{\rho}\left(\theta_{0}'\left(\rho(x,t)\right)^{2}\right)\nabla^{\Gamma}h_{A}^{\epsilon}\epsilon^{-\frac{1}{2}}Z\left(S(x,t),t\right)\beta\left(S(x,t),t\right)\cdot\partial_{\mathbf{n}}\psi\right|\d x\d t\\
 & \quad+C_{2}\int_{0}^{T}\int_{\Gamma_{t}(\delta)}\left|\partial_{\rho}\left(\theta_{0}'\left(\rho(x,t)\right)^{2}\right)\nabla^{\Gamma}h_{A}^{\epsilon}\epsilon^{-\frac{1}{2}}Z\left(S(x,t),t\right)\beta\left(S(x,t),t\right)\cdot\psi\right|\d x\d t\\
 & \quad+C_{3}\int_{0}^{T}\int_{\Gamma_{t}(\delta)}\left|\left(\theta_{0}'\left(\rho(x,t)\right)^{2}\right)\epsilon^{-\frac{1}{2}}\nabla^{\Gamma}Z\left(S(x,t),t\right)\beta\left(S(x,t),t\right)\cdot\partial_{\mathbf{n}}\psi\right|\d x\d t\\
 & \quad+C_{4}\int_{0}^{T}\int_{\Gamma_{t}(\delta)}\left|\left(\theta_{0}'\left(\rho(x,t)\right)^{2}\right)\epsilon^{-\frac{1}{2}}\partial_{s}Z\left(S(x,t),t\right)\beta\left(S(x,t),t\right)\psi\right|\d x\d t\\
 & \quad+C_{5}e^{-C_{6}\frac{\delta}{2\epsilon}}\left\Vert \psi\right\Vert _{L^{2}\left(0,T;H^{1}\left(\Gamma_{t}(\delta)\right)\right)}\left\Vert Z\right\Vert _{L^{2}\left(0,T;H^{1}\left(\mathbb{T}^{1}\right)\right)}\\
 & \leq C(K)\epsilon^{M-\frac{1}{2}}\left\Vert \psi\right\Vert _{L^{2}\left(0,T;H^{1}\left(\Gamma_{t}(\delta)\right)\right)}.
\end{align*}
Here we used the definition of $\left[\partial_{\mathbf{n}},\nabla^{\Gamma}\right]$
in the first estimate (cf.\ (\ref{eq:comuutatordef})), integration
by parts, (\ref{eq:commutator}) and the exponential decay of $\nabla^{\Gamma}\beta$
and the boundary terms in the second step. In the third step we again
used $\epsilon^{M-\frac{3}{2}}\left\Vert h_{M-\frac{1}{2}}^{\epsilon}\right\Vert _{X_{T_{\epsilon}}}\leq1$.
This concludes the proof.
\end{proof}
Regarding the fractional order terms, we have the following bounds,
which are a result of \cite{NSCH2}, Theorem 3.15. This enables us
to use (\ref{eq:w1epsab}), whenever Assumption \ref{assu:Main-est}
is satisfied.
\begin{lem}
\label{lem:hM-1}Let $\epsilon_{0}\in\left(0,1\right)$. If Assumption
\ref{assu:Main-est} holds for $c_{A}=c_{A}^{\epsilon}$, then there exist
$\epsilon_{1}\in\left(0,\epsilon_{0}\right]$ and a constant $C(K)>0$
independent of $\epsilon$ such that 
\begin{equation}
\big\Vert h_{M-\frac{1}{2}}^{\epsilon}\big\Vert _{X_{T_{\epsilon}}}+%\leq C(K)%\label{eq:heps0,5}
\big\Vert \mu_{M-\frac{1}{2}}^{\pm,\epsilon}\big\Vert _{Z_{T_{\epsilon}}}+\big\Vert \mathbf{v}_{M-\frac{1}{2}}^{\pm,\epsilon}\big\Vert _{L^{6}\left(0,T_{\epsilon};H^{2}\left(\Omega^{\pm}(t)\right)\right)}\leq C(K)\label{eq:muv0,5est}
\end{equation}
for all $\epsilon\in\left(0,\epsilon_{1}\right)$, where $Z_{T_{\epsilon}}:=L^{2}\left(0,T_{\epsilon};H^{2}(\Omega^{\pm}(t))\right)\cap L^{6}\left(0,T_{\epsilon};H^{1}(\Omega^{\pm}(t))\right)$.
\end{lem}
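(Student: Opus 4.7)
The strategy is to exploit the fact that the fractional order terms are constructed precisely to absorb the contribution of $\tilde{\mathbf{w}}_{1}^{\epsilon}$ (equivalently $\mathbf{w}_{1}^{\epsilon}$) in the approximate solution. Lemma~\ref{Wichtig} already provides $\Vert \tilde{\mathbf{w}}_{1}^{\epsilon}\Vert_{L^{2}(0,T_{\epsilon};H^{1}(\Omega))}\le C(K)\epsilon^{M-\frac{1}{2}}$ under Assumption~\ref{assu:Main-est}, i.e.\ $\Vert\mathbf{w}_{1}^{\epsilon}\Vert_{L^{2}(H^{1})}\le C(K)$ by \eqref{eq:w1e}, and this bound together with trace estimates on $\Gamma_{t}$ is what feeds the whole proof. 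The first step is to collect, from the construction in \cite{NSCH2}, the coupled system satisfied by the triple $(h_{M-\frac{1}{2}}^{\epsilon},\mu_{M-\frac{1}{2}}^{\pm,\epsilon},\mathbf{v}_{M-\frac{1}{2}}^{\pm,\epsilon})$ that results from collecting all terms of order $\epsilon^{M-\frac{1}{2}}$ in the residual equations \eqref{eq:Stokesapp}--\eqref{eq:Hilliardapp} (and the corresponding expansions in the inner region), with the initial condition $h_{M-\frac{1}{2}}^{\epsilon}|_{t=0}=0$ coming from $h_{A}^{\epsilon}(\cdot,0)=0$.

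Next I would write down the evolution equation for $h_{M-\frac{1}{2}}^{\epsilon}$. Following the derivation of the sharp interface limit \eqref{eq:S-SAC1}--\eqref{eq:S-SAC8}, linearization of the Mullins--Sekerka coupling around the limit solution produces a parabolic pseudodifferential equation of the form
\[
D_{t,\Gamma}h_{M-\frac{1}{2}}^{\epsilon}+\mathcal{A}h_{M-\frac{1}{2}}^{\epsilon}=g^{\epsilon}\quad\text{on }\mathbb{T}^{1}\times(0,T_{\epsilon}),
\]
where $\mathcal{A}$ is a third-order, positive, self-adjoint operator on $\mathbb{T}^{1}$ arising as the Dirichlet-to-Neumann operator associated to the two-phase Laplace problem for $\mu_{M-\frac{1}{2}}^{\pm,\epsilon}$ composed with $\sigma H_{\Gamma_{t}}^{\prime}[h]$, and $g^{\epsilon}$ depends linearly on the trace $\mathbf{n}\cdot\mathbf{w}_{1}^{\epsilon}|_{\Gamma_{t}}$ together with lower order, already controlled, contributions from previously constructed expansion terms. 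The space $X_{T_{\epsilon}}$ is precisely the maximal regularity class for such an operator, and the trace theorem together with Lemma~\ref{Wichtig} gives $\Vert g^{\epsilon}\Vert_{L^{2}(0,T_{\epsilon};H^{1/2}(\mathbb{T}^{1}))}\le C(K)$. Maximal regularity for $\partial_{t}+\mathcal{A}$ on the torus then yields
\[
\Vert h_{M-\frac{1}{2}}^{\epsilon}\Vert_{X_{T_{\epsilon}}}\le C\bigl(\Vert g^{\epsilon}\Vert_{L^{2}(0,T_{\epsilon};H^{1/2}(\mathbb{T}^{1}))}+\Vert h_{M-\frac{1}{2}}^{\epsilon}|_{t=0}\Vert_{H^{2}(\mathbb{T}^{1})}\bigr)\le C(K).
\]

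For the bounds on $\mu_{M-\frac{1}{2}}^{\pm,\epsilon}$ and $\mathbf{v}_{M-\frac{1}{2}}^{\pm,\epsilon}$ I would argue time-slice by time-slice. For each fixed $t\in[0,T_{\epsilon}]$, $\mu_{M-\frac{1}{2}}^{\pm,\epsilon}(\cdot,t)$ solves a two-phase transmission problem for the Laplacian in $\Omega^{\pm}(t)$ with Dirichlet data $\mu_{M-\frac{1}{2}}^{-,\epsilon}=0$ on $\partial\Omega$ and jumps on $\Gamma_{t}$ given linearly in $h_{M-\frac{1}{2}}^{\epsilon}$ and its tangential derivatives (up to second order), while $\mathbf{v}_{M-\frac{1}{2}}^{\pm,\epsilon}(\cdot,t)$ solves a two-phase Stokes problem with Robin condition on $\partial\Omega$ and a Young--Laplace-type jump on $\Gamma_{t}$ driven by the same $h_{M-\frac{1}{2}}^{\epsilon}$ and by traces of $\mathbf{w}_{1}^{\epsilon}$. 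Standard two-phase elliptic theory and the Stokes regularity from Theorem~\ref{thm:strongexstokes} combined with Grubb--Solonnikov type estimates as used in its proof provide $H^{2}$ bounds in each phase controlled by the boundary data in $H^{5/2}$ on $\Gamma_{t}$. Using the embedding $X_{T_{\epsilon}}\hookrightarrow C^{0}([0,T_{\epsilon}];H^{2}(\mathbb{T}^{1}))$, the $H^{7/2}$--$H^{1/2}$ interpolation implicit in the definition of $X_{T_{\epsilon}}$, and Lemma~\ref{Wichtig} together with the Sobolev embedding $H^{1}(\Omega)\hookrightarrow L^{6}$ for the trace of $\mathbf{w}_{1}^{\epsilon}$, one obtains the claimed $Z_{T_{\epsilon}}$ and $L^{6}(0,T_{\epsilon};H^{2})$ bounds after integrating in time.

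The main obstacle is the circularity of the system: the evolution of $h_{M-\frac{1}{2}}^{\epsilon}$ on $\Gamma$ involves the normal derivative jump $[\mathbf{n}_{\Gamma_{t}}\cdot\nabla\mu_{M-\frac{1}{2}}^{\pm,\epsilon}]$ from the linearized \eqref{eq:S-SAC6}, which is itself determined by $h_{M-\frac{1}{2}}^{\epsilon}$ through the elliptic transmission problem. One therefore has to identify the principal symbol of the composition ``solve transmission problem in $h$, then take the normal jump'' with the operator $\mathcal{A}$ above, so that the full system reduces to a genuine parabolic equation on $\mathbb{T}^{1}$ to which maximal regularity applies. A subsidiary technical point is matching the fractional-in-time integrability ($L^{6}$) required for $\mathbf{v}_{M-\frac{1}{2}}^{\pm,\epsilon}$; this is precisely why the space $X_{T_{\epsilon}}$ is chosen with the particular interpolation scale $L^{2}H^{7/2}\cap H^{1}H^{1/2}$, and the balance between these two parts through Sobolev embedding on $\mathbb{T}^{1}$ yields enough temporal integrability of the boundary data to close the Stokes estimate.
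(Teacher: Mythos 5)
The paper does not prove this lemma itself: it is quoted verbatim from the companion paper \cite{NSCH2}, Theorem~3.15, where the fractional order terms are actually constructed. Your sketch of the structure there --- a linearized Mullins--Sekerka evolution for $h_{M-\frac{1}{2}}^{\epsilon}$ of third order on $\mathbb{T}^{1}$ driven by the trace of $\mathbf{w}_{1}^{\epsilon}$, solved by maximal $L^{2}$-regularity in $X_{T}$, combined with two-phase elliptic and Stokes estimates for $\mu_{M-\frac{1}{2}}^{\pm,\epsilon}$ and $\mathbf{v}_{M-\frac{1}{2}}^{\pm,\epsilon}$ --- is the right picture of that construction, and the identification of $X_{T}=L^{2}H^{7/2}\cap H^{1}H^{1/2}$ as the matching regularity class is correct.

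There is, however, a genuine gap in the logic as you present it. You invoke Lemma~\ref{Wichtig} to bound the forcing $g^{\epsilon}$ by $C(K)$, but Lemma~\ref{Wichtig} (and Proposition~\ref{Rbar-Zerl} on which it rests) takes as a \emph{hypothesis} precisely the uniform bound $\sup_{\epsilon}\Vert h_{M-\frac{1}{2}}^{\epsilon}\Vert_{X_{T_{\epsilon}}}\leq\bar{C}$ that you are trying to prove; moreover the right-hand side of the Stokes system \eqref{eq:w1}--\eqref{eq:w13} defining $\tilde{\mathbf{w}}_{1}^{\epsilon}$ contains $\mathbf{h}$, which by \eqref{eq:hh} is built from $\nabla^{\Gamma}h_{M-\frac{1}{2}}^{\epsilon}$ itself. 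So the loop is not merely the internal Dirichlet-to-Neumann circularity of the Mullins--Sekerka system that you flag (that one is standard and is absorbed into the operator $\mathcal{A}$); it is a circularity between $h_{M-\frac{1}{2}}^{\epsilon}$ and $\mathbf{w}_{1}^{\epsilon}$ across two different equations. Closing it requires the key structural fact, stated explicitly in Lemma~\ref{Wichtig}, that the constant $C(K)$ in $\Vert\tilde{\mathbf{w}}_{1}^{\epsilon}\Vert_{L^{2}(0,T_\epsilon;H^{1}(\Omega))}\leq C(K)\epsilon^{M-\frac{1}{2}}$ is \emph{independent of} $\bar{C}$, together with a fixed-point or self-consistency argument (carried out in \cite{NSCH2}) that upgrades the conditional estimate to the unconditional one claimed in \eqref{eq:muv0,5est}. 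Without making that independence and the closure of the loop explicit, the argument as written assumes what it sets out to prove.
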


As a direct consequence of (\ref{eq:muv0,5est}) and $X_{T}\hookrightarrow C^{0}(\left[0,T\right];C^{1}(\mathbb{T}^{1}))$,
we remark 
\begin{equation}
\left\Vert h_{A}^{\epsilon}\right\Vert _{C^{0}\left(0,T_{\epsilon};C^{1}(\mathbb{T}^1)\right)}\le C(K)\label{eq:haepglm}
\end{equation}
for all $\epsilon\in\left(0,\epsilon_{1}\right)$. Finally, concerning
the relation between $c_{I}$ and $c_{O,\mathbf{B}}$, we have in
the case that $\epsilon_{0}\in\left(0,1\right)$ and Assumption \ref{assu:Main-est}
holds for $c_{A}=c_{A}^{\epsilon}$ that
\begin{equation}
\left\Vert D_{x}^{l}\left(c_{I}-c_{O,\mathbf{B}}\right)\right\Vert _{L^{\infty}\left(\Gamma\left(2\delta;T_{\epsilon}\right)\backslash\Gamma\left(\delta;T_{\epsilon}\right)\right)}\leq C(K)e^{-\frac{C}{\epsilon}}\label{eq:matching}
\end{equation}
for $l\in\left\{ 0,1\right\} $ and constants $C(K),C>0$.
This is discussed in \cite[Corollary~4.9]{NSCH2}.

\subsection{Auxiliary Results\label{sec:Auxiliary-Results}}

Without repeating it, we will consider the following assumptions throughout
this section.
\begin{assumption}
\label{assu:Auxiliary}We assume that Assumption~\ref{assu:Main-est} holds true
holds for $c_{A}=c_{A}^{\epsilon}$, $\epsilon_{0}\in\left(0,1\right)$,
$K\geq1$ and a family $\left(T_{\epsilon}\right)_{\epsilon\in\left(0,\epsilon_{0}\right)}\subset\left(0,T_{0}\right]$.
Moreover, we assume that $\epsilon_{1}\in\left(0,\epsilon_{0}\right]$
is chosen small enough, such that \eqref{eq:remcahn}\textendash \eqref{eq:rch1-rch2-Linfbdry},
the statement of Lemma~\ref{lem:hM-1}, \eqref{eq:w1epsab} and \eqref{eq:matching}
hold true.

Finally, we denote $R:=c^{\epsilon}-c_{A}^{\epsilon}$.
\end{assumption}

The following proposition guarantees that Lemma~\ref{lem:energy} may be used.
\begin{prop}
\label{prop:Energyholds}Let $\epsilon_{0}\in\left(0,1\right)$ and
$\psi_{0}^{\epsilon}:\Omega\rightarrow\mathbb{R}$ be a smooth function
satisfying the inequality $\left\Vert \psi_{0}^{\epsilon}\right\Vert _{C^{1}(\Omega)}\leq C_{\psi_{0}}\epsilon^{M}$
for $\epsilon\in\left(0,\epsilon_{0}\right)$. Moreover let $c_{0}^{\epsilon}(x):=c_{A}^{\epsilon}\left(x,0\right)+\psi_{0}^{\epsilon}(x)$
for all $x\in\Omega$. Then there is some $\tilde{\epsilon}\in\left(0,\epsilon_{0}\right]$
and a constant $C_{0}>0$ which only depends on $\tilde{\epsilon}$,
$C_{\psi_{0}}$ and $\sup_{\epsilon\in\left(0,\epsilon_{0}\right)}\left\Vert c_{A}^{\epsilon}\left(x,0\right)\right\Vert _{L^{\infty}(\Omega)}$,
such that
\[
E^{\epsilon}\left(c_{0}^{\epsilon}\right)\leq C_{0},\quad\left\Vert c_{0}^{\epsilon}\right\Vert _{L^{\infty}(\Omega)}\leq C_{0}\qquad\text{for all }\epsilon\in\left(0,\tilde{\epsilon}\right),
\]
 where $E^{\epsilon}$
is given as in (\ref{eq:gb}).
\end{prop}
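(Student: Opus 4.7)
The strategy is to verify the two bounds directly from the explicit structure of $c_A^\epsilon$ recalled in Subsection~\ref{subsec:The-Approximate-Solutions}, exploiting the crucial simplification $h_A^\epsilon(s,0)=0$, which means $\rho(x,0)=d_\Gamma(x,0)/\epsilon$ at initial time and removes the fractional order contributions in the stretched variable.

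\textbf{Step 1 ($L^\infty$ bound).} Since $c_{O,\mathbf{B}}=\pm 1+\mathcal O(\epsilon)$ in $C^1(\Omega_{T_0}^\pm)$, each $c_k$ is smooth and bounded on $\mathbb R\times\Gamma(2\delta;T_0)$, $\xi$ takes values in $[0,1]$, and $c_I=\sum_{k=0}^{M+1}\epsilon^k c_k(\rho,\cdot)$ with $c_0=\theta_0$ bounded by $1$, we obtain $\|c_A^\epsilon(\cdot,0)\|_{L^\infty(\Omega)}\le C$ uniformly in $\epsilon$. Combined with $\|\psi_0^\epsilon\|_{L^\infty(\Omega)}\le C_{\psi_0}\epsilon^M\le C_{\psi_0}$, this yields the uniform bound on $\|c_0^\epsilon\|_{L^\infty(\Omega)}$.

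\textbf{Step 2 (Energy of $c_A^\epsilon(\cdot,0)$).} Split $\Omega$ into the outer region $\Omega\setminus\Gamma_0(2\delta)$ and the interface region $\Gamma_0(2\delta)$. On the outer region, $c_A^\epsilon(\cdot,0)=c_{O,\mathbf{B}}(\cdot,0)$; since $c_{O,\mathbf{B}}=\pm 1+\mathcal O(\epsilon)$ in $C^1$ and $f(\pm 1)=f'(\pm 1)=0$, a second-order Taylor expansion gives $|f(c_{O,\mathbf{B}})|\le C\epsilon^2$ pointwise, hence $\frac1\epsilon\int_{\Omega\setminus\Gamma_0(2\delta)}f(c_{O,\mathbf{B}})\,dx\le C\epsilon|\Omega|$, and $\epsilon\int|\nabla c_{O,\mathbf{B}}|^2\,dx\le C\epsilon$. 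On the interface region, change to local coordinates $(r,s)\in(-2\delta,2\delta)\times\mathbb T^1$ via $X$. Using the chain rule and the structure of $c_I$, the leading term of $\nabla c_A^\epsilon(\cdot,0)$ is $\epsilon^{-1}\theta_0'(r/\epsilon)\mathbf n$, with remaining terms bounded by $C(1+\sum_{k\ge 1}\epsilon^{k-1}|\partial_\rho c_k|)$, where each $\partial_\rho c_k\in\mathcal R_\alpha$. Substituting $\rho=r/\epsilon$ (so $dr=\epsilon\,d\rho$) and using $|J|\le C$,
\begin{align*}
\epsilon\int_{\Gamma_0(2\delta)}|\nabla c_A^\epsilon(\cdot,0)|^2\,dx &\le C\int_{\mathbb T^1}\!\!\int_{-2\delta/\epsilon}^{2\delta/\epsilon}\!\!\big(\theta_0'(\rho)^2+\textstyle\sum_{k\ge 1}\epsilon^{2k-2}(\partial_\rho c_k)^2\big)\,d\rho\,ds+\mathcal O(\epsilon)\le C,
\end{align*}
where finiteness follows from $\theta_0'\in L^2(\mathbb R)$ and the exponential decay of $\partial_\rho c_k$. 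Similarly, $f(c_A^\epsilon(\cdot,0))=f(\theta_0(\rho))+\mathcal O(\epsilon)$, and a change of variables gives
\[
\tfrac1\epsilon\int_{\Gamma_0(2\delta)}f(c_A^\epsilon(\cdot,0))\,dx\le C\!\int_{\mathbb T^1}\!\!\int_{\mathbb R}\!f(\theta_0(\rho))\,d\rho\,ds+C\le C,
\]
using that $f(\theta_0(\rho))$ decays exponentially by \eqref{eq:optimopti} together with $f(\pm 1)=f'(\pm 1)=0$.

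\textbf{Step 3 (Adding the perturbation).} Writing $c_0^\epsilon=c_A^\epsilon(\cdot,0)+\psi_0^\epsilon$, the triangle inequality gives
\[
\tfrac\epsilon2\!\int_\Omega|\nabla c_0^\epsilon|^2\,dx\le \epsilon\!\int_\Omega|\nabla c_A^\epsilon(\cdot,0)|^2\,dx+\epsilon\!\int_\Omega|\nabla\psi_0^\epsilon|^2\,dx\le C+C\epsilon^{2M+1}|\Omega|.
\]
For the potential part, the $L^\infty$ bound from Step~1 and the smoothness of the polynomial $f$ yield $\|f'\|_{L^\infty([-C,C])}\le C$, whence
\[
\tfrac1\epsilon\!\int_\Omega|f(c_0^\epsilon)-f(c_A^\epsilon(\cdot,0))|\,dx\le \tfrac{C}{\epsilon}\|\psi_0^\epsilon\|_{L^\infty}|\Omega|\le C\epsilon^{M-1},
\]
which is bounded since $M\ge 4$. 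Combining with Step~2 yields $E^\epsilon(c_0^\epsilon)\le C_0$, completing the proof.

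The argument is essentially bookkeeping: the only mildly delicate point is the change of variables in Step~2, where one must track the $\epsilon^{-1}$ in $\nabla c_A^\epsilon$ against the $\epsilon$ from $dr=\epsilon\,d\rho$ to see the leading contribution $\int_{\mathbb R}(\theta_0')^2\,d\rho$ reconstitute (up to a factor encoding $\sigma$) as an $\epsilon$-independent finite quantity.
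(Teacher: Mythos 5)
Your proof is correct and follows essentially the same route as the paper: split into outer and interface regions, use $h_A^\epsilon(\cdot,0)=0$ so that $\rho(x,0)=d_\Gamma(x,0)/\epsilon$, change variables to the stretched coordinate to see the leading gradient contribution $\int_{\mathbb{R}}(\theta_0')^2\,d\rho$, and exploit the double zeros $f(\pm1)=f'(\pm1)=0$ together with the exponential decay \eqref{eq:optimopti} for the potential term. The only cosmetic difference is that the paper Taylor-expands $f$ around $\pm1$ (bounding $\frac{1}{\epsilon}\int|\theta_0(\rho)\mp1|$) while you expand around $\theta_0(\rho)$ (bounding $\int f(\theta_0(\rho))\,d\rho$), which amounts to the same estimate.
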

\begin{proof}
For simplicity we consider $c_{0}^{\epsilon}(x)=c_{A}^{\epsilon}\left(x,0\right)$
and highlight the situations where $\psi_{0}^{\epsilon}$ would play
a role. The estimate for $\left\Vert c_{0}^{\epsilon}\right\Vert _{L^{\infty}(\Omega)}$
follows immediately by the construction of $c_{A}^{\epsilon}$. Considering
$\frac{\epsilon}{2}\int_{\Omega}\left|\nabla c_{A}^{\epsilon}\left(x,0\right)\right|^{2}\d x$
we note that $\left\Vert \nabla c_{A}^{\epsilon}\right\Vert _{L^{\infty}\left(\Omega_{T_{0}}\backslash\Gamma(2\delta)\right)}\leq C\epsilon$
and estimate 
\begin{align}
&\frac{\epsilon}{2}\int_{\Gamma_{0}(2\delta)}\left|\nabla c_{A}^{\epsilon}(x,0)\right|^{2}\d x  \leq\frac{\epsilon}{2}\int_{\Gamma_{0}(2\delta)}\left|\xi\left(d_{\Gamma}\right)\nabla c_{I}\left(x,0\right)\right|^{2}\d x\nonumber \\
 & \quad+\frac{\epsilon}{2}\int_{\Gamma_{0}(2\delta)}\left|\left(1-\xi\left(d_{\Gamma}\right)\right)\nabla c_{O,\mathbf{B}}(x,0)+\nabla\left(\xi\left(d_{\Gamma}\right)\right)\left(c_{I}-c_{O,\mathbf{B}}\right)(x,0)\right|^{2}\d x.\label{eq:nablacaepsenergy}
\end{align}
Now we have $\nabla c_{O,\mathbf{B}}\left(.,0\right)\in\mathcal{O}\left(\epsilon\right)$
in $L^{\infty}\left(\Omega^{\pm}\left(0\right)\right)$ and $c_{I},c_{O,\mathbf{B}}\in\mathcal{O}\left(1\right)$
in $L^{\infty}\left(\Gamma_{0}(2\delta)\backslash\Gamma_{0}(\delta)\right)$.
Moreover, $\rho\left(x,0\right)=\frac{d_{\Gamma}\left(x,0\right)}{\epsilon}$,
as $h_{A}^{\epsilon}\left(x,0\right)=0$, and thus
$$
\nabla\left(c_{0}(\rho(x,0),x,0)\right)=\frac{1}{\epsilon}\theta_{0}'(\rho(x,0))\cdot\mathbf{n}(x,0).
$$
In particular 
\begin{align*}
\frac{\epsilon}{2}\int_{\Gamma_{0}(2\delta)}\left|\xi\left(d_{\Gamma}\right)\nabla\left(c_{0}\left(\rho\left(x,0\right),x,0\right)\right)\right|^{2}\d x & \leq C\int_{\mathbb{T}^{1}}\int_{-\frac{2\delta}{\epsilon}}^{\frac{2\delta}{\epsilon}}\theta_{0}'(\rho)^{2}\d\rho\d s\leq C.
\end{align*}
As $\epsilon^{k}\nabla\left(c_{k}\left(\rho\left(.,0\right),.,0\right)\right)\in\mathcal{O}\left(1\right)$
in $L^{\infty}\left(\Gamma_{0}(2\delta)\right)$ for $k\geq1$,
we may use (\ref{eq:nablacaepsenergy}) to find $\frac{\epsilon}{2}\int_{\Gamma_{0}(2\delta)}\left|\nabla c_{A}^{\epsilon}\left(x,0\right)\right|^{2}\d x\leq C_{1}$.
Note that $\psi_{0}^{\epsilon}$ can be estimated uniformly  in $C^{1}(\Omega)$
and is multiplied by $\epsilon^{M}$, so would cause no troubles in
these estimates. For the second term in $E^{\epsilon}\left(c_{0}^{\epsilon}\right)$,
we compute
\begin{align*}
\frac{1}{\epsilon}\int_{\Omega^{+}\left(0\right)}f\left(c_{0}^{\epsilon}\right)\d x & =\frac{1}{\epsilon}\int_{\Omega^{+}\left(0\right)}f'\left(\beta(x)\right)\left(c_{A}^{\epsilon}\left(x,0\right)-1\right)\d x\leq C
\end{align*}
for some suitable $\beta(x)\in\left(1,c_{A}^{\epsilon}\left(x,0\right)\right)$,
where we used a Taylor expansion and the
explicit structure of $c_{A}^{\epsilon}$. In particular,
in $\Gamma_{0}^{+}(\delta):=\Omega^{+}\left(0\right)\cap\Gamma_{0}(\delta)$
a change of variables yields
\[
\frac{1}{\epsilon}\int_{\Omega^{+}\left(0\right)}f'\left(\beta(x)\right)\left(c_{A}^{\epsilon}\left(x,0\right)-1\right)\d x\leq C_{1}+C_{2}\int_{\mathbb{T}^{1}}\int_{0}^{\frac{\delta}{\epsilon}}\left|\theta_{0}(\rho)-1\right|\d\rho\d s\leq C.
\]
The appearance of $\psi_{0}^{\epsilon}$ would have changed nothing
in this argumentation. This proves the claim.
\end{proof}
\begin{lem}
\label{lem:R-L2-Linf}Let $\alpha,\kappa\in (0,1)$. There are some $C(K),C(K,\alpha)$ such that for all $\epsilon\in\left(0,\epsilon_{1}\right)$
\begin{align}\nonumber
\left\Vert R\right\Vert _{L^{2}\left(0,T_{\epsilon};L^{\infty}(\Omega)\right)}&\leq C(K,\alpha)\epsilon^{M-\frac{3}{2}}\epsilon^{-\left(M+2\right)\alpha},\\
\left\Vert \nabla R\right\Vert _{L^{\infty}\left(0,T_{\epsilon};L^{2}(\Omega)\right)}&\leq C(K)\epsilon^{-\frac{1}{2}},\label{eq:gradR}\\\nonumber
\left\Vert \gamma R\right\Vert _{L^{\infty}\left(0,T_{\epsilon};L^{2+\kappa}(\Omega)\right)}&\leq C(K)\epsilon^{M-\frac{1}{2}-\frac{\kappa}{2+\kappa}M},\\\nonumber
\left\Vert R\right\Vert _{L^{\infty}\left(0,T_{\epsilon};L^{2}(\Omega)\right)}&\leq C(K)\epsilon^{\frac{1}{2}\left(M-\frac{1}{2}\right)}.
\end{align}
\end{lem}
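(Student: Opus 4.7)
I will establish the four bounds by combining the assumed bounds in \eqref{eq:Main-est}, the energy estimate of Lemma~\ref{lem:energy} (applicable thanks to Proposition~\ref{prop:Energyholds}), and the explicit inner structure of $c_A^\epsilon$ recalled in Subsection~\ref{subsec:The-Approximate-Solutions}, using standard two-dimensional Sobolev and Gagliardo--Nirenberg interpolation inequalities. The order in which I carry them out is \eqref{eq:gradR}, then the fourth bound, then the third, and finally the first (which is the delicate one).

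\emph{Second bound.} By the triangle inequality, $\|\nabla R\|_{L^\infty(0,T_\epsilon;L^2)}\le\|\nabla c^\epsilon\|_{L^\infty(0,T_\epsilon;L^2)}+\|\nabla c_A^\epsilon\|_{L^\infty(0,T_\epsilon;L^2)}$. The first term is $O(\epsilon^{-1/2})$ by Lemma~\ref{lem:energy}; the second is of the same order since the leading inner contribution to $\nabla c_A^\epsilon$ is $\epsilon^{-1}\theta_0'(\rho)\mathbf{n}\,\xi(d_\Gamma)$, whose $L^2$-norm is $O(\epsilon^{-1/2})$ after the stretched change of variables $\rho=d_\Gamma/\epsilon-h_A^\epsilon$, while outer and higher order contributions are $O(1)$.

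\emph{Fourth bound.} Since $R|_{\partial\Omega}=0$ (both $c^\epsilon$ and $c_A^\epsilon$ equal $-1$ on $\partial\Omega$, by \eqref{eq:StokesBdry} and \eqref{eq:boundaryconditions}), the interpolation $\|u\|_{L^2}^2\le\|u\|_{H^{-1}}\|u\|_{H_0^1}$ applies pointwise in $t$ to $u=R(\cdot,t)$. Combining with $\|R\|_{L^\infty(0,T_\epsilon;H^{-1})}\le K\epsilon^M$ from~\eqref{eq:Main-est-b} and the second bound already proved, one obtains $\|R\|_{L^\infty(0,T_\epsilon;L^2)}\le C(K)\epsilon^{(M-1/2)/2}$.

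\emph{Third bound.} Apply the two-dimensional Gagliardo--Nirenberg inequality
\[
\|u\|_{L^{2+\kappa}(\Omega)}\le C\|u\|_{L^2(\Omega)}^{2/(2+\kappa)}\|u\|_{H^1(\Omega)}^{\kappa/(2+\kappa)}
\]
to $u=\gamma R(\cdot,t)\in H_0^1(\Omega)$ pointwise in $t$. The bound $\|\gamma R\|_{L^\infty(0,T_\epsilon;L^2)}\le K\epsilon^{M-1/2}$ is contained in~\eqref{eq:Main-est-d} after transferring the factor $\epsilon^{1/2}$, and $\|\gamma R\|_{L^\infty(0,T_\epsilon;H^1)}\le C(K)\epsilon^{-1/2}$ follows from the product rule, $\nabla\gamma\in L^\infty(\Omega)$, and the second bound. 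A direct computation of the resulting exponent, $\tfrac{2(M-1/2)}{2+\kappa}-\tfrac{\kappa/2}{2+\kappa}=\tfrac{2M-1-\kappa/2}{2+\kappa}$, simplifies to $M-\tfrac{1}{2}-\tfrac{\kappa M}{2+\kappa}$.

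\emph{First bound.} Because $H^1(\Omega)\not\hookrightarrow L^\infty(\Omega)$ in two dimensions, I employ the Sobolev embedding $H^{1+\alpha'}(\Omega)\hookrightarrow L^\infty(\Omega)$ for arbitrary $\alpha'>0$ (whose constant blows up as $\alpha'\searrow 0$), combined with the interpolation $\|u\|_{H^{1+\alpha'}}\le C\|u\|_{L^2}^{(1-\alpha')/2}\|u\|_{H^2}^{(1+\alpha')/2}$ between $L^2$ and $H^2$. Squaring the resulting pointwise bound, integrating in~$t$, and applying Hölder with dual exponents $2/(1-\alpha')$ and $2/(1+\alpha')$ yields
\[
\|R\|_{L^2(0,T_\epsilon;L^\infty)}^2\le C_{\alpha'}\|R\|_{L^2(\Omega_{T_\epsilon})}^{1-\alpha'}\|R\|_{L^2(0,T_\epsilon;H^2(\Omega))}^{1+\alpha'}.
\]
The first factor is bounded by $K\epsilon^{M-1/2}$ from~\eqref{eq:Main-est-a}; for the second, elliptic regularity for the homogeneous Dirichlet Laplacian together with the energy bound $\|\Delta c^\epsilon\|_{L^2(\Omega_{T_\epsilon})}\le C\epsilon^{-7/2}$ of Lemma~\ref{lem:energy} and the explicit inner estimate $\|\Delta c_A^\epsilon\|_{L^2}\le C\epsilon^{-3/2}$ give $\|R\|_{L^2(0,T_\epsilon;H^2)}\le C\epsilon^{-7/2}$. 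Relabeling $\alpha'$ appropriately in terms of the statement's $\alpha$ and collecting exponents produces a bound of the claimed form $C(K,\alpha)\epsilon^{M-3/2-(M+2)\alpha}$, where the penalty $\epsilon^{-(M+2)\alpha}$ absorbs both the Sobolev blow-up $C_{\alpha'}$ and the large factor $\epsilon^{-7(1+\alpha')/2}$ coming from the $H^2$-bound.

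\textbf{Main obstacle.} The critical difficulty lies in the first estimate: the $H^2$-bound from the energy identity is fairly crude ($\epsilon^{-7/2}$), so the desired power $\epsilon^{M-3/2}$ only emerges after the $L^2$-smallness of $R$ is carefully traded against the $H^2$-largeness via the interpolation parameter~$\alpha'$. The other three bounds amount to applications of elementary interpolation inequalities directly from the hypotheses of Assumption~\ref{assu:Main-est}, Proposition~\ref{prop:Energyholds} and Lemma~\ref{lem:energy}.
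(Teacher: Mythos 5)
Your second, third and fourth estimates are correct and follow essentially the paper's own route: the energy bound of Lemma~\ref{lem:energy} for $\nabla R$, the duality interpolation $\Vert R\Vert_{L^2}^2\le\Vert R\Vert_{H^{-1}}\Vert\nabla R\Vert_{L^2}$ combined with \eqref{eq:Main-est-b}, and Gagliardo--Nirenberg between $L^2$ and $H^1$ for $\gamma R$ (your exponent arithmetic for the third bound checks out).

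The first estimate, however, has a genuine gap: the exponents do not work out. Interpolating $H^{1+\alpha'}$ between $L^2$ and $H^2$ forces the weight $(1+\alpha')/2\ge\tfrac12$ onto the catastrophic factor $\Vert R\Vert_{L^2(0,T_\epsilon;H^2)}\le C(K)\epsilon^{-7/2}$. Collecting exponents in your displayed inequality gives
\[
\left\Vert R\right\Vert_{L^{2}\left(0,T_{\epsilon};L^{\infty}(\Omega)\right)}\le C_{\alpha'}\,\epsilon^{\left(M-\frac{1}{2}\right)\frac{1-\alpha'}{2}}\,\epsilon^{-\frac{7}{2}\cdot\frac{1+\alpha'}{2}}=C_{\alpha'}\,\epsilon^{\frac{M}{2}-2+O(\alpha')},
\]
and $\frac{M}{2}-2$ is strictly smaller than $M-\frac{3}{2}$ by $\frac{M}{2}+\frac{1}{2}$; for $M=4$ your bound degenerates to $O(1)$ while the claim is $O(\epsilon^{5/2-6\alpha})$. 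The penalty $\epsilon^{-(M+2)\alpha}$ tends to $1$ as $\alpha\to0$ and therefore cannot absorb this fixed loss. The fix is to keep the lower interpolation endpoint at $H^1$ rather than $L^2$: from \eqref{eq:Main-est-a} and \eqref{eq:Main-est-b} one has $\Vert R\Vert_{L^{2}\left(0,T_{\epsilon};H^{1}(\Omega)\right)}\le C K\epsilon^{M-\frac{3}{2}}$ (the worst contribution being $\Vert\partial_{\mathbf{n}}R\Vert_{L^{2}\left(\Gamma(T_\eps,\delta)\right)}\le K\epsilon^{M-\frac{3}{2}}$), so that $\Vert R\Vert_{H^{1+\alpha}}\le C\Vert R\Vert_{H^{1}}^{1-\alpha}\Vert R\Vert_{H^{2}}^{\alpha}$ puts only the weight $\alpha$ on $\epsilon^{-7/2}$ and yields exactly $\left(\epsilon^{M-\frac{3}{2}}\right)^{1-\alpha}\left(\epsilon^{-\frac{7}{2}}\right)^{\alpha}=\epsilon^{M-\frac{3}{2}-(M+2)\alpha}$, which is how the paper proceeds.
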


\begin{proof}
 For $\alpha\in\left(0,1\right)$ it holds 
\begin{equation}
\left\Vert R\right\Vert _{L^{\infty}(\Omega)}\leq C\left(\alpha\right)\left\Vert R\right\Vert _{H^{1+\alpha}(\Omega)}\leq C\left(\alpha\right)\left\Vert R\right\Vert _{H^{1}(\Omega)}^{1-\alpha}\left\Vert R\right\Vert _{H^{2}(\Omega)}^{\alpha}.\label{eq:interpol}
\end{equation}
% Here the first inequality is due to the Sobolev embeddings and the
% second inequality is a consequence of interpolation theory. In order
% to control the $H^{2}$-norm of $R$ we use elliptic regularity theory
% to get $\left\Vert R\right\Vert _{H^{2}(\Omega)}\leq C\left\Vert \Delta R\right\Vert _{L^{2}(\Omega)}$,
% which holds since $R|_{\partial_{T_{0}}\Omega}=0$.
Due to the construction
and since $h_{A}^{\epsilon}$ is uniformly bounded in $X_{T_{\epsilon}}$
(cf.\ (\ref{eq:muv0,5est})). It can be easily verified by direct calculations
and the properties of $c_{A}^{\epsilon}$ given in Subsection \ref{subsec:The-Approximate-Solutions}
that $\left\Vert \Delta c_{A}^{\epsilon}\right\Vert _{L^{2}\left(\Omega_{T_{\epsilon}}\right)}\leq C(K)\frac{1}{\epsilon^{2}}$.
 Because of Lemma \ref{energy} and $R|_{\partial\Omega}=0$, we get
\begin{equation}
\|R\|_{H^2(\Omega)}\leq C'\left\Vert \Delta R\right\Vert _{L^{2}(\Omega_{T_{\epsilon}})}\leq C(K)\epsilon^{-\frac{7}{2}},\label{eq:laplaceR}
\end{equation}
where $C(K)$ depends 
only on $K$, $T_{0}$, and $C_{0}$ (where $C_{0}$ is the constant from
(\ref{eq:Eepsbes})). Using this and (\ref{eq:Main-est}) in (\ref{eq:interpol}),
we find 
\begin{align*}
\left\Vert R\right\Vert _{L^{2}\left(0,T_{\epsilon};L^{\infty}(\Omega)\right)} & \leq C(K)\left(\epsilon^{M-\frac{3}{2}}\right)^{1-\alpha}\left(\epsilon^{-\frac{7}{2}}\right)^{\alpha}=C(K)\epsilon^{M-\frac32}\epsilon^{-(M+2)\alpha}.
\end{align*}

In order to prove the second inequality,  we employ Lemma \ref{energy}, which yields
\[
\epsilon^{\frac{1}{2}}\left\Vert \nabla R\right\Vert _{L^{\infty}\left(0,T_{\epsilon};L^{2}(\Omega)\right)}\leq\epsilon^{\frac{1}{2}}\left(\left\Vert \nabla c^{\epsilon}\right\Vert _{L^{\infty}\left(0,T_{\epsilon};L^{2}(\Omega)\right)}+\left\Vert \nabla c_{A}^{\epsilon}\right\Vert _{L^{\infty}\left(0,T_{\epsilon};L^{2}(\Omega)\right)}\right)\leq C(K).
\]
Here we used $\epsilon^{\frac{1}{2}}\left\Vert \nabla c_{A}^{\epsilon}\right\Vert _{L^{\infty}\left(0,T_{\epsilon};L^{2}(\Omega)\right)}\leq C(K)$,
which is a consequence of the uniform bound on $c_{k},c_{O,\mathbf{B}}$
and their derivatives for $k\in\left\{ 0,\ldots,M+1\right\} $ and
the boundedness of $h_{M-\frac{1}{2}}^{\epsilon}$ in $X_{T_{\epsilon}}$,
combined with a change of variables.

For the proof of the third inequality we note that for $\kappa>0$ we have for any $u\in H^1_0(\Omega)$
\begin{equation}
  \left\Vert u\right\Vert _{L^{2+\kappa}(\Omega)}\leq C_{1}\left\Vert u\right\Vert _{L^{2}(\Omega)}^{1-\frac{\kappa}{2+\kappa}}\left\Vert \nabla u\right\Vert _{L^{2}(\Omega)}^{\frac{\kappa}{2+\kappa}}%+C_{2}\left\Vert u\right\Vert _{L^{2}(\Omega)}
  \label{eq:res1}
\end{equation}
for some $C_{1}>0$ due to the Gagliardo-Nirenberg interpolation
inequality. Moreover, (\ref{eq:res1}) together with by (\ref{eq:gradR})
and (\ref{eq:Main-est-d}) we obtain
\begin{align}\nonumber
  &\left\Vert \gamma R\right\Vert _{L^{\infty}(0,T_{\epsilon};L^{2+\kappa}(\Omega))} \le C_{1}\left\Vert \gamma R\right\Vert _{L^{\infty}(0,T_{\epsilon};L^{2}(\Omega))}^{1-\frac{\kappa}{2+\kappa}}\left\Vert \nabla R\right\Vert _{L^{\infty}(0,T_{\epsilon};L^{2}(\partial\Omega(\frac{\delta}{2})))}^{\frac{\kappa}{2+\kappa}}\\%+C_{2}\left\Vert \gamma R\right\Vert _{L^{\infty}(0,T_{\epsilon};L^{2}(\Omega))}\nonumber \\
  &\leq C(K)\epsilon^{M-\frac{1}{2}-\frac{\kappa}{2+\kappa}M}.\label{eq:res2}
\end{align}
because of Poincar\'e's inequality,  $\left\Vert R\right\Vert _{L^{2}(\Omega)}^{2}\leq\left\Vert R\right\Vert _{H^{-1}(\Omega)}\left\Vert \nabla R\right\Vert _{L^{2}(\Omega)}$,
(\ref{eq:Main-est-b}) and (\ref{eq:gradR}).
\end{proof}
The following lemma is an adapted version of \cite[Lemma~5.4]{nsac}.
\begin{lem}
\label{dim2einbett} Let $u\in H^{1}(\Omega).$ Then there
is some constant $C>0$ such that 
\begin{align*}
\left\Vert u\right\Vert _{L^{3}\left(\Gamma_{t}(\delta)\right)}^{3} & \leq C\left(\left\Vert u\right\Vert _{L^{2}\left(\gt\right)}+\left\Vert \nabla^{\Gamma}u\right\Vert _{L^{2}\left(\gt\right)}\right)^{\frac{1}{2}}\left(\left\Vert u\right\Vert _{L^{2}\left(\gt\right)}+\left\Vert \partial_{\mathbf{n}}u\right\Vert _{L^{2}\left(\gt\right)}\right)^{\frac{1}{2}}\\
 & \quad\cdot\left(\left\Vert u\right\Vert _{L^{2}\left(\gt\right)}\right)^{2}
\end{align*}
holds for all $t\in\left[0,T_{0}\right]$.
\end{lem}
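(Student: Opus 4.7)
The plan is to pull back to local coordinates via the diffeomorphism $X(\cdot,\cdot,t)\colon (-\delta,\delta)\times\mathbb{T}^1\to \gt$ from \eqref{eq:diffeo}. The Jacobian $J$ defined in \eqref{eq:jacob} and its reciprocal are uniformly bounded in $t\in[0,T_0]$ by smoothness of the evolving interface, so $L^p$-norms of the pulled-back function are equivalent to those on $\gt$, and the operators $\partial_{\mathbf{n}}$ and $\nabla^{\Gamma}$ correspond, up to uniformly bounded coefficients, to the partial derivatives $\partial_r$ and $\partial_s$ via \eqref{eq:surfgraddecomp}. It therefore suffices to prove an anisotropic Gagliardo-Nirenberg-Ladyzhenskaya estimate on the rectangle $I\times\mathbb{T}^1$ with $I=(-\delta,\delta)$ and then interpolate down to an $L^3$-bound.

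First I would establish the one-sided $L^\infty$ bounds
\[
\|u(\cdot,s)\|_{L^\infty_r}^2\le C\,\|u(\cdot,s)\|_{L^2_r}\bigl(\|u(\cdot,s)\|_{L^2_r}+\|\partial_r u(\cdot,s)\|_{L^2_r}\bigr)
\]
by combining the fundamental theorem of calculus
\[
u(r,s)^2 = u(r_0,s)^2 + 2\int_{r_0}^r u(r',s)\,\partial_r u(r',s)\,dr'
\]
with the mean value choice $u(r_0(s),s)^2=|I|^{-1}\|u(\cdot,s)\|_{L^2_r}^2$ and Cauchy-Schwarz on the integral term. The analogous bound in the $s$-direction is obtained symmetrically, with $\mathbb{T}^1$-periodicity making the base point trivial. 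Multiplying the two pointwise inequalities yields an estimate for $|u(r,s)|^4$ whose right-hand side factors as a function of $s$ alone times a function of $r$ alone; integrating over $I\times\mathbb{T}^1$ and applying Cauchy-Schwarz in each factor then produces the anisotropic Ladyzhenskaya inequality
\[
\|u\|_{L^4}^4\le C\,\|u\|_{L^2}^2\bigl(\|u\|_{L^2}+\|\partial_r u\|_{L^2}\bigr)\bigl(\|u\|_{L^2}+\|\partial_s u\|_{L^2}\bigr).
\]

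The lemma is finished by the elementary interpolation $\|u\|_{L^3}^3\le \|u\|_{L^2}\|u\|_{L^4}^2$, which follows from applying Cauchy-Schwarz to $\int|u|\cdot|u|^2$; taking the square root of the Ladyzhenskaya estimate and substituting gives the claimed bound, which is then transported back to $\gt$ via $X$ in $(\nabla^{\Gamma},\partial_{\mathbf{n}})$ form. I do not expect a serious obstacle: the only point requiring care is the uniformity of constants in $t\in[0,T_0]$, which follows from compactness of $[0,T_0]$ and the smooth dependence of $X$, $J$ and $\mathbf{n}$ on $t$. This is essentially the proof of \cite[Lemma~5.4]{nsac} transferred from the Allen-Cahn to the present geometric setup, and no new analytic ingredient is required.
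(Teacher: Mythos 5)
Your argument is correct, and every step checks out: the two one-dimensional sup-bounds obtained from the fundamental theorem of calculus plus a mean-value base point, the factorization of $|u|^{4}$ into a function of $s$ times a function of $r$, the resulting anisotropic Ladyzhenskaya inequality $\|u\|_{L^{4}}^{4}\le C\|u\|_{L^{2}}^{2}(\|u\|_{L^{2}}+\|\partial_{r}u\|_{L^{2}})(\|u\|_{L^{2}}+\|\partial_{s}u\|_{L^{2}})$, and the Cauchy--Schwarz interpolation $\|u\|_{L^{3}}^{3}\le\|u\|_{L^{2}}\|u\|_{L^{4}}^{2}$ combine to give exactly the claimed exponents. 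The transport between $(\partial_{r},\partial_{s})$ and $(\partial_{\mathbf{n}},\nabla^{\Gamma})$ is unproblematic since $\partial_{\mathbf{n}}u\circ X=\partial_{r}\tilde{u}$ exactly and $|\nabla S|$ is bounded above and away from zero uniformly in $t$ on the compact closure of the tubular neighborhood. The paper's route is different in its mechanics: it works in mixed norms directly toward $L^{3}$, applying the one-dimensional Gagliardo--Nirenberg inequality $\|u\|_{L^{3}(\Gamma_{t})}\le C\|u\|_{H^{1}(\Gamma_{t})}^{1/6}\|u\|_{L^{2}(\Gamma_{t})}^{5/6}$ in the tangential variable, then H\"older in $r$ with exponents $4$ and $4/3$, an implicit Minkowski integral inequality to swap the order of the $r$- and $s$-norms, and finally $\|u\|_{L^{10/3}(-\delta,\delta)}\le C\|u\|_{H^{1}}^{1/5}\|u\|_{L^{2}}^{4/5}$ in the normal variable. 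Your version avoids the fractional-exponent bookkeeping and the norm interchange by passing through $L^{4}$, at the cost of the (harmless) detour via the stronger Ladyzhenskaya estimate; the paper's version is shorter on the page but leans on citable 1D interpolation inequalities rather than being fully self-contained. Both yield the identical final bound with constants uniform in $t\in[0,T_{0}]$.
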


\begin{proof}
Note
\[
\left\Vert u\right\Vert _{L^{3}\left(\gt\right)}^{3}\leq C\int_{-\delta}^{\delta}\int_{\Gamma_{t}}\left|u\left(p,r\right)\right|^{3}\d\mathcal{H}^{1}(p)\d r=C\left\Vert \left\Vert u\right\Vert _{L^{3}\left(\Gamma_{t}\right)}\right\Vert _{L^{3}\left(-\delta,\delta\right)}^{3}
\]
and $\left\Vert u\right\Vert _{L^{3}\left(\Gamma_{t}\right)}\leq C\left\Vert u\right\Vert _{H^{1}\left(\Gamma_{t}\right)}^{\frac{1}{6}}\left\Vert u\right\Vert _{L^{2}\left(\Gamma_{t}\right)}^{\frac{5}{6}}$
as $\Gamma_{t}$ is one-dimensional. Now H\"older's inequality leads
to 
\begin{align*}
\left\Vert u\right\Vert _{L^{3}\left(\gt\right)}^{3} & \leq C\left\Vert \left\Vert u\right\Vert _{H^{1}\left(\Gamma_{t}\right)}^{\frac{1}{6}}\left\Vert u\right\Vert _{L^{2}\left(\Gamma_{t}\right)}^{\frac{5}{6}}\right\Vert _{L^{3}\left(-\delta,\delta\right)}^{3}\\
 & \leq C\left\Vert \left\Vert u\right\Vert _{H^{1}\left(\Gamma_{t}\right)}\right\Vert _{L^{2}\left(-\delta,\delta\right)}^{\frac{1}{2}}\left\Vert \left\Vert u\right\Vert _{L^{\frac{10}{3}}\left(-\delta,\delta\right)}\right\Vert _{L^{2}\left(\Gamma_{t}\right)}^{\frac{5}{2}}\\
% & \leq C\left\Vert \left\Vert u\right\Vert _{H^{1}\left(\Gamma_{t}\right)}\right\Vert _{L^{2}\left(-\delta,\delta\right)}^{\frac{1}{2}}\left\Vert \left\Vert u\right\Vert _{H^{1}\left(-\delta,\delta\right)}^{\frac{1}{5}}\left\Vert u\right\Vert _{L^{2}\left(-\delta,\delta\right)}^{\frac{4}{5}}\right\Vert _{L^{2}\left(\Gamma_{t}\right)}^{\frac{5}{2}}\\
 & \leq C\left\Vert \left\Vert u\right\Vert _{H^{1}\left(\Gamma_{t}\right)}\right\Vert _{L^{2}\left(-\delta,\delta\right)}^{\frac{1}{2}}\left\Vert \left\Vert u\right\Vert _{H^{1}\left(-\delta,\delta\right)}\right\Vert _{L^{2}\left(\Gamma_{t}\right)}^{\frac{1}{2}}\left\Vert \left\Vert u\right\Vert _{L^{2}\left(-\delta,\delta\right)}\right\Vert _{L^{2}\left(\Gamma_{t}\right)}^{2},
\end{align*}
where we used $\left\Vert u\right\Vert _{L^{\frac{10}{3}}\left(-\delta,\delta\right)}\leq C\left\Vert u\right\Vert _{H^{1}\left(-\delta,\delta\right)}^{\frac{1}{5}}\left\Vert u\right\Vert _{L^{2}\left(-\delta,\delta\right)}^{\frac{4}{5}}$.
\end{proof}

\subsubsection{The Error in the Velocity\label{sec:Estimates-Regarding-the}}

For $\epsilon\in\left(0,\epsilon_{0}\right)$ we consider strong solutions
$\overline{\mathbf{v}^{\epsilon}}:\Omega_{T_{0}}\rightarrow\mathbb{R}^{2}$
and $\overline{p^{\epsilon}}:\Omega_{T_{0}}\rightarrow\mathbb{R}$
of the system
\begin{align}
-\Delta\overline{\mathbf{v}^{\epsilon}}+\nabla\overline{p^{\epsilon}} & =\mu_{A}^{\epsilon}\nabla c_{A}^{\epsilon} &  & \text{in }\Omega_{T_{0}},\label{eq:vbar}\\
\operatorname{div}\overline{\mathbf{v}^{\epsilon}} & =0 &  & \text{in }\Omega_{T_{0}},\label{eq:vbar2}\\
\left(-2D_{s}\overline{\mathbf{v}^{\epsilon}}+\overline{p^{\epsilon}}\mathbf{I}\right)\mathbf{n}_{\partial\Omega} & =\alpha_{0}\overline{\mathbf{v}^{\epsilon}} &  & \text{on }\partial_{T_{0}}\Omega\label{eq:vbar3}
\end{align}
(cf.\ Theorem \ref{thm:strongexstokes}) and weak solutions $\twz:\Omega_{T_{0}}\rightarrow\mathbb{R}^{2}$
and $q_{2}^{\epsilon}:\Omega_{T_{0}}\rightarrow\mathbb{R}$ of 
\begin{align}
-\Delta\twz+\nabla q_{2}^{\epsilon} & =-\epsilon\left(\operatorname{div}\left(\mathbf{h}\otimes_{s}\nabla R\right)+\operatorname{div}\left(\nabla R\otimes\nabla R\right)\right) &  & \text{in }\Omega_{T_{0}},\label{eq:w2}\\
\operatorname{div}\twz & =0 &  & \text{in }\Omega_{T_{0}},\label{eq:w22}\\
\left(-2D_{s}\twz+q_{2}^{\epsilon}\mathbf{I}\right)\mathbf{n}_{\partial\Omega} & =\alpha_{0}\twz &  & \text{in }\partial_{T_{0}}\Omega,\label{eq:w23}
\end{align}
where $\mathbf{h}$ is defined as in (\ref{eq:hh}). We consider the
right hand side of (\ref{eq:w2}) as a functional in $V_{0}^{'}$
given by 
\begin{equation}
\mathbf{g}^{\epsilon}\left(\psi\right):=\epsilon\int_{\Omega}\left(\left(\mathbf{h}\otimes_{s}\nabla R\right)+\left(\nabla R\otimes\nabla R\right)\right):\nabla\psi\d x\quad \text{for all }\psi \in V_0.\label{eq:geps}
\end{equation}
Introducing 
\begin{equation}
\mathbf{v}_{err}^{\epsilon}:=\mathbf{v}^{\epsilon}-\left(\overline{\mathbf{v}^{\epsilon}}+\twe+\twz\right)\label{eq:vepserrderf}
\end{equation}
we have $\mathbf{v}^{\epsilon}-\overline{\mathbf{v}^{\epsilon}}=\mathbf{v}_{err}^{\epsilon}+\twe+\twz$. Hence, if we control $\mathbf{v}_{err}^{\epsilon}$, $\twe$, and
$\twz$, we will control the error $\mathbf{v}^{\epsilon}-\overline{\mathbf{v}^{\epsilon}}$. 
\begin{lem}
\label{lem:w2eps-first}Let $\mathbf{\tilde{\mathbf{w}}}_{2}^{\epsilon}$
be the unique weak solution to \eqref{eq:w2}\textendash \eqref{eq:w23}
in $\Omega_{T_{0}}$ for $\epsilon\in\left(0,\epsilon_{1}\right)$.
Then it holds for all $r\in\left[1,2\right]$ and $q\in\left(1,2\right)$
\begin{align}
\left\Vert \tilde{\mathbf{w}}_{2}^{\epsilon}\right\Vert _{L^{r}\left(0,T_{\epsilon};L^{q}(\Omega)\right)} & \leq C(K,r,q)\epsilon^{\frac{2\left(M-1\right)}{r}}\label{eq:w2epsab}
\end{align}
for all $\epsilon\in\left(0,\epsilon_{1}\right)$.
\end{lem}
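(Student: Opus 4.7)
The plan is to apply Lemma~\ref{LpLqstokes} pointwise in $t\in(0,T_{\epsilon})$ to the Stokes system \eqref{eq:w2}--\eqref{eq:w23}, whose right-hand side is the functional $\mathbf{g}^{\epsilon}$ from \eqref{eq:geps}. For $q\in(1,2)$ and its H\"older conjugate $p=q/(q-1)>2$, Lemma~\ref{LpLqstokes} gives
\[
\|\twz(\cdot,t)\|_{L^{q}(\Omega)}\leq C_{q}\sup_{\psi\in W_{p}^{2}(\Omega)^{2},\psi\neq0}\frac{|\mathbf{g}^{\epsilon}(t)(\psi)|}{\|\psi\|_{W_{p}^{2}(\Omega)}}.
\]
Since $W_{p}^{2}(\Omega)\hookrightarrow W^{1,\infty}(\Omega)$ in two dimensions for $p>2$, H\"older's inequality applied to the integral expression \eqref{eq:geps} defining $\mathbf{g}^{\epsilon}(t)$ produces
\[
|\mathbf{g}^{\epsilon}(t)(\psi)|\leq C\epsilon\left(\|\mathbf{h}(\cdot,t)\|_{L^{\infty}(\Omega)}\|\nabla R(\cdot,t)\|_{L^{2}(\Omega)}+\|\nabla R(\cdot,t)\|_{L^{2}(\Omega)}^{2}\right)\|\psi\|_{W_{p}^{2}(\Omega)},
\]
so that for a.e.\ $t\in(0,T_{\epsilon})$ we obtain
\[
\|\twz(\cdot,t)\|_{L^{q}(\Omega)}\leq C(q)\,\epsilon\left(\|\mathbf{h}(\cdot,t)\|_{L^{\infty}(\Omega)}\|\nabla R(\cdot,t)\|_{L^{2}(\Omega)}+\|\nabla R(\cdot,t)\|_{L^{2}(\Omega)}^{2}\right).
\]

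To turn this into a space--time estimate, I would collect the following ingredients. From the structural formula \eqref{eq:hh}, the uniform bounds on $\partial_{\rho}c_{k}$, $k=0,\dots,M+1$, and the embedding $X_{T_{\epsilon}}\hookrightarrow C^{0}([0,T_{\epsilon}];C^{1}(\mathbb{T}^{1}))$ combined with $\|h_{M-1/2}^{\epsilon}\|_{X_{T_{\epsilon}}}\leq C(K)$ (Lemma~\ref{lem:hM-1}), one obtains $\|\mathbf{h}\|_{L^{\infty}(\Omega_{T_{\epsilon}})}\leq C(K)\epsilon^{M-3/2}$. For $\nabla R$, the orthogonal decomposition $|\nabla R|^{2}=|\nabla^{\Gamma}R|^{2}+|\partial_{\mathbf{n}}R|^{2}$ inside $\Gamma(T_{\epsilon},\delta)$ together with \eqref{eq:Main-est-a} and \eqref{eq:Main-est-b} yields $\|\nabla R\|_{L^{2}(\Omega_{T_{\epsilon}})}\leq C(K)\epsilon^{M-3/2}$, while Lemma~\ref{lem:R-L2-Linf} (ultimately derived from the energy estimate in Lemma~\ref{lem:energy}) supplies $\|\nabla R\|_{L^{\infty}(0,T_{\epsilon};L^{2}(\Omega))}\leq C(K)\epsilon^{-1/2}$.

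At $r=1$ Cauchy--Schwarz in time reduces both summands to $\epsilon\cdot C(K)^{2}\epsilon^{2M-3}=C(K)\epsilon^{2(M-1)}$, as required. At $r=2$, the linear-in-$\nabla R$ contribution is immediately controlled by $\epsilon\|\mathbf{h}\|_{L^{\infty}}\|\nabla R\|_{L^{2}(\Omega_{T_{\epsilon}})}\leq C(K)\epsilon^{2M-2}$, while for the quadratic one the interpolation inequality
\[
\big\|\,\|\nabla R(\cdot,t)\|_{L^{2}(\Omega)}\,\big\|_{L^{4}(0,T_{\epsilon})}\leq\|\nabla R\|_{L^{2}(\Omega_{T_{\epsilon}})}^{1/2}\|\nabla R\|_{L^{\infty}(0,T_{\epsilon};L^{2}(\Omega))}^{1/2}\leq C(K)\epsilon^{(M-2)/2}
\]
gives $\epsilon\,\big\|\,\|\nabla R\|_{L^{2}(\Omega)}^{2}\,\big\|_{L^{2}(0,T_{\epsilon})}\leq C(K)\epsilon^{M-1}$, hence $\|\twz\|_{L^{2}(0,T_{\epsilon};L^{q}(\Omega))}\leq C(K)\epsilon^{M-1}$. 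The intermediate range $r\in(1,2)$ follows by log-convexity of $L^{r}$-norms in time, producing precisely the claimed exponent $2(M-1)/r$. The only non-routine step is the $r=2$ endpoint, which genuinely requires both the space--time $L^{2}$ bound of order $\epsilon^{M-3/2}$ and the energy-based $L^{\infty}$-in-time bound of order $\epsilon^{-1/2}$ on $\|\nabla R\|_{L^{2}(\Omega)}$; the remaining manipulations are all standard H\"older-type arguments.
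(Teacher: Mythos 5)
Your proposal is correct and follows essentially the same route as the paper: the duality estimate of Lemma \ref{LpLqstokes} combined with the embedding $W^{2}_{q'}(\Omega)\hookrightarrow C^{1}(\overline{\Omega})$ reduces everything to $L^{1}(\Omega)$-bounds on the two tensor products, which are then controlled via $\|\nabla R\|_{L^{2}(\Omega_{T_\epsilon})}\lesssim\epsilon^{M-3/2}$, $\|\nabla R\|_{L^{\infty}(0,T_\epsilon;L^{2})}\lesssim\epsilon^{-1/2}$ and a time-interpolation for the quadratic term. The only cosmetic differences are that the paper exploits the exponential decay of $\partial_{\rho}c_{k}$ to bound $\mathbf{h}$ in $L^{2}$ of the normal variable (gaining an extra $\epsilon^{1/2}$ that is not needed) and interpolates $\|\nabla R\|_{L^{2r}(0,T_\epsilon;L^{2})}$ directly for each $r$ rather than interpolating the final norm between the endpoints $r=1,2$; both yield the same exponent.
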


\begin{proof}
% For unique weak solvability of (\ref{eq:w2})\textendash (\ref{eq:w23}),
% see Theorem \ref{exinstokes}.
Since $\Omega\subset\mathbb{R}^{2}$,
we have $W_{q'}^{1}(\Omega)\hookrightarrow C^{0}(\Omega)$,
where $\frac{1}{q'}+\frac{1}{q}=1$. Thus Lemma \ref{LpLqstokes}
implies
\[
\left\Vert \tilde{\mathbf{w}}_{2}^{\epsilon}\right\Vert _{L^{r}\left(0,T_{\epsilon};L^{q}(\Omega)\right)}\leq C(q)\epsilon\left(\left\Vert \nabla R\otimes\mathbf{h}\right\Vert _{L^{r}\left(0,T_{\epsilon};L^{1}(\Omega)\right)}+\left\Vert \nabla R\otimes\nabla R\right\Vert _{L^{r}\left(0,T_{\epsilon};L^{1}(\Omega)\right)}\right).
\]
We use $X_{T_{\epsilon}}\hookrightarrow C^{0}(\left[0,T_{\epsilon}\right];C^{1}(\mathbb{T}^{1}))$
and $\partial_{\rho}c_{k}\in\mathcal{R}_{\alpha}$ for $k\in\left\{ 0,\ldots,M+1\right\} $
and get 
\begin{align*}
  &\epsilon\left\Vert \nabla R\otimes\mathbf{h}\right\Vert _{L^{r}\left(0,T_{\epsilon};L^{1}(\Omega)\right)} \\
  & \leq C\epsilon^{M-\frac{1}{2}}\epsilon^{\frac{1}{2}}\left\Vert \sum_{k=0}^{M+1}\epsilon^{k}\partial_{\rho}c_{k}\right\Vert _{L^{\infty}\left(\Gamma\left(2\delta;T_{0}\right);L^{2}\left(\mathbb{R}\right)\right)}\left\Vert h_{M-\frac{1}{2}}^{\epsilon}\right\Vert _{C^{0}\left(\left[0,T_{\epsilon}\right];C^{1}(\mathbb{T}^{1})\right)}\left\Vert \nabla R\right\Vert _{L^{2}\left(0,T_{\epsilon};L^{2}(\Omega)\right)}\\
 & \leq C(K)\epsilon^{2M-\frac{3}{2}}
\end{align*}
for all $\epsilon\in\left(0,\epsilon_{1}\right)$ due to (\ref{eq:muv0,5est})
and (\ref{eq:Main-est}). Moreover,
\begin{align*}
\epsilon\left\Vert \nabla R\otimes\nabla R\right\Vert _{L^{r}\left(0,T_{\epsilon};L^{1}(\Omega)\right)} & \leq\epsilon\left\Vert \nabla R\right\Vert _{L^{2}\left(0,T_{\epsilon};L^{2}(\Omega)\right)}^{\frac{2}{r}}\left\Vert \nabla R\right\Vert _{L^{\infty}\left(0,T_{\epsilon};L^{2}(\Omega)\right)}^{2-\frac{2}{r}}\leq C(K)\epsilon^{\frac{2M}{r}-\frac{2}{r}}
\end{align*}
for $\epsilon\in\left(0,\epsilon_{1}\right)$, by (\ref{eq:Main-est})
and (\ref{eq:gradR}). Combining the above estimates and using $r\geq 1$
the claim follows.
\end{proof}
\begin{lem}
\label{w2eps} Let $\varphi\in L^{\infty}\left(0,T_{\epsilon};H^{1}(\Omega)\right)$
and let the assumptions of Lemma \ref{lem:w2eps-first} hold. Then
there is some $r'>0$ such that
\begin{align*}
\int_{0}^{T_{\epsilon}}\left|\int_{\Omega}\left(\twz\cdot\nabla c_{A}^{\epsilon}\right)\varphi\d x\right|\d t & \leq C(K)T_{\epsilon}^{r'}\epsilon^{M}\left\Vert \varphi\right\Vert _{L^{\infty}\left(0,T_{\epsilon};H^{1}(\Omega)\right)} \quad \text{for all }\epsilon\in\left(0,\epsilon_{1}\right).
\end{align*}
\end{lem}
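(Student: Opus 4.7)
The plan is to estimate $|\int_\Omega (\twz\cdot\nabla c_A^\eps)\varphi\,\d x|$ by a three-factor Hölder inequality in space, controlling $\twz$ in $L^q(\Omega)$ with $q\in(1,2)$ via Lemma \ref{lem:w2eps-first}, $\nabla c_A^\eps$ in $L^a(\Omega)$ with $a>q':=q/(q-1)$ by direct calculation, and $\varphi$ in $L^b(\Omega)$ via the two-dimensional Sobolev embedding $H^1(\Omega)\hookrightarrow L^b(\Omega)$, where $\tfrac{1}{q}+\tfrac{1}{a}+\tfrac{1}{b}=1$. A subsequent Hölder inequality in time with exponents $r\in(1,\infty)$ and $r/(r-1)$ produces the $T_\eps^{(r-1)/r}$-factor.

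More concretely, the first step is to combine these inequalities with Lemma \ref{lem:w2eps-first} to obtain
\begin{align*}
\int_0^{T_\eps}\Bigl|\int_\Omega(\twz\cdot\nabla c_A^\eps)\varphi\,\d x\Bigr|\d t\leq C(K,r,q)\,T_\eps^{(r-1)/r}\,\eps^{\frac{2(M-1)}{r}}\|\nabla c_A^\eps\|_{L^\infty(0,T_\eps;L^a(\Omega))}\|\varphi\|_{L^\infty(0,T_\eps;H^1(\Omega))}.
\end{align*}
The second step is the estimate $\|\nabla c_A^\eps\|_{L^\infty(0,T_\eps;L^a(\Omega))}\leq C\eps^{(1-a)/a}$. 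To prove it, split $\Omega$ into $\Gamma_t(2\delta)$ and its complement: outside, $\nabla c_A^\eps$ is $\mathcal{O}(1)$ uniformly in $\eps$ by the structural description of Subsection \ref{subsec:The-Approximate-Solutions}; inside, the leading contribution is $\eps^{-1}\xi(d_\Gamma)\theta_0'(\rho)\mathbf{n}$, and the change of variables $r=\eps(\rho+h_A^\eps(s,t))$ together with the uniform bound on $h_A^\eps$ from Lemma \ref{lem:hM-1} and \eqref{eq:haepglm} and the exponential decay \eqref{eq:optimopti} of $\theta_0'$ yields the claim.

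The third step is to choose the parameters. Inserting the bound above gives the overall estimate
\begin{align*}
\int_0^{T_\eps}\Bigl|\int_\Omega(\twz\cdot\nabla c_A^\eps)\varphi\,\d x\Bigr|\d t\leq C(K)\,T_\eps^{(r-1)/r}\,\eps^{\frac{2(M-1)}{r}-\frac{a-1}{a}}\|\varphi\|_{L^\infty(0,T_\eps;H^1(\Omega))}.
\end{align*}
Since $\tfrac{a-1}{a}<1$ and $\tfrac{2(M-1)}{r}\to 2(M-1)\geq 6$ as $r\to 1^+$, the assumption $M\geq 4$ allows us to pick $q$ close to $2$, $a$ slightly above $q'$, and $r$ slightly above $1$ so that $\tfrac{2(M-1)}{r}-\tfrac{a-1}{a}>M$. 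Then $r':=(r-1)/r>0$, and, since $\eps<1$, any excess power of $\eps$ is absorbed, yielding the asserted bound.

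The main obstacle is the balance between the $\eps^{-1}$-singularity of $\nabla c_A^\eps$ near $\Gamma_t$ and the quadratic-in-$R$ gain $\eps^{2(M-1)/r}$ provided by Lemma \ref{lem:w2eps-first}: the exclusion $q<2$ in that lemma and the divergence of $\|\nabla c_A^\eps\|_{L^a}$ as $a\to 1^+$ force a joint interior choice of Hölder exponents. The condition $M\geq 4$ precisely ensures enough margin to accommodate this constraint while simultaneously retaining a positive power of $T_\eps$ from time Hölder with $r>1$.
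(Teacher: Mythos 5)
Your proposal is correct and follows essentially the same route as the paper: both rest on the $L^r(0,T_\epsilon;L^q(\Omega))$ bound for $\twz$ from Lemma \ref{lem:w2eps-first}, a H\"older argument in space and time, control of $\nabla c_A^{\epsilon}$ via its explicit structure (with the $\epsilon^{-1}\theta_0'(\rho)$ profile dominating near $\Gamma$), and the slack provided by $M\geq 4$ to absorb the interface singularity while keeping a positive power of $T_\epsilon$. The only (harmless) variation is that you measure $\nabla c_A^{\epsilon}$ in $L^a(\Omega)$, losing only $\epsilon^{-(a-1)/a}$, whereas the paper extracts $\epsilon^{-1}\Vert\theta_0'\Vert_{L^\infty}$ and pairs $L^q$ against $L^{q'}$, losing a full power of $\epsilon$ — both budgets close since $2(M-1)/r$ can be made larger than $M+1$.
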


\begin{proof}
Let $r\in\left(1,2\right)$. As $\nabla c_{A}^{\epsilon}\in\mathcal{O}\left(\epsilon\right)$
in $L^{\infty}\left(\Omega_{T_{0}}\backslash\Gamma(2\delta)\right)$
it immediately follows
\begin{align}
\int_{0}^{T_{\epsilon}}\left|\,\int_{\Omega\backslash\Gamma_{t}(2\delta)}\left(\twz\cdot\nabla c_{A}^{\epsilon}\right)\varphi\d x\right|\d t & \leq C\epsilon\left\Vert \varphi\right\Vert _{L^{\infty}\left(0,T_{\epsilon};H^{1}(\Omega)\right)}\int_{0}^{T_{\epsilon}}\left\Vert \twz\right\Vert _{L^{q}(\Omega)}\d t\nonumber \\
 & \leq C(K)T_{\epsilon}^{\frac{1}{r'}}\left\Vert \varphi\right\Vert _{L^{\infty}\left(0,T_{\epsilon};H^{1}(\Omega)\right)}\epsilon^{\frac{2\left(M-1\right)}{r}+1}\label{eq:w2abauss}
\end{align}
by (\ref{eq:w2epsab}) for $q\in\left(1,2\right)$ and due to $H^{1}(\Omega)\hookrightarrow L^{s}(\Omega)$
for all $s\geq1$. The same estimate holds for $\left(\nabla\xi\left(d_{\Gamma}\right)\left(c_{I}-c_{O,\mathbf{B}}\right)+\left(1-\xi\left(d_{\Gamma}\right)\right)\nabla c_{O,\mathbf{B}}\right)$
in $\Gamma_{t}(2\delta)\backslash\Gamma_{t}(\delta)$
by (\ref{eq:matching}).

In $\Gamma\left(2\delta;T_{\epsilon}\right)$ we consider $\nabla\left(c_{0}\left(\rho(x,t),x,t\right)\right)=\nabla\left(\theta_{0}\left(\rho(x,t)\right)\right)$
and compute 
\begin{align*}
 & \int_{0}^{T_{\epsilon}}\left|\,\int_{\Gamma_{t}(2\delta)}\left(\twz\cdot\nabla\left(\theta_{0}\left(\rho(x,t)\right)\right)\right)\xi\left(d_{\Gamma}\right)\varphi\d x\right|\d t\\
 & \leq\int_{0}^{T_{\epsilon}}\int_{\Gamma_{t}(2\delta)}\left|\left(\twz\cdot\left(\mathbf{n}-\epsilon\nabla^{\Gamma}h_{A}^{\epsilon}(x,t)\right)\frac{1}{\epsilon}\theta_{0}'\left(\rho(x,t)\right)\right)\varphi\right|\d x\d t\\
 & \leq C(K)\left\Vert \varphi\right\Vert _{L^{\infty}\left(0,T_{\epsilon};H^{1}(\Omega)\right)}\epsilon^{-1}\int_{0}^{T_{\epsilon}}\left\Vert \tilde{\mathbf{w}}_{2}^{\epsilon}\right\Vert _{L^{q}(\Omega)}\d t\left\Vert \theta_{0}'\right\Vert _{L^{\infty}\left(\mathbb{R}\right)}\\
 & \leq C(K)T_{\epsilon}^{\frac{1}{r'}}\left\Vert \varphi\right\Vert _{L^{\infty}\left(0,T_{\epsilon};H^{1}(\Omega)\right)}\epsilon^{\frac{2\left(M-1\right)}{r}-1}.
\end{align*}
%Here we used (\ref{eq:haepglm}) in the second estimate and (\ref{eq:w2epsab})
%in the last line.
Since $\nabla\left(c_{I}-c_{0}\left(\rho\left(.\right),.\right)\right)\in\mathcal{O}\left(1\right)$
in $L^{\infty}\left(\Gamma\left(2\delta;T_{\epsilon}\right)\right)$,
we immediately get 
\[
\int_{0}^{T_{\epsilon}}\left|\,\int_{\Gamma_{t}(2\delta)}\left(\twz\cdot\nabla\left(c_{I}-c_{0}\left(\rho\left(.\right),.\right)\right)\right)\varphi\d x\right|\d t\leq C(K)T_{\epsilon}^{\frac{1}{r'}}\left\Vert \varphi\right\Vert _{L^{\infty}\left(0,T_{\epsilon};H^{1}(\Omega)\right)}\epsilon^{\frac{2\left(M-1\right)}{r}}
\]
by similar arguments as in (\ref{eq:w2abauss}).

As $M\geq4$ there always exists $r\in\left(1,2\right)$ (and with
it $r'\in\left(2,\infty\right)$) such that $\epsilon^{\frac{2\left(M-1\right)}{r}-1}<\epsilon^{M}$
which concludes the proof. 
\end{proof}
\begin{thm}[Error in the velocity]~\label{vehler}\\
  Let $\overline{\mathbf{v}^{\epsilon}}$ be a strong
solution to \eqref{eq:vbar}-\eqref{eq:vbar3}, let the
assumptions of Lemma \ref{lem:w2eps-first} hold true and let $\mathbf{v}_{err}^{\epsilon}:=\mathbf{v}^{\epsilon}-\left(\overline{\mathbf{v}^{\epsilon}}+\twe+\twz\right)$.
\begin{enumerate}
\item There is a constant $C(K)>0$ such that
\[
\left\Vert \mathbf{v}_{A}^{\epsilon}-\overline{\mathbf{v}^{\epsilon}}\right\Vert _{L^{2}\left(0,T_{\epsilon};H^{1}(\Omega)\right)}\leq C(K)\epsilon^{M}\quad \text{for all }\epsilon\in\left(0,\epsilon_{1}\right).
\]
\item For every $\beta\in\left(0,\frac{1}{2}\right)$ there are constants $C_{1}(\beta),C_{2}(\beta),C(K)>0$ such that 
\begin{align}
\left\Vert \mathbf{v}_{err}^{\epsilon}\right\Vert _{H^{1}(\Omega)} & \leq C_{1}\left(\left\Vert \rh\nabla c_{A}^{\epsilon}\right\Vert _{\left(H_{\sigma}^{1}(\Omega)\right)'}+\epsilon\left\Vert \nabla R\right\Vert _{L^{2}\left(\partial\Omega\left(\frac{\delta}{2}\right)\right)}^{1-2\beta}\left\Vert \gamma\nabla R\right\Vert _{H^{1}\left(\partial\Omega\left(\frac{\delta}{2}\right)\right)}^{1+2\beta}\right)\nonumber \\
 & \quad+C_{2}\epsilon^{2}\left\Vert \nabla R\right\Vert _{L^{2}\left(\partial\Omega\left(\frac{\delta}{2}\right)\right)}^{\frac{1}{2}-\beta}\left\Vert \gamma\nabla R\right\Vert _{H^{1}\left(\partial\Omega\left(\frac{\delta}{2}\right)\right)}^{\frac{1}{2}+\beta}\label{eq:vepserr-wotime}
\end{align}
for almost every $t\in (0,T_\epsilon)$ and
\begin{equation}
\left\Vert \mathbf{v}_{err}^{\epsilon}\right\Vert _{L^{1}\left(0,T_{\epsilon};H^{1}(\Omega)\right)}\leq C(K)C\left(T_{\epsilon},\epsilon\right)\epsilon^{M}\label{eq:vepserr-L1}
\end{equation}
for all $\epsilon\in\left(0,\epsilon_{1}\right)$, where $C\left(T,\epsilon\right)\rightarrow0$
as $\left(T,\epsilon\right)\rightarrow0$.
\end{enumerate}
\end{thm}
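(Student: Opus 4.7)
Both assertions reduce to Stokes problems for suitable differences, to which the theory of Subsection~\ref{sec:Instationary-Stokes-Equation} applies. For (1), set $\mathbf{u}^\epsilon:=\mathbf{v}_A^\epsilon-\overline{\mathbf{v}^\epsilon}$. Subtracting \eqref{eq:vbar}\textendash\eqref{eq:vbar3} from \eqref{eq:Stokesapp}\textendash\eqref{eq:Divapp} together with \eqref{eq:boundaryconditions}, the pair $(\mathbf{u}^\epsilon,p_A^\epsilon-\overline{p^\epsilon})$ solves \eqref{eq:instokes1}\textendash\eqref{eq:instokes3} with body force $\mathbf{f}=\rs$ and divergence source $g=\rdiv\in L^2(\Omega)$, vanishing on $\partial\Omega$. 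Theorem~\ref{exinstokes} yields $\|\mathbf{u}^\epsilon(\cdot,t)\|_{H^1(\Omega)}\le C(\|\rs(\cdot,t)\|_{(H^1(\Omega))'}+\|\rdiv(\cdot,t)\|_{L^2(\Omega)})$ pointwise in $t$; squaring and integrating on $(0,T_\epsilon)$, the claim follows from \eqref{eq:remstokes}.

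For (2), I first derive the system for $\mathbf{v}_{err}^\epsilon$. Subtracting \eqref{eq:vbar}, \eqref{eq:w1} and \eqref{eq:w2} from \eqref{eq:StokesPart},
\begin{equation*}
-\Delta\mathbf{v}_{err}^\epsilon+\nabla p_{err}^\epsilon=\mu^\epsilon\nabla c^\epsilon-\mu_A^\epsilon\nabla c_A^\epsilon+\epsilon\operatorname{div}\bigl(\nabla c_A^\epsilon\otimes_s\nabla R+\nabla R\otimes\nabla R\bigr),
\end{equation*}
with $\operatorname{div}\mathbf{v}_{err}^\epsilon=0$ and the Navier condition on $\partial\Omega$. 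The crucial algebraic identity, valid whenever $\mu=-\epsilon\Delta c+\epsilon^{-1}f'(c)+r$, reads
\begin{equation*}
\mu\nabla c=-\epsilon\operatorname{div}(\nabla c\otimes\nabla c)+\nabla\bigl(\tfrac{\epsilon}{2}|\nabla c|^2+\epsilon^{-1}f(c)\bigr)+r\nabla c.
\end{equation*}
Applied to $(\mu^\epsilon,c^\epsilon,0)$ and to $(\mu_A^\epsilon,c_A^\epsilon,\rh)$, and combined with the identity $\nabla c^\epsilon\otimes\nabla c^\epsilon-\nabla c_A^\epsilon\otimes\nabla c_A^\epsilon=\nabla c_A^\epsilon\otimes_s\nabla R+\nabla R\otimes\nabla R$, this makes the divergence terms in the right-hand side cancel exactly, leaving only a pure-gradient contribution $\nabla(P^\epsilon-P_A^\epsilon)$ with $P^\epsilon-P_A^\epsilon=\tfrac{\epsilon}{2}(|\nabla c^\epsilon|^2-|\nabla c_A^\epsilon|^2)+\epsilon^{-1}(f(c^\epsilon)-f(c_A^\epsilon))$ plus the source $-\rh\nabla c_A^\epsilon$. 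Absorbing the gradient into $\tilde p_{err}^\epsilon:=p_{err}^\epsilon-(P^\epsilon-P_A^\epsilon)$ converts the problem into a Stokes system for $\mathbf{v}_{err}^\epsilon$ with body force $-\rh\nabla c_A^\epsilon$ and modified boundary condition $(-2D_s\mathbf{v}_{err}^\epsilon+\tilde p_{err}^\epsilon\mathbf{I})\mathbf{n}_{\partial\Omega}=\alpha_0\mathbf{v}_{err}^\epsilon-(P^\epsilon-P_A^\epsilon)\mathbf{n}_{\partial\Omega}$.

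Testing against $\psi\in V_0(\Omega)=H_\sigma^1(\Omega)$ and invoking Theorem~\ref{exinstokes} pointwise in $t$ gives
$\|\mathbf{v}_{err}^\epsilon\|_{H^1(\Omega)}\le C(\|\rh\nabla c_A^\epsilon\|_{V_0'}+\|(P^\epsilon-P_A^\epsilon)\mathbf{n}_{\partial\Omega}\|_{V_0'})$.
Since $c^\epsilon=c_A^\epsilon=-1$ on $\partial\Omega$, all tangential derivatives of $c^\epsilon$ and $c_A^\epsilon$ vanish there and $f(c^\epsilon)=f(c_A^\epsilon)=0$, hence on $\partial\Omega$
\begin{equation*}
P^\epsilon-P_A^\epsilon=\epsilon\bigl(\partial_{\mathbf{n}}c_A^\epsilon\bigr)\bigl(\partial_{\mathbf{n}}R\bigr)+\tfrac{\epsilon}{2}\bigl(\partial_{\mathbf{n}}R\bigr)^2,
\end{equation*}
with the linear prefactor of order $\mathcal{O}(\epsilon)$ because $c_A^\epsilon=c_{O,\mathbf{B}}=-1+\mathcal{O}(\epsilon)$ in $C^1$ near $\partial\Omega$. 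Using $\gamma\equiv1$ on $\partial\Omega$, I apply the fractional trace bound $\|u\|_{L^p(\partial\Omega)}\le C\|u\|_{H^{1/2+\beta}(\Omega)}$ together with the interpolation $\|u\|_{H^{1/2+\beta}(\Omega)}\le C\|u\|_{L^2(\Omega)}^{1/2-\beta}\|u\|_{H^1(\Omega)}^{1/2+\beta}$ to $u=\gamma\nabla R$, dualizing $\psi$ via $\|\psi\|_{L^{p'}(\partial\Omega)}\le C\|\psi\|_{H^1(\Omega)}$ ($p'<\infty$) from the Sobolev trace. The linear and the quadratic piece in $\partial_{\mathbf{n}}R$ then produce precisely the two terms of \eqref{eq:vepserr-wotime}.

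For \eqref{eq:vepserr-L1} I integrate the pointwise bound in time: the $\rh\nabla c_A^\epsilon$-contribution is handled by Cauchy-Schwarz and \eqref{eq:rch2-nablacae}, while the boundary contributions are controlled via H\"older's inequality with exponents matching the interpolation weights $1\pm2\beta$ and $\tfrac{1}{2}\pm\beta$, combining \eqref{eq:Main-est-a} to estimate $\|\nabla R\|_{L^2(0,T_\epsilon;L^2(\partial\Omega(\delta/2)))}\le K\epsilon^{M-1/2}$ (note $\partial\Omega(\delta/2)\subset\Omega\setminus\Gamma(\delta;T_\epsilon)$ since $\mathrm{dist}(\Gamma_t,\partial\Omega)>5\delta$) with \eqref{eq:Main-est-d} and elliptic regularity for $\gamma\nabla R$ (using $R|_{\partial\Omega}=0$) to bound $\|\gamma\nabla R\|_{L^2(0,T_\epsilon;H^1)}\le C(K)\epsilon^{M-1}$. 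Direct power-counting then shows the resulting exponent of $\epsilon$ strictly exceeds $M$ for $M\ge4$, leaving a prefactor that tends to $0$ as $(T_\epsilon,\epsilon)\to0$. The main obstacle is the algebraic cancellation in the derivation of the reduced equation for $\mathbf{v}_{err}^\epsilon$, which both explains and requires the precise construction of $\twe+\twz$ as the divergence-form corrector absorbing the bilinear and quadratic $\nabla R$-contributions of the capillary stress; the remaining technical difficulty is the fractional trace interpolation localized on the tubular neighborhood $\partial\Omega(\delta/2)$, which yields the $\beta$-family of boundary bounds.
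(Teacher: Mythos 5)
Your argument follows the paper's proof in all essentials: part (1) is treated identically, and for part (2) you arrive at the same reduced problem for $\mathbf{v}_{err}^{\epsilon}$ (body force $-\rh\nabla c_{A}^{\epsilon}$ plus boundary data that is linear-plus-quadratic in $\partial_{\mathbf{n}}R$) and the same trace/interpolation estimates producing the exponents $1\pm2\beta$ and $\tfrac12\pm\beta$, followed by the same power counting in time.

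One step in your derivation is imprecise. The right-hand sides of \eqref{eq:w1} and \eqref{eq:w2} are \emph{defined} as the functionals $\psi\mapsto\int_{\Omega}\epsilon(\cdots):\nabla\psi\,\mathrm{d}x$ (cf.\ \eqref{eq:fepsh}, \eqref{eq:geps}), i.e.\ as distributional divergences carrying no boundary contribution, whereas $\operatorname{div}(\nabla c^{\epsilon}\otimes\nabla c^{\epsilon}-\nabla c_{A}^{\epsilon}\otimes\nabla c_{A}^{\epsilon})$ in your algebraic identity is the classical divergence of an integrable tensor field. Since the test functions $\psi\in H_{\sigma}^{1}(\Omega)$ do not vanish on $\partial\Omega$, the claimed "exact cancellation" of the divergence terms leaves a residual boundary term $\epsilon\int_{\partial\Omega}\left(\nabla c_{A}^{\epsilon}\otimes\nabla c_{A}^{\epsilon}-\nabla c^{\epsilon}\otimes\nabla c^{\epsilon}\right)\mathbf{n}_{\partial\Omega}\cdot\psi\,\mathrm{d}\mathcal{H}^{1}$ in addition to your $-(P^{\epsilon}-P_{A}^{\epsilon})\mathbf{n}_{\partial\Omega}$; this is precisely the first boundary integral in the paper's weak formulation \eqref{eq:vepserr-weakform}, which your reduction silently drops. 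The omission turns out to be harmless: because $c^{\epsilon}=c_{A}^{\epsilon}=-1$ on $\partial\Omega$, both gradients are purely normal there, so the extra term collapses to $\epsilon\left((\partial_{\mathbf{n}}c^{\epsilon})^{2}-(\partial_{\mathbf{n}}c_{A}^{\epsilon})^{2}\right)\mathbf{n}_{\partial\Omega}\cdot\psi$, i.e.\ exactly the same structure as the term you do estimate, and \eqref{eq:vepserr-wotime}, \eqref{eq:vepserr-L1} follow unchanged up to constants. You should nevertheless state the residual term explicitly rather than assert exact cancellation; the remainder of your argument (the use of \eqref{eq:Main-est-a}, \eqref{eq:Main-est-d}, the elliptic bound \eqref{eq:gamma-nablaR} for $\gamma\nabla R$, and \eqref{eq:rch2-nablacae}) is sound and coincides with the paper's.
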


\begin{proof}
Ad 1) By definition, $\mathbf{v}_{A}^{\epsilon}-\overline{\mathbf{v}^{\epsilon}}$
satisfies 
\begin{align*}
-\Delta\left(\mathbf{v}_{A}^{\epsilon}-\overline{\mathbf{v}^{\epsilon}}\right)+\nabla\left(p_{A}^{\epsilon}-\overline{p^{\epsilon}}\right) & =\rs &  & \text{in }\Omega_{T_{\epsilon}},\\
\operatorname{div}\left(\mathbf{v}_{A}^{\epsilon}-\overline{\mathbf{v}^{\epsilon}}\right) & =\rdiv &  & \text{in }\Omega_{T_{\epsilon}},\\
\left(-2D_{s}\left(\mathbf{v}_{A}^{\epsilon}-\overline{\mathbf{v}^{\epsilon}}\right)+\left(p_{A}^{\epsilon}-\overline{p^{\epsilon}}\right)\mathbf{I}\right)\mathbf{n}_{\partial\Omega} & =\alpha_{0}\left(\mathbf{v}_{A}^{\epsilon}-\overline{\mathbf{v}^{\epsilon}}\right) &  & \text{on }\partial_{T_{\epsilon}}\Omega.
\end{align*}
Thus, we have by Theorem \ref{exinstokes} and since $\rdiv=0$ on
$\partial_{T_{0}}\Omega$ 
\[
\left\Vert \mathbf{v}_{A}^{\epsilon}-\overline{\mathbf{v}^{\epsilon}}\right\Vert _{L^{2}\left(0,T_{\epsilon};H^{1}(\Omega)\right)}\leq C\left(\left\Vert \rs\right\Vert _{L^{2}\left(0,T_{\epsilon};\left(H^{1}(\Omega)\right)'\right)}+\left\Vert \rdiv\right\Vert _{L^{2}\left(\Omega_{T_{\epsilon}}\right)}\right)
\]
and the claim follows from (\ref{eq:remstokes}).

Ad 2) First of all we have for $\psi\in H_{\sigma}^{1}(\Omega)$
\begin{equation}
\int_{\Omega}2D_{s}\left(\mathbf{v}^{\epsilon}-\overline{\mathbf{v}^{\epsilon}}\right):D_{s}\psi\d x+\alpha_{0}\int_{\partial\Omega}\left(\mathbf{v}^{\epsilon}-\overline{\mathbf{v}^{\epsilon}}\right)\cdot\psi\d\mathcal{H}^{1}(s)=\int_{\Omega}\left(\mu^{\epsilon}\nabla c^{\epsilon}-\mu_{A}^{\epsilon}\nabla c_{A}^{\epsilon}\right)\cdot\psi\d x.\label{eq:veps-vepsbar}
\end{equation}
Plugging in (\ref{eq:CH-Part2}), (\ref{eq:Hilliardapp}) and using
integration by parts we get 
\begin{align}
 & \int_{\Omega}\left(\mu^{\epsilon}\nabla c^{\epsilon}-\mu_{A}^{\epsilon}\nabla c_{A}^{\epsilon}\right)\cdot\psi\d x
 =\epsilon\int_{\Omega}\left(\nabla c^{\epsilon}\otimes\nabla c^{\epsilon}-\nabla c_{A}^{\epsilon}\otimes\nabla c_{A}^{\epsilon}\right):\nabla\psi\d x-\int_{\Omega}\rh\nabla c_{A}^{\epsilon}\cdot\psi\d x\nonumber \\
 & \quad+\epsilon\int_{\partial\Omega}\left(\left(\nabla c_{A}^{\epsilon}\otimes\nabla c_{A}^{\epsilon}-\nabla c^{\epsilon}\otimes\nabla c^{\epsilon}\right)\mathbf{n}_{\partial\Omega}+\frac{1}{2}\left(\left|\nabla c^{\epsilon}\right|^{2}-\left|\nabla c_{A}^{\epsilon}\right|^{2}\right)\mathbf{n}_{\partial\Omega}\right)\cdot\psi\d\mathcal{H}^{1}(s).\label{eq:rsveps-vepsbar}
\end{align}
Here we used $c^{\epsilon}=c_{A}^{\epsilon}=-1$ on $\partial_{T}\Omega$
together with $f\left(-1\right)=0$ and  $\operatorname{div}\left(\nabla c\otimes\nabla c\right)=\Delta c\nabla c+\frac{1}{2}\nabla\left(\left|\nabla c\right|^{2}\right)$
for sufficiently smooth $c\colon \Omega\rightarrow\mathbb{R}$. 

So, defining\textbf{ }$\mathbf{v}_{err}^{\epsilon}$ as in (\ref{eq:vepserrderf})
and taking into account (\ref{eq:veps-vepsbar}), (\ref{eq:rsveps-vepsbar}),
and the definitions of $\twe$ and $\twz$ (cf.\ (\ref{eq:w1}),
(\ref{eq:w2})) as weak solutions we find that $\mathbf{v}_{err}^{\epsilon}$
solves
\begin{align}
&\int_{\Omega}2D_{s}\mathbf{v}_{err}^{\epsilon}:D_{s}\psi\d x  +\alpha_{0}\int_{\partial\Omega}\mathbf{v}_{err}^{\epsilon}\cdot\psi\d\mathcal{H}^{1}(s)\nonumber \\
&=  \epsilon\!\int_{\partial\Omega}\!\!\left(\left(\nabla c_{A}^{\epsilon}\otimes\nabla c_{A}^{\epsilon}-\nabla c^{\epsilon}\otimes\nabla c^{\epsilon}\right)\mathbf{n}_{\partial\Omega}+\frac{1}{2}\left(\left|\nabla c^{\epsilon}\right|^{2}-\left|\nabla c_{A}^{\epsilon}\right|^{2}\right)\!\mathbf{n}_{\partial\Omega}\right)\!\cdot\psi\d\mathcal{H}^{1}(s)\nonumber \\
 & \qquad -\int_{\Omega}\rh\nabla c_{A}^{\epsilon}\cdot\psi\d x =:  \mathcal{F}^{\epsilon}\left(\psi\right)\label{eq:vepserr-weakform}
\end{align}
for all $\psi\in H_{\sigma}^{1}(\Omega)$. % Thus, in order
% to get an estimate in $L^{1}\left(0,T_{\epsilon};H^{1}(\Omega)\right)$
% (or only $H^{1}(\Omega)$) for $\mathbf{v}_{err}^{\epsilon}$
% we need to find a suitable upper bound for $\left\Vert \mathcal{F}^{\epsilon}\right\Vert _{L^{1}\left(0,T_{\epsilon};\left(H_{\sigma}^{1}(\Omega)\right)'\right)}$
% (or only $\left\Vert \mathcal{F}^{\epsilon}\right\Vert _{\left(H_{\sigma}^{1}(\Omega)\right)'}$).
Due to (\ref{eq:rch2-nablacae}) we have
\begin{align}
\int_{0}^{T_{\epsilon}}\left|\int_{\Omega}\rh\nabla c_{A}^{\epsilon}\cdot\psi\d x\right|\d t & \leq\int_{0}^{T_{\epsilon}}\left\Vert \rh\nabla c_{A}^{\epsilon}\right\Vert _{\left(H_{\sigma}^{1}(\Omega)\right)'}\d t\left\Vert \psi\right\Vert _{H^{1}(\Omega)}\nonumber \\
 & \leq C(K)C\left(T_{\epsilon},\epsilon\right)\epsilon^{M},\label{eq:erstens}
\end{align}
where $C\left(T,\epsilon\right)\rightarrow0$ as $\left(T,\epsilon\right)\rightarrow0$.
Thus, we only need to estimate the appearing boundary terms in (\ref{eq:vepserr-weakform}). 
%In
%this proof, we denote $\mathcal{R}^{\epsilon}=\epsilon^{M-\frac{1}{2}}\left(\mathbf{w}_{1}^{\epsilon}-\left.\mathbf{w}_{1}^{\epsilon}\right|_{\Gamma}\xi\right)\cdot\nabla c_{A}^{\epsilon}$.
To this end, let $\beta\in\left(0,\frac{1}{2}\right)$ and we compute
\begin{align}
\epsilon\int_{0}^{T_{\epsilon}}\int_{\partial\Omega} & \left|\left(\left|\nabla c^{\epsilon}\right|^{2}-\left|\nabla c_{A}^{\epsilon}\right|^{2}\right)\psi\right|\d\mathcal{H}^{1}(s)\d t \leq\epsilon\int_{0}^{T_{\epsilon}}\int_{\partial\Omega}\left(\left|\nabla R\right|^{2}+2\left|\nabla R\right|\left|\nabla c_{A}^{\epsilon}\right|\right)\left|\psi\right|\d\mathcal{H}^{1}(s)\d t\nonumber \\
 & \leq C\int_{0}^{T_{\epsilon}}\left(\epsilon\left\Vert \gamma\nabla R\right\Vert _{H^{\frac{1}{2}+\beta}(\Omega)}^{2}+\epsilon^{2}\left\Vert \gamma\nabla R\right\Vert _{H^{\frac{1}{2}+\beta}(\Omega)}\right)\left\Vert \psi\right\Vert _{H^{1}(\Omega)}\d t\nonumber \\
 & \leq C_{1}\int_{0}^{T_{\epsilon}}\left(\epsilon\left\Vert \nabla R\right\Vert _{L^{2}\left(\partial\Omega\left(\frac{\delta}{2}\right)\right)}^{1-2\beta}\left\Vert \gamma\nabla R\right\Vert _{H^{1}(\Omega)}^{1+2\beta}\right)\left\Vert \psi\right\Vert _{H^{1}(\Omega)}\d t\nonumber \\
 & \quad+C_{2}\int_{0}^{T_{\epsilon}}\left(\epsilon^{2}\left\Vert \nabla R\right\Vert _{L^{2}\left(\partial\Omega\left(\frac{\delta}{2}\right)\right)}^{\frac{1}{2}-\beta}\left\Vert \gamma\nabla R\right\Vert _{H^{1}(\Omega)}^{\frac{1}{2}+\beta}\right)\left\Vert \psi\right\Vert _{H^{1}(\Omega)}\d t\label{eq:zweitens}\\
 & \leq C_{1}\left(\epsilon\left\Vert \nabla R\right\Vert _{L^{2}\left(\partial_{T_{\epsilon}}\Omega\left(\frac{\delta}{2}\right)\right)}^{1-2\beta}\left\Vert \gamma\nabla R\right\Vert _{L^{2}\left(0,T_{\epsilon};H^{1}(\Omega)\right)}^{1+2\beta}\right)\left\Vert \psi\right\Vert _{H^{1}(\Omega)}\nonumber \\
 & \quad+C_{2}T_{\epsilon}^{\frac{1}{2}}\left(\epsilon^{2}\left\Vert \nabla R\right\Vert _{L^{2}\left(\partial_{T_{\epsilon}}\Omega\left(\frac{\delta}{2}\right)\right)}^{\frac{1}{2}-\beta}\left\Vert \gamma\nabla R\right\Vert _{L^{2}\left(0,T_{\epsilon};H^{1}(\Omega)\right)}^{\frac{1}{2}+\beta}\right)\left\Vert \psi\right\Vert _{H^{1}(\Omega)},\label{eq:helfer-1}
\end{align}
where we used in the second inequality that $\nabla c_{A}^{\epsilon}=\mathcal{O}\left(\epsilon\right)$
in $L^{\infty}\left(\overline{\partial_{T_{0}}\Omega\left(\frac{\delta}{2}\right)}\right)$
and that $H^{\frac{1}{2}}\left(\partial\Omega\right)\hookrightarrow L^{s}\left(\partial\Omega\right)$
for all $s\in [1,\infty)$. %  by the Sobolev-Embeddings Theorem, as $\partial\Omega$
% is one dimensional. Moreover, we used the Trace Theorem and the embedding
and  $H^{\beta}\left(\partial\Omega\right)\hookrightarrow L^{2+\beta}\left(\partial\Omega\right)$,
% which is a consequence of the Sobolev Embeddings,
since $\beta-\frac{1}{2}\geq-\frac{1}{2+\beta}$.
% for $\beta\geq0$. The third inequality is due to interpolation theory.
Now we may estimate
\begin{align}
\left\Vert \gamma\nabla R\right\Vert _{H^{1}(\Omega)} & \leq C\left\Vert \left(\gamma\Delta R,\left|\nabla R\right|,R\right)\right\Vert _{L^{2}\left(\partial\Omega\left(\frac{\delta}{2}\right)\right)}\label{eq:gamma-nablaR}
\end{align}
due to elliptic regularity theory and the definition of $\gamma$.
Using this in (\ref{eq:helfer-1}) together with (\ref{eq:Main-est-a})
and (\ref{eq:Main-est-d}), we find
\begin{align*}
\epsilon\int_{0}^{T_{\epsilon}}\int_{\partial\Omega}\left|\left(\left|\nabla c^{\epsilon}\right|^{2}-\left|\nabla c_{A}^{\epsilon}\right|^{2}\right)\psi\right|\d\mathcal{H}^{1}(s)\d t & \leq\left\Vert \psi\right\Vert _{H^{1}(\Omega)}C(K)\left(\epsilon^{2M-\frac{1}{2}-\beta}+T_{\epsilon}^{\frac{1}{2}}\epsilon^{M+\frac{5}{4}-\frac{1}{2}\beta}\right)\\
 & \leq\left\Vert \psi\right\Vert _{H^{1}(\Omega)}C(K)\left(\epsilon^{\frac{1}{2}}+T_{\epsilon}^{\frac{1}{2}}\right)\epsilon^{M}
\end{align*}
as $M\geq4$ and $\beta>0$ can be chosen sufficiently small.

For the remaining, not estimated term in (\ref{eq:vepserr-weakform}),
we note that
\begin{align*}
\epsilon\int_{0}^{T_{\epsilon}}\int_{\partial\Omega} & \left|\left(-\nabla c^{\epsilon}\otimes\nabla c^{\epsilon}+\nabla c_{A}^{\epsilon}\otimes\nabla c_{A}^{\epsilon}\right)\mathbf{n}_{\partial\Omega}\cdot\psi\right|\d\mathcal{H}^{1}(s)\d t\\
 & \leq\int_{0}^{T_{\epsilon}}\int_{\partial\Omega}\left(\left|\nabla R\right|^{2}+2\left|\nabla R\right|\left|\nabla c_{A}^{\epsilon}\right|\right)\left|\psi\right|\d\mathcal{H}^{1}(s)\d t
\end{align*}
and may then proceed as in (\ref{eq:helfer-1}). This proves (\ref{eq:vepserr-L1})
and also (\ref{eq:vepserr-wotime}) if we use (\ref{eq:erstens}) and (\ref{eq:zweitens}) without the integration in time.
\end{proof}
\begin{cor}
\label{cor:vehler}Let the assumptions of Theorem \ref{vehler} hold true
and let $\varphi\in L^{\infty}(0,T_{\epsilon};H^{1}(\Omega))$.
Then 
\begin{align}
\int_{0}^{T_{\epsilon}}\left|\int_{\Omega}\left(\mathbf{v}_{A}^{\epsilon}-\overline{\mathbf{v^{\epsilon}}}\right)\cdot\nabla c_{A}^{\epsilon}\varphi\d x\right|\d t & \leq C(K)T_{\epsilon}^{\frac{1}{2}}\epsilon^{M}\left\Vert \varphi\right\Vert _{L^{\infty}\left(0,T_{\epsilon};H^{1}(\Omega)\right)},\label{eq:vaeps-vepsbar}\\
\int_{0}^{T_{\epsilon}}\left|\int_{\Omega}\mathbf{v}_{err}^{\epsilon}\cdot\nabla c_{A}^{\epsilon}\varphi\d x\right|\d t & \leq C(K)C(\epsilon,T_{\epsilon})\epsilon^{M}\left\Vert \varphi\right\Vert _{L^{\infty}\left(0,T_{\epsilon};H^{1}(\Omega)\right)},\label{eq:vepserr-psi-nablacae}\\
  \int_{0}^{T_{\epsilon}}\left|\int_{\Omega}R\mathbf{v}_{err}^{\epsilon}\cdot\nabla R\gamma^{2}\d x\right|\d t & \leq C(K)C(\epsilon,T_{\epsilon})\epsilon^{2M-1},\label{eq:vepserr-R-nablaR}\\
\int_{0}^{T_{\epsilon}}\left|\int_{\Omega}\mathbf{v}_{err}^{\epsilon}\cdot\nabla R\varphi\d x\right|\d t & \leq C(K)C(\epsilon,T_{\epsilon})\epsilon^{M}\left\Vert \varphi\right\Vert _{L^{\infty}\left(0,T_{\epsilon};H^{1}(\Omega)\right)}\label{eq:vepserr-nablaR-psi}
\end{align}
for all $\epsilon\in\left(0,\epsilon_{1}\right)$ and $C(\epsilon,T)\rightarrow0$
if $\left(\epsilon,T\right)\rightarrow0$.
\end{cor}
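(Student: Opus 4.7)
The plan is to treat the four inequalities by a common pattern: each integrand is rewritten by duality or integration by parts into a shape that pairs one of the bounds from Theorem~\ref{vehler} (either the $L^2(0,T_\epsilon;H^1)$-estimate on $\mathbf{v}_A^\epsilon-\overline{\mathbf{v}^\epsilon}$ or the $L^1(0,T_\epsilon;H^1)$-estimate on $\mathbf{v}_{err}^\epsilon$) against a factor controlled through Assumption~\ref{assu:Main-est}, Lemma~\ref{lem:R-L2-Linf}, and the two-dimensional embedding $H^1(\Omega)\hookrightarrow L^p(\Omega)$ for every finite $p$. The two key structural facts I would exploit throughout for the $\mathbf{v}_{err}^\epsilon$-estimates are that $\operatorname{div}\mathbf{v}_{err}^\epsilon=0$ (all four of $\mathbf{v}^\epsilon$, $\overline{\mathbf{v}^\epsilon}$, $\tilde{\mathbf{w}}_1^\epsilon$, $\tilde{\mathbf{w}}_2^\epsilon$ are solenoidal) and that $R|_{\partial\Omega}=0$ (since $c^\epsilon=c_A^\epsilon=-1$ on $\partial\Omega$).

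For \eqref{eq:vaeps-vepsbar} I would first show the uniform bound $\|\nabla c_A^\epsilon\,\varphi\|_{(H^1(\Omega))'}\le C\|\varphi\|_{H^1(\Omega)}$. Indeed, for $\psi\in H^1(\Omega)^2$, integration by parts gives
\[
\int_\Omega (\nabla c_A^\epsilon\cdot\psi)\varphi\,dx=-\int_\Omega c_A^\epsilon\operatorname{div}(\varphi\psi)\,dx+\int_{\partial\Omega}c_A^\epsilon\varphi(\psi\cdot\mathbf{n}_{\partial\Omega})\,d\mathcal H^1,
\]
and $\|c_A^\epsilon\|_{L^\infty(\Omega_{T_0})}\le C$ together with the trace theorem bounds each summand by $C\|\varphi\|_{H^1}\|\psi\|_{H^1}$. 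Pairing with $\mathbf{v}_A^\epsilon-\overline{\mathbf{v}^\epsilon}\in L^2(0,T_\epsilon;H^1)$ and using Cauchy--Schwarz in time delivers the factor $T_\epsilon^{1/2}\epsilon^M$ from Theorem~\ref{vehler}.1. The same duality, now paired with \eqref{eq:vepserr-L1} instead, gives \eqref{eq:vepserr-psi-nablacae} immediately.

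For \eqref{eq:vepserr-R-nablaR} I would use $R\nabla R=\tfrac12\nabla(R^2)$ and integrate by parts to obtain
\[
\int_\Omega R\,\mathbf{v}_{err}^\epsilon\cdot\nabla R\,\gamma^2\,dx=-\int_\Omega R^2\,\gamma(\nabla\gamma)\cdot\mathbf{v}_{err}^\epsilon\,dx.
\]
Since $\operatorname{supp}(\nabla\gamma)\subset\partial\Omega(\delta/2)\setminus\partial\Omega(\delta/4)$ and $\operatorname{dist}(\Gamma_t,\partial\Omega)>5\delta$, this support lies entirely in $\Omega\setminus\Gamma_t(\delta)$, where the sharpened bound $\|R\|_{L^2(\Omega_{T_\epsilon}\setminus\Gamma(T_\epsilon,\delta))}\le K\epsilon^{M+1/2}$ of \eqref{eq:Main-est-a} applies. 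I would then combine the Gagliardo--Nirenberg inequality $\|R\|_{L^4(\Omega)}^2\le C\|R\|_{L^2}\|R\|_{H^1}$, the $L^\infty(0,T_\epsilon;H^1)$-bound $\|R\|_{L^\infty(H^1)}\le C(K)\epsilon^{-1/2}$ from Lemma~\ref{lem:R-L2-Linf}, and an $L^2(0,T_\epsilon;L^2(\Omega))$-bound for $\mathbf{v}_{err}^\epsilon$ obtained by squaring \eqref{eq:vepserr-wotime} and using \eqref{eq:rch2-nablacae} together with the bounds in Assumption~\ref{assu:Main-est} and \eqref{eq:gamma-nablaR}, to reach the target $\epsilon^{2M-1}$.

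For \eqref{eq:vepserr-nablaR-psi} the same two observations give
\[
\int_\Omega \mathbf{v}_{err}^\epsilon\cdot\nabla R\,\varphi\,dx=-\int_\Omega R\,\mathbf{v}_{err}^\epsilon\cdot\nabla\varphi\,dx,
\]
and I would estimate the right-hand side by H\"older with exponents $L^4\cdot L^4\cdot L^2$. The $L^\infty(0,T_\epsilon;L^4(\Omega))$-bound on $R$ follows from $\|R\|_{L^\infty(L^2)}\le C(K)\epsilon^{(M-1/2)/2}$ (Lemma~\ref{lem:R-L2-Linf}) and $\|R\|_{L^\infty(H^1)}\le C(K)\epsilon^{-1/2}$ via Gagliardo--Nirenberg; the factor $\|\mathbf{v}_{err}^\epsilon\|_{L^4}\le C\|\mathbf{v}_{err}^\epsilon\|_{H^1}$ is pulled into $L^1$ of time via \eqref{eq:vepserr-L1}; and $\|\nabla\varphi\|_{L^\infty(L^2)}$ is absorbed into $\|\varphi\|_{L^\infty(H^1)}$. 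Since $M\ge 4$, the resulting power of $\epsilon$ is comfortably larger than $M$. I expect \eqref{eq:vepserr-R-nablaR} to be the main technical obstacle: extracting a clean $L^2(0,T_\epsilon;L^2(\Omega))$-control of $\mathbf{v}_{err}^\epsilon$ from the pointwise inequality \eqref{eq:vepserr-wotime} requires handling the nonlinear factor $\|\nabla R\|^{1-2\beta}\|\gamma\nabla R\|_{H^1}^{1+2\beta}$ in the mean-square sense, and this is where the interplay between $\|\nabla R\|_{L^\infty(L^2)}$, $\|\gamma\nabla R\|_{L^2(H^1)}$, and the choice of $\beta\in(0,\tfrac12)$ must be tuned against $M\ge 4$.
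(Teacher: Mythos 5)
Your arguments for \eqref{eq:vaeps-vepsbar}, \eqref{eq:vepserr-psi-nablacae} and \eqref{eq:vepserr-nablaR-psi} are correct but genuinely different from the paper's. For the first two, the paper splits $\Omega$ into $\Gamma_t(2\delta)$ and its complement and controls the singular normal derivative $\frac1\eps\theta_0'(\rho)\mathbf{n}$ by a change of variables together with the embedding $H^1(\Gamma_t(2\delta))\hookrightarrow L^{2,\infty}(\Gamma_t(2\delta))$; your observation that an integration by parts gives the uniform duality bound $\|\varphi\,\nabla c_A^\eps\|_{(H^1(\Omega)^2)'}\le C\|\varphi\|_{H^1(\Omega)}$ (using only $\|c_A^\eps\|_{L^\infty}\le C$ and the trace theorem) bypasses the profile structure entirely and is shorter. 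For \eqref{eq:vepserr-nablaR-psi}, moving the derivative onto $\varphi$ via $\operatorname{div}\mathbf{v}_{err}^\eps=0$ and $R|_{\partial\Omega}=0$ trades $\|\nabla R\|_{L^2}\sim\eps^{-1/2}$ for $\|R\|_{L^\infty(0,T_\eps;L^4)}\sim\eps^{M/4-3/8}$, after which the $L^1(0,T_\eps;H^1)$ bound \eqref{eq:vepserr-L1} closes the estimate directly; the paper instead re-expands $\|\mathbf{v}_{err}^\eps\|_{H^1}$ via \eqref{eq:vepserr-wotime} and estimates three terms. Your route is cleaner where it applies, at the price of relying on the exact solenoidality of $\mathbf{v}_{err}^\eps$ and the Dirichlet data for $R$.

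There is, however, a genuine gap in your treatment of \eqref{eq:vepserr-R-nablaR}: the $L^2(0,T_\eps;L^2(\Omega))$ bound on $\mathbf{v}_{err}^\eps$ that you propose to obtain "by squaring \eqref{eq:vepserr-wotime}" is not available. Squaring produces, under the time integral, the term
\begin{equation*}
\eps^{2}\left\Vert \nabla R\right\Vert _{L^{2}\left(\partial\Omega\left(\frac{\delta}{2}\right)\right)}^{2-4\beta}\left\Vert \gamma\nabla R\right\Vert _{H^{1}(\Omega)}^{2+4\beta},
\end{equation*}
and $\left\Vert \gamma\nabla R\right\Vert _{H^{1}(\Omega)}$ is only controlled in $L^{2}(0,T_{\eps})$ (via \eqref{eq:gamma-nablaR}, \eqref{eq:Main-est-a} and \eqref{eq:Main-est-d}); no $L^{p}(0,T_{\eps})$ bound with $p>2$, and in particular no $L^\infty$-in-time bound on $\|\gamma\Delta R\|_{L^2}$, is at hand, so a power strictly larger than $2$ cannot be integrated in time. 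This is exactly why the paper never squares: in \eqref{eq:ZWEI} it keeps $\|\mathbf{v}_{err}^\eps\|_{H^1}$ to the first power, multiplies by the two extra small factors $\|\gamma R\|_{L^{2+\kappa}}$ and $\|\gamma\nabla R\|_{L^2}$ (taken in $L^\infty$ of time), and applies H\"older in time with conjugate exponents $\frac{2}{1-2\beta}$ and $\frac{2}{1+2\beta}$ so that the total power of $L^2$-in-time quantities is exactly two. Your surrounding strategy for \eqref{eq:vepserr-R-nablaR} — integration by parts to $-\int_\Omega R^2\gamma\nabla\gamma\cdot\mathbf{v}_{err}^\eps\,dx$ and localization to $\operatorname{supp}\nabla\gamma\subset\Omega\setminus\Gamma_t(\delta)$ — is sound and can be completed, but only by pairing $\|\mathbf{v}_{err}^\eps\|_{H^1}$ linearly in $L^1(0,T_\eps)$ against $L^\infty$-in-time bounds on the $R$-factors (e.g.\ $\|\gamma R\|_{L^\infty(0,T_\eps;L^2)}\le K\eps^{M-\frac12}$ and $\|R\|_{L^\infty(0,T_\eps;L^4)}\le C(K)\eps^{\frac{M}{4}-\frac38}$, which yields $\eps^{\frac94M-\frac78}\le\eps^{2M-1}$ for $M\ge4$), not via the mean-square control of $\mathbf{v}_{err}^\eps$ you describe.
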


\begin{proof}
Ad (\ref{eq:vaeps-vepsbar}): We have $\nabla c_{A}^{\epsilon}\in\mathcal{O}(\epsilon)$
in $L^{\infty}\left(\Omega_{T_{\epsilon}}\backslash\Gamma(2\delta;T_{\epsilon})\right)$
and thus get the estimate in $\Omega_{T_\eps}\setminus \Gamma(2\delta;T_\eps)$ by simply using H\"older's inequality and
Theorem \ref{vehler} 1). It remains to give an estimate inside $\Gamma(2\delta;T_{\epsilon})$:
We have $\nabla c_{O,\mathbf{B}}\in\mathcal{O}(\epsilon)$
in $L^{\infty}$ and the term involving $\left(c_{I}-c_{O,\mathbf{B}}\right)$
in $\nabla c_{A}^{\epsilon}$ can be handled by using (\ref{eq:matching}),
H\"older's inequality and Theorem \ref{vehler} 1) as before. Moreover,
we estimate 
\begin{align*}
  &\int_{\Gamma\left(2\delta;T_{\epsilon}\right)}\left|\left(\mathbf{v}_{A}^{\epsilon}-\overline{\mathbf{v}^{\epsilon}}\right)\xi\nabla\left(\theta_{0}\circ\rho\right)\varphi\right|\d (x,t)\\
  & \leq C\!\int_{0}^{T_{\epsilon}}\!\int_{\mathbb{T}^{1}}\!\left\Vert \left(\mathbf{v}_{A}^{\epsilon}-\overline{\mathbf{v}^{\epsilon}}\right)\varphi\right\Vert _{L^{\infty}(-2\delta,2\delta)}\!\int_{\mathbb{R}}\!\left|\theta_{0}'\left(\mathbf{n}+\nabla^{\Gamma}h_{A}^{\epsilon}\right)\right|\!\d\rho\d s\d t\\
  &\leq C(K)T_{\epsilon}^{\frac{1}{2}}\epsilon^{M}\left\Vert \varphi\right\Vert _{L^{\infty}\left(0,T_{\epsilon};H^{1}(\Omega)\right)},
\end{align*}
where we used $H^{1}(\Gamma_{t}(2\delta))\hookrightarrow L^{2,\infty}(\Gamma_{t}(2\delta))$
together with Theorem \ref{vehler} 1) in the last step. For $k\geq1$
we can use $\epsilon^{k}\nabla\left(c_{k}\left(\rho(.),.\right)\right)\in L^{\infty}\left(\Gamma(2\delta;T_{\epsilon})\right)$
uniformly in $\epsilon$. This proves (\ref{eq:vaeps-vepsbar}). 

Furthermore (\ref{eq:vepserr-psi-nablacae}) follows in the same way by using
(\ref{eq:vepserr-L1}) and noting that we may not generate a term
$T_{\epsilon}^{\frac{1}{2}}$ as we only control $\left\Vert \mathbf{v}_{err}^{\epsilon}\right\Vert _{L^{1}\left(0,T_{\epsilon};H^{1}(\Omega)\right)}$.

Ad (\ref{eq:vepserr-R-nablaR}): Since
$H^{1}(\Omega)\hookrightarrow L^{s}(\Omega)$
for all $s\in [1,\infty)$, we have
\begin{equation}
\int_{0}^{T_{\epsilon}}\left|\int_{\Omega}R\mathbf{v}_{err}^{\epsilon}\cdot\nabla R\gamma^{2}\d x\right|\d t\leq C(K)\int_{0}^{T_{\epsilon}}\left\Vert \mathbf{v}_{err}^{\epsilon}\right\Vert _{H^{1}(\Omega)}\left\Vert \gamma R\right\Vert _{L^{2+\kappa}(\Omega)}\left\Vert \gamma\nabla R\right\Vert _{L^{2}(\Omega)}\d t\label{eq:NULL}
\end{equation}
for $\kappa>0$. Regarding (\ref{eq:vepserr-wotime}), we need to
show three estimates:

Firstly, we have 
\begin{align}
 & \int_{0}^{T_{\epsilon}}\left\Vert \rh\nabla c_{A}^{\epsilon}\right\Vert _{\left(H^{1}(\Omega)\right)'}\left\Vert \gamma R\right\Vert _{L^{2+\kappa}(\Omega)}\left\Vert \gamma\nabla R\right\Vert _{L^{2}(\Omega)}\d t\nonumber \\
 & \le\left\Vert \rh\nabla c_{A}^{\epsilon}\right\Vert _{L^{2}\left(0,T_{\epsilon};\left(H_{\sigma}^{1}(\Omega)\right)'\right)}\left\Vert \gamma R\right\Vert _{L^{\infty}\left(0,T_{\epsilon};L^{2+\kappa}(\Omega)\right)}\left\Vert \gamma\nabla R\right\Vert _{L^{2}\left(\Omega_{T_{\epsilon}}\right)}\nonumber \\
 & \le C(K)C\left(T_{\epsilon},\epsilon\right)\epsilon^{2M}\left(\epsilon^{M-\frac{1}{2}-\frac{\kappa}{2+\kappa}M}\right) \leq C(K)C\left(T_{\epsilon},\epsilon\right)\epsilon^{2M-1},\label{eq:EINS}
\end{align}
where we used (\ref{eq:rch2-nablacae}), (\ref{eq:Main-est-d}) and
Lemma \ref{lem:R-L2-Linf} 3) and the fact
that $M\geq4$ and $\kappa>0$ can be chosen arbitrarily.

Secondly, we estimate for $\beta\in\left(0,\frac{1}{2}\right)$
\begin{align}
 & \int_{0}^{T_{\epsilon}}\epsilon\left\Vert \nabla R\right\Vert _{L^{2}\left(\partial\Omega\left(\frac{\delta}{2}\right)\right)}^{1-2\beta}\left\Vert \gamma\nabla R\right\Vert _{H^{1}(\Omega)}^{1+2\beta}\left\Vert \gamma R\right\Vert _{L^{2+\kappa}(\Omega)}\left\Vert \gamma\nabla R\right\Vert _{L^{2}(\Omega)}\d t\nonumber \\
 & \leq C\epsilon\left\Vert \nabla R\right\Vert _{L^{2}\left(\partial_{T_{\epsilon}}\Omega\left(\frac{\delta}{2}\right)\right)}^{1-2\beta}\left\Vert \gamma\nabla R\right\Vert _{L^{2}\left(0,T_{\epsilon};H^{1}(\Omega)\right)}^{1+2\beta}\left\Vert \gamma R\right\Vert _{L^{\infty}\left(0,T_{\epsilon};L^{2+\kappa}(\Omega)\right)}\left\Vert \nabla R\right\Vert _{L^{\infty}\left(0,T_{\epsilon};L^{2}(\Omega)\right)}\nonumber \\
 & \leq C(K)\left(\epsilon^{2M-\frac{1}{2}-\beta}\epsilon^{M-\frac{1}{2}-\frac{\kappa}{2+\kappa}M}\epsilon^{-\frac{1}{2}}\right) \leq C(K)\epsilon^{2M-\frac{1}{2}},\label{eq:ZWEI}
\end{align}
where we used (\ref{eq:gamma-nablaR}), (\ref{eq:Main-est-a}), (\ref{eq:Main-est-d}),
Lemma \ref{lem:R-L2-Linf} 3) and (\ref{eq:gradR}), $M\geq4$, and that $\beta>0$,$\kappa>0$ can be
chosen arbitrarily small. 

Similarly we obtain 
\begin{align}
 & \int_{0}^{T_{\epsilon}}\epsilon^{2}\left\Vert \nabla R\right\Vert _{L^{2}\left(\partial\Omega\left(\frac{\delta}{2}\right)\right)}^{\frac{1}{2}-\beta}\left\Vert \gamma\nabla R\right\Vert _{H^{1}(\Omega)}^{\frac{1}{2}+\beta}\left\Vert \gamma R\right\Vert _{L^{2+\kappa}(\Omega)}\left\Vert \gamma\nabla R\right\Vert _{L^{2}(\Omega)}\d t\nonumber \\
 & \leq C\epsilon^{2}\left\Vert \nabla R\right\Vert _{L^{2}\left(\partial_{T_{\epsilon}}\Omega\left(\frac{\delta}{2}\right)\right)}^{\frac{1}{2}-\beta}\left\Vert \gamma\nabla R\right\Vert _{L^{2}\left(0,T_{\epsilon};H^{1}(\Omega)\right)}^{\frac{1}{2}+\beta}\left\Vert \gamma R\right\Vert _{L^{\infty}\left(0,T_{\epsilon};L^{2+\kappa}(\Omega)\right)}\left\Vert \gamma\nabla R\right\Vert _{L^{2}\left(0,T_{\epsilon};L^{2}(\Omega)\right)}\nonumber \\
 & \leq C(K)\left(\epsilon^{M+\frac{5}{4}-\frac{\beta}{2}}\epsilon^{M-\frac{1}{2}-\frac{\kappa}{2+\kappa}M}\epsilon^{M-\frac{1}{2}}\right)\leq C(K)\epsilon^{2M-\frac{1}{2}}.\label{eq:DREI}
\end{align}
% where we again used (\ref{eq:gamma-nablaR}), (\ref{eq:Main-est-a}),
% (\ref{eq:Main-est-d}) and Lemma \ref{lem:R-L2-Linf} 3) in the third
% step and the fact that $M\geq4$ and that $\beta>0$, $\kappa>0$
% can be chosen arbitrarily small in the last line.
Now (\ref{eq:NULL})-(\ref{eq:DREI})
together with (\ref{eq:vepserr-wotime}) yield (\ref{eq:vepserr-R-nablaR}).

Concerning (\ref{eq:vepserr-nablaR-psi}) we note that
\begin{align*}
\left|\int_{\Omega}\mathbf{v}_{err}^{\epsilon}\cdot\nabla R\varphi\d x\right| & \leq\left\Vert \mathbf{v}_{err}^{\epsilon}\right\Vert _{H^{1}(\Omega)}\left\Vert \nabla R\right\Vert _{L^{2}(\Omega)}\left\Vert \varphi\right\Vert _{L^{4}(\Omega)}.
\end{align*}
 Regarding (\ref{eq:vepserr-wotime}), we again consider three different
terms: Firstly, 
\begin{align*}
\int_{0}^{T_{\epsilon}} & \left\Vert \rh\nabla c_{A}^{\epsilon}\right\Vert _{\left(H^{1}(\Omega)\right)'}\left\Vert \nabla R\right\Vert _{L^{2}(\Omega)}\left\Vert \varphi\right\Vert _{L^{4}(\Omega)}\d t\\
 & \leq\left\Vert \rh\nabla c_{A}^{\epsilon}\right\Vert _{L^{2}\left(0,T_{\epsilon};H^{1}(\Omega)'\right)}\left\Vert \nabla R\right\Vert _{L^{2}\left(\Omega_{T_{\epsilon}}\right)}\left\Vert \varphi\right\Vert _{L^{\infty}\left(0,T_{\epsilon};H^{1}(\Omega)\right)}
\end{align*}
where we may now use (\ref{eq:rch2-nablacae}) and (\ref{eq:Main-est}) and
$M\geq4$ to gain the estimate by the right-hand side of (\ref{eq:vepserr-nablaR-psi}). Secondly,
\begin{align*}
\int_{0}^{T_{\epsilon}} & \epsilon\left\Vert \nabla R\right\Vert _{L^{2}\left(\partial\Omega\left(\frac{\delta}{2}\right)\right)}^{1-2\beta}\left\Vert \gamma\nabla R\right\Vert _{H^{1}(\Omega)}^{1+2\beta}\left\Vert \nabla R\right\Vert _{L^{2}(\Omega)}\left\Vert \varphi\right\Vert _{L^{4}(\Omega)}\d t\\
 & \leq C\epsilon\left\Vert \nabla R\right\Vert _{L^{2}\left(\partial_{T_{\epsilon}}\Omega\left(\frac{\delta}{2}\right)\right)}^{1-2\beta}\left\Vert \gamma\nabla R\right\Vert _{L^{2}\left(0,T_{\epsilon};H^{1}(\Omega)\right)}^{1+2\beta}\left\Vert \nabla R\right\Vert _{L^{\infty}\left(0,T_{\epsilon};L^{2}(\Omega)\right)}\left\Vert \varphi\right\Vert _{L^{\infty}\left(0,T_{\epsilon};H^{1}(\Omega)\right)}
\end{align*}
for $\beta\in\left(0,\frac{1}{2}\right)$, where (\ref{eq:Main-est})
and (\ref{eq:gradR}) together with $M\geq4$ imply the desired
estimate. Thirdly,
\begin{align*}
\int_{0}^{T_{\epsilon}} & \epsilon^{2}\left\Vert \nabla R\right\Vert _{L^{2}\left(\partial\Omega\left(\frac{\delta}{2}\right)\right)}^{\frac{1}{2}-\beta}\left\Vert \gamma\nabla R\right\Vert _{H^{1}(\Omega)}^{\frac{1}{2}+\beta}\left\Vert \nabla R\right\Vert _{L^{2}(\Omega)}\left\Vert \varphi\right\Vert _{L^{4}(\Omega)}\d t\\
 & \le C\epsilon^{2}\left\Vert \nabla R\right\Vert _{L^{2}\left(\partial_{T_{\epsilon}}\Omega\left(\frac{\delta}{2}\right)\right)}^{\frac{1}{2}-\beta}\left\Vert \gamma\nabla R\right\Vert _{L^{2}\left(0,T_{\epsilon};H^{1}(\Omega)\right)}^{\frac{1}{2}+\beta}\left\Vert \nabla R\right\Vert _{L^{2}\left(\Omega_{T_{\epsilon}}\right)}\left\Vert \varphi\right\Vert _{L^{\infty}\left(0,T_{\epsilon};H^{1}(\Omega)\right)}
\end{align*}
for $\beta\in\left(0,\frac{1}{2}\right)$, where finally (\ref{eq:Main-est})
and $M\geq4$ imply the claim.
\end{proof}
\begin{lem}
\label{CHS-Konvektion} Let $\varphi\in L^{\infty}\left(0,T_{\epsilon};H^{1}(\Omega)\right)$ and $(\twe)^{\Gamma}=\twe-\left.\twe\right|_{\Gamma}$.
Then 
\begin{align}
\int_{0}^{T_{\epsilon}}\left|\,\int_{\Gamma_{t}(\delta)}\frac{1}{\epsilon}\left(\twe\right)^{\Gamma}\cdot\mathbf{n}\theta_{0}'(\rho)\varphi\d x\right|\d t & \le C(K)(T_{\epsilon})^{\frac{1}{2}}\epsilon^{M}\Vert \varphi\Vert _{L^{\infty}\left(0,T_{\epsilon};H^{1}(\Omega)\right)},\label{eq:Konvektion1}\\
\int_{0}^{T_{\epsilon}}\left|\,\int_{\Gamma_{t}(\delta)}\left(\twe\right)^{\Gamma}\cdot\nabla^{\Gamma}h_{A}^{\epsilon}\theta_{0}'(\rho)\varphi\d x\right|\d t & \le C(K)(T_{\epsilon})^{\frac{1}{2}}\epsilon^{M+1}\Vert \varphi\Vert _{L^{\infty}\left(0,T_{\epsilon};H^{1}(\Omega)\right)},\label{eq:Konvektion2}\\
\int_{0}^{T_{\epsilon}}\left|\,\int_{\Gamma_{t}(\delta)}\left(\twe\right)^{\Gamma}\cdot\left(\frac{\mathbf{n}}{\epsilon}-\nabla^{\Gamma}h_{A}^{\epsilon}\right)\epsilon\partial_{\rho}c_{1}\varphi\d x\right|\d t & \le C(K)(T_{\epsilon})^{\frac{1}{2}}\epsilon^{M+1}\Vert \varphi\Vert _{L^{\infty}\left(0,T_{\epsilon};H^{1}(\Omega)\right)}\label{eq:Konvektion3}
\end{align}
for all $\epsilon\in\left(0,\epsilon_{1}\right)$.
\end{lem}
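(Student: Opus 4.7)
The plan is to exploit that $(\twe)^{\Gamma}$ vanishes on $\Gamma$, so that a one-dimensional Poincar\'e/Hardy estimate in the normal direction produces a factor $\sqrt{|d_\Gamma|}$ which, after passing to the stretched variable $\rho$, yields the crucial $\sqrt{\epsilon}$. Combined with the exponential decay of $\theta_0'$ (and of $\partial_\rho c_1\in\mathcal{R}_\alpha$) this will absorb the singular prefactor $\frac{1}{\epsilon}$ in \eqref{eq:Konvektion1} and produce the extra power of $\epsilon$ in \eqref{eq:Konvektion2}, \eqref{eq:Konvektion3}. The final $\epsilon^{M-\frac12}$-rate then comes from Lemma~\ref{Wichtig} applied to $\twe$ in $L^2(0,T_\epsilon;H^1(\Omega))$.

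First I would fix $(x,t)\in\Gamma\left(\delta;T_\eps\right)$ and write, using $\twe|_\Gamma(x,t)=\twe(\mathrm{Pr}_{\Gamma_t}(x),t)$ and the fundamental theorem of calculus along the normal,
\begin{equation*}
  \bigl|(\twe)^{\Gamma}(x,t)\bigr|\leq \int_{0}^{|d_\Gamma(x,t)|}\bigl|\partial_{\mathbf n}\twe(\mathrm{Pr}_{\Gamma_t}(x)+\sigma\mathbf n,t)\bigr|\,d\sigma\leq \sqrt{|d_\Gamma(x,t)|}\,\bigl\|\partial_{\mathbf n}\twe(\cdot,S(x,t),t)\bigr\|_{L^2(-\delta,\delta)}.
\end{equation*}
In the variables $(r,s)=(d_\Gamma,S)$ and then $\rho=r/\epsilon-h_A^\epsilon(s,t)$, we have $|r|=\epsilon|\rho+h_A^\epsilon|$, so $\sqrt{|d_\Gamma|}=\sqrt{\epsilon}\sqrt{|\rho+h_A^\epsilon|}$, and $dx=J^\epsilon(\rho,s,t)\,\epsilon\, d\rho\, ds$ with $J^\epsilon$ uniformly bounded.

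For \eqref{eq:Konvektion1}, substituting this into the integral and using $\mathbf n\cdot\mathbf n=1$ gives
\begin{align*}
  \biggl|\int_{\Gamma_t(\delta)}\tfrac{1}{\epsilon}(\twe)^{\Gamma}\!\cdot\!\mathbf n\,\theta_0'(\rho)\varphi\,dx\biggr|
  &\le\int_{\mathbb T^1}\!\!\bigl\|\partial_{\mathbf n}\twe\bigr\|_{L^2_r(-\delta,\delta)}\!\!\int_{I_\epsilon^{s,t}}\!\!\tfrac{\sqrt{\epsilon}}{\epsilon}\sqrt{|\rho+h_A^\epsilon|}\,|\theta_0'(\rho)|\,|\varphi|\,\epsilon J^\epsilon\,d\rho\,ds\\
  &\le C\sqrt{\epsilon}\,\bigl\|\twe(\cdot,t)\bigr\|_{H^1(\Gamma_t(\delta))}\bigl\|\varphi(\cdot,t)\bigr\|_{L^{2,\infty}(\Gamma_t(\delta))},
\end{align*}
where I used that $\sqrt{|\rho+h_A^\epsilon|}\,\theta_0'(\rho)\in L^1(\mathbb R)$ by \eqref{eq:optimopti} together with the uniform $L^\infty$-bound on $h_A^\epsilon$ from \eqref{eq:haepglm}. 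Integrating in time via Cauchy--Schwarz, using Lemma~\ref{L4inf} to control $\|\varphi\|_{L^{2,\infty}}\le C\|\varphi\|_{H^1(\Omega)}$, and finally invoking Lemma~\ref{Wichtig}, yields
\begin{equation*}
  \int_0^{T_\epsilon}\!\!\biggl|\int_{\Gamma_t(\delta)}\!\tfrac{1}{\epsilon}(\twe)^{\Gamma}\!\cdot\!\mathbf n\,\theta_0'(\rho)\varphi\,dx\biggr|dt
  \le C\sqrt{\epsilon}\,T_\epsilon^{1/2}\bigl\|\twe\bigr\|_{L^2(0,T_\epsilon;H^1)}\bigl\|\varphi\bigr\|_{L^\infty(0,T_\epsilon;H^1)}\le C(K)T_\epsilon^{1/2}\epsilon^M\|\varphi\|_{L^\infty(0,T_\epsilon;H^1)},
\end{equation*}
which is \eqref{eq:Konvektion1}.

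For \eqref{eq:Konvektion2} the prefactor $1/\epsilon$ is absent, while $\nabla^{\Gamma}h_A^\epsilon$ is uniformly bounded by \eqref{eq:haepglm}; the very same computation then produces $\epsilon^{3/2}$ instead of $\epsilon^{1/2}$ in front of $\|\twe\|_{H^1}\|\varphi\|_{L^{2,\infty}}$, which after Lemma~\ref{Wichtig} gives the claimed $\epsilon^{M+1}$-rate. For \eqref{eq:Konvektion3} I would split $(\mathbf n/\epsilon-\nabla^{\Gamma}h_A^\epsilon)\epsilon\partial_\rho c_1$ into $\mathbf n\,\partial_\rho c_1-\epsilon\,\nabla^{\Gamma}h_A^\epsilon\,\partial_\rho c_1$. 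Since $\partial_\rho c_1\in\mathcal{R}_\alpha$ has the same exponential decay as $\theta_0'$, each summand is estimated exactly as in \eqref{eq:Konvektion2}, with the second summand even better by an extra factor of $\epsilon$.

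The only real obstacle is the bookkeeping of powers of $\epsilon$ coming from (i) the normal Poincar\'e inequality ($\sqrt{\epsilon}$), (ii) the Jacobian of the change to the stretched variable ($\epsilon$), and (iii) the prefactors $1/\epsilon$ or $\epsilon$; the exponential decay of $\theta_0'$ and $\partial_\rho c_1$ makes all remaining $\rho$-integrals harmless, and the trace/embedding $H^1(\Omega)\hookrightarrow L^{2,\infty}(\Gamma_t(\delta))$ controls $\varphi$ in the direction normal to $\Gamma_t$ without losing additional powers of $\epsilon$.
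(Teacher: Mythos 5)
Your proof is correct and yields the stated rates; for \eqref{eq:Konvektion2} and \eqref{eq:Konvektion3} it is essentially identical to the paper's argument, which uses precisely the bound $\bigl|\int_0^r(\partial_{\mathbf{n}}\twe)(X(\sigma,s,t))\,\d\sigma\bigr|\le\sqrt{|r|}\,\Vert\twe(X(\cdot,s,t))\Vert_{H^1(-\delta,\delta)}$ followed by the substitution $r=\epsilon(\rho+h_A^{\epsilon})$, the exponential decay of $\theta_0'$ and $\partial_\rho c_1$, the embedding of Lemma~\ref{L4inf}, and Lemma~\ref{Wichtig}. Where you genuinely depart from the paper is \eqref{eq:Konvektion1}: there the paper does not use the normal Poincar\'e inequality but instead exploits $\operatorname{div}\twe=0$ to replace $\partial_{\mathbf{n}}\twe\cdot\mathbf{n}$ by $-\operatorname{div}^{\Gamma}\twe$ and then integrates by parts tangentially via Lemma~\ref{lem:divlm}, producing three terms $I_1,I_2,I_3$ (with curvature and boundary contributions) in which $\twe$ itself, rather than its normal derivative, appears; the gain there is a full factor $|r|=\epsilon|\rho+h_A^{\epsilon}|$ from $\bigl|\int_0^r\twe\,\d\sigma\bigr|\le|r|\,\Vert\twe\Vert_{L^\infty_r}$, paid for by an $\epsilon^{-1/2}$ when $\theta_0'$ is paired in $L^2$ with $\nabla_{\boldsymbol{\tau}}\varphi$. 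Your route gains only $\sqrt{|r|}=\sqrt{\epsilon}\sqrt{|\rho+h_A^{\epsilon}|}$ but loses nothing elsewhere, so both arguments arrive at the same bound $C\epsilon^{1/2}\Vert\twe\Vert_{H^1(\Gamma_t(\delta))}\Vert\varphi\Vert_{H^1(\Omega)}$ per time slice and hence at $C(K)T_\epsilon^{1/2}\epsilon^{M}$ after Cauchy--Schwarz in time and \eqref{eq:w1epsab}. Your version is the more elementary one (no divergence-free structure, no tangential integration by parts, no curvature or boundary terms) and treats all three estimates uniformly; what the paper's version buys is only a different distribution of derivatives (onto $\varphi$ instead of $\twe$), which is immaterial here since both functions are controlled in $H^1$, and it mainly reflects the template of \cite[Lemma 5.1]{nsac}.
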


\begin{proof}
  Proceeding as in \cite[proof of Lemma~5.1]{nsac} we find, using $\partial_{\mathbf{n}}\twe=-\operatorname{div}_{\boldsymbol{\tau}} \twe$,
\begin{align*}
 & \int_{\Gamma_{t}(\delta)}\frac{1}{\epsilon}\left(\twe-\left.\twe\right|_{\Gamma}\right)\cdot\mathbf{n}\theta_{0}'(\rho)\varphi\d x\\
 & =\int_{-\delta}^{\delta}\int_{0}^{r}\int_{\Gamma_{t}}\frac{1}{\epsilon}\twe\left(\sigma,p,t\right)\cdot\nabla_{\boldsymbol{\tau}}\left(\rho\left(r,p,t\right)\right)\theta_{0}''\left(\rho\left(r,p,t\right)\right)\varphi\left(r,p,t\right)J\left(r,p,t\right)\d\mathcal{H}^{1}(p)\d\sigma\d r\\
 & \quad+\int_{-\delta}^{\delta}\int_{0}^{r}\int_{\Gamma_{t}}\frac{1}{\epsilon}\twe\left(\sigma,p,t\right)\cdot\nabla_{\boldsymbol{\tau}}\left(\varphi\left(r,p,t\right)J\left(r,p,t\right)\right)\theta_{0}'\left(\rho\left(r,p,t\right)\right)\d\mathcal{H}^{1}(p)\d\sigma\d r\\
 & \quad+\int_{-\delta}^{\delta}\int_{0}^{r}\int_{\Gamma_{t}}\frac{1}{\epsilon}\twe\left(\sigma,p,t\right)\cdot\mathbf{n}_{\Gamma_{t}}(p)\kappa(p)\theta_{0}'\left(\rho\left(r,p,t\right)\right)\varphi\left(r,p,t\right)J\left(r,p,t\right)\d\mathcal{H}^{1}(p)\d\sigma\d r\\
 & =:I_{1}+I_{2}+I_{3},
\end{align*}
because of Lemma~\ref{lem:divlm}. 
To estimate the occurring integrals, we note that
\begin{equation}
\left|\int_{0}^{r}\twe\left(\sigma,p,t\right)\d\sigma\right|\leq r\left\Vert \twe\left(.,p,t\right)\right\Vert _{L^{\infty}\left(-\delta,\delta\right)}\leq Cr\left\Vert \twe\left(.,p,t\right)\right\Vert _{H^{1}\left(-\delta,\delta\right)}\label{eq:CHS-wabs}
\end{equation}
holds for all $p\in\Gamma_{t}$ and $r\in\left(-\delta,\delta\right)$. 
After a change of variables, we get
\begin{align*}
\left|I_{2}\right| & \leq C\epsilon\int_{\Gamma_{t}}\left\Vert \twe\left(.,p,t\right)\right\Vert _{H^{1}\left(-\delta,\delta\right)}\int_{-\frac{\delta}{\epsilon}-h_{A}^{\epsilon}}^{\frac{\delta}{\epsilon}-h_{A}^{\epsilon}}\left|\left(\nabla_{\boldsymbol{\tau}}\left(\varphi J\right)\left(\epsilon\left(\rho+h_{A}^{\epsilon}\right),p,t\right)\right)\left(\rho+h_{A}^{\epsilon}\right)\theta_{0}'(\rho)\right|\d\rho\d\mathcal{H}^{1}(p)\\
 & \leq C(K)\epsilon^{\frac{1}{2}}\left\Vert \twe\left(.,t\right)\right\Vert _{L^{2}\left(\Gamma_{t};H^{1}\left(-\delta,\delta\right)\right)}\left\Vert \left(\rho+1\right)\theta_{0}'\right\Vert _{L^{2}\left(\mathbb{R}\right)}\left(\left\Vert \varphi\right\Vert _{H^{1}(\Omega)}+\epsilon^{\frac{1}{2}}\left\Vert \varphi\right\Vert _{L^{2,\infty}\left(\Gamma_{t}(\delta)\right)}\right)
\end{align*}
where we used (\ref{eq:CHS-wabs}), $\left\Vert h_{A}^{\epsilon}\right\Vert _{C^{0}\left(\left[0,T\right];C^{1}\left(\mathbb{T}^{1}\right)\right)}\leq C(K)$
as in (\ref{eq:haepglm}). Employing Lemma \ref{L4inf} and the exponential
decay of $\theta_{0}'$,we find 
\[
\int_{0}^{T_{\epsilon}}\left|I_{2}\right|\d t\leq C(K)\left(T_{\epsilon}\right)^{\frac{1}{2}}\epsilon^{\frac{1}{2}}\left\Vert \twe\right\Vert _{L^{2}\left(0,T_{\epsilon};H^{1}(\Omega)\right)}\left\Vert \varphi\right\Vert _{L^{\infty}\left(0,T_{\epsilon};H^{1}(\Omega)\right)}.
\]
As $\nabla_{\boldsymbol{\tau}}\left(\rho\left(r,p,t\right)\right)=\nabla_{\boldsymbol{\tau}}\left(h_{A}^{\epsilon}\left(S\left(p,t\right),t\right)\right)$
we may estimate $I_{1}$ in a similar manner, and $\left|\kappa(p)\right|\le C$
for all $p\in\Gamma_{t}$, implies the equivalent estimate for $I_{3}$.
Lemma \ref{Wichtig} together with the estimates on $I_{1}$, $I_{2}$
and $I_{3}$ completes the proof for (\ref{eq:Konvektion1}).

To show (\ref{eq:Konvektion2}), we calculate 
\begin{align*}
 & \left|\,\int_{\Gamma_{t}(\delta)}\left(\left(\twe-\left.\twe\right|_{\Gamma_{t}}\right)\right)\cdot\nabla^{\Gamma}h_{A}^{\epsilon}\left(S(x,t),t\right)\theta_{0}'(\rho)\varphi\d x\right|\\
 & \leq C\int_{\mathbb{T}^{1}}\int_{-\delta}^{\delta}\int_{0}^{r}\left|\left(\partial_{\mathbf{n}}\twe\right)\left(X\left(\sigma,s,t\right)\right)\right|\d\sigma\left|\nabla^{\Gamma}h_{A}^{\epsilon}(s,t)\theta_{0}'\left(\rho\left(X(r,s,t)\right)\right)\varphi\right|\d r\d s.\\
 & \leq C(K)\int_{\mathbb{T}^{1}}\left\Vert \twe\left(X\left(.,s,t\right)\right)\right\Vert _{H^{1}\left(-\delta,\delta\right)}\left\Vert \varphi\right\Vert _{L^{\infty}\left(-\delta,\delta\right)}\int_{-\frac{\delta}{\epsilon}-h_{A}^{\epsilon}}^{\frac{\delta}{\epsilon}-h_{A}^{\epsilon}}\epsilon^{\frac{3}{2}}\left|\rho+1\right|\left|\theta_{0}^{'}(\rho)\right|\d\rho\d s
\end{align*}
since
\[
\left|\int_{0}^{r}\left(\partial_{\mathbf{n}}\twe\right)\left(X(r,s,t)\right)\d\sigma\right|\leq\left\Vert \twe\left(X\left(.,s,t\right)\right)\right\Vert _{H^{1}\left(-\delta,\delta\right)}\sqrt{\left|r\right|}\quad\forall r\in\left(-\delta,\delta\right)
\]
and $s\in\mathbb{T}^{1}$, $t\in\left[0,T_{\epsilon}\right]$. Integration
from $0$ to $T_{\epsilon}$ and Lemma \ref{Wichtig} yield the assertion.
The proof of (\ref{eq:Konvektion3}) follows analogously to the proof
of (\ref{eq:Konvektion2}) since $\partial_{\rho}c_{1}\in\mathcal{R}_{\alpha}$.
\end{proof}
\begin{lem}
\label{lem:Konvekionfinal}Let $\varphi\in L^{\infty}\left(0,T_{\epsilon};H^{1}(\Omega)\right)$
and $\mathbf{w}_{1}^{\epsilon}=\frac{\twe}{\epsilon^{M-\frac{1}{2}}}$.
Then it holds
\[
\int_{0}^{T_{\epsilon}}\left|\int_{\Omega}\epsilon^{M-\frac{1}{2}}\left(\mathbf{w}_{1}^{\epsilon}-\left.\mathbf{w}_{1}^{\epsilon}\right|_{\Gamma}\xi\right)\cdot\nabla c_{A}^{\epsilon}\varphi\d x\right|\d t\leq C(K)C\left(T_{\epsilon},\epsilon\right)\epsilon^{M}\left\Vert \varphi\right\Vert _{L^{\infty}\left(0,T_{\epsilon};H^{1}(\Omega)\right)},
\]
for all $\epsilon\in\left(0,\epsilon_{1}\right)$, where $C\left(T,\epsilon\right)\rightarrow0$
as $\left(T,\epsilon\right)\rightarrow0$.
\end{lem}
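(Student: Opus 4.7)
The plan is to substitute $\epsilon^{M-\frac{1}{2}}\mathbf{w}_1^{\epsilon}=\twe$ so that the integrand becomes $(\twe-\twe|_{\Gamma}\xi)\cdot\nabla c_{A}^{\epsilon}\varphi$, and then split the spatial integral into the three pieces $\Omega\setminus\Gamma_{t}(2\delta)$, $\Gamma_{t}(2\delta)\setminus\Gamma_{t}(\delta)$, and $\Gamma_{t}(\delta)$. In the outer region $\xi(d_{\Gamma})\equiv 0$, so the correction $\twe|_{\Gamma}\xi$ vanishes and $\nabla c_{A}^{\epsilon}=(1-\xi)\nabla c_{O,\mathbf{B}}\in\mathcal{O}(\epsilon)$ in $L^{\infty}$; H\"older together with Lemma~\ref{Wichtig} then yields a bound of order $C(K)T_{\epsilon}^{1/2}\epsilon^{M+1/2}\|\varphi\|_{L^{\infty}(H^{1})}$. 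In the annulus $\Gamma_{t}(2\delta)\setminus\Gamma_{t}(\delta)$, I would combine the matching estimate \eqref{eq:matching}, the exponential decay of $\theta_{0}'$ on $|\rho|\gtrsim\delta/(2\epsilon)$ (cf.\ \eqref{eq:epsab1}, \eqref{eq:expab}) and $\partial_{\rho}c_{k}\in\mathcal{R}_{\alpha}$ to conclude that $|\nabla c_{A}^{\epsilon}|\le C\epsilon+C_{1}e^{-C_{2}/\epsilon}/\epsilon$, whereas $\twe|_{\Gamma}\xi$ is controlled via Lemma~\ref{Wichtig} and a trace-type argument, giving again a contribution of at most $C(K)T_{\epsilon}^{1/2}\epsilon^{M+1/2}\|\varphi\|_{L^{\infty}(H^{1})}$.

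The essential work is in $\Gamma_{t}(\delta)$, where $\xi(d_{\Gamma})=1$, so $\twe-\twe|_{\Gamma}\xi=(\twe)^{\Gamma}$, and $\nabla c_{A}^{\epsilon}=\nabla c_{I}$. Using the expansion in \eqref{eq:ca-h} together with $c_{0}(\rho,x,t)=\theta_{0}(\rho)$, I would write
\begin{align*}
\nabla c_{I}&=\tfrac{1}{\epsilon}\theta_{0}'(\rho)\mathbf{n}-\theta_{0}'(\rho)\nabla^{\Gamma}h_{A}^{\epsilon}
+\epsilon\partial_{\rho}c_{1}\bigl(\tfrac{\mathbf{n}}{\epsilon}-\nabla^{\Gamma}h_{A}^{\epsilon}\bigr)\\
&\quad+\sum_{k=2}^{M+1}\epsilon^{k}\partial_{\rho}c_{k}\bigl(\tfrac{\mathbf{n}}{\epsilon}-\nabla^{\Gamma}h_{A}^{\epsilon}\bigr)+\sum_{k=1}^{M+1}\epsilon^{k}\nabla_{x}c_{k}.
\end{align*}
The three leading contributions are exactly those controlled by \eqref{eq:Konvektion1}, \eqref{eq:Konvektion2} and \eqref{eq:Konvektion3} of Lemma~\ref{CHS-Konvektion}, which deliver bounds of orders $T_{\epsilon}^{1/2}\epsilon^{M}$, $T_{\epsilon}^{1/2}\epsilon^{M+1}$ and $T_{\epsilon}^{1/2}\epsilon^{M+1}$ respectively. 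For the contribution of $\nabla^{\Gamma}h_{A}^{\epsilon}$ one splits $h_{A}^{\epsilon}=\sum_{k=1}^{M+1}\epsilon^{k-1}h_{k}+\epsilon^{M-3/2}h_{M-1/2}^{\epsilon}$, applying \eqref{eq:Konvektion2} to the leading $h_{1}$ piece and estimating the remaining pieces by the same technique using the bound \eqref{eq:muv0,5est} on $h_{M-1/2}^{\epsilon}$.

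For the higher-order sums one would rerun the argument in the proof of Lemma~\ref{CHS-Konvektion} (writing $\twe-\twe|_{\Gamma}=\int_{0}^{r}\partial_{\mathbf{n}}\twe\,d\sigma$, applying Cauchy–Schwarz in the normal direction and a change of variables $r\mapsto\rho$) with $\theta_{0}'(\rho)$ replaced by the exponentially decaying functions $\partial_{\rho}c_{k}\in\mathcal{R}_{\alpha}$; each additional power of $\epsilon$ produces an extra $\epsilon$ in the final estimate. The tangential-gradient pieces $\epsilon^{k}\nabla_{x}c_{k}$ ($k\ge 1$) are uniformly bounded, so they can be handled simply by H\"older together with the $L^{2}(0,T_{\epsilon};H^{1}(\Omega))$-bound on $\twe$ from Lemma~\ref{Wichtig}, producing a factor $\epsilon\cdot\epsilon^{M-1/2}T_{\epsilon}^{1/2}$ which is more than enough.

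The main obstacle, as always in this setup, is the leading singular term $\frac{1}{\epsilon}\theta_{0}'(\rho)\mathbf{n}$; naively it would produce an $\epsilon^{M-3/2}$ bound, losing $\epsilon^{3/2}$ compared to what is required. This is precisely where one must exploit that $(\twe)^{\Gamma}$ vanishes on $\Gamma_{t}$ and write it as an integral of $\partial_{\mathbf{n}}\twe$, gaining a factor $r\sim\epsilon\rho$, which together with the $L^{2}$-bound of $\twe$ in $H^{1}$ provided by Lemma~\ref{Wichtig} closes the half-power gap. Setting $C(T_{\epsilon},\epsilon):=T_{\epsilon}^{1/2}+\epsilon^{1/2}$ and collecting all contributions yields the claimed estimate.
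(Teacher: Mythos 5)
Your proposal is correct and follows essentially the same route as the paper: a spatial splitting into outer region, matching annulus and $\Gamma_t(\delta)$, with the outer/annulus pieces handled by the $\mathcal{O}(\epsilon)$ bound on $\nabla c_{O,\mathbf{B}}$, \eqref{eq:matching} and exponential decay of $\partial_\rho c_k$, the three leading inner terms handled by \eqref{eq:Konvektion1}--\eqref{eq:Konvektion3}, and the terms with $k\geq 2$ or $\nabla_x c_k$ absorbed by their extra powers of $\epsilon$ together with Lemma~\ref{Wichtig}. The only cosmetic difference is that you split $h_A^\epsilon$ into its individual coefficients before applying \eqref{eq:Konvektion2}, which is unnecessary since that estimate is already stated for the full $\nabla^\Gamma h_A^\epsilon$.
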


\begin{proof}
In $\Omega_{T_{0}}\backslash\Gamma(2\delta)$ we have $\nabla c_{A}^{\epsilon}\in\mathcal{O}\left(\epsilon\right)$
in $L^{\infty}$, thus the estimate in this region is a direct consequence
of Lemma \ref{Wichtig}. Inside $\Gamma\left(2\delta;T_{\epsilon}\right)$
we have $\nabla c_{A}^{\epsilon}=\xi\nabla c_{I}+\xi'\mathbf{n}\left(c_{I}-c_{O,\mathbf{B}}\right)+\left(1-\xi\right)\nabla c_{O,\mathbf{B}}$.
The term involving $\nabla c_{O,\mathbf{B}}$ can be treated as in
the outer region and the estimate for the term $\left(c_{I}-c_{O,\mathbf{B}}\right)$
is a consequence of (\ref{eq:matching}). Now by definition
\[
\nabla c_{I}(x,t)=\sum_{i=0}^{M+1}\epsilon^{i}\left(\partial_{\rho}c_{i}\left(\rho(x,t),x,t\right)\left(\frac{\mathbf{n}\left(S(x,t),t\right)}{\epsilon}-\nabla^{\Gamma}h_{A}^{\epsilon}(x,t)\right)+\nabla_{x}c_{i}\left(\rho(x,t),x,t\right)\right)
\]
for $(x,t)\in\Gamma\left(2\delta;T_{\epsilon}\right)$.
Since $\nabla_{x}c_{0}\equiv0$, we have $\sum_{i=0}^{M+1}\epsilon^{i}\nabla_{x}c_{i}\in\mathcal{O}\left(\epsilon\right)$
in $L^{\infty}\left(\mathbb{R}\times\Gamma(2\delta)\right)$,
allowing for a suitable estimate with the help of Lemma \ref{Wichtig}.
Choosing $\epsilon>0$ small enough, we have $\left|\frac{d_{\Gamma}}{\epsilon}-h_{A}^{\epsilon}\right|\geq\frac{\delta}{2\epsilon}$
in $\Gamma\left(2\delta;T_{\epsilon}\right)\backslash\Gamma\left(\delta;T_{\epsilon}\right)$
and as $\partial_{\rho}c_{i}\in\mathcal{R}_{\alpha}$, this leads
to 
\begin{align*}
\int_{\Gamma\left(2\delta;T_{\epsilon}\right)\backslash\Gamma\left(\delta;T_{\epsilon}\right)} & \epsilon^{M-\frac{1}{2}}\left|\mathbf{w}_{1}^{\epsilon}-\left.\mathbf{w}_{1}^{\epsilon}\right|_{\Gamma}\xi\right|\left|\partial_{\rho}c_{i}\left(\rho(x,t),x,t\right)\left(\frac{\mathbf{n}}{\epsilon}-\nabla^{\Gamma}h_{A}^{\epsilon}\right)\right|\left|\varphi\right|\d(x,t)\\
 & \leq C(K)\left\Vert \twe\right\Vert _{L^{2}\left(0,T_{\epsilon};H^{1}(\Omega)\right)}\left\Vert \varphi\right\Vert _{L^{2}\left(0,T_{\epsilon};H^{1}(\Omega)\right)}\frac{1}{\epsilon}C_{1}e^{-C_{2}\frac{\delta}{2\epsilon}}
\end{align*}
for all $i\in\left\{ 0,\ldots,M+1\right\} $, where we have used $\left\Vert h_{M-\frac{1}{2}}^{\epsilon}\right\Vert _{C^{0}\left(0,T_{\epsilon};C^{1}\left(\mathbb{T}^{1}\right)\right)}\leq C(K)$
due to (\ref{eq:haepglm}). So we only need to show
\begin{align*}
 & \int_{\Gamma\left(\delta;T_{\epsilon}\right)}\epsilon^{M-\frac{1}{2}}\left|\left(\mathbf{w}_{1}^{\epsilon}-\left.\mathbf{w}_{1}^{\epsilon}\right|_{\Gamma}\right)\cdot\left(\epsilon^{i}\partial_{\rho}c_{i}\left(\rho(x,t),x,t\right)\left(\frac{\mathbf{n}}{\epsilon}-\nabla^{\Gamma}h_{A}^{\epsilon}\right)\right)\varphi\right|\d(x,t)\\
 & \leq C\left(T,\epsilon\right)C(K)\epsilon^{M}\left\Vert \varphi\right\Vert _{L^{\infty}\left(0,T_{\epsilon};H^{1}(\Omega)\right)}
\end{align*}
for $i\in\left\{ 0,\ldots,M+1\right\} $, where $C\left(T,\epsilon\right)\rightarrow0$
as $\left(T,\epsilon\right)\rightarrow0$. For $i\in\left\{ 0,1\right\} $
this is a consequence of Lemma \ref{CHS-Konvektion} and for $i\geq2$
this is a consequence of $\partial_{\rho}c_{i}\in L^{\infty}\left(\mathbb{R}\times\Gamma(2\delta)\right)$.
This shows the claim.
\end{proof}

\subsection{The Proof of the Main Result\label{sec:The-Proof}}

Let the assumptions of Theorem \ref{Main} hold true. Moreover, let $c_{A}^{\epsilon},\mu_{A}^{\epsilon},\mathbf{v}_{A}^{\epsilon},p_{A}^{\epsilon},h_{A}^{\epsilon}$
be given as in \cite[Definition~4.1]{NSCH2}, which 
implies in particular
that the properties discussed in Subsection \ref{subsec:The-Approximate-Solutions}
hold. Let $\tilde{\mathbf{w}}_{1}^{\epsilon}$ and $\twz$ be weak
solutions to (\ref{eq:w1})\textendash (\ref{eq:w13}) and (\ref{eq:w2})\textendash (\ref{eq:w23}), resp.,
and let $\overline{\mathbf{v}^{\epsilon}}$ be a strong solution to
(\ref{eq:vbar})\textendash (\ref{eq:vbar3}). We denote $\mathbf{w}_{1}^{\epsilon}=\frac{\twe}{\epsilon^{M-\frac{1}{2}}}$.
Additionally, let $\left(\mathbf{v}^{\epsilon},p^{\epsilon},c^{\epsilon},\mu^{\epsilon}\right)$
be smooth solutions to (\ref{eq:StokesPart})\textendash (\ref{eq:StokesBdry})
such that (\ref{eq:canf}) is satisfied. Note that Proposition \ref{prop:Energyholds}
implies that Lemma \ref{energy} is applicable in this situation.
We define $R:=c^{\epsilon}-c_{A}^{\epsilon}$ in $\Omega_{T_{0}}$
and let $\varphi\left(.,t\right)\in H^{2}(\Omega)\cap H_{0}^{1}(\Omega)$
for $t\in\left[0,T_{0}\right]$ be the unique solution of the problem
\begin{align*}
-\Delta\varphi\left(.,t\right) & =R\left(.,t\right) &  & \text{in }\Omega,\\
\varphi\left(.,t\right) & =0 &  & \text{on }\partial\Omega.
\end{align*}
Then $\varphi$ is smooth and we have $\left\Vert \varphi\left(.,0\right)\right\Vert _{H^{1}(\Omega)}\leq C\Vert R\left(.,0\right)\Vert _{L^{2}(\Omega)}\leq C_{\psi_{0}}\epsilon^{M}$for
all $\epsilon\in\left(0,1\right)$. This implies the existence of
some family $\left(\tau_{\epsilon}\right)_{\epsilon\in\left(0,1\right)}\subset\left(0,T_{0}\right]$
and $K\geq1$ such that Assumption~\ref{assu:Main-est} is satisfied
(and in particular (\ref{eq:Main-est}) holds for $\tau_{\epsilon}$)
and such that 
\begin{equation}
\left\Vert \varphi\left(.,0\right)\right\Vert _{H^{1}(\Omega)}\leq\left\Vert R\left(.,0\right)\right\Vert _{L^{2}(\Omega)}\leq\frac{K}{2}\epsilon^{M}.\label{eq:phi0}
\end{equation}
Moreover, we may choose $\epsilon_{0}\in\left(0,1\right)$ small enough,
such that (\ref{eq:remcahn})\textendash (\ref{eq:rch1-rch2-Linfbdry}),
Lemma \ref{lem:hM-1}, (\ref{eq:w1epsab}) and (\ref{eq:matching})
hold. This implies in particular that Assumption \ref{assu:Auxiliary}
is satisfied and that we may use all the results shown in Section
\ref{sec:Auxiliary-Results}. Now let $T\in\left(0,T_{0}\right]$
and for $\epsilon\in\left(0,\epsilon_{0}\right)$ we set 
\begin{equation}
T_{\epsilon}:=\sup\left\{ \left.t\in\left(0,T\right]\right|\eqref{eq:Main-est}\text{ holds true for \ensuremath{t}}\right\} .\label{eq:Teps}
\end{equation}
We will show in the following that we may choose $T\in\left(0,T_{0}\right]$
(independent of $\epsilon$) and $\epsilon_{0}$ small enough, such
that $T_{\epsilon}=T$ for all $\epsilon\in\left(0,\epsilon_{0}\right)$.

%\textbf{Testing Procedure with $\varphi$}\\
Now let $T'\in\left(0,T_{0}\right]$ be fixed. Multiplying the difference
of the differential equations (\ref{eq:CH-Part1}) and (\ref{eq:Cahnapp})
by $\varphi$ and integrating the result over $\Omega$ yields 
\begin{align}
0 & =\int_{\Omega}\varphi\partial_{t}\left(-\Delta\varphi\right)+\varphi\left(\left(\mathbf{v}^{\epsilon}\cdot\nabla R\right)-\left(\mathbf{v}_{A}^{\epsilon}-\overline{\mathbf{v^{\epsilon}}}\right)\cdot\nabla c_{A}^{\epsilon}+\left(\twe-\twe|_{\Gamma}\xi\left(d_{\Gamma}\right)\right)\cdot\nabla c_{A}^{\epsilon}\right)\d x\nonumber \\
 & \quad+\int_{\Omega}\varphi\left(\mathbf{v}_{err}^{\epsilon}\cdot\nabla c_{A}^{\epsilon}+\twz\cdot\nabla c_{A}^{\epsilon}-\Delta\left(\mu^{\epsilon}-\mu_{A}^{\epsilon}\right)\right)+\varphi\rc\d x\label{eq:CHS-Premaj}
\end{align}
for all $t\in\left(0,T\right).$ Here we used the definition of
$\varphi$ and the identity 
\begin{equation}
\mathbf{v}^{\epsilon}\cdot\nabla c^{\epsilon}-\mathbf{v}_{A}^{\epsilon}\cdot\nabla c_{A}^{\epsilon}=\mathbf{v}^{\epsilon}\cdot\nabla R+\left(\twe+\twz\right)\cdot\nabla c_{A}^{\epsilon}-\left(\mathbf{v}_{A}^{\epsilon}-\overline{\mathbf{v}^{\epsilon}}\right)\cdot\nabla c_{A}^{\epsilon}+\mathbf{v}_{err}^{\epsilon}\cdot\nabla c_{A}^{\epsilon},\label{eq:veps-vaeps}
\end{equation}
which is a consequence of the definition of $\mathbf{v}_{err}^{\epsilon}$
(cf.\ (\ref{eq:vepserrderf})). In order to shorten the notation,
we now write
\begin{align}\nonumber
  \mathcal{E}(R,T')&:=\int_{\Omega_{T'}}\epsilon\left|\nabla R\right|^{2}+\epsilon^{-1}f''\left(c_{A}^{\epsilon}\right)R^{2}\d(x,t),\\
  \mathcal{N}(c_{A}^{\epsilon},R)&:=f'\left(c_{A}^{\epsilon}+R\right)-f'\left(c_{A}^{\epsilon}\right)-f''\left(c_{A}^{\epsilon}\right)R\label{eq:N}
 = f'''(c_A)\frac{R^2}2 + f^{(4)}(c_A) \frac{R^3}6\\\nonumber  
\mathcal{R}^{\epsilon}&:=\left(\epsilon^{M-\frac{1}{2}}\left(-\mathbf{w}_{1}^{\epsilon}+\mathbf{w}_{1}^{\epsilon}|{}_{\Gamma}\xi\left(d_{\Gamma}\right)\right)\cdot\nabla c_{A}^{\epsilon}\right)
\end{align}
which %\textendash{} regarding the formulae for $\mu^{\epsilon}$ and
%$\mu_{A}^{\epsilon}$ in (\ref{eq:CH-Part2}) and (\ref{eq:Hilliardapp})
%\textendash{}
leads us to 
\begin{align}
0 & =\frac{1}{2}\frac{\d}{\d t}\int_{\Omega}\left|\nabla\varphi\right|^{2}\d x+\mathcal{E}(R,T')+\int_{\Omega}\varphi\left(\mathbf{v}^{\epsilon}\cdot\nabla R\right)+\epsilon^{-1}\mathcal{N}(c_{A}^{\epsilon},R)R\d x\nonumber \\
 & \quad-\int_{\Omega}\varphi\left(\left(\mathbf{v}_{A}^{\epsilon}-\overline{\mathbf{v^{\epsilon}}}\right)\cdot\nabla c_{A}^{\epsilon}-\twz\cdot\nabla c_{A}^{\epsilon}-\mathbf{v}_{err}^{\epsilon}\cdot\nabla c_{A}^{\epsilon}-\rc+\mathcal{R}^{\epsilon}\right)+R\rh\d x\label{eq:CHS-Majoreq}
\end{align}
for all $t\in\left(0,T'\right)$ because of (\ref{eq:CH-Part2}) and (\ref{eq:Hilliardapp}). We obtained this equality by using
integration by parts in (\ref{eq:CHS-Premaj}) and noting that the
boundary integrals vanish due to the Dirichlet boundary conditions
satisfied by $\varphi$, $\mu_{A}^{\epsilon}$ and $\mu^{\epsilon}$.

Using Theorem \ref{specHillmod}, we obtain 
\begin{align}
&\int_{\Omega}\epsilon\left|\nabla R\right|^{2}+\epsilon^{-1}f''\left(c_{A}^{\epsilon}\right)R^{2}\d x  \geq C_{1}\left(\epsilon\left\Vert R\right\Vert _{L^{2}(\Omega)}^{2}+\epsilon^{-1}\left\Vert R\right\Vert _{L^{2}\left(\Omega\backslash\Gamma_{t}(\delta)\right)}+\epsilon\left\Vert \nabla^{\Gamma}R\right\Vert _{L^{2}\left(\Gamma_{t}(\delta)\right)}^{2}\right)\nonumber \\
 & \qquad+C_{2}\left(\epsilon^{3}\left\Vert \nabla R\right\Vert _{L^{2}(\Omega)}^{2}+\epsilon\left\Vert \nabla R\right\Vert _{L^{2}\left(\Omega\backslash\Gamma_{t}(\delta)\right)}^{2}\right)-C_{3}\left\Vert \nabla\varphi\right\Vert _{L^{2}(\Omega)}^{2}\label{eq:CH-spectral-1}
\end{align}
and due to the assumptions on $f$, \cite[Lemma 2.2]{abc} yields 
\[
\frac{1}{\epsilon}\int_{\Omega}\mathcal{N}\left(c_{A}^{\epsilon},R\right)R\d x\geq-\frac{C}{\epsilon}\int_{\Omega}\left|R\right|^{3}\d x.
\]
Plugging these observations into (\ref{eq:CHS-Majoreq}) enables us
to get 
\begin{align}
 & \frac{1}{2}\frac{\d}{\d t}\int_{\Omega}\left|\nabla\varphi\right|^{2}\d x+C_{1}\left(\left\Vert \left(\epsilon R,\epsilon^{3}\nabla R\right)\right\Vert _{L^{2}(\Omega)}^{2}+\left\Vert \left(\epsilon^{-1}R,\epsilon\nabla R\right)\right\Vert _{L^{2}\left(\Omega\backslash\Gamma_{t}(\delta)\right)}+\epsilon\left\Vert \nabla^{\Gamma}R\right\Vert _{L^{2}\left(\Gamma_{t}(\delta)\right)}^{2}\right)\nonumber \\
 & \leq C_{2}\left\Vert \nabla\varphi\right\Vert _{L^{2}(\Omega)}^{2}+\mathcal{RS},\label{eq:CHS-pr=0000E4gron}
\end{align}
where 
\begin{align*}
\mathcal{RS} & :=\left|\int_{\Omega}\left(\left(\mathbf{v}_{A}^{\epsilon}-\overline{\mathbf{v^{\epsilon}}}\right)\cdot\nabla c_{A}^{\epsilon}+\rc-\twz\cdot\nabla c_{A}^{\epsilon}+\mathcal{R}^{\epsilon}-\mathbf{v}^{\epsilon}\cdot\nabla R-\mathbf{v}_{err}^{\epsilon}\cdot\nabla c_{A}^{\epsilon}\right)\varphi\d x\right|.\\
 & \quad+\frac{C_{3}}{\epsilon}\int_{\Omega}\left|R\right|^{3}\d x+\left|\int_{\Omega}R\rh\d x\right|.
\end{align*}
Integrating (\ref{eq:CHS-pr=0000E4gron}) over $\left(0,T'\right)$
and using Gronwall's inequality, we get

\begin{align}
 & \sup_{0\leq\tau\leq T'}\left\Vert \nabla\varphi\right\Vert _{L^{2}(\Omega)}^{2}+\left\Vert \left(\epsilon R,\epsilon^{3}\left|\nabla R\right|\right)\right\Vert _{L^{2}\left(\Omega_{T'}\right)}^{2}+\left\Vert \left(\epsilon^{-1}R,\epsilon\left|\nabla R\right|\right)\right\Vert _{L^{2}\left(\Omega\backslash\Gamma\left(\delta;T'\right)\right)}^{2}\nonumber \\
 & \quad+\epsilon\left\Vert \nabla^{\Gamma}R\right\Vert _{L^{2}\left(\Gamma\left(\delta;T'\right)\right)}^{2}\le C\left(T_{0}\right)\left(\left\Vert \nabla\varphi\left(.,0\right)\right\Vert _{L^{2}(\Omega)}^{2}+\int_{0}^{T'}\mathcal{RS}\d t\right)\label{eq:CHS-zentral}
\end{align}
for some positive constant $C\left(T_{0}\right)>0$. On the other
hand, (\ref{eq:CHS-Majoreq}) together with Gronwall's inequality
and (\ref{eq:phi0}) also implies 
\begin{equation}
\mathcal{E}\left(R,T'\right)\leq C\left(T_{0}\right)\left(\left\Vert \nabla\varphi\left(.,0\right)\right\Vert _{L^{2}(\Omega)}^{2}+\int_{0}^{T'}\mathcal{RS}\d t\right).\label{eq:CHS-Zusatz}
\end{equation}
The idea now is to show that we may choose $\epsilon_{0}>0$ and $T\in\left(0,T_{0}\right]$
in the definition of $T_{\epsilon}$ so small, that 
\begin{align*}
C\left(T_{0}\right)\left(\left\Vert \nabla\varphi\left(.,0\right)\right\Vert _{L^{2}(\Omega)}^{2}+\int_{0}^{T_{\epsilon}}\mathcal{RS}\d t\right) & <K^{2}\epsilon^{2M}.
\end{align*}
holds for all $\epsilon\in\left(0,\epsilon_{0}\right)$. To this end we have to estimate $\mathcal{RS}$ in the following.

Due to (\ref{eq:remcahn})- (\ref{eq:remhill}) and since
(\ref{eq:Main-est}) holds true for $T_{\epsilon}$, we get 
\[
\int_{0}^{T_{\epsilon}}\left|\int_{\Omega}R\rh\d x\right|\d t+\int_{0}^{T_{\epsilon}}\left|\int_{\Omega}\rc\varphi\d x\right|\d t\leq C(K)C\left(T,\epsilon\right)\epsilon^{2M}.
\]
Moreover, we immediately get
\begin{align*}
\int_{0}^{T_{\epsilon}}\left|\int_{\Omega}\left(\left(\mathbf{v}_{A}^{\epsilon}-\overline{\mathbf{v^{\epsilon}}}\right)\cdot\nabla c_{A}^{\epsilon}+\mathbf{v}_{err}^{\epsilon}\cdot\nabla c_{A}^{\epsilon}+\mathcal{R}^{\epsilon}+\twz\cdot\nabla c_{A}^{\epsilon}\right)\varphi\d x\right|\d t & \leq C(K)C\left(T;\epsilon\right)\epsilon^{2M},
\end{align*}
as a consequence of Corollary \ref{cor:vehler} and Lemmata \ref{lem:Konvekionfinal} and
\ref{w2eps}. Here $C\left(T,\epsilon\right)\rightarrow0$ as $\left(T,\epsilon\right)\rightarrow0$.

%Next we consider $\frac{1}{\epsilon}\int_{\Omega_{T_{\epsilon}}}\left|R\right|^{3}\d x$:
Moreover, as a consequence of Lemma \ref{dim2einbett} and H\"older's inequality
we have
\begin{align}
 & \int_{0}^{T_{\epsilon}}\left\Vert R\right\Vert _{L^{3}\left(\gt\right)}^{3}\d t \leq C\left(\left\Vert R\right\Vert _{L^{2}\left(0,T_{\epsilon};L^{2}\left(\gt\right)\right)}+\left\Vert \nabla^{\Gamma}R\right\Vert _{L^{2}\left(0,T_{\epsilon};L^{2}\left(\gt\right)\right)}\right)^{\frac{1}{2}}\nonumber \\
 & \quad\cdot\left(\left\Vert R\right\Vert _{L^{2}\left(0,T_{\epsilon};L^{2}\left(\gt\right)\right)}+\left\Vert \partial_{\mathbf{n}}R\right\Vert _{L^{2}\left(0,T_{\epsilon};L^{2}\left(\gt\right)\right)}\right)^{\frac{1}{2}}\left\Vert R\right\Vert _{L^{4}\left(0,T_{\epsilon};L^{2}\left(\gt\right)\right)}^{2}.\label{eq:linabin-1}
\end{align}
 Since $\left\Vert R\right\Vert _{L^{2}(\Omega)}^{2}\leq\left\Vert \nabla\varphi\right\Vert _{L^{2}(\Omega)}\left\Vert \nabla R\right\Vert _{L^{2}(\Omega)}$, we deduce 
\begin{align}
\left\Vert R\right\Vert _{L^{4}\left(0,T_{\epsilon};L^{2}(\Omega)\right)}^{2} & \leq\sup_{\tau\in\left(0,T_{\epsilon}\right)}\left\Vert \nabla\varphi\right\Vert _{L^{2}(\Omega)}\left\Vert \nabla R\right\Vert _{L^{2}\left(\Omega_{T_{\epsilon}}\right)}.\label{eq:CHS-L4L2-1}
\end{align}
Because of (\ref{eq:Main-est}) and the definition
of $T_{\epsilon}$, this implies 
\[
\frac{1}{\epsilon}\int_{0}^{T_{\epsilon}}\left\Vert R\right\Vert _{L^{3}\left(\gt\right)}^{3}\d t<\frac{1}{\epsilon}CK^{3}\epsilon^{\frac{1}{2}M-\frac{1}{4}}\epsilon^{\frac{1}{2}M-\frac{3}{4}}\epsilon^{M}\epsilon^{M-\frac{3}{2}}=CK^{3}\epsilon^{3M-\frac{7}{2}}\leq CK^{3}\epsilon^{2M+\frac{1}{2}}
\]
since $M\geq4$. On the other hand, we have, for $\epsilon>0$
small enough, 
\begin{align}
\frac{1}{\epsilon}\int_{0}^{T_{\epsilon}}\left\Vert R\right\Vert _{L^{3}\left(\Omega\backslash\Gamma_{t}(\delta)\right)}^{3}\d t & \leq\frac{1}{\epsilon}C\left\Vert R\right\Vert _{L^{2}\left(0,T_{\epsilon};H^{1}\left(\Omega\backslash\gt\right)\right)}\left\Vert R\right\Vert _{L^{4}\left(0,T_{\epsilon};L^{2}\left(\Omega\backslash\gt\right)\right)}^{2}\nonumber \\
 & \leq\frac{1}{\epsilon}CK^{3}\epsilon^{M-\frac{1}{2}}\epsilon^{2M-\frac{3}{2}},\label{eq:Rdreioutside}
\end{align}
where we used the Gagliardo-Nirenberg interpolation theorem, (\ref{eq:CHS-L4L2-1})
and (\ref{eq:Main-est}). As $M\geq4$, the estimate follows.

For the last term in $\mathcal{RS}$ we have 
\begin{align}
\int_{0}^{T_{\epsilon}}\left|\int_{\Omega}\mathbf{v}^{\epsilon}\cdot\nabla R\varphi\d x\right|\d t & =\int_{0}^{T_{\epsilon}}\left|\int_{\Omega}\mathbf{v}^{\epsilon}\cdot\nabla\varphi R\d x\right|\d t\nonumber \\
 & \leq\int_{0}^{T_{\epsilon}}\left|\int_{\Omega}\mathbf{v}_{A}^{\epsilon}\cdot\nabla\varphi R\d x\right|+\left|\int_{\Omega}\left(\mathbf{v}^{\epsilon}-\mathbf{v}_{A}^{\epsilon}\right)\cdot\nabla\varphi R\d x\right|\d t.\label{eq:vepsi}
\end{align}
Before we continue with the estimates, we introduce $\hat{\mathbf{v}}_{A}^{\epsilon}:=\mathbf{v}_{A}^{\epsilon}-\epsilon^{M-\frac{1}{2}}\mathbf{v}_{A,M-\frac{1}{2}}^{\epsilon}\in L^{\infty}\left(\Omega_{T_{0}}\right)$.
First of all we have
\begin{align}\nonumber
  &\int_{0}^{T_{\epsilon}}\left|\int_{\Omega}\!\hat{\mathbf{v}}_{A}^{\epsilon}\cdot\nabla\varphi R\d x\right|\d t\leq \int_{0}^{T_{\epsilon}}\left|\int_{\Omega}\gamma\hat{\mathbf{v}}_{A}^{\epsilon}\cdot\nabla\varphi R\d x\right|\d t  \\
  &\qquad +\int_{0}^{T_{\epsilon}}\!\int_{\Omega}\left|\nabla\left(\left(1-\gamma\right)\hat{\mathbf{v}}_{A}^{\epsilon}\right):\left(\nabla\varphi\otimes\nabla\varphi\right)\right|+\!\left|\left(1-\gamma\right)\hat{\mathbf{v}}_{A}^{\epsilon}\cdot\nabla\left(\frac{\left|\nabla\varphi\right|^{2}}{2}\right)\right|\d x\d t,\label{eq:vepsihilf}
\end{align}
where we used $-\Delta\varphi=R$. We note that we introduced $\gamma$ since $\hat{\mathbf{v}}_{A}^{\epsilon}$
does not satisfy Dirichlet boundary conditions (nor does $\varphi$
satisfy Neumann boundary conditions).

Now $\left|\nabla\hat{\mathbf{v}}_{A}^{\epsilon}(x,t)\right|\leq\left|\xi(d_{\Gamma}(x,t))\partial_{\rho}\mathbf{v}_{0}\left(\rho(x,t),x,t\right)\frac{1}{\epsilon}\right|+C(K)$,
which is a consequence of the uniform boundedness of the terms $\mathbf{v}_{k}$,
$\mathbf{v}_{O,\mathbf{B}}$ and of $\left\Vert h_{A}^{\epsilon}\right\Vert _{C^{0}\left(0,T_{\epsilon};C^{1}(\Gamma_{t}(2\delta))\right)}\leq C(K)$
(see (\ref{eq:haepglm})). Moreover, by (\ref{eq:0teordnung}), and
since $d_{\Gamma}(x,t)=\epsilon\left(\rho(x,t)+h_{A}^{\epsilon}(x,t)\right)$
for $(x,t)\in\Gamma(2\delta)$, we have 
\begin{equation}
\left|\partial_{\rho}\mathbf{v}_{0}\left(\rho(x,t),x,t\right)\right|\leq\epsilon|\eta'(\rho(x,t))|\left|\rho(x,t)+h_{A}^{\epsilon}(x,t)\right|\left|\frac{\mathbf{v}^{+}(x,t)-\mathbf{v}^{-}(x,t)}{d_{\Gamma}(x,t)}\right|\label{eq:delrhov0}
\end{equation}
for all $(x,t)\in\Gamma(2\delta)$.

Due to $\|\eta'(\rho)\rho|<C$ for all $\rho\in\mathbb{R}$
and $\mathbf{v}^{+}=\mathbf{v}^{-}$ on $\Gamma$  this results in
\[
\int_{0}^{T_{\epsilon}}\left|\int_{\Omega}\nabla\left(\left(1-\gamma\right)\hat{\mathbf{v}}_{A}^{\epsilon}\right):\left(\nabla\varphi\otimes\nabla\varphi\right)\d x\right|\d t\leq C(K)T_{\epsilon}\left\Vert \nabla\varphi\right\Vert _{L^{\infty}\left(0,T_{\epsilon};L^{2}(\Omega)\right)}^{2}\leq C(K)T_{\epsilon}\epsilon^{2M}
\]
by (\ref{eq:Main-est-b}) and the facts that $\mathbf{\hat{v}}_{A}^{\epsilon}\in L^{\infty}\left(\Omega_{T_{0}}\right)$
and $\gamma$, $\gamma'$ are bounded.

Concerning the second term on the right-hand side of (\ref{eq:vepsihilf}),
we note that $\left\Vert \operatorname{div}\left(\hat{\mathbf{v}}_{A}^{\epsilon}\right)\right\Vert _{L^{\infty}\left(\Omega_{T_{\epsilon}}\right)}\leq C(K)$
as a consequence of (\ref{eq:delrhov0}) and (\ref{eq:haepglm}).
Thus
\[
\int_{0}^{T_{\epsilon}}\left|\int_{\Omega}\left(\left(1-\gamma\right)\hat{\mathbf{v}}_{A}^{\epsilon}\right)\cdot\nabla\left(\frac{1}{2}\left|\nabla\varphi\right|^{2}\right)\d x\right|\d t\leq C(K)T_{\epsilon}\left\Vert \nabla\varphi\right\Vert _{L^{\infty}\left(0,T_{\epsilon};L^{2}(\Omega)\right)}^{2}\leq C(K)T_{\epsilon}\epsilon^{2M}.
\]
For the third term on the right-hand side of (\ref{eq:vepsihilf}),
we calculate
\begin{align*}
\int_{0}^{T_{\epsilon}}\left|\int_{\Omega}\gamma\hat{\mathbf{v}}_{A}^{\epsilon}\cdot\nabla\varphi R\d x\right|\d t & \leq CT_{\epsilon}^{\frac{1}{2}}\left\Vert \nabla\varphi\right\Vert _{L^{\infty}\left(0,T_{\epsilon};L^{2}(\Omega)\right)}\left\Vert R\right\Vert _{L^{2}\left(\Omega_{T_{\epsilon}}\backslash\Gamma\left(2\delta;T_{\epsilon}\right)\right)},
\end{align*}
so (\ref{eq:Main-est}) implies a suitable estimate. Regarding (\ref{eq:vepsi}),
we have
\begin{align*}
  &\int_{0}^{T_{\epsilon}}\left|\int_{\Omega}\epsilon^{M-\frac{1}{2}}\mathbf{v}_{A,M-\frac{1}{2}}^{\epsilon}\cdot\nabla\varphi R\d x\right|\d t\\
  & \leq\epsilon^{M-\frac{1}{2}}\left\Vert \mathbf{v}_{A,M-\frac{1}{2}}^{\epsilon}\right\Vert _{L^{2}\left(0,T_{\epsilon};L^{\infty}(\Omega)\right)}\left\Vert \nabla\varphi\right\Vert _{L^{\infty}\left(0,T_{\epsilon};L^{2}(\Omega)\right)}\cdot\left\Vert R\right\Vert _{L^{2}\left(0,T_{\epsilon};L^{2}(\Omega)\right)} <C(K)\epsilon^{2M+\frac{1}{2}}
\end{align*}
as $M\geq4$. Here we used that
$\Vert \mathbf{v}_{M-\frac{1}{2}}^{\pm,\epsilon}\Vert _{L^{2}\left(0,T_{\epsilon};L^{\infty}\left(\Omega^{\pm}(t)\cup\Gamma_{t}(2\delta)\right)\right)}\leq C(K)$
due to (\ref{eq:muv0,5est}) and  $H^{2}(\Omega)\hookrightarrow L^{\infty}(\Omega)$.
Hence
\begin{align}
 & \int_{0}^{T_{\epsilon}}\left|\int_{\Omega}\left(\mathbf{v}^{\epsilon}-\mathbf{v}_{A}^{\epsilon}\right)\cdot\nabla\varphi R\d x\right|\d t\nonumber \\
 & \leq\left(\left\Vert \twe\right\Vert _{L^{2}\left(0,T_{\epsilon};L^{4}(\Omega)\right)}+\left\Vert \overline{\mathbf{v}^{\epsilon}}-\mathbf{v}_{A}^{\epsilon}\right\Vert _{L^{2}\left(0,T_{\epsilon};L^{4}(\Omega)\right)}\right)\left\Vert \nabla\varphi\right\Vert _{L^{\infty}\left(0,T_{\epsilon};L^{2}(\Omega)\right)}\left\Vert R\right\Vert _{L^{2}\left(0,T_{\epsilon};L^{4}(\Omega)\right)}\nonumber \\
 & \quad+\int_{0}^{T_{\epsilon}}\left|\int_{\Omega}\twz\cdot\nabla R\varphi\d x\right|\d t+\int_{0}^{T_{\epsilon}}\left|\int_{\Omega}\mathbf{v}_{err}^{\epsilon}\cdot\nabla R\varphi\d x\right|\d t\nonumber \\
 & \leq C(K)\left(\epsilon^{2M+\frac{1}{2}}+\epsilon^{2M+\frac{1}{2}}+C\left(T,\epsilon\right)\epsilon^{2M}\right)+\int_{0}^{T_{\epsilon}}\left|\int_{\Omega}\twz\cdot\nabla R\varphi\d x\right|\d t\label{eq:teil}
\end{align}
because of $\mathbf{v}_{err}^{\epsilon}=\mathbf{v}^{\epsilon}-\left(\overline{\mathbf{v}^{\epsilon}}+\tilde{\mathbf{w}}_{1}^{\epsilon}+\tilde{\mathbf{w}}_{2}^{\epsilon}\right)$, Theorem \ref{vehler} 1), (\ref{eq:w1epsab}),
(\ref{eq:Main-est}), (\ref{eq:vepserr-nablaR-psi}), and $M\geq4$. Regarding the $\twz$ term we first
note that for $\kappa>0$ we have 
\begin{align}
\left\Vert \nabla R\right\Vert _{L^{2}\left(0,T_{\epsilon};L^{2+\kappa}(\Omega)\right)} & \leq C\left(\left\Vert \nabla R\right\Vert _{L^{2}\left(\Omega_{T_{\epsilon}}\right)}^{1-\frac{\kappa}{2+\kappa}}\left\Vert \Delta R\right\Vert _{L^{2}\left(\Omega_{T_{\epsilon}}\right)}^{\frac{\kappa}{2+\kappa}}+\left\Vert \nabla R\right\Vert _{L^{2}\left(\Omega_{T_{\epsilon}}\right)}\right)\nonumber \\
 & \leq C(K)\left(\epsilon^{M-\frac{3}{2}}\epsilon^{-\left(M+2\right)\frac{\kappa}{2+\kappa}}\right)\label{eq:nablaRkappa}
\end{align}
where we used $\Vert R\Vert _{H^{2}(\Omega)}\leq C\left\Vert \Delta R\right\Vert _{L^{2}(\Omega)}$
and $\Vert \Delta R\Vert _{L^{2}\left(\Omega_{T_{\epsilon}}\right)}\leq C(K)\epsilon^{-\frac{7}{2}}$
as in (\ref{eq:laplaceR}). Thus, we may estimate for $\kappa>0$
and $q\in(\frac{2+\kappa}{\left(2+\kappa\right)-1},2)$ 
\begin{align*}
\int_{0}^{T_{\epsilon}}\left|\int_{\Omega}\twz\cdot\nabla R\varphi\d x\right|\d t & \leq\left\Vert \twz\right\Vert _{L^{2}\left(0,T_{\epsilon};L^{q}(\Omega)\right)}\left\Vert \nabla R\right\Vert _{L^{2}\left(0,T_{\epsilon};L^{2+\kappa}(\Omega)\right)}\left\Vert \varphi\right\Vert _{L^{\infty}\left(0,T_{\epsilon};H^{1}(\Omega)\right)}\\
 & \leq C(K)\epsilon^{3M-\frac{5}{2}}\epsilon^{-\left(M+2\right)\frac{\kappa}{2+\kappa}}\leq C(K)\epsilon^{2M+\alpha}
\end{align*}
for some $\alpha>0$, where we used (\ref{eq:w2epsab}), (\ref{eq:Main-est-b}), (\ref{eq:nablaRkappa}), $M\geq4$ and that $\kappa>0$ can be chosen arbitrarily small. 

Because of (\ref{eq:teil}), we get $\int_{0}^{T_{\epsilon}}\left|\int_{\Omega}\left(\mathbf{v}^{\epsilon}-\mathbf{v}_{A}^{\epsilon}\right)\cdot\nabla\varphi R\d x\right|\d t\leq C(K)C\left(T,\epsilon\right)\epsilon^{2M}$,
which concludes the estimates for $\mathcal{RS}$. Since (\ref{eq:CHS-zentral})
and (\ref{eq:CHS-Zusatz}) do not imply estimates of the kind (\ref{eq:Main5})
and (\ref{eq:Main6}), we need to apply another strategy and test with $\gamma^{2}R$ in the following.

Let again $T'\in\left(0,T_{0}\right]$. Multiplying the difference
of the differential equations (\ref{eq:CH-Part1}) and (\ref{eq:Cahnapp})
by $\gamma^{2}R$ and integrating the result over $\Omega$ yields
\begin{align}
0 & =\frac{1}{2}\int_{\Omega}\frac{d}{dt}\left(R^{2}\right)\gamma^{2}\d x+\int_{\Omega_{T}}\gamma^{2}R\left(\mathbf{v}^{\epsilon}\cdot\nabla R+\mathbf{v}_{err}^{\epsilon}\cdot\nabla c_{A}^{\epsilon}+\left(\overline{\mathbf{v}^{\epsilon}}-\mathbf{v}_{A}^{\epsilon}+\twe+\twz\right)\cdot\nabla c_{A}^{\epsilon}\right)\d x\nonumber \\
 & \quad+\int_{\Omega}\gamma^{2}R\rc-\Delta\left(\gamma^{2}R\right)\left(-\epsilon\Delta R+\frac{1}{\epsilon}\left(f''\left(c_{A}^{\epsilon}\right)R+\mathcal{N}\left(c_{A}^{\epsilon},R\right)\right)-\rh\right)\d x,\label{eq:alive}
\end{align}
where we used $\text{supp}\gamma\cap\text{supp}\xi\circ d_{\Gamma_{t}}=\emptyset$
for all $t\in\left[0,T_{0}\right]$, (\ref{eq:veps-vaeps}), integration
by parts and $R=\mu^{\epsilon}=\mu_{A}^{\epsilon}=0$ on $\partial_{T_{0}}\Omega$.

As $c_{A}^{\epsilon}=-1+\mathcal{O}\left(\epsilon\right)$ in $L^{\infty}\left(\partial_{T_{0}}\Omega\left(\frac{\delta}{2}\right)\right)$,
we have $f''\left(c_{A}^{\epsilon}(x,t)\right)=f''\left(-1\right)+\epsilon\tilde{f}(x,t)$
for $(x,t)\in\partial_{T_{0}}\Omega\left(\frac{\delta}{2}\right)$
by a Taylor expansion, where $\tilde{f}\in L^{\infty}\left(\partial_{T_{0}}\Omega\left(\frac{\delta}{2}\right)\right)$.
Moreover, 
\begin{align*}
\nabla\left(\gamma^{2}R\right) & =2\gamma R\nabla\gamma+\gamma^{2}\nabla R,\quad\Delta\left(\gamma^{2}R\right)=\Delta\left(\gamma^{2}\right)R+4\gamma\nabla\gamma\cdot\nabla R+\gamma^{2}\Delta R
\end{align*}
% in $\partial_{T_{0}}\Omega\left(\frac{\delta}{2}\right)$
and we find
\begin{align}\nonumber
  &\frac{1}{\epsilon}\int_{\Omega}-\Delta\left(\gamma^{2}R\right)f''\left(c_{A}^{\epsilon}\right)R\d x \\
  & =\frac{1}{\epsilon}f''\left(-1\right)\left\Vert \gamma\nabla R\right\Vert _{L^{2}(\Omega)}^{2}+\frac{1}{\epsilon}\int_{\Omega}f''\left(-1\right)R\nabla\left(\gamma^{2}\right)\cdot\nabla R\d x-\int_{\Omega}\Delta\left(\gamma^{2}R\right)\tilde{f}R\d x,\label{eq:Umformung-f}
\end{align}
where we used $R=0$ on $\partial_{T_{0}}\Omega$.
Moreover, we have
\begin{align*}
\nabla\mathcal{N}\left(c_{A}^{\epsilon},R\right) & =k_{f}\nabla c_{A}^{\epsilon}R^{2}+\left(f^{\left(3\right)}\left(c_{A}^{\epsilon}\right)R+k_{f}R^{2}\right)\nabla R,
\end{align*}
due to \eqref{eq:N} and  $k_{f}=\frac{f^{\left(4\right)}(c_A)}2$. This
yields 
\begin{align}
\int_{\Omega}-\Delta\left(\gamma^{2}R\right)\frac{1}{\epsilon}\mathcal{N}\left(c_{A}^{\epsilon},R\right)\d(x,t) & =\frac{1}{\epsilon}\int_{\Omega}k_{f}\left(\left|\gamma\left(\nabla R\right)R\right|^{2}+\nabla\left(\gamma^{2}\right)R^{3}\cdot\nabla R\right)\d x\nonumber \\
 & \quad+\frac{1}{\epsilon}\int_{\Omega}\nabla\left(\gamma^{2}R\right)\cdot\left(k_{f}\nabla c_{A}^{\epsilon}R^{2}+f^{\left(3\right)}\left(c_{A}^{\epsilon}\right)R\nabla R\right)\d x\nonumber \\
 & =\frac{k_{f}}{\epsilon}\left\Vert \gamma\left|\nabla R\right|R\right\Vert _{L^{2}(\Omega)}^{2}+\frac{1}{\epsilon}\int_{\Omega}\mathcal{N}^{\nabla}\left(c_{A}^{\epsilon},R\right)\d x,\label{eq:Umformung-Nabla}
\end{align}
where the boundary terms due to integration by parts vanish since
$f'\left(-1\right)=R(x,t)=0$ and $c_{A}^{\epsilon}(x,t)=-1$
for $(x,t)\in\partial_{T_{0}}\Omega$. Here we used the
notation 
\begin{equation}
\mathcal{N}^{\nabla}\left(c_{A}^{\epsilon},R\right):=k_{f}\nabla\left(\gamma^{2}\right)R^{3}\cdot\nabla R+\nabla\left(\gamma^{2}R\right)\cdot\left(k_{f}\nabla c_{A}^{\epsilon}R^{2}+f^{\left(3\right)}\left(c_{A}^{\epsilon}\right)R\nabla R\right).\label{eq:N-Nabla}
\end{equation}
Additionally, we compute
\begin{equation}
\int_{\Omega}-\Delta\left(\gamma^{2}R\right)\left(-\epsilon\Delta R\right)\d x=\epsilon\left\Vert \gamma\Delta R\right\Vert _{L^{2}(\Omega)}^{2}+\epsilon\int_{\Omega}4\gamma\nabla\gamma\cdot\nabla R\Delta R+\Delta\left(\gamma^{2}\right)R\Delta R\d x.\label{eq:Umformung Laplace R}
\end{equation}
 Plugging (\ref{eq:Umformung-f}), (\ref{eq:Umformung-Nabla}) and
(\ref{eq:Umformung Laplace R}) (noting that $k_{f},f''\left(-1\right)>0$)
into (\ref{eq:alive}) and integrating in time yields
\begin{align}
 & \sup_{t\in\left(0,T'\right)}\left\Vert \gamma R\left(.,t\right)\right\Vert _{L^{2}(\Omega)}^{2}+\epsilon\left\Vert \gamma\Delta R\right\Vert _{L^{2}\left(\Omega_{T'}\right)}^{2}+\frac{k_{f}}{\epsilon}\left\Vert \left(\gamma\nabla R,\gamma R\nabla R\right)\right\Vert _{L^{2}\left(\Omega_{T'}\right)}^{2}\nonumber \\
 & \leq\left\Vert \gamma R\left(.,0\right)\right\Vert _{L^{2}(\Omega)}^{2}+C_{1}\int_{0}^{T'}\left|\int_{\Omega}\!\gamma^{2}R\left(\mathbf{v}^{\epsilon}\cdot\nabla R+\left(\mathbf{v}_{err}^{\epsilon}+\overline{\mathbf{v}^{\epsilon}}-\mathbf{v}_{A}^{\epsilon}+\twe+\twz\right)\cdot\nabla c_{A}^{\epsilon}\right)\d x\right|\d t\nonumber \\
 & \quad+C_{2}\int_{0}^{T'}\left|\int_{\Omega}\gamma^{2}R\rc+\epsilon\left(\Delta\left(\gamma^{2}\right)R+4\gamma\nabla\gamma\cdot\nabla R\right)\Delta R+\frac{1}{\epsilon}\mathcal{N}^{\nabla}\left(c_{A}^{\epsilon},R\right)\d x\right|\d t\nonumber \\
 & \quad+C_{3}\int_{0}^{T'}\left|\int_{\Omega}\Delta\left(\gamma^{2}R\right)\left(\tilde{f}R-\rh\right)+R\nabla\left(\gamma^{2}\right)\cdot\nabla R\frac{1}{\epsilon}f''\left(-1\right)\d x\right|\d t.\label{eq:Main-New}
\end{align}
If we may now give suitable estimates for the right-hand side of (\ref{eq:Main-New}),
replacing $T'$ by $T_{\epsilon}$, we get (\ref{eq:Main5}) and (\ref{eq:Main6}).

Now we estimate the right-hand side of (\ref{eq:Main-New}). Starting from the last term in (\ref{eq:Main-New}), we have 
\begin{align*}
\int_{0}^{T_{\epsilon}}\left\Vert \nabla\left(\gamma^{2}\right)R\nabla R\frac{f''\left(-1\right)}{\epsilon}\right\Vert _{L^{1}(\Omega)}\d t & \leq\frac{C}{\epsilon}\left\Vert \gamma\nabla R\right\Vert _{L^{2}\left(\Omega_{T_{\epsilon}}\right)}\left\Vert \nabla\gamma R\right\Vert _{L^{2}\left(\Omega_{T_{\epsilon}}\right)}\leq C(K)\epsilon^{2M-\frac{1}{2}}
\end{align*}
due to (\ref{eq:Main-est-a}) and (\ref{eq:Main-est-d}). For the next term, we note that $\rh=\rhb$ in $\partial_{T_{0}}\Omega\left(\frac{\delta}{2}\right)$
and use (\ref{eq:bdry-remainder}) to conclude

\begin{align*}
&\int_{0}^{T_{\epsilon}}\left|\int_{\Omega}\Delta\left(\gamma^{2}R\right)\rh\d x\right|\d t  \le C\left\Vert \left(\gamma\Delta R,\nabla R,R\right)\right\Vert _{L^{2}\left(\partial_{T_{\epsilon}}\Omega\left(\frac{\delta}{2}\right)\right)}\epsilon^{M+1}\\
 &\qquad +C_{2}\epsilon^{M-\frac{1}{2}}\left\Vert \nabla\mu_{M-\frac{1}{2}}^{-}\right\Vert _{L^{2}\left(\Omega_{T_{\epsilon}}^{-}\right)}\left\Vert \left(\gamma\nabla R,R\right)\right\Vert _{L^{2}\left(\partial_{T_{\epsilon}}\Omega\left(\frac{\delta}{2}\right)\right)}\leq C(K)\epsilon^{2M-\frac{1}{2}},
\end{align*}
where we used integration by parts, $\mu_{M-\frac{1}{2}}^{-}=0$ on $\partial_{T_{\epsilon}}\Omega$, (\ref{eq:Main-est-a}), (\ref{eq:Main-est-d})
and (\ref{eq:muv0,5est}). Moreover,
\begin{align*}
  \int_{0}^{T_{\epsilon}}\left|\int_{\Omega}\Delta\left(\gamma^{2}R\right)\tilde{f}R\d x\right|\d t & \leq C\left\Vert \left(\gamma\Delta R,\nabla R,R\right)\right\Vert _{L^{2}\left(\partial_{T_{\epsilon}}\Omega\left(\frac{\delta}{2}\right)\right)}\left\Vert R\right\Vert _{L^{2}\left(\partial_{T_{\epsilon}}\Omega\left(\frac{\delta}{2}\right)\right)}\le C(K)\epsilon^{2M-\frac{1}{2}}.
\end{align*}
Skipping $\mathcal{N}^{\nabla}\left(c_{A}^{\epsilon},R\right)$ for
now, we next estimate 
\begin{align*}
\int_{0}^{T_{\epsilon}}\left|\int_{\Omega}\epsilon4\left(\nabla\gamma\cdot\nabla R\right)\gamma\Delta R\d x\right|\d t & \leq C\epsilon\left\Vert \gamma\Delta R\right\Vert _{L^{2}\left(\Omega_{T_{\epsilon}}\right)}\left\Vert \nabla R\right\Vert _{L^{2}\left(\partial_{T_{\epsilon}}\Omega\left(\frac{\delta}{2}\right)\right)}\leq C_{1}\epsilon^{2M-\frac{1}{2}}
\end{align*}
due to (\ref{eq:Main-est-a}) and (\ref{eq:Main-est-d}). Additionally,
\begin{align*}
&\int_{0}^{T_{\epsilon}}\left|\int_{\Omega}\epsilon\Delta\left(\gamma^{2}\right)R\Delta R\d x\right|\d t  =\int_{0}^{T_{\epsilon}}\left|\int_{\Omega}\epsilon\left(\nabla\Delta\left(\gamma^{2}\right)R+\Delta\left(\gamma^{2}\right)\nabla R\right)\cdot\nabla R\d x\right|\d t\\
 & \qquad \leq C\epsilon\left(\left\Vert R\right\Vert _{L^{2}\left(\partial_{T_{\epsilon}}\Omega\left(\frac{\delta}{2}\right)\right)}\left\Vert \nabla R\right\Vert _{L^{2}\left(\partial_{T_{\epsilon}}\Omega\left(\frac{\delta}{2}\right)\right)}+\left\Vert \nabla R\right\Vert _{L^{2}\left(\partial_{T_{\epsilon}}\Omega\left(\frac{\delta}{2}\right)\right)}^{2}\right)=O(\eps^{2M})
\end{align*}
because of $R|_{\partial\Omega}=0$ and (\ref{eq:Main-est-b}). Now
\begin{align*}
\int_{0}^{T_{\epsilon}}\left|\int_{\Omega}\gamma^{2}Rr_{CH1}^{\epsilon}\d x\right|\d t & \le C\left\Vert R\right\Vert _{L^{2}\left(\partial_{T_{\epsilon}}\Omega\left(\frac{\delta}{2}\right)\right)}\left\Vert \rc\right\Vert _{L^{2}\left(\partial_{T_{\epsilon}}\Omega\left(\frac{\delta}{2}\right)\right)}\le C(K)\epsilon^{2M+\frac{1}{2}}
\end{align*}
due to (\ref{eq:Main-est-a}) and (\ref{eq:rch1-rch2-Linfbdry}),
and
\begin{align*}
 & \int_{0}^{T_{\epsilon}}\left|\int_{\Omega}\gamma^{2}R\left(\overline{\mathbf{v}^{\epsilon}}-\mathbf{v}_{A}^{\epsilon}+\twe+\twz\right)\cdot\nabla c_{A}^{\epsilon}\d x\right|\d t\\
 & \leq C\epsilon\left\Vert R\right\Vert _{L^{2}\left(\partial_{T_{\epsilon}}\Omega\left(\frac{\delta}{2}\right)\right)}\left(\left\Vert \overline{\mathbf{v}^{\epsilon}}-\mathbf{v}_{A}^{\epsilon}\right\Vert _{L^{2}\left(0,T_{\epsilon};H^{1}(\Omega)\right)}+\left\Vert \twe\right\Vert _{L^{2}\left(0,T_{\epsilon};H^{1}(\Omega)\right)}\right)\\
 & \quad+C\epsilon\left\Vert R\right\Vert _{L^{2}\left(0,T_{\epsilon};L^{q'}\left(\partial\Omega\left(\frac{\delta}{2}\right)\right)\right)}\left\Vert \twz\right\Vert _{L^{2}\left(0,T_{\epsilon};L^{q}(\Omega)\right)}
\end{align*}
where $q\in\left(1,2\right)$, $\frac{1}{q'}+\frac{1}{q}=1$ and we used $\nabla c_{A}^{\epsilon}=\mathcal{O}\left(\epsilon\right)$
in $L^{\infty}\left(\partial_{T_{0}}\Omega\left(\frac{\delta}{2}\right)\right)$. Now (\ref{eq:w1epsab}), Theorem \ref{vehler}
1) and (\ref{eq:w2epsab}) together with $H^{1}\left(\partial\Omega\left(\frac{\delta}{2}\right)\right)\hookrightarrow L^{q'}\left(\partial\Omega\left(\frac{\delta}{2}\right)\right)$
and (\ref{eq:Main-est-a}) imply that the term is of order $O(\eps^{2M+\frac12})$. Next,
\begin{align*}
\int_{0}^{T_{\epsilon}}\left|\int_{\Omega}\gamma^{2}R\mathbf{v}_{err}^{\epsilon}\cdot\nabla c_{A}^{\epsilon}\d x\right|\d t & \leq\epsilon\left\Vert \gamma R\right\Vert _{L^{\infty}\left(0,T_{\epsilon};L^{2}(\Omega)\right)}\left\Vert \mathbf{v}_{err}^{\epsilon}\right\Vert _{L^{1}\left(0,T_{\epsilon};H^{1}(\Omega)\right)}\le C(K)\epsilon^{2M+\frac{1}{2}},
\end{align*}
where we again used $\nabla c_{A}^{\epsilon}=\mathcal{O}\left(\epsilon\right)$
in $L^{\infty}\left(\partial_{T_{0}}\Omega\left(\frac{\delta}{2}\right)\right)$
in the first line and (\ref{eq:vepserr-L1}), (\ref{eq:Main-est-d})
in the second line. In view of the above considerations, $\left\Vert \gamma R\left(.,0\right)\right\Vert _{L^{2}\left(\Omega_{T_{\epsilon}}\right)}^{2}\leq\frac{K^{2}}{4}\epsilon^{2M}$
(cf.\ (\ref{eq:phi0})) and (\ref{eq:Main-New}), we have two more
estimates to show:

Using the explicit form of $\mathcal{N}^{\nabla}$ given in (\ref{eq:N-Nabla}),
we calculate
\begin{align}
&\frac{1}{\epsilon}\int_{0}^{T_{\epsilon}}  \left|\int_{\Omega}\mathcal{N}^{\nabla}\left(c_{A}^{\epsilon},R\right)\d x\right|\d t\nonumber \\
 & \le\frac{1}{\epsilon}\int_{0}^{T_{\epsilon}}\left|\int_{\Omega}k_{f}\nabla\left(\gamma^{2}\right)R^{3}\cdot\nabla R\d x\right|\d t+C_{1}\left\Vert R\right\Vert _{L^{3}\left(\Omega_{T_{\epsilon}}\backslash\Gamma\left(2\delta;T_{\epsilon}\right)\right)}^{3}+C_{2}\int_{0}^{T_{\epsilon}}\int_{\Omega}|\gamma^{2}\nabla RR^{2}|\d x\d t\nonumber \\
 & \quad+C_{3}\frac{1}{\epsilon}\int_{0}^{T_{\epsilon}}\int_{\Omega}|\nabla\left(\gamma^{2}R\right)R\nabla R|\d x\d t,\label{eq:nablaNest}
\end{align}
where we again used $\nabla c_{A}^{\epsilon}=\mathcal{O}\left(\epsilon\right)$
in $L^{\infty}\left(\partial_{T_{0}}\Omega\left(\frac{\delta}{2}\right)\right)$
in the last step. Now we have
\begin{align*}
 & \frac{1}{\epsilon}\int_{0}^{T_{\epsilon}}\int_{\Omega}|k_{f}\nabla\left(\gamma^{2}\right)R^{3}\cdot\nabla R|\d x\d t\\
 & \leq\frac{1}{\epsilon}C\left\Vert \gamma R\left|\nabla R\right|\right\Vert _{L^{2}\left(\Omega_{T_{\epsilon}}\right)}\left\Vert R\right\Vert _{L^{\infty}\left(0,T_{\epsilon};L^{2}(\Omega)\right)}\left\Vert R\right\Vert _{L^{2}\left(0,T_{\epsilon};H^{1}\left(\partial\Omega\left(\frac{\delta}{2}\right)\right)\right)}\\
 & \leq C(K)\epsilon^{-1}\epsilon^{M}\epsilon^{\frac{M}{2}-\frac{1}{4}}\epsilon^{M-\frac{1}{2}}
\end{align*}
where we used $\|u\|_{L^4(\Omega)}\leq C \|u\|_{L^2(\Omega)}^{\frac12}\|u\|_{H^1(\Omega)}^{\frac12}$ and (\ref{eq:Main-est}) together with Lemma \ref{lem:R-L2-Linf}
4). The estimate follows since $M\geq4$. Next we have
$\left\Vert R\right\Vert _{L^{3}\left(\Omega_{T_{\epsilon}}\backslash\Gamma\left(2\delta;T_{\epsilon}\right)\right)}\leq C(K)\epsilon^{2M+1}$
due to (\ref{eq:Rdreioutside}) and 
\[
\int_{0}^{T_{\epsilon}}\int_{\Omega}|\gamma^{2}\nabla RR^{2}|\d x\d t\leq C\left\Vert \gamma R\nabla R\right\Vert _{L^{2}\left(\Omega_{T_{\epsilon}}\right)}\left\Vert R\right\Vert _{L^{2}\left(\partial_{T_{\epsilon}}\Omega\left(\frac{\delta}{2}\right)\right)}\leq C(K)\epsilon^{2M+\frac{1}{2}}
\]
due to (\ref{eq:Main-est}). Regarding the last term in (\ref{eq:nablaNest})
we have on the one hand 
\[
\frac{1}{\epsilon}\int_{0}^{T_{\epsilon}}\int_{\Omega}|\left(\nabla\gamma^{2}\right)R^{2}\nabla R|\d x\d t\leq C\frac{1}{\epsilon}\left\Vert \gamma R\nabla R\right\Vert _{L^{2}\left(\Omega_{T_{\epsilon}}\right)}\left\Vert R\right\Vert _{L^{2}\left(\partial_{T_{\epsilon}}\Omega\left(\frac{\delta}{2}\right)\right)}\leq C(K)\epsilon^{2M-\frac{1}{2}}
\]
as before and on the other hand 
\begin{align*}
\frac{1}{\epsilon}\int_{0}^{T_{\epsilon}}\int_{\Omega}|\gamma^{2}\left(\nabla R\right)^{2}R|\d x\d t & \leq C\frac{1}{\epsilon}\left\Vert R\right\Vert _{L^{\infty}\left(0,T_{\epsilon};L^{2}(\Omega)\right)}\left\Vert \gamma\nabla R\right\Vert _{L^{2}\left(\Omega_{T_{\epsilon}}\right)}\left\Vert \gamma\nabla R\right\Vert _{L^{2}\left(0,T_{\epsilon};H^{1}(\Omega)\right)}\\
 & \leq C(K) \epsilon^{\frac{M}{2}-\frac{1}{4}}\epsilon^{M}\epsilon^{M-1}\epsilon^{-1}=C(K)\eps^{\frac52 M-2-\frac14}
\end{align*}
where we use Lemma~\ref{lem:R-L2-Linf} 4) and (\ref{eq:Main-est}). Altogether we have $\frac{1}{\epsilon}\int_{0}^{T_{\epsilon}}\left|\int_{\Omega}\mathcal{N}^{\nabla}(c_{A}^{\epsilon},R)\d x\right|\d t\leq\epsilon^{2M-\frac{1}{2}}.$

Finally, we estimate
\begin{align*}
  & \int_{0}^{T_{\epsilon}}\left|\int_{\Omega}\gamma^{2}R\left(\mathbf{v}^{\epsilon}\cdot\nabla R\right)\d x\right|\d t \\
  &\leq\int_{0}^{T_{\epsilon}}\left|\int_{\Omega}\gamma^{2}R\left(\mathbf{v}_{err}^{\epsilon}\cdot\nabla R\right)\d x\right|%+\left|\int_{\Omega}\left(\mathbf{v}^{\epsilon}-\mathbf{v}_{err}^{\epsilon}\right)\cdot\frac{1}{2}\nabla\left(R^{2}\gamma^{2}\right)\d x\right|\d t\\
 +\int_{0}^{T_{\epsilon}}\left|\int_{\Omega}\left(\mathbf{v}^{\epsilon}-\mathbf{v}_{err}^{\epsilon}\right)\cdot\frac{1}{2}\nabla\left(\gamma^{2}\right)R^{2}\d x\right|\d t\\
 & \leq C(K)C\left(\epsilon,T_{\epsilon}\right)\epsilon^{2M-1}+\frac{1}{2}\int_{\Omega_{T_{\epsilon}}}\!\!\left|\left(\overline{\mathbf{v}^{\epsilon}}-\mathbf{v}_{A}^{\epsilon}+\twe+\twz\right)\cdot\nabla\left(\gamma^{2}\right)R^{2}\right|+\left|\mathbf{v}_{A}^{\epsilon}\cdot\nabla\left(\gamma^{2}\right)R^{2}\right|\!\d(x,t)\\
 & \leq C(K)C\left(\epsilon,T_{\epsilon}\right)\epsilon^{2M-1}+\frac{1}{2}\int_{\Omega_{T_{\epsilon}}}\left|\left(\overline{\mathbf{v}^{\epsilon}}-\mathbf{v}_{A}^{\epsilon}+\twe+\twz\right)\cdot\nabla\left(\gamma^{2}\right)R^{2}\right|\d(x,t)\\
 & \quad+\epsilon^{M-\frac{1}{2}}\frac{1}{2}\int_{0}^{T_{\epsilon}}\int_{\Omega}\left|\mathbf{v}_{M-\frac{1}{2}}^{-,\epsilon}\cdot\nabla\left(\gamma^{2}\right)R^{2}\right|\d t,
\end{align*}
where we used that $\mathbf{v}^{\epsilon}-\mathbf{v}_{err}^{\epsilon}$
is divergence free and $R|_{\partial\Omega}=0$, as well
as (\ref{eq:vepserr-R-nablaR}) and the definition of $\mathbf{v}_{err}^{\epsilon}$
in (\ref{eq:vepserrderf}). Furthermore,
we used $\mathbf{v}_{A}^{\epsilon}-\epsilon^{M-\frac{1}{2}}\mathbf{v}_{A,M-\frac{1}{2}}^{\epsilon}\in L^{\infty}\left(\Omega_{T_{0}}\right)$
and (\ref{eq:Main-est-a}). Note that $\mathbf{v}_{A,M-\frac{1}{2}}^{\epsilon}=\mathbf{v}_{M-\frac{1}{2}}^{-,\epsilon}$
in $\partial_{T_{0}}\Omega\left(\frac{\delta}{2}\right)$. We may
continue estimating
\begin{align*}
  &\int_{\Omega_{T_{\epsilon}}}\left|\left(\overline{\mathbf{v}^{\epsilon}}-\mathbf{v}_{A}^{\epsilon}+\twe\right)\cdot\nabla\left(\gamma^{2}\right)R^{2}\right|\d(x,t)\\
  & \le\left(\left\Vert \overline{\mathbf{v}^{\epsilon}}-\mathbf{v}_{A}^{\epsilon}\right\Vert _{L^{2}\left(0,T_{\epsilon};H^{1}(\Omega)\right)}+\left\Vert \twe\right\Vert _{L^{2}\left(0,T_{\epsilon};H^{1}(\Omega)\right)}\right)\cdot\left\Vert \gamma R\right\Vert _{L^{\infty}\left(0,T_{\epsilon};L^{2+\kappa}(\Omega)\right)}\left\Vert R\right\Vert _{L^{2}\left(\partial_{T_{\epsilon}}\Omega\left(\frac{\delta}{2}\right)\right)}\\
 & \leq C(K)\left(\epsilon^{M}+\epsilon^{M-\frac{1}{2}}\right)\epsilon^{M-\frac{1}{2}-\frac{\kappa}{2+\kappa}M}\epsilon^{M+\frac{1}{2}}
\end{align*}
where we used $H^{1}(\Omega)\hookrightarrow L^{s}(\Omega)$
for all $1\leq s<\infty$ in the first inequality, Theorem \ref{vehler} 1),
Lemma \ref{Wichtig} (in particular (\ref{eq:w1epsab})), Lemma \ref{lem:R-L2-Linf}
3) and (\ref{eq:Main-est-a}) in the second inequality. A suitable
estimate follows since $M\geq4$ and we may choose $\kappa>0$ arbitrarily
small. Regarding $\twz$, we choose $\kappa>0$ and $q=\frac{2+\kappa}{\left(2+\kappa\right)-1}$
and estimate
\begin{align*}
\int_{\Omega_{T_{\epsilon}}}\left|\twz\cdot\nabla\left(\gamma^{2}\right)R^{2}\right|\d(x,t) & \leq\left\Vert \twz\right\Vert _{L^{2}\left(0,T_{\epsilon};L^{q}(\Omega)\right)}\left\Vert \gamma R\right\Vert _{L^{\infty}\left(0,T_{\epsilon};L^{2+\kappa}(\Omega)\right)}\left\Vert R\right\Vert _{L^{2}\left(0,T_{\epsilon};L^{\infty}(\Omega)\right)}\\
 & \leq C\left(K,\alpha\right)\epsilon^{M-1}\epsilon^{M-\frac{1}{2}}\epsilon^{-\frac{\kappa}{2+\kappa}M}\epsilon^{M-\frac{3}{2}}\epsilon^{-\left(M+2\right)\alpha}
\end{align*}
for $\alpha>0$ , where we used (\ref{eq:w2epsab}), (\ref{eq:Main-est-d})
and Lemma~\ref{lem:R-L2-Linf} 1).
Again $M\geq 4$ and a suitable choice of $\alpha>0$ and $\kappa>0$
yield the final estimate by $C(T,\eps)\eps^{2M-1}$. For the term involving $\mathbf{v}_{M-\frac{1}{2}}^{-,\epsilon}$
we obtain
\begin{align*}
  &\epsilon^{M-\frac{1}{2}}\int_{\Omega_{T_{\epsilon}}}\left|\mathbf{v}_{M-\frac{1}{2}}^{-,\epsilon}\cdot\nabla\left(\gamma^{2}\right)R^{2}\right|\d(x,t)\\
  & \leq C\epsilon^{M-\frac{1}{2}}\left\Vert \mathbf{v}_{M-\frac{1}{2}}^{-,\epsilon}\right\Vert _{L^{2}\left(0,T_{\epsilon};L^{\infty}\left(\Omega^{-}(t)\right)\right)}\left\Vert \gamma R\right\Vert _{L^{\infty}\left(0,T_{\epsilon};L^{2}(\Omega)\right)}\cdot\left\Vert R\right\Vert _{L^{2}\left(\partial_{T_{\epsilon}}\Omega\left(\frac{\delta}{2}\right)\right)}\le C(K)\epsilon^{3M-\frac{1}{2}}
\end{align*}
where we used (\ref{eq:muv0,5est}) (together with $H^{2}\left(\Omega^{-}(t)\right)\hookrightarrow L^{\infty}\left(\Omega^{-}(t)\right)$)
and (\ref{eq:Main-est}) in the second estimate.

Thus, we have shown 
\[
\int_{0}^{T_{\epsilon}}\left|\int_{\Omega}\gamma^{2}R\left(\mathbf{v}^{\epsilon}\cdot\nabla R\right)\d x\right|\d t\leq C(K)C\left(T_{\epsilon},\epsilon\right)\epsilon^{2M-1}
\]
 and with that may conclude using (\ref{eq:Main-New}) that
\begin{align}
\sup_{t\in\left(0,T_{\epsilon}\right)}\left\Vert \gamma R\left(.,t\right)\right\Vert _{L^{2}(\Omega)}^{2}+\epsilon\left\Vert \gamma\Delta R\right\Vert _{L^{2}\left(\Omega_{T_{\epsilon}}\right)}^{2}\nonumber \\
+\frac{1}{\epsilon}\left\Vert \left(\gamma\nabla R,\gamma R\nabla R\right)\right\Vert _{L^{2}\left(\Omega_{T_{\epsilon}}\right)}^{2} & \leq C(K)C\left(T,\epsilon\right)\epsilon^{2M-1}.\label{eq:mainestzweiterteil}
\end{align}
where $C(T,\eps)\to_{(T,\eps)\to 0} 0$.

Altogether we may now choose $\epsilon_{0}>0$ and $T\in\left(0,T_{0}\right]$
so small that (\ref{eq:Main-est-a})\textendash (\ref{eq:Main-est-c})
follow for $T_{\epsilon}=T$ from (\ref{eq:CHS-zentral}) and (\ref{eq:CHS-Zusatz})
as a consequence of the estimates for $\mathcal{RS}$. (\ref{eq:Main-est-d})
follows for $T_{\epsilon}=T$ from (\ref{eq:mainestzweiterteil}).
This shows (\ref{eq:Main}). Regarding (\ref{eq:Mainv}), we have
by the definition of $\mathbf{v}_{err}^{\epsilon}$ in (\ref{eq:vepserrderf})
for $q\in\left(1,2\right)$
\begin{align*}
\left\Vert \mathbf{v}^{\epsilon}-\mathbf{v}_{A}^{\epsilon}\right\Vert _{L^{1}\left(0,T;L^{q}(\Omega)\right)} & \leq\left\Vert \mathbf{v}_{err}^{\epsilon}+\twe+\twz\right\Vert _{L^{1}\left(0,T;L^{q}(\Omega)\right)}+C\left\Vert \overline{\mathbf{v}^{\epsilon}}-\mathbf{v}_{A}^{\epsilon}\right\Vert _{L^{1}\left(0,T;L^{2}(\Omega)\right)}\\
 & \leq C\left(K,q\right)\epsilon^{M-\frac{1}{2}}
\end{align*}
by (\ref{eq:w1epsab}), (\ref{eq:w2epsab}) and Theorem \ref{vehler}.
The convergence results (\ref{eq:Maincconverge}) and (\ref{eq:Mainvconverge})
are then due to the construction of $c_{A}^{\epsilon}$ and $\mathbf{v}_{A}^{\epsilon}$,
more precisely to the discussed form of the zero-th order terms, where
it is important to note (\ref{eq:muv0,5est}) for $\mathbf{v}_{M-\frac{1}{2}}^{\pm,\epsilon}$.
This finishes the proof of Theorem~\ref{Main}.
\begin{rem}
\label{rem:nodirnosol} In this final remark, we want to discuss the
consequences of considering Neumann boundary conditions $\partial_{\mathbf{n}_{\partial\Omega}}\mu^{\epsilon}=0$
on $\partial_{T_{0}}\Omega$ instead of $\mu^\epsilon=0$. Of
course, in this case we would construct $\mu_{A}^{\epsilon}$ such
that $\partial_{\mathbf{n}_{\partial\Omega}}\mu_{A}^{\epsilon}=0$
is satisfied on $\partial_{T_{0}}\Omega$. To gain (\ref{eq:CHS-Majoreq}),
which is a vital point of the proof, we need to ensure that 
\[
\int_{\Omega}\varphi\Delta\left(\mu^{\epsilon}-\mu_{A}^{\epsilon}\right)\d x=\int_{\Omega}\Delta\varphi\left(\mu^{\epsilon}-\mu_{A}^{\epsilon}\right)\d x
\]
holds, which is satisfied if we choose Neumann boundary conditions
for $\varphi$. In particular, $\varphi$ should be the solution to
\begin{equation}
-\Delta\varphi\left(.,t\right)=R\left(.,t\right)\text{ in }\Omega,\;\partial_{\mathbf{n}_{\partial\Omega}}\varphi=0\text{ on }\partial\Omega,\label{eq:neumannphi}
\end{equation}
together with $\int_{\Omega}\varphi\left(.,t\right)\d x=0$.
However, in order for (\ref{eq:neumannphi}) to be well-posed, $\int_{\Omega}R\left(.,t\right)\d x=0$
needs to be satisfied, where
\begin{align*}
\int_{\Omega}R(x,t)\d x & =\int_{0}^{t}\int_{\Omega}\partial_{t}\left(c^{\epsilon}-c_{A}^{\epsilon}\right)\d x\d\tau+\int_{\Omega}c_{0}^{\epsilon}-c_{A}^{\epsilon}|_{t=0}\d x\\
 & =\int_{0}^{t}\int_{\Omega}\operatorname{div}\left(\mathbf{v}_{A}^{\epsilon}\right)c_{A}^{\epsilon}+\left.\twe\right|_{\Gamma}\cdot\nabla c_{A}^{\epsilon}\xi-\rc\d x\d\tau+\int_{\Omega}c_{0}^{\epsilon}-c_{A}^{\epsilon}|_{t=0}\d x
\end{align*}
in the case of no-slip boundary conditions for $\mathbf{v}^{\epsilon}$.
This expression does not vanish and we are not able to estimate it to a high enough power of $\eps$. A similar problem
arises in the case of periodic boundary conditions. To circumvent
this difficulty, we decided to stick to Dirichlet boundary values for $\mu$.
\end{rem}

\section*{Acknowledgement} 
The results of this paper are part of the second author's PhD Thesis. The authors acknowledge support by the SPP 1506 ``Transport Processes
at Fluidic Interfaces'' of the German Science Foundation (DFG) through the grant AB285/4-2.

%\bibliographystyle{amsplain}
%\bibliography{bibtex}
\providecommand{\bysame}{\leavevmode\hbox to3em{\hrulefill}\thinspace}
\providecommand{\MR}{\relax\ifhmode\unskip\space\fi MR }
% \MRhref is called by the amsart/book/proc definition of \MR.
\providecommand{\MRhref}[2]{%
  \href{http://www.ams.org/mathscinet-getitem?mr=#1}{#2}
}
\providecommand{\href}[2]{#2}

\end{document}